\documentclass[11pt,oneside,reqno]{amsart}
\usepackage[top=25mm, bottom=25mm, left=25mm, right=25mm]{geometry}
\usepackage{amsfonts,amssymb,amsmath,amsthm,amsxtra}
\usepackage[T1]{fontenc}
\usepackage[utf8]{inputenc}
\usepackage{bm}
\usepackage{xparse}
\usepackage{mathtools}
\usepackage{dsfont}
\usepackage{bbm}
\usepackage{mathrsfs}
\usepackage{enumitem}
\usepackage{tikz}
\usepackage{comment}
\usepackage{stmaryrd}

\usepackage{multirow}

\usepackage{graphics}
\usepackage{color}
\usepackage[unicode,hidelinks,bookmarks]{hyperref}
\hypersetup{
colorlinks=true,
citecolor=[rgb]{0,0,0.8},
linkcolor=[rgb]{0.89,0.02,0.17},
urlcolor=[rgb]{0,0,0.75},
pdfpagemode=UseNone,
pdfstartview=FitH,
pdfdisplaydoctitle=true,
pdftitle={Discrete analogues in harmonic analysis: Multi-parameter Radon averages},
pdfauthor={Hejna, Kim, Langowski, Mirek, Song, Wright},
pdflang=en-US
}

\numberwithin{equation}{section}
\newtheorem{theorem}[equation]{Theorem}
\newtheorem{corollary}[equation]{Corollary}
\newtheorem{lemma}[equation]{Lemma}

\newtheorem{proposition}[equation]{Proposition}

\newtheorem*{BF}{Bellow--Furstenberg problem}
\newtheorem*{MBF}{Multi-parameter Bellow--Furstenberg problem}

\theoremstyle{definition}
\newtheorem{remark}[equation]{Remark}
\newtheorem{definition}[equation]{Definition}
\newtheorem{example}[equation]{Example}

\newcommand{\BB}{\mathbb{B}}
\newcommand{\CC}{\mathbb{C}}
\newcommand{\DD}{\mathbb{D}}
\newcommand{\EE}{\mathbb{E}}

\newcommand{\GG}{\mathbb{G}}

\newcommand{\II}{\mathbb{I}}
\newcommand{\JJ}{\mathbb{J}}
\newcommand{\KK}{\mathbb{K}}

\newcommand{\NN}{\mathbb{N}}

\newcommand{\QQ}{\mathbb{Q}}
\newcommand{\RR}{\mathbb{R}}

\newcommand{\TT}{\mathbb{T}}

\newcommand{\XX}{\mathbb{X}}

\newcommand{\ZZ}{\mathbb{Z}}

\usepackage{stackengine,scalerel}

\newcommand{\calF}{\mathcal{F}}
\newcommand{\calB}{\mathcal{B}}

\newcommand{\lo}{\mathrm{l}}

\newcommand{\dif}{\mathrm{d}}
\newcommand{\ex}{\bm{e}}

\newcommand{\proj}{\mathrm{p}}

\newcommand{\ind}[1]{\mathds{1}_{{#1}}}

\usepackage{scalerel,stackengine}
\stackMath
\newcommand\reallywidehat[1]{%
\savestack{\tmpbox}{\stretchto{%
  \scaleto{%
    \scalerel*[\widthof{\ensuremath{#1}}]{\kern-.7pt\bigwedge\kern-.7pt}%
    {\rule[-\textheight/2]{1.5ex}{\textheight}}
  }{\textheight}%
}{0.9ex}}%
\stackon[2.25pt]{#1}{\tmpbox}%
}

\newcommand*{\DMO}[1]{\expandafter\DeclareMathOperator\csname #1\endcsname {#1}}
\DMO{ord}
\DMO{supp}
\DMO{Sym}
\DMO{cl}
\DMO{lcm}
\DMO{dist}
\DMO{tr}
\DMO{diam}
\DMO{Log}

\DeclarePairedDelimiter\abs{\lvert}{\rvert}

\DeclarePairedDelimiter\norm{\lVert}{\rVert}

\DeclarePairedDelimiter\floor{\lfloor}{\rfloor}

\DeclarePairedDelimiterX\spr[2]{\langle}{\rangle}{#1,#2}
\newcommand{\ipr}[2]{#1\cdot#2}
\DeclarePairedDelimiterX\Set[2]{\{}{\}}{#1\colon #2}
\DeclarePairedDelimiterX\Seq[1]{(}{)}{#1}

\dedicatory{Dedicated to the memory of Alexandra Bellow (1935--2025).}

\begin{document}
\title[Discrete analogues in harmonic analysis: Multi-parameter Radon averages]{Discrete analogues in harmonic analysis:\\ Multi-parameter Radon averages}

\author{Agnieszka Hejna-Łyżwa}
\address[Agnieszka Hejna-Łyżwa]{
Instytut Matematyczny,
Uniwersytet Wroc{\l}awski,
Plac Grunwaldzki 2,
50-384 Wroc{\l}aw
Poland}
\email{agnieszka.hejna@math.uni.wroc.pl}

\author{Joonil Kim}
\address[Joonil Kim]{
Department of Mathematics,
Yonsei University,
Seoul 120-729, Korea}
\email{jikim7030@yonsei.ac.kr}

\author{Bartosz Langowski}
\address[Bartosz Langowski]
{
Department of Mathematics and Physical Sciences, 
Franciscan University of Steubenville, 
1235 University Blvd., Steubenville, OH 43952, USA
}
\email{blangowski@franciscan.edu}

\author{Mariusz Mirek }
\address[Mariusz Mirek]{
Department of Mathematics,
Rutgers University,
Piscataway, NJ 08854-8019, USA \\
\&
Instytut Matematyczny,
Uniwersytet Wroc{\l}awski,
Plac Grunwaldzki 2/4,
50-384 Wroc{\l}aw
Poland}
\email{mariusz.mirek@rutgers.edu}

\author{Hoyoung Song}
\address[Hoyoung Song]{
Department of Mathematics,
Yonsei University,
Seoul 120-729, Korea}
\email{nonspin0070@gmail.com}

\author{James Wright}
\address[James Wright]{
James Clerk Maxwell Building,
The King's Buildings,
Peter Guthrie Tait Road,
City Edinburgh,
EH9 3FD}
\email{J.R.Wright@ed.ac.uk}

\begin{abstract}
In this paper we study maximal and oscillation inequalities for
multi-parameter discrete Radon averaging operators. We develop a
robust variant of the multi-parameter circle method within the
framework of \textit{Discrete Analogues in Harmonic Analysis}.  In
particular, this gives quantitative estimates for these averages and
their underlying Fourier multipliers which reveals an interesting
major-arcs rigidity phenomenon. As a consequence, we completely
resolve in the affirmative the multi-parameter Bellow--Furstenberg
problem in pointwise ergodic theory.
\end{abstract}

\date{\today}

\thanks{Joonil Kim and Hoyoung Song were supported by the National Research Foundation of Korea (NRF) under Grant No. RS-2026-25482410. Bartosz Langowski was supported by the AMS–Simons Research
Enhancement Grant for Primarily Undergraduate Institution (PUI)
Faculty and by the National Science Centre of Poland under the OPUS
grant no. 2025/59/B/ST1/01786.  Mariusz Mirek was partially supported
by the NSF CAREER grant DMS-2236493.  James Wright was partially
supported by a Leverhulme Research Fellowship
RF-2023-709$\backslash$9}
\maketitle

\setcounter{tocdepth}{1}

\tableofcontents

\section{Introduction}\label{section:1}

\subsection{Motivations and statement of the main results}
Multi-parameter problems have been both a plague and a challenge in
classical harmonic analysis, from the failure of Lebesgue's
differentiation theorem on \(L^1(\mathbb{R}^d)\) for averages over
rectangles with sides parallel to the coordinate axes \cite{Saks1, Saks}
 to C. Fefferman's counterexample for the pointwise convergence of 
double Fourier series \cite{Fef}. These challenges and counterexamples have produced new insights and significant advancements from A. C\'ordoba and R. Fefferman's geometric proof of the strong maximal function \cite{CF} to J. Journ\'e's multi-parameter theory for classical Calder\'on--Zygmund singular integral operators \cite{journe}.

The theory of {\it Singular and Maximal Radon Transforms} (operators too singular to fall within the scope of classical Calder\'on--Zygmund theory) has been a central topic in Euclidean harmonic analysis since the early 1970s. The theory in the one-parameter setting was developed by Stein and his collaborators and culminated in work of Christ, Nagel, Stein and Wainger from the early 1990s \cite{CNSW}. More recently, it
has been extended to the multi-parameter setting by Ricci, Stein and Street; see for example \cite{RS} and \cite{SS}. This is all in the continuous setting.

From early on, operators on Euclidean spaces ${\mathbb R}^d$ and their discrete counterparts on $\mathbb Z^d$ have been studied hand in hand. For instance, when Hardy and Littlewood \cite{HLmax} established in 1930 the basic mapping properties of the classical maximal operator
$$
\sup_{N\in\RR_+} \bigg|\frac{1}{N} \int_0^N f(x-t) dt\bigg| \quad \text{for} \quad f \in L^p({\mathbb R}),
$$
 they did so by establishing these properties for the discrete maximal operator
$$
\sup_{N\in\ZZ_+} \bigg|\frac{1}{N} \sum_{n=1}^N f(x-n)\bigg|, \quad \text{for} \quad f \in \ell^p({\mathbb Z}),
$$
and then {\it transferring} the bounds from ${\mathbb Z}$ to ${\mathbb R}$. 

When the underlying operators are not too singular, then a simple
transference argument allows us to pass back and forth from the
continuous to the discrete settings; see for example the sampling
principle \cite[Proposition 2.1]{MSW}, which is also 
formulated in Proposition \ref{prop:msw}.  However this argument
completely breaks down for singular Radon averaging operators.  Over
the last 35 years, a successful theory has been established to treat
discrete Radon averaging operators in the one-parameter setting by
several authors. We will describe these developments in more detail
below.

This paper aims to develop quantitative tools to study multi-parameter
maximal and oscillation inequalities for discrete multi-parameter
Radon averaging operators. As an application, we solve the
multi-parameter Bellow--Furstenberg problem in pointwise ergodic
theory.

For
$d, k\in \ZZ_+:=\{1, 2,\ldots\}$, let
$P=(P_1,\ldots, P_d):\ZZ^k\to\ZZ^d$ be a polynomial mapping, where 
$P_1,\ldots, P_d$ are $k$-variate polynomials with integer coefficients.
Then for any function $f\in\ell^1(\ZZ^d)$ and $x\in\ZZ^d$, we define the
\textit{discrete multi-parameter polynomial Radon averaging operator} by
\begin{align}
\label{eq:38}
A_{M; \ZZ^d}^{P}f(x):=\EE_{m\in Q_M^k}f(x-P(m)), 
\end{align}
where $Q_M^k:=[M_1]\times\cdots\times [M_k]$ with $M=(M_1,\ldots, M_k)\in[1, \infty)^k$.
Here and throughout the paper we use the notation
$\EE_{y\in Y}f(y):=\frac{1}{|Y|}\sum_{y\in Y}f(y)$ for any finite set
$Y\neq\emptyset$ and any function $f:Y\to\CC$, where $|Y|$ denotes the
cardinality of the set $Y$, and $[M]:=(0, M]\cap\ZZ$ for any real
number $M\ge1$.

Our main result is the following  theorem for the discrete Radon averages from \eqref{eq:38}.

\begin{theorem}
\label{thm:1}
Let $d, k\in\ZZ_+$ be given and let
$P=(P_1,\ldots, P_d):\ZZ^k\to\ZZ^d$ be a polynomial mapping as above.
Define $\DD_{\tau}:=\{\tau^n:n\in\NN\}$ for any $\tau\in(1, \infty)$. Let
$A_{M; \ZZ^d}^{P}$ be the multi-parameter Radon average defined in
\eqref{eq:38} with $M=(M_1,\ldots, M_k)\in[1, \infty)^k$. Then the
following inequalities hold.

\begin{enumerate}
\item[(i)] \textit{(Multi-parameter maximal inequality).}  For every $p\in(1, \infty]$ there exists  $C_{d, k, p}\in\RR_+$ such that for every function $f\in \ell^p(\ZZ^d)$, one has
\begin{align}
\label{eq:80}
\big\|\sup_{M\in\ZZ_+^k}|A_{M; \ZZ^d}^{P}f|\big\|_{\ell^p(\ZZ^d)}\le C_{d, k, p}\|f\|_{\ell^p(\ZZ^d)}.
\end{align}
\item[(ii)] \textit{(Multi-parameter oscillation inequality).}  For every $p\in(1, \infty)$ and $\tau\in(1, \infty)$ there exists  $C_{d, k, p, \tau}\in\RR_+$ such that  for every $J\in\ZZ_+$ and for any $k$ strictly increasing sequences $(I_{j, 1})_{j\in\NN},\ldots, (I_{j, k})_{j\in\NN}\subseteq \DD_{\tau}$ and for every function $f\in \ell^p(\ZZ^d)$, one has
\begin{align}
\label{eq:88}
\bigg\|\Big(\sum_{j=0}^{J-1}\sup_{M\in \BB[I_j]\cap\DD_{\tau}^k}
\big|A_{M;\ZZ^d}^{P}f - A_{I_j;\ZZ^d}^{P}f\big|^2\Big)^{1/2}\bigg\|_{\ell^p(\ZZ^d)}\le C_{d, k, p, \tau}\|f\|_{\ell^p(\ZZ^d)},
\end{align}
where $I_j:=(I_{j,1}, \ldots, I_{j,k})\in\DD_{\tau}^k$, and
$\BB[I_j]:=[I_{j, 1}, I_{j+1, 1})\times\cdots\times[I_{j, k}, I_{j+1, k})$ for $j\in \mathbb N$.
\end{enumerate}
The constants $C_{d, k, p}$ and $C_{d, k, p,\tau}$,  respectively in \eqref{eq:80} and \eqref{eq:88}, may depend on the degree
of the underlying polynomial mapping $P$, but are independent of its
coefficients. Importantly, the constant $C_{d, k, p,\tau}$ is independent of the sequence $I := (I_j)_{j\in {\mathbb N}}$ and $J\in {\mathbb Z}_{+}$. Moreover, the parameter ranges $p$ for which inequalities \eqref{eq:80} and
\eqref{eq:88} hold are both sharp.
\end{theorem}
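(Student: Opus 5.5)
We will deduce both parts from a single quantitative analysis of the Fourier multiplier
\[
m_M(\xi)=\EE_{m\in Q_M^k}e(\xi\cdot P(m)),\qquad \xi\in\TT^d .
\]
This analysis is a multi-parameter circle method.

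\textbf{Preliminary reductions.} First, by lifting we may assume that $P$ is the canonical moment map. That is, we replace $\ZZ^d$ by $\ZZ^{D}$, indexed by the multi-indices $\gamma\in\NN^k$ with $0<|\gamma|\le\deg P$, and use $m\mapsto(m^\gamma)_\gamma$. This is the standard lifting trick, and it explains why the constants do not depend on the coefficients. Second, by a dyadic (or $\tau$-adic) reduction we may restrict $M\in\DD_\tau^k$: for positive $f$ the averages over $M$ and over $\tau^{\lfloor\log_\tau M\rfloor}$ are pointwise comparable. Third, $p=\infty$ in (i) is trivial. Sharpness of the $p$-range follows from the standard counterexamples: delta functions show that (i) fails at $p=1$, and the known failure of oscillation bounds at $p=\infty$ covers (ii).

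\textbf{Major/minor arc decomposition.} For each scale $M=(2^{n_1},\dots,2^{n_k})$ we split the frequency space using the Ionescu--Wainger type multi-frequency projections $\Pi_{\le l}$. These are adapted to rationals $a/q$ with $q$ having bounded prime structure at level $2^{l}$, and to boxes of side $\approx 2^{-n\cdot\gamma}\,2^{\delta |n|}$ in the $\gamma$-th coordinate.

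On the minor arcs the key input is a multi-parameter Weyl inequality:
\[
|m_M(\xi)|\lesssim 2^{-c\,l}
\]
whenever $\xi$ is not approximable at level $l$. Crucially, this decay must be uniform in the eccentricity of the box $Q_M^k$. This is the step I expect to be the main obstacle. In one direction the box may be very short, so the sum in that variable gives no cancellation, and the phase must then be treated as a polynomial in the remaining long variables with coefficients that depend on the short ones. We will try first to apply the one-parameter Weyl bound in the longest variable, fibering over the others. After that, a "rigidity" statement is needed: if $m_M(\xi)$ is large, then every monomial coefficient $\xi_\gamma$ lies near a rational with a small denominator at the scale $M^\gamma$. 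The condition must hold at this scale, and not merely at the scale of the long variable. We expect to prove this by induction on $k$, using a multi-parameter Vinogradov mean-value or a Hua-type argument. The resulting minor-arc pieces are summed using the $\ell^2$ decay combined with a trivial $\ell^{p}$ bound, via interpolation.

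\textbf{Major arcs.} Near $a/q$ we obtain the approximation
\[
m_M(\xi)\approx G(a/q)\,\Phi_M(\xi-a/q),
\]
where $G$ is a complete multi-dimensional Gauss sum and $\Phi_M$ is the continuous multi-parameter multiplier. Here we need quantitative error bounds of the form $2^{-c\min_i n_i}$ after localization. The resulting operators are handled by the Ionescu--Wainger multiplier theorem together with the sampling principle of Proposition~\ref{prop:msw}. Through these tools, the maximal and oscillation bounds reduce to the corresponding bounds for continuous multi-parameter Radon averages on $\RR^{D}$. Those bounds we prove by induction on $k$: a multi-parameter oscillation estimate, obtained by iterating one-parameter Littlewood--Paley, jump and square-function estimates in each variable separately. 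This gives a bound of $l^{C}$ times $\|f\|_p$ on the piece at level $l$. Multiplied by the $2^{-cl}$ gain coming from the Gauss sums, $|G(a/q)|\lesssim q^{-\delta}$, this is summable in $l$.

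Finally, the errors in the approximation $m_M\approx G\Phi_M$ at large scales are controlled by square functions summed over $n$ with geometric decay. This completes (i). The same decomposition, combined with the multi-parameter oscillation bound for the continuous model, yields (ii) uniformly in $J$ and in the sequence $I$.
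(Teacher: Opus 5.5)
\textbf{The gap: summing the errors over all scales.} Your plan disposes of the minor arcs and of the major-arc approximation errors with square functions summed over all scales $n\in\NN^k$ with geometric decay. When $d,k\ge2$, no bound for $m_M$ off the major arcs can decay in anything better than the smallest scale $\min_i M_i$. For instance, after lifting, if $M_1<2$ the $m_1$-average is trivial, and $|m_M(\xi)|=1$ whenever $\sum_{\gamma_1}\xi_{(\gamma_1,\gamma_2)}\in\ZZ$ for every $\gamma_2\ge1$. This happens at points all of whose coordinates are badly approximable. So your minor-arc bound $|m_M(\xi)|\lesssim 2^{-cl}$, with boxes of width $2^{-n\cdot\gamma}2^{\delta|n|}$, is false. (For $\gamma=(1,0)$ and $n=(0,n_2)$ these boxes are wider than the torus, which also breaks the support hypothesis of Theorem~\ref{thm:iw-semi}.) Once the arcs are tied to $\min_i n_i$, as they must be, the errors are only of size $2^{-c\min_i n_i}$, and $\sum_{n\in\NN^k}2^{-c\min_i n_i}=\infty$. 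A pointwise rigidity statement cannot repair this, however you prove it (Vinogradov, Hua, induction on $k$). Pointwise multiplier bounds can only be fed into square functions over all of $\DD_\tau^k$, and this is exactly the obstruction described in the paper's introduction.

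\textbf{What the paper does instead.} It uses \eqref{eq:95}: on $\{M_1\le M_2\}$ the oscillation of $T_{\ZZ^\Gamma}[m_{M}-\Phi_{M}]f$ is controlled by $\sum_{M_1\in\DD_\tau}\|\sup_{M_2\ge M_1}|T_{\ZZ^\Gamma}[m_{M}-\Phi_{M}]f|\|_{\ell^p}$. That is, a square function in the small parameter and a maximal function in the large one.
\begin{enumerate}
\item This needs an $\ell^2$ \emph{maximal} estimate in $M_2$ with decay $M_1^{-c}$. It is interpolated against $\ell^{p_0}$ bounds that are not trivial: they come from the one-parameter maximal theorem (uniform in coefficients) applied for each fixed $m_1$, together with the main-term estimate \eqref{eq:osc_period}.
\item In the regime $M_2^{\varrho}\le M_1$ the two-parameter Weyl bound decays like a power of $M_2$, so summation works.
\item The hard regime $M_1\le M_2^{\varrho}$ needs three things: the circle method run in $m_2$ for each fixed $m_1$, a reduction of denominators and cutoffs to the $M_1$ scale, and the counting Lemma~\ref{lem:comb} (major-arc rigidity). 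Consider the set $A^{m_1}_{M_1}$ of $\xi$ that lie off the $\Gamma_2$-major arcs at scale $(M_1,N)$ while $\xi_{\Gamma_2}(m_1)$ lies on the $m_2$-major arcs at scale $N$, for some $N\ge M_1^{1/\varrho}$. A Vandermonde/gcd argument shows this set has $m_1$-density $\lesssim M_1^{-c}$.
\item Because this exceptional set does not depend on $M_2$, $\sup_{M_2}$ can be taken through the fiberwise Ionescu--Wainger maximal theorem at cost $M_1^{\varepsilon}$. Lemma~\ref{lem:padova} then absorbs the $M_2$-dependent projections.
\end{enumerate}
None of this mechanism appears in your proposal, and it is the core of the proof.

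\textbf{A secondary point.} The continuous multiplier is not a tensor product, so ``iterating one-parameter estimates in each variable'' does not work as stated. It has to be replaced by the inclusion--exclusion and rank-$r$ Littlewood--Paley argument of Theorem~\ref{thm:osc_IW_2par}, which handles parameter gluing when the exponent matrix has rank smaller than the number of monomials.
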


The proof of Theorem \ref{thm:1} is reduced to proving the oscillation
inequality \eqref{eq:88}, which implies the maximal inequality from
\eqref{eq:80} in view of the estimate stated in \eqref{eq:135} below.  The
proof of the multi-parameter oscillation inequality \eqref{eq:88} is a
serious challenge requiring substantial new ideas that we
systematically develop in this paper. The key tool is a
\textit{multi-parameter circle method} developed in the context of
maximal and oscillation estimates, with the ``\textit{major arcs
rigidity}'' as a new phenomenon which will be described below.

Since the discrete averaging Radon operators
\eqref{eq:38} live in a very close symbiosis with their ergodic
counterparts, inequalities \eqref{eq:80} and \eqref{eq:88} have direct
applications to pointwise convergence problems in ergodic
theory. Specifically, the multi-parameter oscillation inequality \eqref{eq:88} will be used to give a
complete solution of the multi-parameter Bellow--Furstenberg problem.

In order to see this, let the triple $(X, \mathcal{B}(X), \mu)$ denote
a $\sigma$-finite measure space throughout this paper.  
For $d, k \in\ZZ_+$ let $P=(P_1,\ldots, P_d):\ZZ^k\to\ZZ^d$ be a polynomial mapping as above. Given  
a family ${\mathcal T} = \{T_1,\ldots, T_d\}$ of invertible commuting
measure-preserving transformations on $X$, a function $f\in L^1(X)$,  a
$k$-tuple $M = (M_1,\ldots, M_k)\in[1, \infty)^k$,  we define for any $x\in X$, the \textit{multi-parameter polynomial ergodic
average} by
\begin{align}
\label{eq:107}
A_{{M}; X, {\mathcal T}}^{P}f(x):= \EE_{m\in Q_{M}^k}f(T_1^{P_1(m)}\cdots T_d^{P_d(m)}x),
\end{align}
where $Q_{M}^k:=[M_1]\times\ldots\times[M_k]$ is a rectangle as in \eqref{eq:38}.

We now make the connection between the Radon averages \eqref{eq:38} and the ergodic averages \eqref{eq:107}.

\begin{example}[Shift integer system]
\label{ex:1}
Fix $d, k \in\ZZ_+$, and let $P=(P_1,\ldots, P_d):\ZZ^k\to\ZZ^d$ be a
polynomial mapping as above.  Consider the
$d$-dimensional lattice $(\ZZ^d, \mathcal B(\ZZ^d), \mu_{\ZZ^d})$
equipped with a family of shifts $S_1,\ldots, S_d:\ZZ^d\to\ZZ^d$,
where $\mathcal B(\ZZ^d)$ denotes the $\sigma$-algebra of all subsets
of $\ZZ^d$, $\mu_{\ZZ^d}$ denotes counting measure on $\ZZ^d$, and
$S_j(x)=x-e_j$ for every $x\in\ZZ^d$ (here $e_j$ is the $j$-th basis
vector from the standard basis in $\ZZ^d$ for each $j\in[d]$). Then the
average $A_{M; X, {\mathcal T}}^{{P}}$ from \eqref{eq:107} with
${\mathcal T} = (T_1,\ldots, T_d)=(S_1,\ldots, S_d)$
becomes the Radon average $A_{M; \ZZ^d}^{P}$ from \eqref{eq:38}.

In some situations, depending on how explicit we need to be, we will
write out the averages
$A_{{M}; X, {\mathcal T}}^{P}=A_{M_1,\ldots, M_k;X, \mathcal T}^{P_1,\ldots, P_d}$
or
$A_{{M}; X, {\mathcal T}}^{P}= A_{M_1,\ldots, M_k;X,T_1,\ldots, T_d}^{P_1,\ldots, P_d}$.
We will also often abbreviate $A_{M; X, {\mathcal T}}^{P}$ to
$A_{M; X}^{P}$ when the transformations are understood. In particular, 
in the integer shift system we will write out the
averages
$A_{M; \ZZ^d}^{P}=A_{M_1,\ldots, M_k; \ZZ^d}^{P}$ or
$A_{M; \ZZ^d}^{P}=A_{M; \ZZ^d}^{P_1,\ldots, P_d}$ or even
$A_{M; \ZZ^d}^{P}=A_{M_1,\ldots, M_k; \ZZ^d}^{P_1,\ldots, P_d}$.

In ergodic applications, finite measure spaces are of primary importance. However, by including $\sigma$-finite spaces, the connection between the discrete averaging Radon operators in \eqref{eq:38} and their ergodic counterparts in \eqref{eq:107}, as illustrated in Example \ref{ex:1}, becomes evident.

\end{example}

Having introduced some basic ergodic terminology and the multi-parameter polynomial ergodic averages in \eqref{eq:107}, we can formulate the multi-parameter Bellow--Furstenberg problem, which reads as follows.

\begin{MBF}\label{BMSW-conj}
For $d, k \in\ZZ_+$ let $P=(P_1,\ldots, P_d):\ZZ^k\to\ZZ^d$ be
a polynomial mapping as above.  Let $\mathcal T=\{T_1, \ldots, T_d\}$
be a family of invertible commuting measure-preserving transformations
on a probability space $(X, \mathcal{B}(X), \mu)$.  Is it true that
for any $f\in L^{\infty}(X)$, the ergodic averages defined in \eqref{eq:107} for any
$M = (M_1,\ldots, M_k)\in[1, \infty)^k$, i.e.
\begin{align*}
A_{{M}; X, {\mathcal T}}^{P}f(x):= \EE_{m\in Q_{M}^k}f(T_1^{P_1(m)}\cdots T_d^{P_d(m)}x),
\end{align*}
 converge pointwise for 
$\mu$-almost every $x\in X$, as $\min(M_1,\ldots, M_k)\to\infty$?
\end{MBF}

For \(d=k=1\) this was a famous problem of Bellow \cite{Bel} and
Furstenberg \cite{F}, solved in the affirmative by Bourgain \cite{B1,
B2, B3}. For \(d=1\) and arbitrary \(k\in\mathbb{Z}_+\), this problem
was established only recently  in \cite{BMSW} by the fourth and last authors, together with
Bourgain and Stein. Our main multi-parameter ergodic theorem of this
paper resolves this problem completely.
\begin{theorem}\label{main-thm}
The multi-parameter Bellow--Furstenberg problem is true for all $d, k \in\ZZ_+$.
\end{theorem}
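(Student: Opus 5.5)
We deduce Theorem \ref{main-thm} from the oscillation inequality \eqref{eq:88} of Theorem \ref{thm:1}(ii), transferred to the ergodic setting by the Calderón transference principle. First we transfer \eqref{eq:88} and \eqref{eq:80} from the shift system of Example \ref{ex:1} to an arbitrary $\sigma$-finite measure space with commuting invertible measure-preserving $T_1,\ldots,T_d$. For $x\in X$ and a large box $[R]^d$, set $F_x(y):=f(T_1^{y_1}\cdots T_d^{y_d}x)\ind{[R]^d}(y)$. For $M$ with $\max_i M_i\le N$, the averages $A^P_{M;X}f(T^y x)$ coincide with $A^P_{M;\ZZ^d}F_x(y)$ when $y$ lies in a slightly smaller box. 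The shrinkage is governed by the size of $P$ on $[N]^k$, which is $O(N^{\deg P})$. Applying \eqref{eq:88} on $\ZZ^d$ and integrating over $x$ with measure preservation gives the oscillation bound on $X$ for all sequences $I$ with entries at most $N$. Letting $R/N^{\deg P}\to\infty$ and then $N\to\infty$, monotone convergence gives the full bound on $X$ with the same constant $C_{d,k,p,\tau}$, uniform in $J$ and $I$.

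Next we fix $f\in L^\infty(X)\subseteq L^2(X)$ on a probability space. We first prove a.e. convergence along the lacunary lattice $M\in\DD_\tau^k$ as $\min_i M_i\to\infty$. Suppose this fails. Then, by a standard argument, there is $\varepsilon>0$ and a set of positive measure on which, for arbitrarily many $j$, there are sequences $I_{j,i}\in\DD_\tau$, strictly increasing in $j$ for each coordinate, such that
$$\sup_{M\in\BB[I_j]\cap\DD_\tau^k}|A_{M;X}f-A_{I_j;X}f|>\varepsilon.$$
To construct the sequences we pick successive rectangles. Given non-convergence of $A_{M;X}f(x)$, one finds $M,M'$ with large minima and $|A_Mf-A_{M'}f|>\varepsilon$. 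One then places $I_j$ coordinatewise below $\min(M,M')$ and $I_{j+1}$ above $\max(M,M')$, so that both $M$ and $M'$ lie in $\BB[I_j]$ and the triangle inequality applies. Strict monotonicity is ensured by passing to subsequences, with a measure-theoretic selection on a set of measure at least half of the bad set at each stage. This gives the lower bound $\|(\cdot)\|_{L^2}^2\gtrsim J\varepsilon^2\mu(E)$, which contradicts \eqref{eq:88} for $J$ large.

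To pass from $\DD_\tau^k$ to all $M\in[1,\infty)^k$, we use positivity of the averages for $f\ge0$ (split $f$ into its positive and negative parts). If $\tau^{n_i}\le M_i<\tau^{n_i+1}$, then by nestedness of $Q^k_M$,
$$\tau^{-k}A_{\tau^{n};X}f\lesssim A_{M;X}f\le \tau^{k}A_{\tau^{n+1};X}f$$
up to rounding errors that vanish as $\min_i M_i\to\infty$. Hence $\limsup$ and $\liminf$ of $A_{M;X}f(x)$ differ by at most a factor $\tau^{2k}$ times the $\DD_\tau^k$-limit. Taking $\tau=1+1/\ell$ with $\ell\in\ZZ_+$ and intersecting the countably many full-measure sets, the limit exists a.e.

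We expect the main obstacle to be the combinatorial step that turns non-convergence into many oscillations along genuinely increasing sequences in every coordinate simultaneously. In several parameters the bad pairs $M,M'$ may be ordered differently in each coordinate. We plan to handle this by always sandwiching both pairs inside a single box $\BB[I_j]$, taking coordinatewise minima and maxima, and by choosing the next pair with all coordinates exceeding the previous $I_{j+1}$. This is possible because the bad pairs exist with $\min_i M_i$ arbitrarily large.
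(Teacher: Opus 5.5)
Your proposal is correct and follows the paper's own route. You transfer \eqref{eq:88} to \eqref{eq:284} by Calder\'on transference, deduce a.e.\ convergence along $\DD_\tau^k$ from the uniform oscillation bound (which the paper cites from \cite[Proposition 2.8]{MSW-survey} instead of giving your selection argument), and then use the positivity sandwich with $\tau\downarrow 1$ (the paper's appeal to \cite[Lemma 1.5]{RW}). The only slip is a sign in the transference: with $F_x(y)=f(T^yx)$ you get the average for $-P$, so either take $F_x(y)=f(T^{-y}x)$ or note that Theorem \ref{thm:1} applies equally to $-P$.
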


Theorem \ref{main-thm} is a consequence of the following quantitative ergodic theorem, see item (ii) below.

\begin{theorem}
\label{thm:main}
Let $d, k\in\ZZ_+$ be given and let
$P=(P_1,\ldots, P_d):\ZZ^k\to\ZZ^d$ be a polynomial mapping as above.
Let $(X, \mathcal B(X), \mu)$ be a $\sigma$-finite measure space
equipped with a family of commuting invertible measure-preserving
transformations ${\mathcal T} = \{T_1,\ldots, T_d\}$ on $X$. Define
$\DD_{\tau}:=\{\tau^n:n\in\NN\}$ for any $\tau\in(1, \infty)$. Let
$A_{M;X,{\mathcal T}}^{P}$ be the ergodic
average defined in \eqref{eq:107} with $M=(M_1,\ldots, M_k)\in[1, \infty)^k$. Then the
following statements hold.

\begin{itemize}
\item[(i)] \textit{(Multi-parameter mean ergodic theorem).} For any $p\in(1, \infty)$ and $f\in L^p(X)$  the averages
$A_{M;X,{\mathcal T}}^{P}f$ converge in $L^p(X)$ norm as $\min(M_1,\ldots, M_k)\to\infty$.
\smallskip
\item[(ii)] \textit{(Multi-parameter pointwise ergodic theorem).} For any $p\in(1, \infty)$ and $f\in L^p(X)$  the averages
$A_{M;X,{\mathcal T}}^{P}f$ converge pointwise $\mu$-almost everywhere on $X$ as $\min(M_1,\ldots, M_k)\to\infty$.
\smallskip

\item[(iii)] \textit{(Multi-parameter maximal ergodic theorem).}
For every $p\in(1, \infty]$ there exists  $C_{d, k, p}\in\RR_+$ such that for every function $f\in L^p(X)$, one has
\begin{align}
\label{eq:230}
\big\|\sup_{M\in \ZZ_+^k}|A_{M;X,{\mathcal T}}^{P}f|\big\|_{L^p(X)}\le C_{d, k, p}\|f\|_{L^p(X)}.
\end{align}
\item[(iv)] \textit{(Multi-parameter oscillation ergodic theorem).}
For every $p\in(1, \infty)$ and $\tau\in(1, \infty)$ there exists  $C_{d, k, p, \tau}\in\RR_+$ such that  for every $J\in\ZZ_+$ and for any $k$ strictly increasing sequences $(I_{j, 1})_{j\in\NN},\ldots, (I_{j, k})_{j\in\NN}\subseteq \DD_{\tau}$ and for every function $f\in L^p(X)$, one has
\begin{align}
\label{eq:284}
\bigg\|\Big(\sum_{j=0}^{J-1}\sup_{M\in \BB[I_j]\cap\DD_{\tau}^k}
\big|A_{M;X,\mathcal T}^{P}f - A_{I_j;X,\mathcal T}^{P}f\big|^2\Big)^{1/2}\bigg\|_{L^p(X)}\le C_{d, k, p, \tau}\|f\|_{L^p(X)},
\end{align}
where $I_j:=(I_{j,1}, \ldots, I_{j,k})\in\DD_{\tau}^k$, and
$\BB[I_j]:=[I_{j, 1}, I_{j+1, 1})\times\cdots\times[I_{j, k}, I_{j+1, k})$ for $j\in\mathbb N$.
\end{itemize}
The implied constants $C_{d, k, p}$ and $C_{d, k, p,\tau}$,
respectively in \eqref{eq:230} and \eqref{eq:284}, may depend on the
degree of the underlying polynomial mapping $P$, but are independent
of its coefficients. Importantly, the constant $C_{d, k, p,\tau}$ is independent of the sequence $I := (I_j)_{j\in {\mathbb N}}$ and $J\in {\mathbb Z}_{+}$. Moreover, the parameter ranges $p$ for which
inequalities \eqref{eq:230} and \eqref{eq:284} hold are both sharp.
\end{theorem}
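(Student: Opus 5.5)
The plan is to deduce Theorem \ref{thm:main} from Theorem \ref{thm:1} by the Calder\'on transference principle, and then obtain (i) and (ii) from (iv) by standard arguments.

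\emph{Transference of (iii) and (iv) from Theorem \ref{thm:1}.} Fix $f\in L^p(X)$ and $R\in\ZZ_+$ large. For $x\in X$ define the lifted function
\[
F_x(y):=f(T_1^{y_1}\cdots T_d^{y_d}x)\ind{[-L,L]^d}(y), \qquad y\in\ZZ^d,
\]
where $L=R+\max_{m\in Q^k_{R}}|P(m)|_\infty$. Since the $T_i$ commute, for $y\in[-R',R']^d$ with $R'=L-\max_{m\in Q_R^k}|P(m)|_\infty$ and $M\in[1,R]^k$ we have
\[
A^P_{M;X}f(T^y x)=A^{P}_{M;\ZZ^d}F_x(-y),
\]
up to the sign convention fixed by Example \ref{ex:1}. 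Therefore the truncated square function in \eqref{eq:284}, with all $I_{j+1,i}\le R$, evaluated at $T^yx$, is dominated pointwise by the discrete one applied to $F_x$. We average over $y\in[-R',R']^d$, use measure preservation of each $T^y$, apply \eqref{eq:88} to $F_x$, and integrate in $x$. This gives the truncated form of \eqref{eq:284} with constant
\[
C_{d,k,p,\tau}\,\big((2L+1)/(2R'+1)\big)^{d/p}.
\]
The lattice of admissible $y$ must grow faster than the polynomial range. So we choose $R'$ as a large parameter $N$, independent of $R$, and let $N\to\infty$ with $R$ fixed; the ratio then tends to $1$. Monotone convergence in $R$ and in $J$ removes the truncations. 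The maximal inequality \eqref{eq:230} follows in the same way from \eqref{eq:80}. Invertibility of the $T_i$ is used here, since negative powers may appear. Independence of the constants from the coefficients, from $I$ and from $J$ is inherited directly. Sharpness of the ranges of $p$ also transfers: the shift system of Example \ref{ex:1} is a special case, so any counterexample for Theorem \ref{thm:1} is one here.

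\emph{Pointwise convergence (ii).} By (iii) the set of $f\in L^p$ for which a.e.\ convergence holds is closed, so we may take $f\in L^p\cap L^\infty$ (or $L^1\cap L^\infty$). The main point is to pass from lacunary to full convergence and to use the oscillation estimate. First, for $M\in\ZZ_+^k$ we pick the lacunary $M'\in\DD_\tau^k$ with $M'_i\le M_i<\tau M'_i$. For $f\ge0$ this gives
\[
\tau^{-k}A_{M'}f\le A_M f\le\tau^{k}A_{\tau M'}f,
\]
and similarly with $\limsup$ and $\liminf$. It therefore suffices to prove a.e.\ convergence along $\DD_\tau^k$ and then let $\tau\downarrow1$ along a countable sequence.

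Along $\DD_\tau^k$ we argue by contradiction. If $(A_Mf(x))_{M\in\DD_\tau^k}$ is not Cauchy as $\min M_i\to\infty$ on a set of positive measure, then there exist $\varepsilon>0$ and a set $E$ with $\mu(E)>0$ (using $\sigma$-finiteness and a finite-measure piece) on which the oscillation exceeds $\varepsilon$ arbitrarily far out. From this we construct, for each $J$, strictly increasing sequences $I_{j,i}$ such that for each $j$ a set of measure $\gtrsim\mu(E)$ carries some $M\in\BB[I_j]$ with $|A_Mf-A_{I_j}f|>\varepsilon/4$. This step is the main obstacle. The exceptional $M$ depend on $x$, so we proceed through a standard stopping-time/measure-exhaustion construction: at each stage, choose $I_{j+1}$ large enough that the oscillation over $\BB[I_j]$ already captures most of $E$. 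This uses countability of $\DD_\tau^k$ and continuity of measure, and is the step that must be checked carefully in the multi-parameter (product of intervals) geometry. Then
\[
\text{LHS of \eqref{eq:284}}\gtrsim \varepsilon\, J^{1/2}\mu(E)^{1/p},
\]
which contradicts (iv) as $J\to\infty$.

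\emph{Norm convergence (i).} This follows from (ii) and (iii) by dominated convergence, since
\[
\sup_M|A_Mf|\in L^p
\]
by (iii) and $A_Mf$ converges a.e.\ by (ii).
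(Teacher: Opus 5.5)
Your proposal is correct and follows the paper's route. The paper transfers \eqref{eq:88} from the shift system by Calder\'on transference, gets a.e.\ convergence along $\DD_\tau^k$ from the oscillation bound (citing \cite[Proposition 2.8]{MSW-survey}), passes to all $M$ by the sandwich argument of \cite[Lemma 1.5]{RW}, and obtains (i) by dominated convergence. The only cosmetic difference is that the paper deduces (iii) from (iv) via \eqref{eq:135}, whereas you transfer \eqref{eq:80} directly.

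The step you flag as the main obstacle is routine with diagonal $I_j=(\tau^{n_j},\ldots,\tau^{n_j})$. Take any pair $s,t$ with $\min_i s_i,\min_i t_i\ge\tau^{n_j}$ and $|A_sf-A_tf|>\varepsilon$. Once $\tau^{n_{j+1}}>\max_i\max(s_i,t_i)$, both lie in the cube $\BB[I_j]$, so one of them differs from $A_{I_j}f$ by more than $\varepsilon/2$.
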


Inequality \eqref{eq:284} implies inequality \eqref{eq:230} due to the
estimate \eqref{eq:135} below.  Once inequality \eqref{eq:284} is
established for any $\tau\in(1, \infty)$, then
$A_{\tau^{n_1},\ldots, \tau^{n_k};X,\mathcal T}^{P}f$ converge
pointwise $\mu$-almost everywhere on $X$ as
$\min(n_1,\ldots, n_k)\to\infty$.  Now arguing as in \cite[Lemma
1.5]{RW}, the conclusion from (ii) follows. Using 
(ii) and (iii) and the dominated convergence theorem, we obtain
(i). Since parts (i)-(iii) in Theorem \ref{thm:main} follow from (iv),
it suffices to prove \eqref{eq:284} or, more specifically, its
counterpart \eqref{eq:88} from Theorem \ref{thm:1} on the integer shift system, due to the
Calder{\'o}n transference principle \cite{Cald}.  Indeed, in our case, oscillation inequalities
\eqref{eq:88} from the integer shift system are transferred by the
Calder{\'o}n transference principle \cite{Cald} to the corresponding
bounds in \eqref{eq:284}  on an arbitrary
$\sigma$-finite measure space $(X, \mathcal B(X), \mu)$. Since
inequality \eqref{eq:88} is a special case of inequality
\eqref{eq:284} for the integer shift system, the inequalities from
\eqref{eq:284} and \eqref{eq:88}, in view of the
Calder{\'o}n transference principle \cite{Cald}, are in fact equivalent.

Here we emphasize that the Calder{\'o}n transference
principle \cite{Cald} does not transfer pointwise almost everywhere
convergence itself! For instance, in the integer shift system,
pointwise convergence is implied by norm convergence, since the
$\ell^\infty(\mathbb Z^d)$ norm is dominated by the
$\ell^p(\mathbb Z^d)$ norm for any $p\in[1, \infty)$, but this cannot
be transferred by the Calder{\'o}n transference principle. The
Calder{\'o}n transference principle \cite{Cald} transfers only
quantitative bounds (such as the inequalities in \eqref{eq:80} and
\eqref{eq:88}) from the integer shift system to the corresponding
bounds in a $\sigma$-finite measure-preserving system in question,
where they are further used to deduce pointwise almost everywhere
convergence on that space.  Hence, from now on we focus on discussing
and proving quantitative bounds on the integer shift systems
(specifically inequality \eqref{eq:88} from Theorem \ref{thm:1}),
which further will underscore the symbiosis between the ergodic world
and discrete analogues in harmonic analysis.

This reduction has both advantages and disadvantages. The disadvantage
is that we completely lose information about the original
measure-preserving system, which a priori may possess properties that
could help to establish pointwise convergence without relying on
quantitative bounds such as maximal or oscillation inequalities, which
are typically challenging to establish. The advantage is that we pass
from an abstract measure-preserving system to the integer shift
system, which is algebraically well structured. The set of
integers forms an abelian group, and thanks to that reduction the
averaging operators from \eqref{eq:38} become convolution operators
with discrete probability measures, making tools from harmonic
analysis available.

Before discussing the results from Theorem \ref{thm:1}  in detail and
explaining the key ideas of our multi-parameter circle method and other
novelties of the paper, we provide a brief historical background on
discrete analogues in harmonic analysis, highlighting one-parameter
developments that will be critical going forward in the paper and
justifying our motivation to pursue the multi-parameter direction in
the fields of discrete analogues in harmonic analysis and 
ergodic theory.

\subsection{A brief history of discrete analogues in harmonic
analysis} The study of discrete analogues in Euclidean harmonic
analysis has a long history that goes back to the 1920s foundational
papers of Riesz \cite{R2} and Hardy and Littlewood \cite{HLmax}. In
\cite{R2}, Riesz established $L^p(\mathbb{R})$ boundedness of the
Hilbert transform and, by a simple comparison argument, (based on the
comparison of sums with integrals), deduced $\ell^p(\mathbb{Z})$
boundedness of its discrete analogue. A few years later, Hardy and
Littlewood \cite{HLmax} established $L^p(\mathbb{R})$ boundedness of
the Hardy--Littlewood maximal function on the real line.
Interestingly, they worked entirely in the discrete setting and solved
a maximal problem stated for a finite set of positive numbers
(interpreting it in the language of cricket or any other game in which
a player compiles a series of scores whose average is recorded), and
used it to deduce classical maximal results in the continuous
setting \cite{HLmax}. Since that time the Hardy--Littlewood maximal
inequality has been viewed as a quantitative form of the Lebesgue
differentiation theorem, which in its simplest form asserts that for
every  $f\in L^1(\RR)$, the limit
\begin{align}
\label{eq:113}
\lim_{M\to 0}\frac{1}{M}\int_0^Mf(x-t)dt=f(x)
\end{align}
exists 
for almost every $x\in\RR$, see also \cite[Section 1.3]{bigs} for its higher-dimensional analogues.

Essentially, at the same time von Neumann \cite{vN} and Birkhoff
\cite{BI} proved their famous ergodic theorems --- von Neumann in the
norm and Birkhoff pointwise almost everywhere, respectively. Their two
ergodic theorems can be summarized as follows. If
$(X,\mathcal B(X), \mu)$ is a probability measure space equipped with
a measure-preserving transformation $T:X\to X$, then for every
$p\in[1, \infty)$ and every $f\in L^p(X)$ the classical ergodic
averages $A_{M;X, T}^{\mathrm m}f(x):=\mathbb E_{m\in [M]}f(T^{m}x)$
converge pointwise for $\mu$-almost every $x\in X$ and in $L^p(X)$
norm as $M\to\infty$. Moreover, if $T$ is ergodic on $X$, then 
\begin{align}
\label{eq:114}
\lim_{M\to \infty}A_{M;X, T}^{\mathrm m}f=\int_Xf(x)d\mu(x)
\end{align}
holds pointwise almost everywhere and in norm for every $f\in L^1(X)$.
This simply states that the time averages converge pointwise almost
everywhere and in mean to the space average, which is a key feature of von Neumann’s and Birkhoff’s theorems. At the time when Birkhoff  established his
pointwise ergodic theorem \cite{BI}, its connections with the Hardy--Littlewood
maximal functions \cite{HLmax} were not yet obvious; these connections
became clearer later, and, from a broader perspective, even more so
after Calder{\'o}n published his transference principle
\cite{Cald}. Calder{\'o}n's transference principle revealed the bridge
between ergodic theory and discrete analogues in harmonic analysis.

In the early 1950s Calder{\'o}n--Zygmund theory for singular integral
operators emerged \cite{CZ}, and boundedness of discrete analogues of
Calder{\'o}n--Zygmund type singular integral operators followed
easily, as observed in \cite{CZ}, from their continuous counterparts
either by a simple comparison argument as in the Hilbert transform
case, or by mimicking continuous arguments in the discrete setting.

Calder{\'o}n--Zygmund theory \cite{CZ} became central in
Euclidean harmonic analysis, mainly by reinterpreting
classical Littlewood--Paley theory \cite{LP} in settings where the
Calder{\'o}n--Zygmund framework makes sense. The latter serves as an
important tool to detect orthogonalities in $L^p(\mathbb{R}^d)$ spaces
for $p \neq 2$, which, combined with Fourier methods or 
almost-orthogonality methods, was successfully developed to study the
so-called generalized Radon operators in $\mathbb{R}^d$, even in situations where
Calder{\'o}n--Zygmund theory cannot be directly applied, see
\cite{bigs, CNSW}.

For the
purpose of our discussion the most representative will be polynomial Radon
operators. Namely, for $d, k\in\ZZ_+$ let
$P=(P_1,\ldots, P_d):\RR^k\to\RR^d$ be a polynomial mapping, where
$P_1,\ldots, P_d$ are $k$-variate polynomials with real coefficients.
Then for any $f\in L^1(\RR^d)$ a vector $M=(M_1,\ldots, M_k)\in\RR_+^k$, and $x\in\RR^d$, define the
\textit{multi-parameter polynomial Radon averaging operator}
by
\begin{align}
\label{eq:109}
A_{M; \RR^d}^{P}f(x):=\frac{1}{M_1\cdots M_k}\int_0^{M_1}\ldots \int_0^{M_k}f(x-P(t))dt.
\end{align}
We see that $A_{M; \RR^d}^{P}$ is the continuous, integral counterpart
of the operator $A_{M; \ZZ^d}^{P}$ from \eqref{eq:38}.

If
$d=k\in\ZZ_+$, $P(t)=(t_1,\ldots, t_d)$ and
$M_1=\ldots=M_k=M\in\RR_+$, then
$A_{M; \RR^d}^{{\rm t}_1,\ldots, {\rm t}_d}$ is the one-parameter
Hardy--Littlewood averaging operator over cubes $[0, M]^d$. It is well
known that for every $f\in L^p(\RR^d)$ the maximal function
$\sup_{M\in\RR_+}|A_{M; \RR^d}^{{\rm t}_1,\ldots, {\rm t}_d}f|$ is
bounded on $L^p(\RR^d)$ for all $p\in(1, \infty]$ and weak-type
$(1,1)$ at the endpoint $p=1$ as a consequence of the Vitali covering
lemma, see \cite[Section 1.3]{bigs}. In this case, as for $d=k=1$ in  \eqref{eq:113}, the $L^p(\RR^d)$
and weak-type $(1,1)$ bounds are quantitative forms of
the Lebesgue differentiation theorem  in $\RR^d$, see \cite[Section 1.3]{bigs}.

It was very surprising, in 1932, when Saks \cite{Saks, Saks1} and
independently Busemann and Feller \cite{BF} showed that the Lebesgue
differentiation theorem may fail for genuinely multi-parameter
averages $A_{M_1,\ldots, M_d; \RR^d}^{{\rm t}_1,\ldots, {\rm t}_d}f$
as $\max(M_1,\ldots, M_d)\to 0$ for some $f\in L^1(\RR^d)$
whenever $d\ge2$. At the same time Zygmund \cite{Zyg34} showed that
the Lebesgue differentiation theorem remains true for
the multi-parameter averages
$A_{M_1,\ldots, M_d; \RR^d}^{{\rm t}_1,\ldots, {\rm t}_d}f$ as
$\max(M_1,\ldots, M_d)\to 0$, whenever $f\in L^p(\RR^d)$ for
every $p\in(1, \infty)$.  The latter result was further extended to
Orlicz spaces by Jessen, Marcinkiewicz and Zygmund \cite{JMZ}.

The $L^p(\RR^d)$ bounds for $p\in(1, \infty]$ of maximal Radon
functions $\sup_{M\in\RR_+}|A_{M; \RR^d}^{P}f|$ corresponding to the
one-parameter Radon averages from \eqref{eq:109} with
$M_1=\ldots=M_k=M\in\RR_+$ for arbitrary polynomial mappings
$P=(P_1,\ldots, P_d):\RR^k\to\RR^d$ were extensively studied by many authors, including
Stein and Wainger \cite{SW} and later  by Duoandikoetxea and Rubio de Francia
\cite{DR}, which are classical references, see also \cite[Section
11.2]{bigs}. Interestingly, the question whether weak-type $(1,1)$
bounds hold for $\sup_{M\in\RR_+}|A_{M; \RR^d}^{P}f|$ is still open
\cite{STW}, except the case $d=k=1$, which follows from
\cite{CRWRev}.  The maximal functions
$\sup_{M\in\RR_+^k}|A_{M; \RR^d}^{P}f|$ for genuinely multi-parameter
Radon averages \eqref{eq:109} with arbitrary polynomial mappings were
studied in a foundational paper by Ricci and Stein
\cite{RS}. Recently, the third and fourth authors in collaboration
with Kosz and Plewa \cite{KLMP} established a multi-parameter
oscillation inequality in the spirit of \eqref{eq:88} for the multi-parameter Radon averages from
\eqref{eq:109}.

In view of these developments for Radon operators in the Euclidean
setting, it was natural to ask whether similar results hold in the
discrete setting for Radon averaging operators from \eqref{eq:38}. However, it quickly became clear that discrete
one-parameter Radon-type operators (see averages \eqref{eq:38} with
$M_1=\ldots=M_k=M\in[1, \infty)$) present difficulties that are 
unapproachable from the continuous perspective; comparison
arguments and covering lemmas fail in the discrete setting. This was a serious challenge that, in contrast to  discrete analogues in classical Calder{\'o}n--Zygmund theory, stagnated for almost four decades.

This situation dramatically changed in the mid-1980s when Bourgain, in
groundbreaking papers \cite{B1,B2,B3}, established the pointwise
ergodic theorem for ergodic averages with polynomial iterates, solving
a problem posed by Bellow \cite{Bel} and Furstenberg \cite{F}, which
reads as follows.

\begin{BF}
Let $(X, \mathcal B(X), \mu)$ be a probability space endowed with an
invertible measure-preserving map $T:X\to X$. Is it true that  for any polynomial
$P\in\ZZ[{\rm m}]$ and for every $f\in L^{\infty}(X)$, the limit of the polynomial ergodic averages
\begin{align}
\label{eq:98}
A_{M; T, X}^{P}f(x):=\EE_{m\in[M]} f(T^{P(m)} x), \quad x\in X, \quad M\in\ZZ_+,
\end{align}
exists pointwise  $\mu$-almost everywhere on $X$, as
$M\to \infty$? 
\end{BF}

Bourgain not only solved the Bellow--Furstenberg problem and obtained
a far-reaching generalization of Birkhoff's ergodic theorem \cite{BI},
but  introduced many important tools to study
pointwise convergence problems and more importantly established the Calder{\'o}n
transference principle \cite{Cald} as the key tool linking the world
of quantitative estimates in ergodic theory with the world of discrete
analogues in harmonic analysis, with the integer shift system, as a
universal measure-preserving system for studying quantitative bounds, like maximal or oscillation inequalities among others.

Bourgain's papers \cite{B1, B2, B3} have had an important impact on
the development of discrete analogues in harmonic analysis. However,
before we get to this point, it will be important to understand the
ergodic perspective, which is not only interesting in its own right,
but also brings number-theoretic tools such as Weyl's inequality
for exponential sums \cite{Weyl} and the Hardy--Littlewood circle method
\cite{IK} into play, which transformed the field of discrete analogues
in harmonic analysis.

\subsubsection{\textbf{Bellow--Furstenberg problem and Bourgain's era}}
In 1910, Bohl \cite{Bohl},
Sierpi\'nski \cite{S} and Weyl \cite{Weyl-10}, proved for any irrational number $\theta \in\RR\setminus\QQ$ that
for every $0\le a<b\le 1$, we have
\begin{align}
\label{eq:108}
\lim_{M\to \infty}M^{-1}|\{m\in[M]: \{\theta m\}\in[a, b]\}|=b-a,
\end{align}
where $\{x\}$ denotes the fractional part of $x\in\RR$.  In other
words, \eqref{eq:108} says that the sequence
$(\{\theta m\})_{m\in\NN}$ is equidistributed on $[0,1]$. This is a
classical equidistribution theorem that has two far-reaching
generalizations. First, in 1916, Weyl \cite{Weyl} proved that the
equidistribution theorem remains true for any polynomial sequence
$(\{P(m)\})_{m\in\NN}$ in place of $(\{\theta m\})_{m\in\NN}$ with any
polynomial $P\in\mathbb R[{\rm m}]$ having at least one irrational
coefficient. A second generalization arose from the question whether the
intervals in \eqref{eq:108} can be replaced by Lebesgue measurable
sets.

While it is not difficult to show that \eqref{eq:108} remains
true if we replace intervals with Borel sets whose boundaries have
Lebesgue measure zero, arbitrary Lebesgue measurable sets are not
allowed in \eqref{eq:108} in place of the intervals. Indeed, fixing
$\theta \in\RR\setminus\QQ$, the sequence $(\{\theta m\})_{m\in\NN}$
is equidistributed, and considering
$E=\{\{\theta m\}: m\in\NN\}\subseteq [0, 1]$, we see
\[
0=\lim_{M\to \infty}M^{-1}|\{m\in[M]: \{\theta m\}\in E^c\}|=1.
\]

However, after a bit of reflection one sees that this question is
ill-posed due to the delicate nature of Lebesgue measure. Khintchin
\cite{Khin} observed that Birkhoff's pointwise ergodic theorem
\cite{BI} gives a strict generalization of the classical
equidistribution theorem for all Lebesgue-measurable sets. Namely, for
every $\theta \in\RR\setminus\QQ$ and every Lebesgue measurable set
$E\subseteq [0, 1]$, we have
\begin{align}
\label{eq:5}
\lim_{M\to\infty} M^{-1}|\{ m\in[M]: \{x + \theta m\}  \in E \}|  =  |E| 
\end{align}
for almost every $x\in[0, 1]$. The latter follows from applying
Birkhoff's pointwise ergodic theorem \eqref{eq:114} to $f=\ind{E}$ and
$T=R_{\theta}:\mathbb{T}\to \mathbb{T}$, the rotation about an
irrational angle \(\theta \in \mathbb{R}\setminus\mathbb{Q}\) on
$\TT$, which is given by $R_{\theta}(x)=\{x+\theta\}$ for any
$x\in\TT$. Here it is important that $R_{\theta}$ is ergodic on $\TT$.

Therefore, it was natural to ask whether an extension of Weyl's
equidistribution theorem in the spirit of Khintchin holds.  This
question was formally posed by Bellow \cite{Bel} and Furstenberg
\cite{F} in the early 1980s, and, using ergodic language, it has been stated
above.

In the mid-1980s, Bourgain, in three papers \cite{B1,B2,B3} that
represent a far-reaching common generalization of Birkhoff's pointwise
ergodic theorem and Weyl's equidistribution theorem, answered
affirmatively the Bellow--Furstenberg problem by establishing
pointwise almost everywhere convergence for the averages from
\eqref{eq:98} for all $f \in L^p(X)$ with $p\in (1,\infty)$.

Bourgain had the good insight to transfer the Bellow--Furstenberg
problem from an abstract measure-preserving system to the integer
shift system by using the Calder{\'o}n transference principle
\cite{Cald}. Then, for the Radon averages
$A_{M; \ZZ}^{P}f:=\EE_{m\in [M]}f(x-P(m))$ corresponding to
\eqref{eq:38} with $d=k=1$, he showed that for every
$p\in(1, \infty]$ and every polynomial $P\in\ZZ[{\rm m}]$ there exists
$C_{p}\in\RR_+$ such that for every function $f\in \ell^p(\ZZ)$,
one has
\begin{align}
\label{eq:111}
\big\|\sup_{M\in\ZZ_+}|A_{M; \ZZ}^{P}f|\big\|_{\ell^p(\ZZ)}\le C_{p}\|f\|_{\ell^p(\ZZ)},
\end{align}
leaving open the question of weak-type $(1,1)$ estimates.  Interestingly
it was shown much later by Buczolich and Mauldin \cite{BM} for
$P(m) = m^2$, and by LaVictoire \cite{LaV1} for $P(m) = m^k$ with
$k\ge2$ that the above weak-type $(1,1)$ estimates fail. This sharply
contrasts with the continuous setting as the one-dimensional maximal
functions $\sup_{M\in\RR_+}|A_{M, \RR}^Pf|$ satisfy weak-type $(1,1)$
estimates for any $f\in L^1(\RR)$ by \cite{CRWRev}.  This also illustrates that any intuition we
develop in Euclidean harmonic analysis --- where sums are replaced by
integrals --- can fail dramatically in discrete problems, making the
field of discrete analogues in harmonic analysis both challenging and interesting.

Besides proving \eqref{eq:111}, Bourgain 
also proved in \cite{B1, B2, B3} a non-uniform variant of oscillation inequality on
$\ell^2(\ZZ)$, which asserts that for every $\tau\in(1, \infty)$ and
 $J\in\ZZ_+$ there exists $C_{d,\tau}(J)\in\RR_+$ such that for
every strictly increasing sequence $(I_j)_{j\in\NN}\subseteq \DD_{\tau}$ and for every
$f\in\ell^2(\ZZ)$, one has
\begin{align}
\label{eq:112}
\bigg\|\Big(\sum_{j=0}^{J-1}\sup_{M\in [I_j, I_{j+1})\cap\DD_{\tau}}
\big|A_{M;\ZZ}^{P}f - A_{I_j;\ZZ}^{P}f\big|^2\Big)^{1/2}\bigg\|_{\ell^2(\ZZ)}\le C_{d,\tau}(J)\|f\|_{\ell^2(\ZZ)}.
\end{align}

An important aspect of Bourgain's proof is the quantified bound
$C_{d,\tau}(J)=o_{d,\tau}(J^{1/2})$, which, after transferring
inequality \eqref{eq:112} to an arbitrary $\sigma$-finite
measure-preserving system, solves the Bellow--Furstenberg problem for
all square-integrable functions, see \cite[Proposition 2.8,
p.2267]{MSW-survey} for the details. This, in turn, combined with
\eqref{eq:111} transferred to the measure-preserving system in
question solves the Bellow--Furstenberg problem for all $p$-integrable
functions with $p\in(1, \infty)$. Ultimately, Bourgain \cite{B1,B2,B3} showed, as a consequence of quantitative bounds, a somewhat stronger result: the Bellow--Furstenberg problem holds for any \(f \in L^p(X)\) with \(p \in (1, \infty)\) on any \(\sigma\)-finite measure space \((X, \mathcal{B}(X), \mu)\) endowed with an invertible measure-preserving map \(T: X \to X\).

Another important aspect of Bourgain's inequality \eqref{eq:112} with
$C_{d,\tau}(J)=o_{d,\tau}(J^{1/2})$ is that inequality \eqref{eq:112}
trivially follows from \eqref{eq:111} for $p=2$ with
$C_{d,\tau}(J)=O_{d}(J^{1/2})$ by the triangle inequality. Therefore,
inequality \eqref{eq:112} can be thought of as the minimal
quantitative requirement that ensures pointwise convergence, since the supremum
$\sup_{M\in [I_j, I_{j+1})\cap\DD_{\tau}}$ cannot be removed from the
definition of the oscillation semi-norms, see for instance \cite[Remark 1.2, p.892]{jkrw}.

Working over the integers allows one to exploit the additive structure of $\ZZ$ and Fourier methods, which Bourgain \cite{B1,B2,B3} combined for the first time with the Hardy--Littlewood circle method from analytic number theory to understand the maximal inequality
\eqref{eq:111} and oscillation estimates \eqref{eq:112}.
It was a turning point not only in ergodic theory but also for the
area of discrete analogues in harmonic analysis, as the
Hardy--Littlewood circle method combined with the Fourier methods
developed \cite{DR, SW} in the context of continuous Radon operators
\eqref{eq:109} with \(M_1=\dots=M_k=M\) made progress on problems that classical Calder{\'o}n--Zygmund theory alone could not treat.

\subsubsection{\textbf{Magyar--Stein--Wainger and Ionescu--Wainger era}}

Bourgain's papers \cite{B1, B2, B3} initiated a systematic study of
discrete operators where classical Calder{\'o}n--Zygmund theory alone is not sufficient.
This resulted in many important papers in the field of
discrete analogues in harmonic analysis, including results for
discrete averaging operators \cite{BM, jkrw, Kr, LaV1, MSW, Mes, MSW-survey, MT,
Slomian, Trojan}, for discrete singular integrals \cite{IW, M10,
Pierce0, SW0}, as well as for the operators in the noncommutative
discrete settings \cite{IMMS,IMSW, IMW, MSW1, SW0}. Besides
Bourgain's papers \cite{B1, B2, B3}, there are two papers that have played an important role in the field of discrete analogues in harmonic
analysis. The first is the paper by Magyar, Stein, and Wainger
\cite{MSW}, where the authors established $\ell^p(\ZZ^d)$ boundedness
of discrete spherical maximal functions for spherical averages
over discrete spheres in $\ZZ^d$. This is a striking result
showing that the behavior in the discrete setting, such as the range
of $\ell^p(\ZZ^d)$ boundedness, may be different from its continuous counterpart. More importantly, the authors in
\cite{MSW} secured the sampling principle as a key tool in the field; it
allows us to compare norms of discrete operators to their
continuous analogues as long as the frequencies are sampled from
sufficiently ``narrow'' sets. The sampling principle is 
stated in Proposition \ref{prop:msw} in Section \ref{sec:IW}.

The second paper is by Ionescu and Wainger \cite{IW}, who established
$\ell^p(\ZZ^d)$ boundedness for all $p\in(1, \infty)$ of discrete
singular Radon transforms $R_N^P$; that is, inequalities of the form
\begin{align}
\label{eq:37}
\sup_{N\in\ZZ_+}\|R_N^Pf\|_{\ell^p(\ZZ^d)}\lesssim _{d, k, P}\|f\|_{\ell^p(\ZZ^d)}, \qquad f\in\ell^p(\ZZ^d),
\end{align}
where
\begin{align}
\label{eq:105}
R_N^Pf(x):=\sum_{m\in[-N, N]^k\setminus\{0\}}f(x-P(m))K(m), \qquad x\in\ZZ^d\qquad N\in\ZZ_+,
\end{align}
 where $P:\ZZ^k\to\ZZ^d$ is a
polynomial mapping with integer coefficients and $K$ is a
Calder{\'o}n--Zygmund kernel.  The authors in \cite{IW} developed a
deep multi-frequency multiplier theorem that allowed them to handle
discrete singular Radon transforms for the full range
$p\in(1, \infty)$, which was out of reach previously. Although
\cite{IW} was a  breakthrough at the time and shared the same
starting point as \cite{B1, B2, B3} --- the circle method --- the full
strength of its methods was not yet fully understood.  Moreover,
Stein's question --- asking about the existence of a discrete
Littlewood--Paley theory that would allow treating discrete averaging
Radon operators as well as discrete singular Radon transforms in a
unified way, as in the continuous setting --- remained open.

\subsubsection{\textbf{Discrete Littlewood--Paley theory}}
This situation changed when the fourth author \cite{M10} provided a
new proof of inequality \eqref{eq:37}, highlighting that the
Ionescu--Wainger multi-frequency multiplier theorem, as stated in
Theorem \ref{thm:iw-semi} in Section \ref{sec:IW}, can be in fact
interpreted as a multi-frequency Littlewood--Paley theory that captures
all the Diophantine features arising from the Hardy--Littlewood circle
method.  This, on the one hand, gave a positive answer to Stein's
question, about the existence of a discrete Littlewood--Paley
theory. On the other hand, it resulted in many papers on the
$r$-variational \cite{MST1, MST2}, jump \cite{MSZ3}, and uniform
oscillation \cite{MSS} inequalities for the Radon averaging operators
\eqref{eq:98} and truncated variants of discrete singular Radon
transforms \eqref{eq:105}. The most important outcome of the efforts undertaken in
\cite{MST1, MST2, MSZ3, MSS} is that all these problems --- whether
for the Radon averaging operators or for truncated discrete singular
Radon transforms --- can be handled in a unified framework, with the
Rademacher--Menshov inequality, see for instance inequality
\eqref{eq:164} with its analogues for $r$-variations or jumps
\cite{MT, MSZ3}, as a key tool to handle the frequencies where the
sampling principle from \cite{MSW} becomes ineffective.

The Ionescu--Wainger multi-frequency multiplier theorem \cite{IW}, 
a key tool in the field of discrete analogues in harmonic analysis,
itself went through a period of considerable change and
development. Originally, it was proved for the so-called
Ionescu--Wainger fractions, a set of reduced rational fractions with
denominators having certain intricate factorization properties, which
exploited strong orthogonality phenomena in the proof and
ultimately controlled the norm of the inequality in the statement of
the Ionescu--Wainger multiplier theorem in terms of the size of the
family of Ionescu--Wainger fractions. Improvements on the size of the
norm and simpler proofs of the Ionescu--Wainger multiplier theorem
were subsequently given in \cite{M10, MSZ3} and
\cite{Pierce}. Finally, Tao \cite{TaoIW} provided a proof with
a norm independent of the size of the family of Ionescu--Wainger
fractions.

Although the Ionescu--Wainger multiplier theorem turned out to be an
invaluable tool for handling discrete operators with arithmetic
features, its formulation based on the family of Ionescu--Wainger
fractions is a disadvantage due to its involved
construction and misalignment with the so-called set of canonical
fractions (see definition \eqref{IWeq:372}), which naturally arises in
applications of the classical circle method of Hardy and Littlewood.
Fortunately, this has recently changed due to the collaboration of the
fourth and last authors with Kosz, Peluse, and Wan \cite{KMPWW}, who
answered a question by Ionescu and
Wainger in \cite{IW}, establishing
a variant of the Ionescu--Wainger multiplier theorem for canonical fractions --- this is
Theorem \ref{thm:iw-semi} in Section \ref{sec:IW}.   Theorem \ref{thm:iw-semi} has  also had
an important impact on proving a multilinear variant of Weyl's
inequality, which was key to Bergelson's conjecture \cite{KMPWW} for
multilinear ergodic averages with commuting transformations and
polynomial iterates, where the underlying polynomials have distinct
degrees. The details of how Theorem \ref{thm:iw-semi} can be used to
interpret the classical Weyl inequality for exponential sums (see
Theorem \ref{thm:weyl1par}) in $\ell^p(\mathbb{Z})$ spaces for
$p \in (1, \infty)$ can be found in \cite{M-ICM}. Surprisingly, the dependence of the norm
in \eqref{eq:376} on the size of the set of canonical fractions cannot
be removed, which stands in sharp contrast to Tao's result
\cite{TaoIW}. We refer the reader to \cite{KMSZ} for more details and discussion.

Now, keeping in mind this brief outline of the one-parameter theory of
discrete analogues in harmonic analysis, we turn to the
multi-parameter setting, which is the subject of the paper.

\subsubsection{\textbf{Multi-parameter Bellow--Furstenberg problem}}
After completing \cite{B1, B2, B3}, Bourgain immediately
observed that his maximal \eqref{eq:111} and oscillation \eqref{eq:112} estimates imply that for any polynomials $P_1,\ldots, P_d\in\ZZ[{\rm m}]$ and any $f\in L^p(X)$ with $p\in(1, \infty)$, the limit
\begin{align}
\label{eq:4}
\lim_{\min(M_1,\ldots,M_d) \to \infty}\EE_{(m_1,\ldots, m_d)\in Q_M^d} f(T_1^{P_1(m_1)}  \cdots  T_d^{P_d(m_d)} x) 
\end{align}
exists pointwise $\mu$-almost everywhere on any $\sigma$-finite
measure space \((X, \mathcal{B}(X), \mu)\) endowed with commuting
invertible measure-preserving maps \(T_1,\ldots, T_d\). This is a
generalization to polynomial iterates of the Dunford \cite{D} and
Zygmund \cite{Z} result who independently proved \eqref{eq:4} whenever
$P_j(m_j)=m_j$ for all $j\in[d]$, simultaneously demonstrating that when, $d\ge 2$, the
pointwise convergence result is manifestly false for general
$f\in L^1(X)$ like in Saks' observation \cite{Saks} for $A_{M; \RR^d}^{{\rm t}_1,\ldots, {\rm t}_d}f$. Interestingly, the result in Dunford \cite{D}
and Zygmund \cite{Z} 
does not require the underlying measure-preserving
transformations to commute. While Bourgain's argument relied on the maximal \eqref{eq:111} and oscillation \eqref{eq:112} estimates, which required the underlying measure-preserving transformations to commute, Wierdl \cite{Wierdl-priv} pointed out that by a simple argument (relying on the one-parameter maximal estimates and dominated convergence theorem) this assumption can be relaxed and \eqref{eq:4} holds for arbitrary invertible measure-preserving transformations $T_1,\ldots, T_d$, not necessarily commuting.
It is striking that
commutativity is not needed in \eqref{eq:4} and contrasts sharply with
Weiss's observation in \cite{B1, BMSW}; namely that the one-parameter averages
$\mathbb{E}_{m\in [M]} f(T_1^{P_1(m)} \cdots T_d^{P_d(m)} x)$ in \eqref{eq:107} with $k=1$ and $d\ge 2$, may fail to
converge if \(T_1,\ldots, T_d\) do not commute.  Bourgain has never
published \eqref{eq:4}, although the key points of his proof are
presented in \cite{MSW-survey}.

Bourgain's interest in multi-parameter phenomena arose after completing \cite{B1}, where he proved \eqref{eq:111} for $p=2$. Initially, Bourgain had hoped that the technique of doubling variables in exponential sum estimates, combined with Vinogradov's method, would yield \eqref{eq:111} for all $p>1$. This turned out to be only partially true, as shown in \cite{B2}. The remaining limitations were ultimately overcome in \cite{B3} by relying only on classical exponential sum estimates (that give a tiny amount of decay) together with the introduction of his multi-frequency logarithmic lemma.
Nevertheless, the technique of doubling variables in exponential sums, which gives rise to multi-parameter phenomena, became an important catalyst for the profound 1979 work of Arkhipov, Chubarikov, and Karatsuba \cite{ACK-fractional} and \cite[Chapter~6]{ACK}. In that work, Weyl's equidistribution theorem \cite{Weyl} was extended to families of multivariate polynomials
$P_1,\ldots,P_d\in\ZZ[{\rm m}_1,\ldots,{\rm m}_k]$
considered over unrestricted regions
$(m_1,\ldots,m_k)\in [M_1]\times\cdots\times[M_k]$
under the assumption that
$\min(M_1,\ldots,M_k)\to\infty$. The latter led Bourgain to the observation in \eqref{eq:4}. However,  Bourgain was hoping that a
somewhat iterative application of his methods \cite{B1, B2, B3} would
result in a generalization of Arkhipov, Chubarikov and Karatsuba's
equidistribution theorem in the spirit of Khintchin's result from
\eqref{eq:5}, but it had resisted his methods for three decades and led
to the multi-parameter Bellow--Furstenberg problem that has been
formulated above for the averages \eqref{eq:107}.

Even for the averages
$\mathbb{E}_{(m_1, m_2)\in [M_1]\times[M_2]} f(T^{{m}_1^2 {m}_2^3} x)$,
the multi-parameter Bellow--Furstenberg problem (i.e. $d=1$, $k=2$ and
$P(m_1, m_2)=m_1^2m_2^3$) becomes very challenging, though it does not
fully reveal the difficulties that arise for genuinely multi-parameter
averages \eqref{eq:107}.  Surprisingly, in contrast to the continuous
setting, it seems that there is no simple way (like changing variables
or interpreting the average
$\mathbb{E}_{(m_1, m_2)\in [M_1]\times[M_2]} f(T^{{m}_1^2{m}_2^3}x)$
as a composition of simpler one-parameter averages as in \eqref{eq:4})
that would help reduce the matter to the setup where pointwise
convergence is known.  Ultimately, the multi-parameter Bellow--Furstenberg problem
for $d=1$ and $k\in\ZZ_+$ 
was 
solved in \cite{BMSW}. The methods from \cite{BMSW} are limited to the
case \(d=1\); here we propose a new, more
efficient approach to handle the general case, which we describe
below.

\subsection{Multi-parameter circle method: overview of the proof}
From now on, it is convenient to restrict attention to truncated
versions of the average \(A_{M; \mathbb{Z}^d}^{P}\) from
\eqref{eq:38}, which for any polynomial mapping $P:\ZZ^k\to\ZZ^d$, any
$f\in\ell^1(\ZZ^d)$ and $x\in\ZZ^d$, is given by
\begin{align}
\label{eq:3}
\tilde{A}_{M; \ZZ^d}^{P}f(x):=\EE_{m\in R_M^k}f(x-P(m)),
\end{align}
where
\[
R_{M}^k:= Q_{\tau M}^k\setminus Q_M^k=([\tau M_1]\setminus[M_1])\times\cdots\times([\tau M_k]\setminus[M_k]).
\]
is a truncated rectangle with scale $M=(M_1,\ldots, M_k)\in[1, \infty)^k$  and  $\tau\in(1, \infty)$.

Instead of working with the averages \(A_{M; \mathbb{Z}^d}^{P}\), we
will work with their truncations \(\tilde{A}_{M; \mathbb{Z}^d}^{P}\);
this is solely a technical reason that will simplify many arguments in
the paper. 
We will prove the following oscillation inequality for the discrete
truncated Radon averages from \eqref{eq:3}.
\begin{theorem}
\label{thm:osc}
Let $d, k\in\ZZ_+$ be given  and let $P=(P_1,\ldots, P_d):\ZZ^k\to\ZZ^d$ be a polynomial mapping
with $P_j\in\ZZ[{\rm m}_1, \ldots, {\rm m}_k]$ for $j\in[d]$.  Let  $\tilde{A}_{M; \ZZ^d}^{P}f$ be the truncated multi-parameter Radon average defined in
\eqref{eq:3} with $M=(M_1,\ldots, M_k)\in[1, \infty)^k$. Define $\DD_{\tau}:=\{\tau^n:n\in\NN\}$ for any $\tau\in(1, \infty)$.
Then for  every $p\in(1, \infty)$ there exists  $C_{d, k, p, \tau}\in\RR_+$ such that for every $f\in \ell^p(\ZZ^d)$, one has
\begin{align}
\label{eq:103}
\sup_{J\in\ZZ_+}\sup_{I\in\mathfrak S_J(\DD_{\tau}^k) }\big\|O_{I, J}(\tilde{A}_{M;\ZZ^d}^{P}f: M \in\DD_{\tau}^k)\big\|_{\ell^p(\ZZ^d)}\le C_{d, k, p, \tau}\|f\|_{\ell^p(\ZZ^d)}.
\end{align}
We refer to Section \ref{section:2} for definitions of the oscillation seminorm $O_{I, J}$ in \eqref{eq:102} and the family $\mathfrak S_J(\DD_{\tau}^k)$ in \eqref{eq:96}. 
Moreover, the implied constant $C_{d, k, p,\tau}$ in \eqref{eq:103}, may depend on the degree
of the underlying polynomial mapping $P$, but is independent of its
coefficients.
\end{theorem}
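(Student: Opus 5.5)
The plan is to prove \eqref{eq:103} by a multi-parameter circle method. We decompose the multiplier $m_M(\xi)=\EE_{m\in R_M^k}e(\xi\cdot P(m))$ according to the Diophantine behaviour of $\xi\in\TT^d$ at the scale determined by $M$. There are three reductions.

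First, we reduce to the case where $P$ has no monomials depending on only some of the variables in a degenerate way. For this we use a lifting argument: we replace $P$ by the canonical polynomial mapping whose components are all monomials $m^\gamma$ with $0<|\gamma|\le\deg P$. Every $P$ factors through this mapping via a linear map, so it suffices to treat this universal case, and this also gives independence of the coefficients. We also reduce to $M$ with a fixed ordering of the $M_i$ and with only $\lesssim_{\tau}1$ distinct "shapes". Comparing against $\max$-type maximal operators, we may moreover assume all $M_i$ are large.

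Second comes the major/minor arc decomposition. For a scale $M$, we set $N=\min_i M_i$ and take a small power $l=N^{\delta}$. The minor arcs are frequencies $\xi$ where, for some monomial coordinate $\gamma$, $\xi_\gamma$ is not within $M^{-\gamma}l$ of a rational with denominator $\le l$. On the minor arcs we invoke the multi-parameter Weyl inequality (Arkhipov--Chubarikov--Karatsuba type, with power saving in $\min M_i$) to get $|m_M(\xi)|\lesssim N^{-c}$. After a Littlewood--Paley smoothing, this contributes a square function bounded on $\ell^2$ by $\sum_{M\in\DD_\tau^k}N^{-c}$. Summing over dyadic scales costs a logarithm in the number of $M$ with a given $N$, and that is where "major arcs rigidity" enters: only the minimal side matters up to polylogarithmic losses, which the power saving absorbs. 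We then interpolate with trivial $\ell^p$ bounds, using the fact that oscillations are dominated by the maximal function for $p\ne2$.

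On the major arcs we write
\[
m_M(\xi)\approx\sum_{a/q}G(a/q)\,\Phi_M(\xi-a/q)\,\eta_{\le l}(\xi-a/q),
\]
where $G$ is the complete Gauss sum and $\Phi_M$ is the continuous multiplier of \eqref{eq:109}. We control the error by a multi-parameter van der Corput/Euler–Maclaurin estimate. We split the rationals into dyadic denominator blocks $q\sim2^s$. For each block we apply the canonical-fraction Ionescu--Wainger theorem (Theorem \ref{thm:iw-semi}) together with the sampling principle (Proposition \ref{prop:msw}). These transfer the continuous multi-parameter oscillation bound of \cite{KLMP} for $A_{M;\RR^d}^P$ to each block with norm $\lesssim s^{C}$, and the Gauss sum bound $|G|\lesssim2^{-cs}$ makes the sum over $s$ converge. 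Scales with $l$ large relative to $2^s$ are handled directly by sampling. For the intermediate regime, where the continuous multiplier is not yet localized, we use the Rademacher--Menshov inequality \eqref{eq:164} with a logarithmic loss in $s$.

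I expect the main obstacle to be the major arcs in the genuinely multi-parameter regime. There, different coordinates $M_i$ give incomparable frequency localizations, so a single cutoff $\eta_{\le l}$ adapted to $\min M_i$ does not align with the anisotropic decay of $\Phi_M$. Here I would first try to prove the rigidity statement: whenever $\xi$ lies on a major arc for one scale $M$, it lies on the same major arc, with the same $a/q$, for all $M'\ge M$ coordinatewise. This would make the projections $\Pi_{s}$ independent of $M$, so that oscillation in $M$ only acts on the continuous factor $\Phi_M$. The Ionescu--Wainger vector-valued extension then reduces that oscillation to \cite{KLMP}.
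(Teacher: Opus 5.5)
Your plan for the main term is close to the paper's: dyadic denominator blocks, Gauss-sum decay $2^{-cs}$, an $M$-independent arithmetic factor, and transfer of the continuous oscillation bound. The error terms, however, contain a genuine gap, and it is exactly the obstruction the paper is built to overcome.

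\textbf{The minor-arc sum diverges.} The Weyl bound decays only in $N=\min_i M_i$. For each value of $N$ there are \emph{infinitely} many $M\in\DD_\tau^k$ with $\min_i M_i=N$ (e.g.\ $(N,\tau^n)$ for every $\tau^n\ge N$), so $\sum_{M\in\DD_\tau^k}N^{-c}=\infty$. The loss is not logarithmic, and no Littlewood--Paley smoothing rescues a square function taken over both parameters. What is actually needed, after ordering $M_1\le M_2$ and using \eqref{eq:95}, is
$\sum_{M_1\in\DD_\tau}\|\sup_{M_2\ge M_1}|T_{\ZZ^\Gamma}[m_{M_1,M_2}^{\langle\Gamma\rangle}-\Phi_{M_1,M_2}^{\langle\Gamma\rangle}]f|\|_{\ell^p}\lesssim\|f\|_{\ell^p}$.
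That is, you need the $\ell^2$ maximal bound \eqref{eq:error_M1_dec_l2} with decay $M_1^{-c}$, uniformly over infinitely many $M_2$. It is then interpolated with nontrivial $\ell^{p_0}$ bounds for the same maximal operator, coming from the one-parameter theory and the main-term estimate. Your interpolation remark is also backwards: oscillations are not dominated by the maximal function; rather \eqref{eq:135} bounds the maximal function by oscillations.

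When $M_2^\varrho\le M_1$, only $O(\log M_1)$ values of $M_2$ occur and the two-parameter Weyl inequality suffices. When $M_1\le M_2^\varrho$, a pointwise bound $|m_M(1-\Pi_M)|\lesssim M_1^{-c}$ gives nothing for $\sup_{M_2}$. Moreover \eqref{eq:86} does not even yield that pointwise bound there, because of the $\log q\approx\log M_2$ loss.

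The major-arc error has the same defect. Replacing the $m_1$-sum by an integral costs $O(qM_1^{\chi-1})$, which decays only in the small scale. The paper instead sums by parts in $m_1$ and controls the resulting continuous multipliers maximally in $M_2$ by the sampling principle (see \eqref{eq:maxLambda} and \eqref{eq:61}). It then uses one-parameter Weyl sums in $m_1$ to pass from $\Phi^{\langle\Gamma_2;u_1,\chi_1\rangle}_{M_1,M_2}$ to $\Phi^{\langle\Gamma\rangle}_{M_1,M_2}$.

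\textbf{Your rigidity statement is false and is not the missing idea.} The arc around a fixed $a/q$ has width about $(M')^{-\gamma}$ times a slowly growing factor. So $\xi_\gamma=a_\gamma/q+\tfrac12M^{-\gamma}l$ lies on the scale-$M$ arc around $a/q$, but not on the scale-$M'$ arc around $a/q$ once $M'\succeq M$ is large. The main term does not need it anyway: the $M$-independence of the arithmetic factor comes from the factorization $\mathfrak h^{s}_{M_1,M_2}\mathfrak g_s$ with cutoffs at the fixed scale $N_s$. Note also that Theorem \ref{thm:iw-semi} only transfers Hilbert-space-valued bounds, so the oscillation bound of \cite{KLMP} cannot be transferred directly; this is why the paper proves Theorem \ref{thm:osc_IW_2par}.

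\textbf{The actual rigidity is Lemma \ref{lem:comb}, and it drives the regime $M_1\le M_2^\varrho$.} For each $m_1$ one runs a one-parameter circle method in $m_2$, with coefficients $\xi_{\Gamma_2}(m_1)$.
\begin{itemize}
\item Where these coefficients lie on $M_2$-minor arcs, one-parameter Weyl gives decay in $M_2$, which is summable.
\item Where they lie on $M_2$-major arcs, one passes to $\Psi^{\langle[d_2]\rangle}_{M_1,M_2}$. After the unimodular change of variables \eqref{eq:chgvbl}, this admits a one-parameter Ionescu--Wainger maximal bound in $M_2$ (Lemma \ref{lem:padova}).
\item The decay in $M_1$ comes from counting. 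If $\xi$ is off the two-parameter major arcs, then at most $\lesssim M_1^{1-c}$ of the $m_1\in(M_1,\tau M_1]$ have $\xi_{\Gamma_2}(m_1)$ near a one-parameter major arc at some scale $N\ge M_1^{1/\varrho}$. This is proved by inverting Vandermonde systems, a gcd argument, and iterated finite differences.
\end{itemize}
Without this, or a substitute, your argument gives no bound at all in the regime $M_1\le M_2^\varrho$.
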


For ease of exposition, we prove Theorem \ref{thm:osc} in the
two-parameter setting $k=2$. However, a careful reader will readily
see that all two-parameter arguments can be adapted to the general
multi-parameter setting for arbitrary $k \ge 2$, albeit at the cost of
introducing cumbersome notation that would make the exposition rather
difficult to read and the arguments harder to digest.

The proof of Theorem \ref{thm:osc} will take up the bulk of this paper, so a few remarks are in order.

\begin{enumerate}[label*={\arabic*}.]

\item The range of parameters $p\in(1, \infty)$ for which inequality
\eqref{eq:103} holds is sharp.

\smallskip

\item Theorem \ref{thm:osc} is the main result of the paper and
implies Theorem \ref{thm:1}. Namely, inequality \eqref{eq:103} implies
inequality \eqref{eq:88}. This follows from \cite[Proposition 3.7,
p. 19]{BMSW}, which asserts that there exists a constant
$C:=C_{d, k, p,\tau}\in\RR_+$ such that for every $f\in \ell^p(\ZZ^d)$, we have 
\begin{align}
\label{eq:104}
\begin{split}
\sup_{J\in\ZZ_+}&\sup_{I\in \mathfrak S_J(\DD_{\tau}^k)}
\big\|O_{I, J}(A_{M; \ZZ^d}^{P}f:M\in\DD_{\tau}^k)\big\|_{\ell^p(\ZZ^d)}\\
&\le C\sup_{J\in\ZZ_+}\sup_{I\in \mathfrak S_J(\DD_{\tau}^k)}
\big\|O_{I, J}(\tilde{A}_{M; \ZZ^d}^{P}f:M\in\DD_{\tau}^k)\big\|_{\ell^p(\ZZ^d)}+C\|f\|_{\ell^p(\ZZ^d)}.
\end{split}
\end{align}
Now, if we combine \eqref{eq:103} with \eqref{eq:104}, we readily
obtain \eqref{eq:88}. It is also not difficult to see that inequality
\eqref{eq:88} implies inequality \eqref{eq:103}. Therefore,
inequalities \eqref{eq:88} and \eqref{eq:103} are equivalent in the
sense that the bounds for one imply the bounds for the other. Hence, in view of inequality \eqref{eq:104} it suffices to
prove the oscillation inequality \eqref{eq:103}.

\smallskip
\item Similar to the remark  after Theorem \ref{thm:1}, the
oscillation inequality \eqref{eq:103} implies the corresponding
maximal bound. Indeed, combining inequality \eqref{eq:103} with
inequality \eqref{eq:135} below (with $\II=\DD_{\tau}^k$) we
deduce that for every $p\in(1, \infty]$ there exists
$C_{d, k, p, \tau}\in\RR_+$ such that for every function
$f\in \ell^p(\ZZ^d)$, we have
\begin{align}
\label{eq:101}
\big\|\sup_{M\in\ZZ_+^k}|\tilde{A}_{M; \ZZ^d}^{P}f|\big\|_{\ell^p(\ZZ^d)}\le C_{d, k, p, \tau}\|f\|_{\ell^p(\ZZ^d)}.
\end{align}
By \cite[Proposition
3.7, p. 19]{BMSW} an analogue of \eqref{eq:104} for maximal functions in
place of oscillations also holds. Hence, inequality \eqref{eq:101}, with say
$\tau=2$, implies inequality \eqref{eq:88}.
\smallskip

\item Inequality \eqref{eq:103} is sometimes called the \textit{long
multi-parameter oscillation inequality} due to the fact that the
parameters are taken along lacunary sequences. If inequality
\eqref{eq:103} holds with \(\mathbb{Z}_+^k\) in place of \(\DD_{\tau}^k\)
it is called the \textit{full
multi-parameter oscillation inequality}.

\smallskip

\item In view of the previous remark, we emphasize that, at the
expense of additional work, one can prove a full variant of the oscillation
inequality \eqref{eq:103}, i.e. \eqref{eq:103} with \(\mathbb{Z}_+^k\)
in place of \(\DD_{\tau}^k\). This, in particular, implies that
inequality \eqref{eq:103} can be proved with a constant independent of
\(\tau>1\). Although this is interesting, we will not address it in this paper, as establishing
inequality \eqref{eq:103} itself is a fairly challenging task and will
entirely suffice for our applications in answering in the affirmative
the multi-parameter Bellow--Furstenberg problem, see Problem
\ref{BMSW-conj} and the discussion below.

\smallskip

\item In \cite{KLMP}, the third and fourth authors, together with Kosz
and Plewa, proved a continuous analogue of Theorem \ref{thm:osc},
where a multi-parameter oscillation inequality similar to
\eqref{eq:103} was established with $\tilde{A}_{M;\mathbb{Z}^d}^{P}$
replaced by $A_{M;\mathbb{R}^d}^{P}$ from \eqref{eq:109}, and with the
$L^p(\mathbb{R}^d)$ spaces with Lebesgue measure in place of the
discrete $\ell^p(\mathbb{Z}^d)$ spaces with counting measure. The methods developed in
\cite{KLMP} allow us to handle full multi-parameter oscillation
inequalities in the continuous setting, which is very important in
view of the previous remark.

\smallskip

\item Finally, we remark that it suffices to prove Theorem
\ref{thm:osc} for arbitrary family of monomials
$P_1,\ldots, P_d \in \mathbb{Z}[{\rm m}_1,\ldots, {\rm m}_k]$ all
having coefficient $1$. This is a simple consequence of the lifting
procedure, which can, for instance, be adapted from \cite[Section
1.4]{MSZ3}. This remark explains why the implied constant
$C_{d, k, p,\tau}$ in \eqref{eq:103}, depends on the degree of the
underlying polynomial mapping $P$, but is independent of its
coefficients.
\end{enumerate}

We now give a brief overview of the paper and its methods,
highlighting novelties and challenges.  In Section \ref{section:2}
we introduce notation and some technical results needed in the
paper. Most of the results there have already been developed in \cite{BMSW}
and \cite{MSW-survey}.  Section \ref{sec:exp} focuses on estimates for
exponential sums. These estimates rely on the results from
\cite[Section 5]{BMSW}, and no fundamentally new results will be required.
In the remaining sections, we develop what we call the
\textit{multi-parameter circle method} aligned with quantitative
estimates (maximal and oscillation inequalities) for the discrete
multi-parameter Radon averaging operators \eqref{eq:38}.

 The multi-parameter circle method is not new and has
been developed for more than four decades, mainly by Arkhipov,
Chubarikov and Karatsuba \cite{ACK-fractional}, in the context of
Vinogradov's mean value type theorems, generalizations of the
equidistribution problems for multivariate polynomials, or
multi-parameter techniques to estimate exponential sums. However, our
context, though  inspired and influenced by these
achievements in \cite{ACK-fractional}, requires new ideas that, to the
best of our knowledge, have not been apparent before, even in
\cite{BMSW}.

There are two important new pillars of the paper that we now highlight:
\begin{itemize}
\item[1.] In Section \ref{sec:IW}, we develop the multi-parameter
Ionescu--Wainger multiplier theorem in the context of multi-parameter
maximal and oscillation inequalities, see Theorem \ref{thm:IW1'},
which handles products of one-dimensional multipliers, and Theorem
\ref{thm:osc_IW_2par}, which handles the multipliers corresponding to
the multi-parameter continuous Radon averages from \eqref{eq:109}.
These results are crucial for interpreting the circle method in the
operator-theoretic setting.

\item[2.] In Sections \ref{sec:circle}-\ref{sec:l2hard}, we
systematically build our \textit{multi-parameter circle method} in the
context of maximal and oscillation inequalities. Our argument, like in
\cite{ACK}, is based on an iterative application of the classical
circle method, which is the most natural approach to take, but things
become more complicated as the circle method must be interpreted and
implemented in the semi-norm setting of maximal and oscillation
inequalities, which is not the most natural environment to implement the
circle-method. This is where our methods diverge from
the approaches developed in \cite{ACK}, since we cannot work directly
with multi-parameter exponential sums as in equidistribution
problems. The objects we need to handle will be partly defined on the
operator side and partly on the frequency side (reminiscent of
exponential sums), which leads to several new complications. The
remedy to overcome these complications are exponential sums estimates
developed  in \cite[Section 5]{BMSW}. Although
these estimates are very effective in some situations, such as in
\cite{BMSW}, they are not sufficient when one deals with genuinely
multi-parameter cases as in \eqref{eq:38}. The general case requires
developing combinatorial tools like those in Lemma \ref{lem:comb} that
reveal a new, interesting phenomenon which we call the ``\textit{major-arcs
rigidity}'', and which we will describe below.
\end{itemize}

\subsubsection{\textbf{Multi-parameter
Ionescu--Wainger multiplier theorem}}
As we noted above, the one-parameter Ionescu--Wainger theory has been
successful on many fronts and so it is natural to seek an
extension to the discrete multi-parameter context. Section
\ref{sec:IW} establishes two results: Theorem \ref{thm:IW1'} and Theorem
\ref{thm:osc_IW_2par}. The first is more technical and serves as an
important tool for establishing the second, which is the main result
of that section. Both results can be viewed as multi-parameter
semi-norm variants of the Ionescu--Wainger theorem \cite{IW}, or more
precisely of Theorem \ref{thm:iw-semi} and Theorem \ref{thm:IW1}. Here
we explain how these two difficult multi-frequency and multi-parameter
ideas come together to yield Theorem \ref{thm:osc_IW_2par}.

Theorem \ref{thm:osc_IW_2par} is an important ingredient of our proof,
which is applied in the last stage when the final approximating
operator of $\tilde{A}_{M; \ZZ^d}^{P}$ is constructed. The latter
arises from applying the circle method, and on the frequency side
corresponds to the major-arcs periodization of the continuous
multiplier associated with truncations of \eqref{eq:109}; see
\eqref{eq:1060} for this multiplier, with \(\Sigma\) being the
rational fractions with small denominators that are the centers of the
corresponding major arcs. This explains how the multi-frequency and
multi-parameter concepts arise together.

According to the last item of the remark below Theorem \ref{thm:osc}
we may assume that $P$ is a canonical polynomial mapping, i.e. its
entries are monomials all having coefficient one, see \eqref{eq:84} in
Section \ref{section:2} and the corresponding continuous multiplier
$\mathfrak{m}_{M}^{\langle\Gamma\rangle}$ defined in
\eqref{eq:PhiM1M2_def_kpar}; see Section \ref{sec:IW}.  Then the proof
of Theorem \ref{thm:osc_IW_2par} is by induction on the number of
components of $P$ or size of $\Gamma$.

By the inclusion-exclusion formula, we decompose
$\mathfrak{m}_{M}^{\langle\Gamma\rangle} = {\rm main}(\mathfrak{m}_{M}^{\langle\Gamma\rangle}) + {\rm error}(\mathfrak{m}_{M}^{\langle\Gamma\rangle})$,
where the main term
consists of many
terms that are tensor products of lower-dimensional multipliers
\eqref{eq:PhiM1M2_def_kpar} and smooth bumps on the remaining
variables. These terms are handled by induction. There is also one
term consisting purely of bumps, which is handled by Theorem
\ref{thm:IW1'}. The most complicated part is to estimate the error
term
${\rm error}(\mathfrak{m}_{M}^{\langle\Gamma\rangle})$. Fortunately,
the error term possesses cancellation and can be handled by 
multi-frequency Littlewood--Paley theory.

The basic idea is to control the oscillations corresponding to the
error term by a square function as in \eqref{eq:91}. It is also
important that we work with \(\DD_\tau\) not the entire \(\mathbb{Z}_+\).
In practice, this is not so easy, as we may encounter the so-called
``\textit{parameters-gluing}'' phenomenon, a certain obstruction which
prevents a naive application of the Littlewood--Paley theory to
control oscillations. To be more precise, since we are working
with a normalized monomial polynomial mapping \(P\) with \(d\) components, they determine certain
``\textit{directions}''. A naive thought suggests that for each
direction one should perform an appropriate Littlewood--Paley
decomposition to control the oscillations by a square function. This
idea is efficient if we have \(d\) nondegenerate directions, but fails
if the number of directions is less than \(d\).

To overcome this difficulty we introduce an auxiliary parameter \(r\),
the rank of a matrix of size \(d\times k\) whose rows are the
exponents of the monomials from \(P\). There is a one-to-one
correspondence between these exponents and the
``\textit{directions}'', and the number of nondegenerate directions is
\(r\le \min(d,k)\). Hence, \(r\) measures how many exponents or
directions have been glued together and explains the
``\textit{parameters-gluing}'' phenomenon. This is precisely stated in
Lemma \ref{lem:long_only}. The idea of the proof of Theorem
\ref{thm:osc_IW_2par} comes from \cite{KLMP}, but here it is slightly
simpler as we only work with long oscillations along
\(\DD_\tau\). Some of these ideas are also apparent in Ricci and
Stein's paper \cite{RS}, but only in the context of multi-parameter
maximal functions, and they are not applicable to the multi-parameter
oscillation semi-norm setting that we discuss here.

It is also clear from the proof that the
method presented here is not only limited to the multipliers
associated with the multi-parameter averaging Radon operators
\eqref{eq:109}; it can be applied to all multipliers that impose some
regularity that can be abstracted from the proof.

\subsubsection{\textbf{Multi-parameter
circle method: challenges in discrete settings}}
The proof of the multi-parameter oscillation inequality from
\eqref{eq:103} is quite intricate. However, serious difficulties arise
only in case \(d\ge 2\) and \(k\ge 2\). The case \(d=1\) and
\(k\ge 2\) was understood in \cite{BMSW}, but at the end of the day it
had many features similar to the one-parameter case
even though it does not directly follow from the one-parameter
case. The reason for this lies behind the special form of the
multiplier corresponding to the average in \eqref{eq:38} for \(d=1\)
and \(k\ge 2\), which is a normalized exponential sum of the form
\(\mathbb{E}_{m\in Q_M^k} e(\xi P(m))\), where
\(P\in \mathbb{Z}[{\rm m}_1,\ldots, {\rm m}_k]\) and
\(\xi\in \mathbb{T}\). Here the key feature is  that the frequency variable \(\xi \in \mathbb{T}\) is one-dimensional. Then, using the geometry of the Newton diagrams corresponding to \(P\), we can split the time-parameter space \(\DD_{\tau}^k \ni M=(M_1,\ldots, M_k)\) into finitely many cones, each associated with a leading monomial that dominates \(P\) in this cone. Then in each fixed cone we perform the major/minor arc analysis with the scales determined by the leading monomial in that cone.

The advantage of this approach is twofold:
\begin{itemize}
\item[1.] The minor-arcs as well as the major-arcs approximations'
contributions in that cone always produce a decay that is summable
with respect to the largest parameter of the time-parameter vector
\(M=(M_1,\ldots, M_k)\). Therefore, the corresponding terms can be easily
handled by simple square-function arguments involving \eqref{eq:91}.

\item[2.] Finally, we end up with a major-arcs periodization of the
continuous multiplier associated with \eqref{eq:109} (for \(d=1\) and
\(k\ge 2\), whose phase function is a leading monomial in that
cone). By a simple change of variables, the problem becomes
one-parameter.
\end{itemize}

This is the approach in the case \(d=1\) and \(k\ge 2\) that was
undertaken and successful in \cite{BMSW}. If \(d\ge 2\) and
\(k\ge 2\), this approach immediately fails. To see this, suppose that
\(d=k=2\) and consider the following polynomial mapping
\begin{align}
\label{eq:99}
P(m_1, m_2) := (m_1^2 m_2^4, m_1^3 m_2^4).
\end{align}
 In this case the
corresponding multiplier is a normalized exponential sum of the form
\begin{align}
\label{eq:8}
\EE_{(m_1, m_2)\in[M_1]\times [M_2]}e(\xi_1 m_1^2 m_2^4+\xi_2 m_1^3 m_2^4), \qquad (\xi_1,\xi_2)\in\TT^2.
\end{align}
The first complication arises on minor arcs, since the multi-parameter
(two-parameter) Weyl inequality, when \((\xi_1,\xi_2)\) lies in the two-parameter 
minor arcs, yields only a negative power of \(\min(M_1,M_2)\),
regardless of how good the major arcs are chosen, see for instance
inequality \eqref{eq:86} below. This prevents us from handling minor arcs by
square-function techniques using inequality \eqref{eq:91}. The reason is that the frequency variable
\((\xi_1,\xi_2)\in \mathbb{T}^2\) is two-dimensional, so the idea of
extracting the leading monomial by studying the Newton diagrams is inefficient
here. Instead of relying on the geometry of Newton
diagrams as in \cite{BMSW}, we will apply the classical circle method
iteratively, similar to Arkhipov, Chubarikov and Karatsuba in
\cite{ACK}.

This also illustrates the difference between the exponential sums
\eqref{eq:8} and their oscillatory integrals analogues, namely by
Proposition \ref{thm:CCW} applied to the polynomial in
\eqref{eq:99}, we have
\begin{align}
\label{eq:69}
\Big|\int_{[0, 1]^2}e((\xi_1, \xi_2)\cdot P(M_1t_1, M_2t_2))dt_1dt_2\Big|\le C_{2,2}\big(M_1^2M_2^4|\xi_1|+M_1^3M_2^4|\xi_2|\big)^{-1/7}.
\end{align}
If \((\xi_1,\xi_2)\in \mathbb{T}^2\) satisfies
\(M_1^2 M_2^4 |\xi_1| \ge \max(M_1, M_2)^{\delta}\) or
\(M_1^3 M_2^4 |\xi_2| \ge \max(M_1, M_2)^{\delta}\) for some small
\(\delta \in (0,1)\), (i.e. $(\xi_1,\xi_2)$ belongs to the continuous
minor arc), then we gain a negative power of
\(\max(M_1, M_2)^{\delta}\), allowing us to efficiently run
square-function arguments. This was crucial to derive the
multi-parameter oscillation inequalities for the continuous Radon
averages from \(\eqref{eq:109}\) in \cite{KLMP}. This is a serious
difference, which makes the discrete multi-parameter analogues much more
difficult.

\subsubsection{\textbf{Multi-parameter circle method: major-arcs rigidity}}
The remedy to compensate for the lack of
decay in terms of \(\max(M_1,M_2)\) in Weyl's inequality for \eqref{eq:8} are
counting arguments (see for example, Lemma \ref{lem:comb} below) that lead to
the ``\textit{major-arcs rigidity}'' phenomena.

To see this, we continue to analyze the polynomial from \eqref{eq:99} and the obstacles encountered above. This  suggests a strategy for proving \eqref{eq:103}: for a small \(\varrho \in (0,1)\), we consider two cases:
\begin{itemize}
\item[1.] The large-scale regime, when  \(M_2^{\varrho} \le M_1 \le M_2\).
\item[2.] The small-scale regime, when  \(M_1 \le M_2^{\varrho}\).
\end{itemize}

In the
first case, the proof of inequality \eqref{eq:103} is somewhat
straightforward, and goes much the same way as arguments in
\cite{BMSW}. The large-scale regime
\((M_2^{\varrho} \le M_1 \le M_2)\), which, in fact, is a regime where
the scales are comparable upon taking logarithms ensures that Weyl's
inequality from Theorem \ref{thm:weyl2par} is very efficient. In this
case, the contribution from minor arcs, as well as the approximation
on major arcs, produce a decay with respect to the larger parameter
\(M_2\), which is easily handled by simple two-parameter square
function arguments. We end up with the approximating operator corresponding to the major-arcs periodization of the
continuous multiplier 
\begin{align}
\label{eq:100}
\int_{[0, 1]^2} e\big((\xi_1, \xi_2)\cdot P(M_1 t_1, M_2 t_2)\big) \, dt_1 dt_2.
\end{align}
 This, in turn, can be handled by Theorem \ref{thm:osc_IW_2par}, as in Section \ref{sec:circle}, and we are done.

In the second case, the proof of inequality \eqref{eq:103} is quite
intricate. Working in the small-scale regime
\((M_1 \le M_2^{\varrho})\), the parameters are no longer comparable,
and one has to apply the classical circle method iteratively, variable
by variable, which is a technically demanding process. Here instead of aiming
at square-function arguments summable in two parameters, we aim (using \eqref{eq:95}) at a
square-function argument with respect to the smaller parameter and a
maximal-function argument with respect to the larger parameter, with
the norm summable with respect to the smaller one.

The starting point is to apply Weyl's inequality to \eqref{eq:8} with
respect to the largest parameter \(M_2\). Then one must understand the
behavior of the multiplier along $m_2^4$ whose coefficient is
$\xi_1 m_1^2 +\xi_2 m_1^3$, i.e.  a polynomial 
corresponding to the smaller scale \(M_1\). Then we encounter  the ``\textit{major arcs rigidity}'' phenomena, which can be
formulated as the following dichotomy: 
\begin{itemize}
\item[(i)] The coefficient of the polynomial $m_2^4$ corresponding to the largest scale \(M_2\) --- which is $\xi_1 m_1^2 +\xi_2 m_1^3$, a polynomial with variables corresponding to the smaller scale \(M_1\) --- belongs to the major arcs very rarely, and we gain a power saving with respect to the smaller scales \(M_1\), which is stated in Lemma \ref{lem:comb}. 

\smallskip

\item[(ii)] Otherwise, the original  polynomial mapping
\(P(m_1, m_2) = (m_1^2 m_2^4, m_1^3 m_2^4)\) is equidistributed in the
sense that its frequencies \((\xi_1, \xi_2)\) on the Fourier transform
side are all well-structured and must belong to the two-parameter
major arcs.  This  also shows that the multiplier from
\eqref{eq:8} can be approximated by the major-arcs periodization of
\eqref{eq:100} as desired.
\end{itemize}

We have only presented a simplified model involving \eqref{eq:99} to illustrate the ``\textit{major arcs rigidity}'' phenomena. In general it is much more complicated, and all details of this procedure are given in Sections \ref{sec:circle}-\ref{sec:l2hard}. This kind of phenomenon was not apparent in \cite{BMSW} since the case considered there, as we explained above, was closer to the one-parameter theory. To be more precise, the small-scale regime \((M_1 \le M_2^{\varrho})\) did not occur in \cite{BMSW}. This only arises when one studies a genuinely multi-parameter case \(d\ge 2\) and \(k\ge 2\), as we highlighted above.


\section{Notation and useful tools}\label{section:2}
We now set up notation and terminology that will be used throughout the paper. 

\subsection{Basic notation}  The set of positive integers and nonnegative
integers will be denoted respectively by $\ZZ_+:=\{1, 2, \ldots\}$ and
$\NN:=\{0,1,2,\ldots\}$. 
For $d\in\ZZ_+$ the sets $\ZZ^d$, $\QQ^d$, $\RR^d$, $\CC^d$ and $\TT^d:=\RR^d/\ZZ^d$
have standard meaning. The cardinality of a set $A$ is denoted by $|A|$.  Moreover, for a finite set $A$, we will write $\XX^A=\XX^{|A|}$, whenever $\XX\in\{\ZZ, \QQ, \RR, \CC, \TT\}$.

For any $x\in\RR$ we will use the integer and fractional part functions
\begin{align*}
\lfloor x \rfloor := \max\{ n \in \ZZ : n \le x \},
\qquad \text{ and } \qquad
\{x\}:=x-\lfloor x \rfloor.
\end{align*}
We denote $\RR_+:=(0, \infty)$ and
for every $N\in\RR_+$ we set
\[
[N]:=(0, N]\cap\ZZ=\{1, \ldots, \lfloor N\rfloor\},
\]
and we will also write
\begin{align}\nonumber
\NN_{\le N}:= [0, N]\cap\NN,\ \: \quad &\text{ and } \quad
\NN_{< N}:= [0, N)\cap\NN,\\
\NN_{\ge N}:= [N, \infty)\cap\NN, \quad &\text{ and } \quad
\NN_{> N}:= (N, \infty)\cap\NN. \label{eq:Nless}
\end{align}
For any real number $\tau>1$ we define the set
\begin{align*}
\DD_{\tau}:=\{\tau^n: n\in\NN\}. 
\end{align*}

We use $\ind{A}$ to denote the indicator function of a set $A$. If $S$ 
is a statement we write $\ind{S}$ to denote its indicator, equal to $1$
if $S$ is true and $0$ if $S$ is false. For instance $\ind{A}(x)=\ind{x\in A}$.

Throughout the paper $C\in\RR_+$ is an absolute constant which may
change from occurrence to occurrence. For two nonnegative quantities
$A, B$ we write $A \lesssim B$ if there is an absolute constant $C\in\RR_+$
such that $A\le CB$. We will write $A \simeq B$ when
$A \lesssim B\lesssim A$.  We will write $\lesssim_{\delta}$ or
$\simeq_{\delta}$ to emphasize that the implicit constant depends on
$\delta$. 

For a function $f:X\to \CC$ and positive-valued function
$g:X\to (0, \infty)$, we write $f = O(g)$ if there exists a constant
$C\in\RR_+$ such that $|f(x)| \le C g(x)$ for all $x\in X$. We will also
write $f = O_{\delta}(g)$ if the implicit constant depends on
$\delta$.

If $\KK$ is either $\ZZ$ or $\RR$, then
$\KK[{\rm m}_1, \ldots, {\rm m}_k]$ denotes the space of all formal
$k$-variate polynomials $P({\rm m}_1, \ldots, {\rm m}_k)$ with
indeterminates ${\rm m}_1, \ldots, {\rm m}_k$ and coefficients in
$\KK$. Consequently, each polynomial
$P\in\KK[{\rm m}_1, \ldots, {\rm m}_k]$ will be identified with a map
$\KK^k\ni(m_1,\ldots, m_k)\mapsto P(m_1,\ldots, m_k)\in\KK$.

\subsection{Basic number theory}
For a vector $a = (a_1,\ldots, a_n) \in \ZZ^n$ and an integer $q\in\ZZ_+$, we denote by $(a,q)$ the greatest
common divisor of $a$ and $q$; that is, the largest integer $d\in\ZZ_+$ that divides $q$ and all the components
$a_1, \ldots, a_n$ of the vector $a$. Clearly any vector in $\QQ^n$ has a unique representation as $a/q$ with $q\in \ZZ_{+}$,
$a \in \ZZ^n$ and $(a,q)=1$.

For any sequences $(a_m:m\in\NN)\subseteq\CC$ and
$(b_m:m\in\NN)\subseteq\CC$ and any integers $U,V\in\ZZ$ such
that $U< V$ we will use the following version of the summation by
parts formula
\begin{align}
\label{eq:sbpformula}
\sum_{m=U+1}^{V}a_mb_m=S_{V}b_{V+1}-\sum_{m=U+1}^{V}S_m(b_{m+1}-b_m),
\end{align}
where $S_m=\sum_{l=U+1}^ma_l$. 

We will also use a simple deviation of the Dirichlet's principle from \cite[Lemma A.15]{MSZ3}.

\begin{lemma}[]
\label{lem:3}
Let $\theta\in\RR$ and $Q\in\ZZ_+$ be given.
Suppose that $|\theta - {a}/{q}|\le q^{-2}$
with $(a,q)=1$ and $0\le a<q \le M$ for some $M\in\RR_+$.
Then there is a fraction $a'/q'\in\QQ$ so that $(a', q') = 1$ and
\[
\Big|Q \theta - \frac{a'}{q'}\Big|
\le
\frac{1}{2q'M},
\]
with $q/(2Q) \le q' \le 2 M$.
\end{lemma}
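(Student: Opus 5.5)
The plan is to produce $a'/q'$ directly from Dirichlet's approximation theorem applied to $Q\theta$ with parameter $\lfloor 2M\rfloor$. The lower bound $q'\ge q/(2Q)$ then follows by comparing $a'/q'$ with the reduced form of $Qa/q$.

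\emph{Step 1 (existence and the upper bound).} Set $N:=\lfloor 2M\rfloor\ge 1$; this needs $M\ge 1/2$, which holds because $1\le q\le M$. Dirichlet's principle gives an integer $1\le q'\le N\le 2M$ with $\|q'Q\theta\|\le 1/(N+1)$. Choosing $a'$ as the nearest integer to $q'Q\theta$, we get
\[
\Big|Q\theta-\frac{a'}{q'}\Big|\le\frac{1}{q'(N+1)}\le\frac{1}{2q'M},
\]
since $N+1>2M$. If $(a',q')>1$, we cancel the common factor. This decreases $q'$ and only improves the right-hand side, because the new error is at most $1/(2q'_{\mathrm{old}}M)\le 1/(2q'_{\mathrm{new}}M)$. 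So we may assume $(a',q')=1$ and $q'\le 2M$.

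\emph{Step 2 (lower bound, by contradiction).} Suppose $q'<q/(2Q)$. Write $Qa/q=a''/q''$ in lowest terms. Since $(a,q)=1$, we have $q''=q/(Q,q)$, so $q/Q\le q''\le q\le M$. If $q''=1$ then $q\le Q$, and $q'<1/2$ is impossible. Otherwise $q''\ge q/Q>2q'>q'$, so the two reduced fractions $a''/q''$ and $a'/q'$ have different denominators and hence are distinct. Therefore
\[
\frac{1}{q'q''}\le\Big|\frac{Qa}{q}-\frac{a'}{q'}\Big|\le Q\Big|\theta-\frac aq\Big|+\Big|Q\theta-\frac{a'}{q'}\Big|\le\frac{Q}{q^2}+\frac{1}{2q'M}.
\]
Since $q''\le M$, the last term is at most $1/(2q'q'')$. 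This gives $1/(2q'q'')\le Q/q^2$, that is, $q^2\le 2Qq'q''\le 2Qq'q$. Hence $q\le 2Qq'$, which contradicts $q'<q/(2Q)$.

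The only delicate point is Step 2. One has to recall that $Qa/q$ need not be reduced, and track its reduced denominator $q''$ with $q/Q\le q''\le q$. This is what makes the two fractions distinct and lets the error term $1/(2q'M)$ be absorbed using $q''\le M$. Everything else is routine.
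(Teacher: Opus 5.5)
Your proof is correct. It is essentially the standard argument for this lemma; the paper gives no proof of its own and simply cites \cite[Lemma A.15]{MSZ3}. That argument applies Dirichlet's principle to $Q\theta$ at level $2M$, then compares $a'/q'$ with $Qa/q$ via $\frac{1}{q'q''}\le \frac{Q}{q^2}+\frac{1}{2q'M}$ together with $q''\le q\le M$, exactly as you do. A small simplification: you can use the unreduced denominator $q$ directly, since $|Qa/q-a'/q'|\ge 1/(qq')$ once the two fractions are known to differ, so $q''$ is not needed.
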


\subsection{Euclidean spaces} For  $d\in\ZZ_+$ the set $\Set{e_i\in\RR^d}{i\in[d]}$ denotes the standard basis in
$\RR^d$. The standard inner product and the corresponding Euclidean norm on $\RR^d$ are denoted by 
\begin{align*}
x\cdot\xi:=\sum_{k=1}^dx_k\xi_k, \quad \text{ and } \quad
\abs{x}:=\abs{x}_2:=\sqrt{\ipr{x}{x}}
\end{align*}
for every $x=(x_1,\ldots, x_d)$ and $\xi=(\xi_1, \ldots, \xi_d)\in\RR^d$. 

Throughout the paper the $d$-dimensional torus $\TT^d = (\RR/\ZZ)^d$, which unless otherwise stated will be  identified with
$[-1/2, 1/2)^d$, is a priori endowed with the   periodic norm
\begin{align*}
\norm{\xi}:=\Big(\sum_{k=1}^d \norm{\xi_k}^2\Big)^{1/2}
\qquad \text{for}\qquad
\xi=(\xi_1,\ldots,\xi_d)\in\TT^d,
\end{align*}
where $\norm{\xi_k}=\dist(\xi_k, \ZZ)$ for all $\xi_k\in\TT$ and
$k\in[d]$.  However, identifying $\TT^d$ with $[-1/2, 1/2)^d$, we
see that the norm $\norm{\:\cdot\:}$ coincides with the Euclidean norm
$\abs{\:\cdot\:}$ restricted to $[-1/2, 1/2)^d$.

Finally, we will use a convenient convention: for $t\in \RR$  and  $n=(n_1,\ldots, n_d) \in \ZZ^d$ we  define  $t^n \coloneqq  (t^{n_1}, \dots, t^{n_d})\in \RR^d$. This will be clear and cause no confusion throughout. 
We shall also  abbreviate the $d$-tuples $(0,\dots,0)$ and $(1,\dots,1)$ to $\bm 0$ and $\bm 1$ respectively.

\subsection{Smooth functions} The partial derivative of a differentiable function $f:\RR^d\to\CC$ with respect to the
$j$-th variable $x_j$ will be denoted by
$\partial_{x_j}f$ or $\partial_j f$, while for any multi-index
$\alpha\in\NN^d$ let $\partial^{\alpha}f$ denote the derivative operator 
$\partial^{\alpha_1}_{x_1}\cdots \partial^{\alpha_d}_{x_d}f=\partial^{\alpha_1}_1\cdots \partial^{\alpha_d}_df$
of total order $|\alpha|:=\alpha_1+\ldots+\alpha_d$.

Let  $\eta:\RR\to[0, 1]$ be a smooth and even cutoff function such that
\begin{align}
\label{eq:eta}
\ind{[-1/4, 1/4]}\le\eta\le \ind{[-1/2, 1/2]}.
\end{align}
For any $n, \xi\in\RR$, we set $\eta_{\le n}(\xi):=\eta(2^n \xi)$.
For any $\xi=(\xi_1,\ldots, \xi_d)\in\RR^d$ and
$i\in[d]$, we also define
\begin{align}\label{eq:etai_def}
\eta_{\le n}^{\{i\}}(\xi):=\eta_{\le n}(\xi_i).
\end{align}
More generally, let $m\in[d]$, and for any
$A=\{i_1,\ldots, i_m\}\subseteq [d]$, and a vector
$n=(n_{i_1},\ldots, n_{i_m})\in\RR^m$ corresponding to the set $A$, we
will write
\begin{align}
\label{eq:789}
\eta_{\le n}^{A}(\xi):=\prod_{j\in[m]}\eta_{\le n_{i_j}}^{\{i_j\}}(\xi)=\prod_{j\in[m]}\eta_{\le n_{i_j}}(\xi_{i_j}).
\end{align}
We will abuse the notation and abbreviate  $\eta_{\le n}^{A}$  to $\eta_{\le k}^{A}$ whenever $n=(n_{i_1},\ldots, n_{i_m})=(k,\ldots, k)$.

\subsection{Function spaces}
All vector spaces in this paper will be defined over the field of complex numbers $\CC$. 
The triple $(X, \mathcal B(X), \mu)$
is a measure space $X$ with $\sigma$-algebra $\mathcal B(X)$ and
$\sigma$-finite measure $\mu$.  The space of all $\mu$-measurable
complex-valued functions defined on $X$ will be denoted by $L^0(X)$.
The space of all functions in $L^0(X)$ whose modulus is integrable
with $p$-th power is denoted by $L^p(X)$ for $p\in(0, \infty)$,
whereas $L^{\infty}(X)$ denotes the space of all essentially bounded
functions in $L^0(X)$.
These notions can be extended to functions taking values in a finite
dimensional normed vector space $(B, \|\cdot\|_B)$, for instance
\begin{align*}
L^{p}(X;B)
:=\big\{F\in L^0(X;B):\|F\|_{L^{p}(X;B)} \coloneqq \left\|\|F\|_B\right\|_{L^{p}(X)}<\infty\big\},
\end{align*}
where $L^0(X;B)$ denotes the space of measurable functions from $X$ to
$B$ (up to almost everywhere equivalence). Of course, if $B$ is
separable, these notions can be extended to infinite-dimensional
$B$. In this paper, we will always be able to work in
finite-dimensional settings by appealing to  standard approximation arguments.
In our case we will usually take  $X=\RR^d$ or
$X=\TT^d$ equipped with Lebesgue measure, and   $X=\ZZ^d$ endowed with 
counting measure. If $X$ is endowed with counting measure we will
abbreviate $L^p(X)$ to $\ell^p(X)$ and $L^p(X; B)$ to $\ell^p(X; B)$.

If $T : B_1 \to B_2$ is a continuous linear  map between two normed
vector spaces $B_1$ and  $B_2$, we use $\|T\|_{B_1 \to B_2}$ to denote its
operator norm.

The following extension of the Marcinkiewicz--Zygmund inequality to
the Hilbert space setting, whose proof can be found in \cite{MST1},  will be very useful in Section \ref{sec:IW}.
\begin{lemma}
\label{lem:10}
Let $(X, \mathcal B(X), \mu)$ be a $\sigma$-finite measure space endowed with
a family $T=(T_m: m\in\NN)$ of bounded linear operators
$T_m:L^p(X)\to L^p(X)$ for some $p\in(0, \infty)$.
 Suppose that 
\begin{align*}
A_p(T):=\sup_{(\omega_m:m\in\NN)\in\{-1, 1\}^{\NN}}\norm[\Big]{\sum_{m\in\NN}\omega_mT_m}_{L^p\to L^p}<\infty.
\end{align*}
Then there is a constant $C_p\in\RR_+$ such that for every sequence
$(f_j: j\in\NN)\in L^p(X;\ell^2(\NN))$ we have
\begin{align}
\label{eq:175}
\norm[\Big]{\big(\sum_{j\in\NN}\sum_{m\in\NN}\abs{T_mf_j}^2\big)^{1/2}}_{L^p(X)}
\le C_pA_p(T)
\norm[\Big]{\big(\sum_{j\in\NN}\abs{f_j}^2\big)^{1/2}}_{L^p(X)}.
\end{align}
The index set $\NN$ in  \eqref{eq:175} can be
replaced by any other countable set and the result  remains valid.
\end{lemma}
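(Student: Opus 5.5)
This is the Hilbert-space Marcinkiewicz--Zygmund inequality, and the plan is to derive it from Khintchine's inequality by randomizing twice, once in the index $m$ and once in the index $j$. We may assume all sums are finite, since the general case then follows by monotone convergence. Let $(\varepsilon_m)$ and $(\varepsilon'_j)$ be independent Rademacher sequences on probability spaces $\Omega$ and $\Omega'$.

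First apply Khintchine's inequality in the double index $(j,m)$, using the product Rademacher family $\varepsilon_m\varepsilon'_j$. Pointwise in $x$ this gives
\[
\Big(\sum_{j,m}|T_mf_j(x)|^2\Big)^{1/2}\simeq_p\Big(\mathbb{E}_{\Omega\times\Omega'}\Big|\sum_{j,m}\varepsilon_m\varepsilon'_jT_mf_j(x)\Big|^p\Big)^{1/2\cdot 2/p}.
\]
This step needs some care. The products $\varepsilon_m\varepsilon'_j$ are not independent, so the one-variable Khintchine inequality does not apply directly to them. What we actually need is the two-fold Khintchine (Kahane) inequality for double Rademacher sums, $\mathbb{E}|\sum_{j,m}a_{jm}\varepsilon_m\varepsilon'_j|^p\simeq_p(\sum|a_{jm}|^2)^{p/2}$. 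We get it by applying Khintchine in $\varepsilon'$ with $\varepsilon$ fixed, then Khintchine again in $\varepsilon$ to each $\ell^2$ norm. For $p<2$ the Minkowski step goes the right way for the upper bound; for the lower bound we use Kahane's equivalence of moments.

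Next, integrate in $x$ and use Fubini:
\[
\Big\|\Big(\sum_{j,m}|T_mf_j|^2\Big)^{1/2}\Big\|_{L^p}^p\lesssim_p\mathbb{E}_{\Omega}\,\mathbb{E}_{\Omega'}\Big\|\sum_m\varepsilon_mT_m\Big(\sum_j\varepsilon'_jf_j\Big)\Big\|_{L^p}^p.
\]
For each fixed $\omega\in\Omega$ the operator $\sum_m\varepsilon_m(\omega)T_m$ has norm at most $A_p(T)$. This is exactly the sign-uniform hypothesis, and it is where it enters. So the inner quantity is at most $A_p(T)^p\,\|\sum_j\varepsilon'_jf_j\|_{L^p}^p$.

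Finally, take $\mathbb{E}_{\Omega'}$ and apply Khintchine in $j$ pointwise to get
\[
\mathbb{E}_{\Omega'}\Big\|\sum_j\varepsilon'_jf_j\Big\|_{L^p}^p\simeq_p\Big\|\Big(\sum_j|f_j|^2\Big)^{1/2}\Big\|_{L^p}^p.
\]
Taking $p$-th roots gives \eqref{eq:175} with $C_p$ depending only on the Khintchine constants. The replacement of $\NN$ by any countable index set is immediate, since only countability is used.

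The main obstacle is the first step, the upper bound $(\sum_{j,m}|a_{jm}|^2)^{1/2}\lesssim_p(\mathbb{E}|\sum a_{jm}\varepsilon_m\varepsilon'_j|^p)^{1/p}$ for all $p\in(0,\infty)$. For $p\ge2$ it follows from the iterated argument by Minkowski's integral inequality. For $p<2$ we use that Rademacher chaos of order two has all moments comparable (hypercontractivity/Kahane), which reduces to the case $p=2$, where it is an orthogonality identity.
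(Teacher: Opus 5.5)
Your argument is correct and is the standard proof behind the reference the paper relies on; the paper gives no proof of its own and defers to \cite{MST1}. The route is to randomize in both indices, use the sign-uniform bound $A_p(T)$ for each fixed $\omega\in\Omega$, and remove the randomization in $j$ with Khintchine's inequality.

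Your bookkeeping of the double Khintchine step is slightly off, though harmlessly. The direction you need is $(\sum_{j,m}|a_{jm}|^2)^{1/2}\lesssim_p\|\sum_{j,m}a_{jm}\varepsilon_m\varepsilon'_j\|_{L^p}$. For $p\ge2$ this is just $\|\cdot\|_{L^p}\ge\|\cdot\|_{L^2}$ together with the orthonormality of the products $\varepsilon_m\varepsilon'_j$; Minkowski gives the opposite inequality in that range. For $p<2$ it follows from Khintchine in $\varepsilon'$, then Minkowski's inequality in $\ell^{2/p}$, then Khintchine in $\varepsilon$, so no hypercontractivity is needed. Finally, when you truncate to finitely many $m$, note that $\|\sum_{m\in F}\varepsilon_mT_m\|_{L^p\to L^p}\le 2^{\max(1/p-1,0)}A_p(T)$, since this operator is the average of two full sign patterns.
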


\subsection{Convolutions and the Fourier transform}
Let $(\GG, +)$ be a locally compact abelian group (LCA group) equipped
with a Haar measure $\lambda_{\GG}$. A convolution of  two functions
$f, g\in L^1(\GG)$ is defined  by
\begin{align}
\label{eq:2100}
f*_{\GG}g(x) \coloneqq f*g(x) \coloneqq\int_{\GG}f(x-y)g(y)d\lambda_{\GG}(y).
\end{align}
For the sake of simplicity, we will abbreviate $*_{\GG}$ to
$*$, it will be clear from the context and there should be no
confusion.  Since $\GG$ is abelian we readily see that $f*g=g*f$.

Although we will use Fourier analysis only on $\RR^d$, $\TT^d$ or
$\ZZ^d$, it will be convenient to set out some abstract harmonic
analysis notation to perform this analysis in a unified fashion and to
avoid repetition. We shall write $e(z)=e^{2\pi {\bm i} z}$ for every
$z\in\CC$, where ${\bm i}^2=-1$.

It is well known (see for instance \cite{Rudin}), that every LCA group $\GG$ has a Pontryagin dual $\hat \GG = (\hat \GG,+)$, 
an LCA group  with a Haar measure $\lambda_{\hat \GG}$ and a pairing, i.e.~a continuous bihomomorphism $\GG \times \hat \GG\ni (x,\xi) \mapsto \langle x , \xi\rangle\in\TT$, such that the Fourier transform $\mathcal F_{\GG} \colon L^1(\GG) \to C(\hat \GG)$ given by
\[
 \mathcal F_{\GG} f(\xi) \coloneqq \int_{\GG} f(x) e(\langle x , \xi\rangle)d\lambda_{\GG}(x), \qquad \xi\in\hat \GG,
\]
extends to a unitary map from $L^2(\GG)$ to $L^2(\hat \GG)$; in particular we have Plancherel's identity
\[
\|\mathcal F_\GG f\|_{L^2(\hat \GG)}=\|f\|_{L^2(\GG)}, \qquad f\in L^2(\GG).
\]
Moreover, the inverse Fourier transform $\mathcal F_\GG^{-1} \colon L^2(\hat \GG) \to L^2(\GG)$ is given by the formula
\[
 \mathcal F_\GG^{-1} f(x) = \int_{\hat \GG} f(\xi) e(-\langle x , \xi\rangle)d\lambda_{\hat \GG}(\xi), \qquad f\in L^1(\hat \GG) \cap L^2(\hat \GG),\quad x\in\GG.
\]
We will mainly work with concrete pairs $(\GG,\hat \GG)$ of Pontryagin dual LCA groups:
\begin{itemize}
\item [(i)]  If $\GG = \RR^d$ with Lebesgue measure $\lambda_{\RR^d} = dx$, then $\hat \GG = \RR^d$ with Lebesgue measure $\lambda_{\RR^d} = d\xi$ is a Pontryagin dual, with pairing $\langle x , \xi\rangle \coloneqq x \cdot \xi$. Also, for any $f \in L^1(\RR^d)$, we have 
\begin{align*}
\mathcal F_{\RR^d} f(\xi)  \coloneqq  \int_{\RR^d} f(x) e(x\cdot\xi) d x, \qquad \xi\in\RR^d.
\end{align*}

\item[(ii)]  If $\GG = \ZZ^d$ with counting measure $\lambda_{\ZZ^d}$, then $\hat \GG = \TT^d$ with Lebesgue measure $\lambda_{\TT^d} = d\xi$ is a Pontryagin dual, with pairing $\langle x , \xi\rangle \coloneqq x \cdot \xi$. Also, for any $f \in \ell^1(\ZZ^d)$, we have 
\begin{align*}
\mathcal F_{\ZZ^d}f(\xi) \coloneqq \sum_{x \in \ZZ^d} f(x) e(x\cdot\xi), \qquad \xi\in \TT^d.
\end{align*}
Sometimes we shall abbreviate $\calF_{\ZZ^d}f$ to $\hat{f}$.
\end{itemize}

For any bounded function $\mathfrak m \colon \hat \GG\to\CC$ and a test function $f \colon \GG\to\CC$ we define the Fourier multiplier operator  by 
\begin{align}
\label{eq:1000}
T_{\GG}[\mathfrak m]f(x) \coloneqq \int_{\hat \GG}e(-\langle x , \xi\rangle)\mathfrak m(\xi)\mathcal F_{\GG}f(\xi)d\lambda_{\hat \GG}(\xi), \qquad  x\in\GG.
\end{align}
One may think that $f \colon \GG\to\CC$ is a compactly supported function on $\GG$ (and smooth if $\GG=\RR^d$) or any other function for which \eqref{eq:1000} makes sense.
Finally for any finite set  $\Sigma\subseteq \hat \GG$ we define
\begin{align}
\label{eq:1060}
T_{\GG}^{\Sigma}[\mathfrak m]f(x) \coloneqq T_{\GG}\Big[\sum_{\theta\in\Sigma}\tau_{\theta}\mathfrak m\Big]f(x), \qquad  x\in\GG,
\end{align}
where $\tau_{\theta}\mathfrak m(\xi) \coloneqq \mathfrak m(\xi-\theta)$ for $\xi\in\hat \GG$. Notation from \eqref{eq:1060} will be especially useful in Section \ref{sec:IW}.

\subsection{Oscillatory integrals and canonical polynomial mappings} 
Let $\RR_{\le d}[{\rm x}_1,\ldots, {\rm x}_k]$ be the vector space of all polynomials in $\RR[{\rm x}_1,\ldots, {\rm x}_k]$ of degree at most $d\in\ZZ_+$, which is equipped with the norm $\|P\|:=\sum_{0\le|\gamma|\le d}|c_{\gamma}|$ whenever
\begin{align*}
P(x)=\sum_{0\le|\gamma|\le d}c_{\gamma}x_1^{\gamma_1}\cdots x_k^{\gamma_k} \quad \text{ for } \quad x=(x_1,\ldots, x_k)\in\RR^k.
\end{align*}
A variant of the van der Corput oscillatory integral lemma for polynomials on $\RR^k$ reads as follows.
\begin{proposition}\label{thm:CCW}
For each $d, k\in\ZZ_+$, there exists a constant $C_{d, k}\in\RR_+$ such that
for any  $P\in \RR_{\le d}[{\rm x}_1,\ldots, {\rm x}_k]$ with $P(0) = 0$, one has
\begin{align*}
  \bigg|\int_{[0, 1]^k}\ex(P(x))dx\bigg|\le C_{d, k}\|P\|^{-1/d}.
\end{align*}
\end{proposition}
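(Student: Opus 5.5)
The plan is to prove Proposition \ref{thm:CCW} by reducing the multivariate estimate to the one-dimensional van der Corput lemma along a suitably chosen direction, uniformly over the compact set of normalized polynomials.

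\textbf{Normalization.} Write $\lambda:=\|P\|$ and $P=\lambda Q$ with $\|Q\|=1$ and $Q(0)=0$. Since the integral is trivially bounded by $1$, we may assume $\lambda\ge1$. It then suffices to show
\[
\Big|\int_{[0,1]^k}\ex(\lambda Q(x))\,dx\Big|\lesssim_{d,k}\lambda^{-1/d}
\]
uniformly over the unit sphere $S$ of polynomials in $\RR_{\le d}[{\rm x}_1,\ldots,{\rm x}_k]$ vanishing at $0$. The set $S$ is compact.

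\textbf{Key step: lower bound on a directional derivative.} For each $Q\in S$ there is a multi-index $\gamma$ with $1\le|\gamma|\le d$ and $|c_\gamma|\gtrsim_{d,k}1$. The plan is to find a unit direction $\omega\in\RR^k$ and an order $j\in[d]$ such that $|\partial_\omega^j Q(x)|\ge c_{d,k}>0$ for all $x\in[0,1]^k$.

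Take $j$ to be the largest degree for which the homogeneous part $Q_j$ of $Q$ has norm $\ge\epsilon_j$. Here the thresholds are chosen hierarchically, $\epsilon_d\gg\epsilon_{d-1}\gg\dots$; a cleaner alternative is to pick $j$ maximizing $\|Q_j\|\,\kappa^{j}$ for a small constant $\kappa$. The derivative $\partial_\omega^j Q(x)$ equals $j!\,Q_j(\omega)$ plus contributions from the higher parts $Q_{j'}$, $j'>j$, which are small by the choice of $j$. Since $Q_j\ne0$ is homogeneous, compactness of the unit sphere of degree-$j$ forms gives $\omega$ with $|Q_j(\omega)|\gtrsim\|Q_j\|$. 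Hence the constant $j!\,Q_j(\omega)$ dominates once the higher parts are small. Arranging these thresholds consistently is the main obstacle.

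A shortcut that avoids this bookkeeping is to use compactness directly. Each $Q\in S$ is nonconstant, so some derivative $\partial^\alpha Q$ with $|\alpha|\ge1$ is nonvanishing at a point. However, we need a lower bound holding on all of $[0,1]^k$, not merely at a point. I would therefore instead apply the one-dimensional van der Corput lemma in its sublevel form, which only requires $\sum_{1\le i\le d}|\partial_\omega^i Q|\ge c$ on the cube. By compactness of $S$ and continuity, such a finite-cover argument yields a uniform $c_{d,k}$.

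\textbf{Conclusion.} Rotate coordinates so that $\omega$ becomes the first direction, at the cost of integrating over a rotated cube. Slice the domain by lines parallel to $\omega$; each slice is an interval of length $\le\sqrt k$. On each line, $t\mapsto\lambda Q(x+t\omega)$ is a polynomial of degree $\le d$ with $|j$-th derivative$|\ge c\lambda$ on the whole interval. The one-dimensional van der Corput lemma then bounds each slice integral by $C_d(c\lambda)^{-1/j}\le C\lambda^{-1/d}$, using $\lambda\ge1$; in the sublevel variant, the line is split into $O_d(1)$ pieces on which a fixed derivative is large. Integrating over the transverse variables finishes the proof with a constant $C_{d,k}$.
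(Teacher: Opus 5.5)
\textbf{The key step fails as written.} Your overall plan is sound: find one direction $\omega$ and one order $j$ with $|\partial_\omega^jQ|\ge c_{d,k}$ on all of $[0,1]^k$, slice along $\omega$, apply one-dimensional van der Corput on each slice, and integrate transversally. The problem is that both of your rules for choosing $j$ point the wrong way. In $\partial_\omega^jQ(x)=j!\,Q_j(\omega)+\sum_{i>j}\partial_\omega^jQ_i(x)$ it is the \emph{higher} homogeneous parts that must be small relative to $Q_j$. If $j$ is the largest degree with $\|Q_j\|\ge\epsilon_j$, you only know $\|Q_i\|<\epsilon_i$ for $i>j$. With $\epsilon_d\gg\epsilon_{d-1}\gg\cdots$ these bounds do not make the higher parts small relative to $Q_j$.

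Concretely, take $k=1$, $d=3$ and $Q(t)=at^3-bt^2+(1-a-b)t$ with $a=\epsilon_3/2$, $b=\epsilon_2$. Your rule selects $j=2$, but $Q''(t)=6at-2b$ vanishes at $t=2\epsilon_2/(3\epsilon_3)\in(0,1)$. Similarly, maximizing $\|Q_j\|\kappa^j$ with $\kappa<1$ selects $j=1$ for $Q(t)=\tfrac12(t^2-t)$, yet $Q'(1/2)=0$.

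The repair is to reverse the hierarchy. For a small $\epsilon=\epsilon(d,k)\le 1/d$, let $j$ be the largest index with $\|Q_j\|\ge\epsilon^j$; it exists because $\max_i\|Q_i\|\ge 1/d$. Then $\|Q_i\|<\epsilon^{j+1}$ for all $i>j$. Choosing $\omega$ with $j!\,|Q_j(\omega)|\ge c_{d,k}\|Q_j\|$ gives $|\partial_\omega^jQ(x)|\ge c_{d,k}\epsilon^j-C_{d,k}\epsilon^{j+1}\ge\tfrac12 c_{d,k}\epsilon^{d}$ for $|x|\le\sqrt k$, once $\epsilon$ is small. Equivalently, maximize $\|Q_j\|\kappa^{-j}$ rather than $\|Q_j\|\kappa^{j}$. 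So the ``main obstacle'' you flagged is a two-line computation once the inequalities point the right way.

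\textbf{The compactness shortcut does not bypass this.} The issue is not uniformity in $Q$. It is that, for a single fixed $Q$, you need a direction along which $Q$ is nonconstant on every line meeting the cube. Knowing that some $\partial^\alpha Q$ is nonzero at one point does not give that. The missing fact is easy: take $\omega$ with $Q_m(\omega)\neq 0$, where $m=\deg Q$, so that $\partial_\omega^mQ$ is a nonzero constant. After that, continuity in $Q$ (uniform over the compact cube) and compactness of $S$ do yield finitely many directions and a uniform constant.

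\textbf{A small omission.} For $j=1$, van der Corput needs $\partial_\omega Q$ to be monotone on the slice. This holds after cutting each slice into at most $d$ intervals.

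With these corrections your argument is a complete, elementary proof. The paper gives no argument of its own; it cites Carbery--Christ--Wright (Corollary 7.3) and Arkhipov--Chubarikov--Karatsuba. Your slicing route would make the statement self-contained, using only one-dimensional van der Corput and norm equivalence on the finite-dimensional space of degree-$j$ forms.
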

The proof of Proposition \ref{thm:CCW} can be found in \cite[Corollary 7.3., p. 1008]{CCW}, see also \cite[Section 1]{ACK}. This proposition will be applied mainly to canonical polynomial mappings.

For $\gamma=(\gamma_1,\ldots, \gamma_k)\in\NN^k$ and $t=(t_1,\ldots, t_k)\in\RR^k$ define 
\begin{align}
\label{eq:85}
t^{\gamma}:= t_1^{\gamma_1}\cdots\:  t_k^{\gamma_k}.
\end{align}
In other words, $t^{\gamma}$ is a  monomial with variables $t_1,\ldots, t_k$ and exponents $\gamma_1,\ldots, \gamma_k$.
Now, let a finite and nonempty set $\Gamma\subset\NN^k\setminus\{{\bm 0}\}$ be given. Using
\eqref{eq:85} we define a polynomial mapping  induced by the set
$\Gamma$ by setting
\begin{align}
\label{eq:84}
\RR^k\ni t\mapsto (t)^{\Gamma}:=(t^{\gamma}: \gamma\in\Gamma)\in\RR^{\Gamma}.
\end{align}
Here, the polynomial mapping $(t)^{\Gamma}$, (sometimes called a \textit{canonical polynomial mapping}), is simply a vector in the $|\Gamma|$-dimensional vector space whose entries are the monomials $t^{\gamma}$ indexed by the set $\Gamma$. 

An important feature of the polynomials in \eqref{eq:84} is their scale homogeneity. 
For two vectors $s=(s_1,\ldots, s_k), u=(u_1,\ldots, u_k) \in \RR^k$, we define their coordinatewise product by
\begin{equation}\label{eq:otimes}
s \otimes u \coloneqq   (s_1 u_1, \dots, s_k u_k)\in\RR^k
\end{equation} 
and observe that $(s \otimes u)^{\Gamma}=(s )^{\Gamma}\otimes (u)^{\Gamma}$ for any $s, u\in\RR^k$. This property will be crucial in the proof of Theorem \ref{thm:osc_IW_2par}, see also Lemma \ref{lem:long_only}.

Finally,  Proposition \ref{thm:CCW} applied to  canonical polynomial mappings of the form
\[
\xi\cdot (t)^\Gamma=\sum_{\gamma\in\Gamma}\xi_\gamma t^\gamma,
\]
for $\xi=(\xi_\gamma\colon\gamma\in \Gamma)\in\RR^\Gamma$,
 yields the conclusion in terms of the size of $|\xi|$.

\subsection{Coordinatewise order $\preceq$}  For any $x=(x_1,\ldots, x_k)\in\RR^k$ and $y=(y_1,\ldots, y_k)\in\RR^k$ 
we say  $x\preceq y$ if and only if $x_i\le y_i$ for each $i\in[k]$.
We also write $x\prec y$ if and only if $x\preceq y$ and
$x\neq y$, and $x\prec_{\rm s} y$ if and only if $x_i< y_i$ for each
$i\in[k]$. Let $\II\subseteq \RR^k$ be an index set
such that $|\II|\ge2$ and 
for every $J\in\ZZ_+\cup\{\infty\}$
define the set
\begin{equation}\label{eq:96}
\mathfrak S_J(\II):
=
\Set[\big]{(t_i:i\in\NN_{\le J})\subseteq \II}{t_{0}\prec_{\rm s}
t_{1}\prec_{\rm s}\ldots \prec_{\rm s}t_{J}},
\end{equation}
where $\NN_{\le \infty}:=\NN$.
In other
words, $\mathfrak S_J(\II)$ is a family of all strictly increasing
sequences (with respect to the coordinatewise order) of length $J+1$
taking their values in the set $\II$.

\subsection{Oscillation seminorms}
Let $\II\subseteq \RR^k$ be an index set such that $|\II|\ge2$. Let $(\mathfrak a_{t}: t\in\II)\subseteq\CC$ be a $k$-parameter
family of complex numbers. For any
$\JJ\subseteq \II$ and a sequence
$I=(I_i : i\in\NN_{\le J}) \in \mathfrak S_J(\II)$ the multi-parameter
oscillation seminorm is defined by
\begin{align}
\label{eq:102}
O_{I, J}(\mathfrak a_{t}: t \in \JJ):=
\Big(\sum_{j=0}^{J-1}\sup_{t\in \BB[I_j]\cap\JJ}
\abs{\mathfrak a_{t} - \mathfrak a_{I_j}}^2\Big)^{1/2},
\end{align}
where
$\BB[I_i]:=[I_{i, 1}, I_{i+1, 1})\times\ldots\times[I_{i, k}, I_{i+1, k})$
is a box determined by the element $I_i=(I_{i,1}, \ldots, I_{i,k})$ of
the sequence $I\in \mathfrak S_J(\II)$. We often use oscillation seminorms with respect to $k$-parameter families of measurable functions
$(\mathfrak a_{t}(x): t\in\II)$ on $X$.
In order to avoid problems with measurability we always assume that
$\II\ni t\mapsto \mathfrak a_{t}(x)\in\CC$ is continuous for
$\mu$-almost every $x\in X$, or $\JJ$ is countable. We also use the
convention that the supremum taken over the empty set is zero.

We make a few remarks about \eqref{eq:102}.

\begin{enumerate}[label*={\arabic*}.]
\item Clearly, $O_{I, J}(\mathfrak a_{t}: t \in \JJ)$ defines a semi-norm.

\item  Let $\II\subseteq \RR^k$ be an index set such that $|\II|\ge2$, and let $\JJ_1, \JJ_2\subseteq \II$ be disjoint. Then for any family $(\mathfrak a_t:t\in\II)\subseteq \CC$,  any $J\in\ZZ_+$ and any $I\in\mathfrak S_J(\II)$ one has
\begin{align*}
O_{I, J}(\mathfrak a_{t}: t\in\JJ_1\cup\JJ_2)\le O_{I, J}(\mathfrak a_{t}: t\in\JJ_1)+O_{I, J}(\mathfrak a_{t}: t\in\JJ_2).
\end{align*}

\item Let $\II\subseteq \RR^k$ be a countable index set such that $|\II|\ge2$ and $\JJ\subseteq \II$. 
Then 
 for any family $(\mathfrak a_t:t\in\II)\subseteq \CC$, any  $J\in\ZZ_+$, any  $I\in\mathfrak S_J(\II)$  one has
 \begin{align}
 \label{eq:91}
 O_{I, J}(\mathfrak a_{t}: t \in \JJ)\lesssim \Big(\sum_{t\in\II}|\mathfrak a_{t}|^2\Big)^{1/2}.
 \end{align}
 \item Let $\II\subseteq \RR^k$ be a countable index set such that
 $\#{\II}\ge2$.  For $l\in[k]$, let $\proj_l:\RR^k \to \RR$ be the
 projection onto the  $l$-th coordinate of $\RR^k$.  Note that for any family
 $(\mathfrak a_t:t\in\II)\subseteq \CC$, any $J\in\ZZ_+$, any
 $I\in\mathfrak S_J(\II)$ and any $l\in[k]$ one has
\begin{align}
\label{eq:95}
\begin{split}
&O_{I, J}(\mathfrak a_{t}: t\in\II)=\Big(\sum_{j=0}^{J-1}\sup_{t\in\BB[I_j]\cap\II}|\mathfrak a_{t}-\mathfrak a_{I_j}|^2\Big)^{1/2}\\
&\hspace{1cm}\lesssim \Big(\sum_{t_l\in\proj_l(\II)}\sup_{\substack{(t_1,\ldots, t_{l-1},  t_{l+1},\ldots, t_k)\in\prod_{i\in[k]\setminus\{l\}}\proj_i(\II)\\(t_1,\ldots, t_{l-1}, t_l, t_{l+1},\ldots, t_k)\in\II}}|\mathfrak a_{(t_1,\ldots, t_{l-1}, t_l, t_{l+1},\ldots, t_k)}|^2\Big)^{1/2}.
\end{split}
\end{align}
  Inequality \eqref{eq:95} will be used in Section
\ref{sec:circle}. It is important to note that the parameter $t\in\II$
in the definition of oscillations and the sequence
$I\in\mathfrak S_J(\II)$ both take values in $\II$.

\item Let $k=1$, then for any $j_0, m\in\NN$ so that $j_0< 2^m$ and
any sequence of complex numbers $(\mathfrak a_k: k\in\NN)$, any
$J\in[2^m]$ and any $I\in\mathfrak S_J([j_0, 2^m))$, the
Rademacher--Menshov inequality for oscillations \eqref{eq:102} holds,
namely we have
\begin{align}
\label{eq:164}
\begin{split}
O_{I, J}(\mathfrak a_{j}: j_0\le j< 2^m)\le \sqrt{2}\sum_{i=0}^m\Big(\sum_{j=0}^{2^{m-i}-1}\big|\sum_{\substack{k\in U_{j}^i\\
U_{j}^i\subseteq [j_0, 2^m)}} (\mathfrak a_{k+1}-\mathfrak a_{k})\big|^2\Big)^{1/2},
\end{split}
\end{align}
where $U_j^i:=[j2^i, (j+1)2^i)$ for any $i, j\in\ZZ$. Inequality \eqref{eq:164}  follows essentially from \cite[Lemma 2.5,
p. 534]{MSZ2}. Inequality \eqref{eq:164} will be used in
Section \ref{sec:IW}. If we replace the left-hand side of \eqref{eq:164} with
$\sup_{ j_0\le m<n< 2^m}|\mathfrak a_{n}-\mathfrak a_{m}|$ the same
conclusion remains true. 

\item Assume that $(\mathfrak a_{t}: t\in\RR^k)\subseteq\CC$ be a
$k$-parameter family of measurable functions on $X$. Let
$\II\subseteq \RR$ and $|\II|\ge2$, then for every $p\in[1, \infty]$,
as in \cite[Proposition 2.6, p.2266]{MSW-survey}, we have
\begin{align}
\label{eq:135}
\qquad \qquad \norm[\big]{\sup_{t \in (\II\setminus\{\sup\II\})^k}\abs{\mathfrak a_{t}}}_{L^p(X)}
\le \sup_{t \in \II^k}\norm{\mathfrak a_{t}}_{L^p(X)}
+\sup_{J\in\ZZ_+}\sup_{I\in \mathfrak S_J(\II)}\norm[\big]{O_{\bar{I}, J}(\mathfrak a_{t}: t \in \II^k)}_{L^p(X)},
\end{align}
where $\bar{I}\in\mathfrak S_J(\II^k)$ is the diagonal sequence
corresponding to a sequence $I\in \mathfrak S_J(\II)$; that is ${\bar{I}} := ({\bar{I}}_i : i\in\NN_{\le J})$ with ${\bar{I}}_i = (I_i, \ldots, I_i) \in \II^k$ whenever $I=(I_i : i\in\NN_{\le J})$. 

\item 
A remarkable feature of the oscillation seminorms is that they imply
pointwise convergence, see \cite[Proposition 2.8, p.2267]{MSW-survey}. Let $(X, \calB(X), \mu)$ be a $\sigma$-finite measure space. For
$k\in\ZZ_+$ let $(\mathfrak a_{t}: t\in\NN^k)\subseteq\CC$ be a
$k$-parameter family of measurable functions on $X$. Suppose that
there is $p\in[1, \infty)$ and a constant $C_p\in\RR_+$ such that
\begin{align*}
\sup_{J\in\ZZ_+}\sup_{I\in \mathfrak S_J(\NN)}
\norm[\big]{O_{\bar{I}, J}(\mathfrak a_{t}: t \in \NN^k)}_{L^p(X)}\le C_p<\infty.
\end{align*}
Then the limit $\lim_{\min\{t_1,\ldots, t_k\}\to\infty}\mathfrak a_{(t_1,\ldots, t_k)}$
exists  $\mu$-almost everywhere on $X$.
\end{enumerate}

\section{Exponential sum estimates}\label{sec:exp}

In this section, we recall several exponential sum estimates from
\cite{BMSW}. To simplify the presentation, we will focus on the
two-parameter setting $k=2$, noting that our arguments can be adapted
to the multi-parameter context $k\ge 2$.

\subsection{Weyl's type estimates}
We begin with recalling the one-parameter estimate.
\begin{theorem}[Proposition 5.4, \cite{BMSW}]
\label{thm:weyl1par}
Let $P\in\RR[\rm m]$ be a polynomial of degree $d\in\ZZ_+$ such that $P(m):=c_dm^d+\ldots+c_1m$.
Then there exists a constant $C\in\RR_+$ such that for every $M\in\ZZ_+$ the following is true.
Suppose that for some $j\in[d]$ there are $a, q\in\ZZ$ with $(a,q)=1$ such that $1\le q\le M^j$ and  
$|c_j-a /q|\le{q^{-2}}$.
Then for certain $\tau(d)\in\ZZ_+$ one has
\begin{align}
\label{eq:w4'}
\Big|\sum_{m=1}^M \ex(P(m))\Big|\le
CM\log(2M)\bigg(\frac{1}{q}+\frac{1}{M}+\frac q{M^j}\bigg)^{\frac{1}{\tau(d)}}.
\end{align}
\end{theorem}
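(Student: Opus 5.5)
The plan is to reduce the case of an arbitrary coefficient $c_j$ to the classical Weyl inequality, in which the rational approximation controls the top coefficient. The reduction is by Weyl differencing with the $j$-th coefficient isolated. Classical Weyl differencing gives a bound through the leading coefficient $c_d$. For intermediate $j<d$, I will proceed by induction on $d-j$ in the following way.

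\textbf{Step 1: the case $j=d$.} Apply Weyl differencing $d-1$ times, using the van der Corput inequality $|\sum_{m\le M}\ex(P(m))|^2\lesssim \frac{M}{H}\sum_{|h|<H}|\sum_m \ex(P(m+h)-P(m))|$ with $H=M$. This produces a sum of linear exponential sums in $m$ with frequency $d!\,h_1\cdots h_{d-1}c_d$. Each of these is bounded by $\min(M,\|d!h_1\cdots h_{d-1}c_d\|^{-1})$. A divisor-counting argument collects the products $h=d!h_1\cdots h_{d-1}$, at the cost of a factor $M^{\varepsilon}$, or $\log$ powers if done carefully. The standard estimate $\sum_{h\le X}\min(Y,\|h c_d\|^{-1})\lesssim (X/q+1)(Y+q\log q)$, valid when $|c_d-a/q|\le q^{-2}$, then gives the bound with exponent $2^{1-d}$ and a logarithmic loss. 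This yields \eqref{eq:w4'} with $\tau(d)=2^{d-1}$ when $j=d$.

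\textbf{Step 2: $j<d$, reduction to the top coefficient.} The coefficient $c_j$ is not the leading one. I will use a dichotomy on the higher coefficients $c_{j+1},\dots,c_d$. Fix a small $\delta>0$ and apply Dirichlet's principle to each $c_i$ with $i>j$ at level $M^{i-\delta}$.

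If some $c_i$ with $i>j$ admits only approximations $a_i/q_i$ with $q_i\ge M^{\delta}$, apply the inductive hypothesis to the index $i$. Here $M^{\delta}\le q_i\le M^{i-\delta}$, so we obtain the gain $M^{-\delta/\tau}$. This beats the claimed bound, since the right-hand side of \eqref{eq:w4'} is at least $M\cdot M^{-1/\tau}$.

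Otherwise, all higher coefficients are major: $c_i=a_i/q_i+\beta_i$ with $q_i\le M^{\delta}$ and $|\beta_i|\le M^{\delta-i}$. Let $Q=\operatorname{lcm}(q_i)\le M^{d\delta}$. Split $m=Qn+r$. The higher-degree part is then constant in $n$ modulo $1$, apart from the slowly varying phase $\sum_{i>j}\beta_i(Qn+r)^i$.

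Remove that slowly varying phase by summation by parts \eqref{eq:sbpformula}. Its total variation over $n\le M/Q$ is $O(M^{d\delta})$. This leaves sums of $\ex(\tilde P_r(n))$ of length $M/Q$, where $\tilde P_r$ has degree $j$ with leading coefficient $Q^jc_j$.

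Lemma \ref{lem:3}, applied with $\theta=c_j$ and this $Q$ to the power $j$, provides a new approximation $a'/q'$ to $Q^jc_j$. Its denominator satisfies $q'\ge q/(2Q^j)$ and $q'\le 2M^j$. Applying Step 1 to the degree-$j$ sum in $n$ then gives the bound $(M/Q)\log M\,(1/q'+Q/M+q'Q^j/M^j)^{2^{1-j}}$. This converts into $(1/q+1/M+q/M^j)^{1/\tau}$ after absorbing the $M^{O(\delta)}$ losses. The absorption requires choosing $\delta$ small in terms of the target quantity. To handle this I will choose $\delta$ adaptively: set $M^{\delta}\approx(1/q+1/M+q/M^j)^{-c}$ for a small $c=c(d)$. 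This is the natural device, and $\tau(d)$ is then made large enough.

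\textbf{Main obstacle.} The delicate step is this bookkeeping. The approximation to $c_j$ must be transferred through the rescaling by $Q^j$ while the powers of $Q$ and the variation factors remain polynomially small relative to the gain. If the quantity $1/q+1/M+q/M^j$ is already larger than $M^{-\eta}$ for a fixed small $\eta$, the estimate is trivial. Otherwise, the adaptive choice of $\delta$ must make both branches of the dichotomy produce the same power saving. I expect that matching the exponents across the induction, and tracking how $\tau(d)$ grows, will take the most care.
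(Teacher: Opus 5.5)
Your architecture is sound, and it differs from the source of the estimate. The paper gives no proof here and quotes \cite{BMSW}. Bounds of this shape (an intermediate coefficient, a single $\log(2M)$ loss) are normally derived from Vinogradov's mean value theorem, which handles $c_j$ directly and gives $\tau(d)$ polynomial in $d$. Your route is: classical Weyl differencing for the top coefficient, then induction on $d-j$ via a Dirichlet dichotomy on $c_{j+1},\dots,c_d$, the splitting $m=Qn+r$, partial summation, and Lemma \ref{lem:3}. This is more elementary, and it is essentially the device the paper itself uses to prove \eqref{eq:bazur}. It costs a much worse $\tau(d)$, which is irrelevant here. Your adaptive choice $M^{\delta}\approx\Delta^{-c}$, with $\Delta:=1/q+1/M+q/M^j$, correctly makes both branches pay in powers of $\Delta$. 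Two steps, however, fail as written.

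\emph{The application of Lemma \ref{lem:3} in Step 2.} You apply it at level $M^j$ and keep only $q'\le 2M^j$. Then the term $q'Q^j/M^j$ in the Weyl bound for the $n$-sum of length $M/Q$ can be as large as $2Q^j$. The bound is then trivial, and nothing converts into a power of $\Delta$. The lemma holds at any level $\ge q$, so use level $qQ^j$ instead. This gives $q/(2Q^j)\le q'\le 2qQ^j$ and $|Q^jc_j-a'/q'|\le (2q'qQ^j)^{-1}\le q'^{-2}$. Hence $1/q'+Q/M+q'Q^j/M^j\le 2Q^{2j}\Delta$, and the factor $Q\le M^{d\delta}=\Delta^{-cd}$ is then absorbed. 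In addition, partial summation needs bounds for all partial sums $\sum_{n\le N}$ with $N\le M/Q$, not only for $N=M/Q$. Since $q'/N^j$ blows up for small $N$, treat $N\le \kappa M/Q$ trivially, with $\kappa$ a small power of $\Delta$.

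\emph{The divisor bound in Step 1.} It yields $M^{1+\varepsilon}$, and this loss is fatal. Contrary to your closing remark, \eqref{eq:w4'} is trivial only when $\Delta\gtrsim(\log 2M)^{-\tau}$, not whenever $\Delta\ge M^{-\eta}$. So for $\Delta$ between $M^{-\eta}$ and $(\log 2M)^{-\tau}$, an $M^{\varepsilon}$ loss cannot be absorbed, and you need a logarithmic version. Write the differenced sum as $\sum_{h\le X}\nu(h)\min(M,\|hc_d\|^{-1})$, with $X=d!\,M^{d-1}$ and $\nu(h)$ the number of representations $h=d!\,|h_1\cdots h_{d-1}|$. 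Apply Cauchy--Schwarz together with $\sum_{h\le X}\nu(h)^2\ll X(\log X)^{C}$. This gives $M(\log 2M)^{C}\Delta^{2^{-d}}$, half your exponent, so $\tau(d)=2^{d-1}$ with a single logarithm is not justified. The surplus logarithms are then absorbed by enlarging $\tau$. If $\Delta\ge(\log 2M)^{-\tau}$, use the trivial bound $M$. Otherwise $(\log 2M)^{C-1}\le \Delta^{-(2^{-d}-1/\tau)}$ as soon as $\tau\ge C2^{d}$. The same absorption must be repeated at each inductive step. With these repairs the induction closes.
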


\begin{remark} \label{remark:weyl1par}
It follows from the above theorem that if $|c_j-a /q|\le{q^{-2}}$ is satisfied for some
$M^\beta\le q\le M^{j-\beta}$, with some $\beta\in(0,1)$, then
\begin{align*}
\Big|\sum_{m=1}^M \ex(P(m))\Big|\lesssim
M^{1-\beta/\tau(d)}(\log M)\lesssim M^{1-\beta/2\tau(d)}.
\end{align*}
\end{remark}
 Let $K_1, K_2\in\NN$, $M_1, M_2\in\ZZ_+$ satisfy $K_1< M_1$ and $K_2< M_2$. Let $Q\in\RR[\rm m_1, \rm m_2]$ be given 
and define the double exponential sums 
\begin{align}
\label{eq:58}
S_{K_1, M_1, K_2, M_2}(Q):=&\sum_{m_1=K_1+1}^{M_1}\sum_{m_2=K_2+1}^{M_2}\ex(Q(m_1, m_2)),\\
\label{eq:87}
S_{K_1, M_1, K_2, M_2}^1(Q):=&\sum_{m_1=K_1+1}^{M_1}\Big|\sum_{m_2=K_2+1}^{M_2}\ex(Q(m_1, m_2))\Big|,\\
\label{eq:213}
S_{K_1, M_1, K_2, M_2}^2(Q):=&\sum_{m_2=K_2+1}^{M_2}\Big|\sum_{m_1=K_1+1}^{M_1}\ex(Q(m_1, m_2))\Big|.
\end{align}
If $K_1=K_2=0$ we will abbreviate \eqref{eq:58}, \eqref{eq:87} and \eqref{eq:213} respectively to
$S_{M_1,  M_2}(Q), S_{M_1,  M_2}^1(Q)$ and $S_{M_1,  M_2}^2(Q)$. By the triangle inequality we have
\begin{align}
\label{eq:226}
|S_{K_1, M_1, K_2, M_2}(Q)|\le \min(S_{K_1, M_1, K_2, M_2}^1(Q),
S_{K_1, M_1, K_2, M_2}^2(Q)).
\end{align}
We will need the following counterpart of Weyl's inequality for double sums. 
\begin{theorem}[Proposition 5.22, \cite{BMSW}]
\label{thm:weyl2par}
Let $d_1, d_2\in\ZZ_+$ and $Q\in\RR[\rm m_1, \rm m_2]$ be such that 
\begin{align*}
Q(m_1, m_2):=\sum_{\gamma_1=0}^{d_1}\sum_{\gamma_2=0}^{d_2}
c_{\gamma_1, \gamma_2}m_1^{\gamma_1}m_2^{\gamma_2}, \quad \text{ and } \quad c_{0, 0}=0.
\end{align*}
Then there exists a constant $C\in\RR_+$ such that for all
$K_1, K_2\in\NN$, $M_1, M_2\in\ZZ_+$ satisfying $K_1\le M_1$ and
$K_2\le M_2$ the following holds.  Suppose that for some
$\rho_1\in [d_1]$ and $\rho_2\in [d_2]$ there are
$a_{\rho_1, \rho_2}\in\ZZ, q_{\rho_1, \rho_2}\in\ZZ_+$ such that
$(a_{\rho_1, \rho_2}, q_{\rho_1, \rho_2})=1$ and
\begin{align}
\label{eq:w6}
\Big|c_{\rho_1, \rho_2}-\frac{a_{\rho_1, \rho_2}}{q_{\rho_1, \rho_2}}\Big|\le\frac{1}{q_{\rho_1, \rho_2}^2}.
\end{align}
Set $k_i:=d_i(d_i+1)$ for $i\in[2]$, $M_{-} := \min(M_1^{\rho_1}, M_2^{\rho_2})$ and $M_{+} := \max(M_1^{\rho_1}, M_2^{\rho_2})$.
Then for $i\in [2]$,
\begin{align}
\label{eq:86}
S_{K_1, M_1, K_2, M_2}^i(Q)
&\le CM_1M_2\bigg( \frac{1}{M_{-}}+\frac{q_{\rho_1, \rho_2}\log q_{\rho_1, \rho_2}}{M_1^{\rho_1}M_2^{\rho_2}}+\frac{1}{q_{\rho_1, \rho_2}}+\frac{\log q_{\rho_1, \rho_2}}{M_{+}}\bigg)^{\frac{1}{4k_1k_2}}.
\end{align}
In view of \eqref{eq:226} the estimate \eqref{eq:86} holds for $|S_{K_1, M_1, K_2, M_2}(Q)|$ as well.
\end{theorem}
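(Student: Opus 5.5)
The plan is to follow the Arkhipov--Chubarikov--Karatsuba (Vinogradov mean value) route, isolating the single coefficient $c_{\rho_1,\rho_2}$ one variable at a time. By symmetry of the hypotheses in $m_1$ and $m_2$ it suffices to treat $i=1$, that is, $S^1_{K_1,M_1,K_2,M_2}(Q)$. Write
\[
Q(m_1,m_2)=\sum_{\gamma_2=0}^{d_2}P_{\gamma_2}(m_1)\,m_2^{\gamma_2},\qquad P_{\gamma_2}(m_1):=\sum_{\gamma_1=0}^{d_1}c_{\gamma_1,\gamma_2}m_1^{\gamma_1}.
\]
For each fixed $m_1$, the inner sum is then a one-variable Weyl sum in $m_2$ of degree $d_2$.

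\textbf{Step 1 (isolate the $m_2^{\rho_2}$ coefficient).} For fixed $m_1$ and $b\simeq k_2$, apply the classical Vinogradov lemma. Raise $|\sum_{m_2}e(Q(m_1,m_2))|$ to the power $2b$, expand, and group tuples by the power sums $h_j=\sum_{l}(x_l^j-y_l^j)$ for $j\in[d_2]$. Then apply Hölder and the Vinogradov mean value theorem (with $k_2=d_2(d_2+1)$ moments giving the essentially optimal bound $J_{b,d_2}(M_2)\lesssim M_2^{2b-k_2/2+\varepsilon}$). This should give
\[
\Big|\sum_{m_2}e(Q(m_1,m_2))\Big|^{2b}\lesssim M_2^{2b-\rho_2}\sum_{|h_2|\lesssim M_2^{\rho_2}}w(h_2)\,e\big(h_2P_{\rho_2}(m_1)\big)\;+\;\text{(acceptable error)},
\]
with bounded weights $w(h_2)$ independent of $m_1$, after discarding the other coefficients by the triangle inequality. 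Now apply Hölder in $m_1$ to $S^1$ and swap the sums. The problem reduces to bounding $\sum_{|h_2|\lesssim M_2^{\rho_2}}\big|\sum_{m_1}e(h_2P_{\rho_2}(m_1))\big|$. The term $h_2=0$ contributes $M_1M_2^{-\rho_2}$ relative to the trivial bound, and this is where the $1/M_-$ and $1/M_+$ type terms come from.

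\textbf{Step 2 (isolate the $m_1^{\rho_1}$ coefficient).} Repeat the same procedure in $m_1$ for each fixed $h_2\neq0$, using $b'\simeq k_1$ moments. This isolates the coefficient $h_2c_{\rho_1,\rho_2}$ of $m_1^{\rho_1}$ and leaves sums over $|h_1|\lesssim M_1^{\rho_1}$ of $e(h_1h_2c_{\rho_1,\rho_2})$, summed over the range $m\lesssim1$. After collecting everything, the main quantity is
\[
\frac{1}{M_1^{\rho_1}M_2^{\rho_2}}\sum_{0<|h_1|\lesssim M_1^{\rho_1}}\ \sum_{0<|h_2|\lesssim M_2^{\rho_2}}\min\Big(1,\ \big\|h_1h_2c_{\rho_1,\rho_2}\big\|^{-1}\Big)\text{-type averages},
\]
plus the boundary terms $h_1=0$ or $h_2=0$, which produce $1/M_-$ and $\log q/M_+$. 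Setting $n=h_1h_2\le M_1^{\rho_1}M_2^{\rho_2}$, I would apply the standard estimate
\[
\sum_{n\le N}\min\big(X,\|n\alpha\|^{-1}\big)\lesssim\Big(\frac{N}{q}+1\Big)(X+q\log q)
\]
under \eqref{eq:w6}. This produces the bracket $\frac1q+\frac{q\log q}{M_1^{\rho_1}M_2^{\rho_2}}$. The accumulated Hölder exponents (one factor $2k_2$ from Step 1 and one factor $2k_1$ from Step 2, together with a further Cauchy--Schwarz) give the power $1/(4k_1k_2)$.

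\textbf{Main obstacle.} The hard step is the passage from the double sum over $(h_1,h_2)$ to a single sum over $n=h_1h_2$. Done naively, this costs the divisor function $\tau(n)\lesssim n^{\varepsilon}$, which would destroy the clean $\log q$ factors and replace them by $M^{\varepsilon}$ losses. I would try first to avoid the divisor bound: fix $h_2$ and apply the $\min$-sum estimate to $\alpha=h_2c_{\rho_1,\rho_2}$ directly. This uses Lemma \ref{lem:3} (with $Q=h_2$) to find a rational approximation $a'/q'$ of $h_2c_{\rho_1,\rho_2}$ with $q/(2h_2)\le q'\le 2M$, and then sums the resulting bounds over $h_2$. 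Any residual $\varepsilon$-losses would be absorbed by enlarging the exponent, which is why the statement has the crude exponent $1/(4k_1k_2)$ rather than an optimal one. Checking that the boundary cases produce exactly $1/M_-$ and $\log q/M_+$, rather than worse terms, is the other place where I expect care to be needed.
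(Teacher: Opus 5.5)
Your toolkit is the right one: H\"older, Vinogradov expansion with the mean value theorem in each variable, then the linear $\min$-sum lemma. The paper itself gives no argument and only cites Proposition~5.22 of BMSW. As written, however, the proposal has a genuine gap, and it starts in Step~1. After H\"older in $m_1$ and expanding the $2k_2$-th power, one has
\[
\sum_{m_1}\Big|\sum_{m_2}\ex(Q(m_1,m_2))\Big|^{2k_2}=\sum_{h}J(h)\sum_{m_1}\ex\Big(\sum_{j=1}^{d_2}h_jP_j(m_1)\Big).
\]
Every $h_j$ here multiplies a polynomial depending on $m_1$. So the phases with $j\neq\rho_2$ cannot be ``discarded by the triangle inequality'' while $\ex(h_{\rho_2}P_{\rho_2}(m_1))$ is kept; your display has a complex right-hand side and is not an inequality. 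Having discarded that structure, you apply a one-variable bound to $h_2P_{\rho_2}$ for each fixed $h_2$. This lands you on the bilinear sum $\sum_{h_1,h_2}\min(\cdot,\|h_1h_2c_{\rho_1,\rho_2}\|^{-1})$, and neither of your remedies closes it:
\begin{itemize}
\item Lemma~\ref{lem:3} with $Q=h_2$ only yields $q'\ge q/(2|h_2|)$. This is vacuous for $|h_2|\gtrsim q$, which is typically most of the range $|h_2|\lesssim M_2^{\rho_2}$.
\item Divisor losses ($\tau(n)\lesssim (M_1M_2)^{\varepsilon}$, or $\tau(q)$) cannot be ``absorbed by enlarging the exponent''. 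The exponent $1/(4k_1k_2)$ and the constant $C$ are fixed, and for, e.g., $q\approx\log M_1M_2$ a factor $(M_1M_2)^{\varepsilon}$ already makes the bound worse than trivial.
\end{itemize}
The $\varepsilon$ in your mean value theorem is likewise unnecessary. With $k_i=d_i(d_i+1)$ variables, well above the critical number $d_i(d_i+1)/2$, one has $J_{k_i,d_i}(N)\lesssim N^{2k_i-d_i(d_i+1)/2}$ with no loss. That is why no $\varepsilon$ appears in \eqref{eq:86}.

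The missing idea is to keep all coordinates and make the weights uniform.
\begin{itemize}
\item Bound $J(h)\le J(0)$, so the right-hand side is at most $J(0)\sum_{h\in H}|\sum_{m_1}\ex(\sum_jh_jP_j(m_1))|$, where $H=\prod_{j\in[d_2]}[-k_2M_2^{j},k_2M_2^{j}]$.
\item Apply H\"older over $h\in H$ with exponent $2k_1$. Expand in $m_1$ with power sums $g=(g_{\gamma_1})_{\gamma_1\in[d_1]}$ ranging over $G=\prod_{\gamma_1\in[d_1]}[-k_1M_1^{\gamma_1},k_1M_1^{\gamma_1}]$, with weights $J'(g)\le J'(0)$.
\item The phase is now $\sum_{\gamma_1,j}g_{\gamma_1}h_jc_{\gamma_1,j}$. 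The sum over the uniformly weighted box $H$ can be done \emph{exactly}: it factors into geometric series bounded by $\prod_{j}\min(|H_j|,\|\sum_{\gamma_1}g_{\gamma_1}c_{\gamma_1,j}\|^{-1})$.
\item Only now drop the factors $j\neq\rho_2$, fix $g_{\gamma_1}$ for $\gamma_1\neq\rho_1$, and sum over $g_{\rho_1}$. This is a \emph{linear} sum $\sum_{|x|\lesssim M_1^{\rho_1}}\min(N,\|xc_{\rho_1,\rho_2}+\beta\|^{-1})$ with $N\simeq M_2^{\rho_2}$. The bound $(X/q+1)(N+q\log q)$ applies to it directly, with the $a/q$ of \eqref{eq:w6}.
\end{itemize}
The product $g_{\rho_1}h_{\rho_2}$ is still present, but the exact summation in $h_{\rho_2}$ consumes it, so no divisor function ever appears. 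Using $J(0)|H|\lesssim M_2^{2k_2}$ and $J'(0)|G|\lesssim M_1^{2k_1}$ gives exactly the power $1/(2k_2\cdot 2k_1)=1/(4k_1k_2)$. Your extra Cauchy--Schwarz would give $1/(8k_1k_2)$ instead.

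The terms $1/M_-$ and $\log q/M_+$ are the $1/X$ and $\log q/N$ terms of that lemma, not zero-frequency contributions. To get $X=M_-$ and $N=M_+$ for either $i$, write $S^i=\sum_{m_i}\epsilon(m_i)\sum_{m_{3-i}}\ex(Q)$ with $|\epsilon|=1$. You can then perform the two expansions in whichever order is needed, since the weighted Vinogradov counts are still bounded by $J(0)$.
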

For future reference we set
\begin{equation}\label{eq:delta_Weyl}
\delta_{\rm{Weyl}}:=\min\Big(\frac{1}{2\tau(d_2)}, \frac{1}{4k_1k_2}\Big), 
\end{equation}
with $\tau(d_2)$ from \eqref{eq:w4'} and $k_i:=d_i(d_i+1)$ for $i\in[2]$.

\subsection{Estimates for complete exponential sums}
For $d_1, d_2\in\ZZ_+$ let
\begin{align}
\label{eq:2}
\Gamma:=\Gamma(d_1, d_2):=\NN_{\le d_1}\times \NN_{\le d_2}\setminus\{(0, 0)\}.
\end{align}
Let
$\emptyset\neq\Lambda\subseteq \Gamma$ and define the corresponding double complete
exponential sum by
\begin{align}
\label{eq:323}
G^\Lambda(a/q):=\frac{1}{q^2}\sum_{r_1=1}^q\sum_{r_2=1}^q\ex((r_1, r_2)^\Lambda\cdot{a/q}),
\qquad  a \in\ZZ^\Lambda, \quad q\in \ZZ_+.
\end{align}

We begin with recalling a well-known bound.

\begin{lemma}
\label{lem:31}
Let $d_1, d_2\in\ZZ_+$ be given and let
$\emptyset\neq\Lambda \subseteq \Gamma$ with $\Gamma=\Gamma(d_1, d_2)$
as in \eqref{eq:2}. Then there are $C_\Lambda\in\RR_+$ and
$\delta_{\Lambda}\in(0, 1)$ such that for every
$a = (a_{\gamma})_{\gamma \in \Lambda} \in \ZZ^{\Lambda}$ and
$q\in\ZZ_+$ satisfying
\begin{align*}
\gcd(\{a_{\gamma_1, \gamma_2}: (\gamma_1, \gamma_2)\in \Lambda\}\cup\{q\})=1,
\end{align*}
we have
\begin{align}
\label{eq:97}
|G^{\Lambda}({a/q})|\le C_\Lambda q^{-\delta_{\Lambda}}.
\end{align}
\end{lemma}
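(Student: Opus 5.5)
We must bound the complete sum $G^\Lambda(a/q)$ for a polynomial $Q(r_1,r_2)=\sum_{\gamma\in\Lambda}a_\gamma r_1^{\gamma_1}r_2^{\gamma_2}$ with integer coefficients, no constant term, and the content of $(a_\gamma)$ coprime to $q$. The plan is a reduction to prime powers via the Chinese remainder theorem, followed by Weyl's inequality (Theorem \ref{thm:weyl1par}) applied in one variable, in the spirit of the one-parameter theory.

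\textbf{Step 1 (multiplicativity).} Write $q=q_1q_2$ with $(q_1,q_2)=1$. Parametrise $r_i=q_2s_i+q_1t_i$ with $s_i$ modulo $q_1$ and $t_i$ modulo $q_2$. Then
\[
G^\Lambda(a/q)=G^\Lambda(a'/q_1)\,G^\Lambda(a''/q_2),
\]
where $a'_\gamma\equiv a_\gamma q_2^{|\gamma|-1}\bar q_2$-type twists (more precisely $a'_\gamma=a_\gamma q_2^{|\gamma|}\overline{q_2}$ modulo $q_1$). The coprimality of the content is preserved, because $q_2$ is invertible modulo $q_1$. Hence it suffices to prove
\[
|G^\Lambda(a/p^\ell)|\le C p^{-\delta\ell}
\]
for prime powers, with $C=1$ whenever $p$ is larger than a constant depending on $\Lambda$. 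The product of the constants over the finitely many small primes is then bounded, and this gives \eqref{eq:97}. The requirement $C=1$ for large $p$ is where care is needed.

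\textbf{Step 2 (reduction to one variable).} Since $(a,p)=1$, some $a_{\gamma^*}$ is a unit modulo $p$. Write
\[
Q(r_1,r_2)=\sum_{j=0}^{d_2}A_j(r_1)\,r_2^{\,j},
\]
where the $A_j$ are polynomials in $r_1$ with integer coefficients. Then
\[
|G|\le \mathbb E_{r_1}\Big|\mathbb E_{r_2}\,\ex\Big(\sum_j A_j(r_1)r_2^{\,j}/q\Big)\Big|.
\]
Fix $j_0=\gamma^*_2$. If $j_0\ge1$, the polynomial $A_{j_0}(r_1)$ has a unit coefficient, namely the coefficient of $r_1^{\gamma_1^*}$.

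To bound the inner sum, use the complete-sum version of Weyl's estimate \eqref{eq:w4'}. Take $M=q$, so the sum has length $q$; in reduced form $A_{j_0}(r_1)/q=b/q'$ with $q'=q/(A_{j_0}(r_1),q)$. This gives a saving of $(q')^{-1/\tau}$ up to logs. (One could alternatively use Hua's bound for complete sums, which holds with the constant $1$ for large primes.)

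Next, count the "bad" $r_1$, namely those for which $(A_{j_0}(r_1),p^\ell)\ge p^{\ell/2}$. Because $A_{j_0}$ has a coefficient that is a unit, its reduction modulo $p$ is nonzero. A Hensel/Stečkin-type root count (e.g.\ Konyagin's bound) shows that the number of $r_1$ modulo $p^\ell$ with $p^{\ell/2}\mid A_{j_0}(r_1)$ is at most $C_{d_1}p^{\ell-\ell/(2d_1)}$.

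If $j_0=0$, the roles are swapped: sum first in $r_1$, with $r_2$ fixed, and use instead the unit coefficient of $A_0$, which is a polynomial in $r_1$ only.

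\textbf{Step 3 (combining and the main obstacle).} Combining the good and bad $r_1$ yields
\[
|G|\lesssim p^{-\ell\delta}
\]
for some $\delta>0$. The losses coming from logarithms, and from the constant $C$ in Weyl's inequality, are what prevent the constant from being $1$ for large primes. Handling them is the main obstacle.

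The first approach I would try is to treat $\ell=1$ and large $p$ directly. Here the Weil bound (or Deligne's bound, in two variables) applies to the nonconstant polynomial $Q$ modulo $p$, giving $|G|\le (\deg Q)\,p^{-1/2}\le 1$ once $p>(\deg Q)^2$. For $\ell\ge2$, I would reduce via the standard stationary-phase recursion for complete sums modulo $p^\ell$, which gives the bound with constant $1$ once $p$ is large compared to the degrees.

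Since the paper only needs some $\delta_\Lambda$, I would accept a small $\delta$ and freely absorb all logarithmic losses into it.
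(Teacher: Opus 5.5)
Your argument takes a genuinely different route from the paper's. The paper's proof is a one-line reduction. It extends $a$ by zero to $b\in\ZZ^{\Gamma}$, notes that $G^{\Lambda}(a/q)=G^{\Gamma}(b/q)$ with the same coprimality, and quotes the classical bound $|G^{\Gamma}(b/q)|\lesssim q^{-\delta}$ for complete multivariate sums \cite[Chapter 2]{ACK}. Alternatively it derives that bound from Theorems \ref{thm:weyl1par} and \ref{thm:weyl2par} as in \cite[Lemma 4.14]{MSZ3}. In the paper's own Weyl-type arguments (compare Proposition \ref{prop:32}), a composite modulus is handled without factorising $q$: Lemma \ref{lem:simplified_q} produces one coefficient with reduced denominator $\ge q^{2^{-K}}$.

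Your CRT factorisation is correct. The twist is $a'_\gamma\equiv a_\gamma q_2^{|\gamma|-1}\pmod{q_1}$, and the content stays coprime. It buys you what composite moduli lack, namely a single coefficient that is a unit. Fibrewise one-variable Weyl plus a Ste\v{c}kin--Konyagin root count for $A_{j_0}$ then gives a self-contained proof. It needs only Theorem \ref{thm:weyl1par}, not the two-parameter inequality, at the price of importing the root-counting bound for polynomial congruences modulo prime powers.

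Two repairs are needed.

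\emph{First}, the choice $M=q$ in Step 2 fails when $j_0=1$. The term $q'/M^{j_0}=q'/q$ in \eqref{eq:w4'} equals $1$ on the generic good fibre ($q'=q$), so Weyl gives nothing there. Use periodicity instead: $\sum_{r_2\in[q]}=q^{-1}\sum_{r_2\in[q^2]}$. Then \eqref{eq:w4'} with $M=q^2$ gives $\EE_{r_2}|\cdots|\lesssim \log q\,(1/q'+1/q)^{c}$ with $c=c(d_2)>0$, for every $j_0\ge 1$.

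\emph{Second}, Step 3 treats a non-issue, and the method you propose there is the one unjustified part of the proposal. A stationary-phase recursion modulo $p^\ell$ for a two-variable polynomial with constant exactly $1$ is not routine, since degenerate critical points occur. You do not need it:
\begin{itemize}
\item Step 2 already gives $|G^{\Lambda}(a/p^\ell)|\le Cp^{-\delta\ell}$ with $C$ independent of $p$ and $\ell$, because $\log p^\ell\lesssim_\varepsilon p^{\varepsilon\ell}$ uniformly and the bad-fibre proportion is $\lesssim p^{-\ell/(2d_1)}$ uniformly.
\item Multiplicativity then yields $|G^{\Lambda}(a/q)|\le C^{\omega(q)}q^{-\delta}$.
\item Since $\omega(q)\lesssim \log q/\log\log q$, we have $C^{\omega(q)}\lesssim_\varepsilon q^{\varepsilon}$, which is absorbed into $\delta_\Lambda$.
\end{itemize}
So you can delete the Weil/Deligne discussion and the constant-$1$ requirement entirely.
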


\begin{proof}
Define a new sequence $b=(b_\gamma)_{\gamma\in\Gamma}\in\ZZ^{\Gamma}$ by setting $b_\gamma:=a_\gamma$ if $\gamma \in \Lambda$, and
$b_\gamma:=0$ if $\gamma \in \Gamma\setminus\Lambda$. By the standard exponential sum estimates for complete
sums, there exists $\delta\in(0, 1)$ such that
\begin{align}
\label{eq:97'}
|G^{\Gamma}({b/q})|\lesssim_\Gamma q^{-\delta},
\end{align}
see for instance \cite[Chapter 2]{ACK}.  Alternatively, to prove
\eqref{eq:97'}, one can use Theorems 
\ref{thm:weyl1par} and \ref{thm:weyl2par}, and proceed as in \cite[Lemma 4.14]{MSZ3}.  Noting
that $G^{\Lambda}({a/q})=G^{\Gamma}({b/q})$, inequality
\eqref{eq:97} follows.
\end{proof}

We will need a counterpart of Lemma \ref{lem:31} for the partially complete exponential sums.
Fix $\emptyset\neq\Lambda \subseteq \Gamma$ as above. Partially complete exponential sums are defined by  
\begin{align}
\label{eq:GGamma2_def}
G^{\Lambda_2}_{m_1}(a/q)
:=\frac{1}{q}
\sum_{r_2=1}^{q}
\ex\big((m_1, r_2)^{\Lambda_2}\cdot a/q\big), \qquad a/q\in(\TT\cap\QQ)^{\Lambda_2}, \quad m_1\in\ZZ_+,
\end{align}
and
\begin{align}
\label{eq:GGamma2_def'}
G^{\Lambda_1}_{m_2}(a/q)
:=\frac{1}{q}
\sum_{r_1=1}^{q}
\ex\big((r_1, m_2)^{\Lambda_1}\cdot a/q\big), \qquad a/q\in(\TT\cap\QQ)^{\Lambda_1}, \quad m_2\in\ZZ_+,
\end{align}
where $\Lambda_1:=\Set{\gamma\in\Lambda}{\gamma_1\not=0}$ and $\Lambda_2:=\Set{\gamma\in\Lambda}{\gamma_2\not=0}$.

To estimate \eqref{eq:GGamma2_def} and \eqref{eq:GGamma2_def'} we need a simple arithmetic lemma.
\begin{lemma}\label{lem:simplified_q}
Let $K\in\ZZ_+$ and suppose that $a=(a_1, \dots, a_K)\in \ZZ^K$ and
$q\in\ZZ_+$ are such that $(a,q)=(a_1,\dots a_K,q)=1$. For each
$i\in[K]$ let $b_i\in \ZZ$ and $q_i\in\ZZ_+$ be such that
${b_i}/{q_i}={a_i}/{q}$ and $(b_i,q_i)=1$.
Then there exists $i_0\in [K]$ such that $q_{i_0}\ge q^{1/2^K}$.
\end{lemma}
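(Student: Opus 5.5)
The plan is a short $p$-adic valuation argument. Since $b_i/q_i=a_i/q$ is in lowest terms, $q_i=q/(a_i,q)$, so $q_i\mid q$ for every $i\in[K]$. The task is to show that the $q_i$ cannot all be small, because together they must recover $q$.

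The first step is to prove that $\lcm(q_1,\ldots,q_K)=q$. Set $L:=\lcm(q_1,\ldots,q_K)$; clearly $L\mid q$. For the converse, fix a prime $p$ with $p^e\,\|\,q$. The hypothesis $(a_1,\ldots,a_K,q)=1$ gives some $i$ with $p\nmid a_i$ whenever $e\ge1$. For that index $(a_i,q)$ is coprime to $p$, so $p^e\mid q_i$, and hence $p^e\mid L$. Running over all primes gives $L=q$.

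The second step is the bound $q\le\prod_{i\in[K]}q_i$, which follows since the lcm of positive integers divides their product. Consequently $\max_i q_i\ge q^{1/K}$.

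The final step is to compare exponents. Because $2^K\ge K$ for every $K\in\ZZ_+$ and $q\ge1$, we have $q^{1/K}\ge q^{1/2^K}$. Choosing $i_0$ to be an index where the maximum is attained gives $q_{i_0}\ge q^{1/2^K}$. (The exponent $2^{-K}$ in the statement is weaker than what this argument gives; an inductive proof on $K$ would naturally produce it, but the direct route gives the stronger $q^{1/K}$.)

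No step here is a genuine obstacle. The only point needing care is the prime-by-prime check that the coprimality hypothesis forces each prime power of $q$ to appear fully in some $q_i$. That check is exactly where $(a,q)=1$ is used, and without it the conclusion fails, for example when every $a_i$ shares a common factor with $q$.
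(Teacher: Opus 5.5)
Your proof is correct, but it takes a different route from the paper's.

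\textbf{The paper's route.} It argues by contradiction. Writing $d_i=(a_i,q)$, so that $q_i=q/d_i$, it assumes $d_i>q^{1-\kappa}$ for every $i$, with $\kappa=2^{-K}$. It then introduces the partial gcds $D_j=(d_1,\dots,d_j)$, which satisfy $D_K=1$. The key fact is that coprime divisors of $q$ have product dividing $q$; this is applied first to $D_{K-1}$ and $d_K$. An induction down the chain then gives $D_{K-j}<q^{(1+2+\cdots+2^{j-1})\kappa}$, and this eventually contradicts $d_1=D_1>q^{1-\kappa}$.

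\textbf{Your route.} You isolate the identity $\lcm(q_1,\dots,q_K)=q$, proved prime by prime; this is exactly where $(a,q)=1$ is used. You then bound the lcm by the product. This is direct rather than by contradiction, and it gives the stronger bound $\max_i q_i\ge q^{1/K}$. That bound is essentially sharp: take $q=p_1\cdots p_K$ with distinct primes of comparable size and $a_i=q/p_i$.

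\textbf{On the exponent $2^{-K}$.} It is only an artifact of a lossy induction in the paper. The lcm of $D_{j-1}$ and $d_j$ divides $q$, so $D_{j-1}d_j\le qD_j$, hence $D_{j-1}<q^{\kappa}D_j$. Running the same chain with this inequality gives $D_{K-j}<q^{j\kappa}$, and therefore also $q^{1/K}$.

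For the application in Proposition~\ref{prop:32}, all that is needed is some power saving in $q$ with an exponent depending on $|\Gamma|$. Either bound suffices there, and yours could only improve the exponent.
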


\begin{proof}
If $K=1$ then there is nothing to prove, so assume that
$K\ge 2$. Denote $\kappa:={2^{-K}}$, and for $i\in[K]$ let
$d_i:=(a_i,q)$. Since $b_i={a_i}/{d_i}$ and $q_i={q}/{d_i}$, it
suffices to prove that for some $i_0\in[K]$ we have
$d_{i_0}\le q^{1-\kappa}$. Assume for a contradiction that for each
$i\in[K]$ one has $d_i>q^{1-\kappa}$.

For $j\in[K]$ let $D_j:=(d_1,\dots, d_j)$. By the assumption $(a,q)=1$, we have
$$
D_K=(d_1,\dots, d_K)=(a_1,\dots,a_K,q)=1.
$$
Furthermore, we have $(D_{K-1}, d_K)=D_K=1$. Thus, $D_{K-1}$ and $d_K$ are co-prime and both divide $q$. Consequently, the product  $D_{K-1}d_K$ also divides $q$. In particular,
$$
d_K D_{K-1}\le q.
$$
Since $d_K>q^{1-\kappa}$, it follows that $D_{K-1}<q^\kappa$. Suppose that for some $j\in[K-2]$, we have
\begin{align}
\label{eq:1}
D_{K-j} < q^{(1+2+2^2+\dots+2^{j-1})\kappa}.
\end{align}
By induction one can easily prove that \eqref{eq:1} holds with $j+1$
in place of $j$. Then taking $j=K-1$ in \eqref{eq:1}, we obtain
$$
q^{1-\kappa} < d_1=D_1 < q^{(1+2+2^2+\dots+2^{K-2})\kappa} < q^{2^{K-1}\kappa},
$$
which leads to the inequality $\kappa (2^{K-1}+1) > 1$, which cannot hold since $\kappa=1/2^K$. 
\end{proof}

The following result for  the partially complete exponential sums from \eqref{eq:GGamma2_def}  will be  needed later. 
\begin{proposition}
\label{prop:32}
Let $d_1, d_2\in\ZZ_+$ be given and recall $\Gamma=\Gamma(d_1, d_2)$
from \eqref{eq:2}.  Then there exist $C_\Gamma\in\RR_+$ and
$\delta\in(0, 1)$ such that for any $M_1\in\ZZ_+$ and
$a\in\ZZ^{\Gamma_2}, q\in\ZZ_+$ satisfying $(a, q)=1$ and
$q\le M_1^{1/\chi}$ for some $0<\chi<\frac{1}{10|\Gamma|}$, we have
\begin{align}
\label{eq:Gauss2}
\frac{1}{M_1}\sum_{m_1=1}^{M_1}|G^{\Gamma_2}_{m_1}(a/q)|\le C_\Gamma \, q^{-\delta}.
\end{align}
An analogous result holds for the partially complete exponential sums from \eqref{eq:GGamma2_def'}.
\end{proposition}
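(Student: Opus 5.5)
The plan is to reduce \eqref{eq:Gauss2} to the two-parameter Weyl inequality of Theorem \ref{thm:weyl2par}, applied to a double sum whose second variable runs over a full period $[q]$. Write $Q(m_1,r_2):=(m_1,r_2)^{\Gamma_2}\cdot a/q=\sum_{\gamma\in\Gamma_2}(a_\gamma/q)\,m_1^{\gamma_1}r_2^{\gamma_2}$, which has no constant term. Then, directly from \eqref{eq:GGamma2_def} and \eqref{eq:87},
\[
\frac{1}{M_1}\sum_{m_1=1}^{M_1}|G^{\Gamma_2}_{m_1}(a/q)|=\frac{1}{M_1q}\,S^1_{M_1,q}(Q).
\]
Note that $Q$ is $q$-periodic in $r_2$, so only the $m_1$-range $[M_1]$ is incomplete; it may be much shorter than $q$, since we only know $q\le M_1^{1/\chi}$. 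Theorem \ref{thm:weyl2par} does not require $M_1\ge q$, so it suits this situation. The hypothesis will enter only through $M_1\ge q^{\chi}$, which turns every negative power of $M_1$ into a negative power of $q$.

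\textbf{Main case.} Set $K:=|\Gamma_2|$. Suppose some $\rho=(\rho_1,\rho_2)\in\Gamma_2$ with $\rho_1\ge1$ has $a_\rho/q=b_\rho/q_\rho$ in lowest terms with $q_\rho\ge q^{c}$ for a fixed $c=c(\Gamma)>0$. Apply \eqref{eq:86} with $M_2=q$, at this $(\rho_1,\rho_2)$ with $c_{\rho}=b_\rho/q_\rho$ exactly, so \eqref{eq:w6} holds trivially. The four terms are bounded as follows:
\begin{itemize}
\item $1/M_-\le M_1^{-1}+q^{-1}\le 2q^{-\chi}$, since $\rho_1,\rho_2\ge1$ and $M_1\ge q^\chi$;
\item $q_\rho\log q_\rho/(M_1^{\rho_1}q^{\rho_2})\le \log q/M_1\lesssim q^{-\chi/2}$, using $q_\rho\le q\le q^{\rho_2}$;
\item $1/q_\rho\le q^{-c}$;
\item $\log q_\rho/M_+\le\log q/q$.
\end{itemize}
Hence $S^1_{M_1,q}(Q)\lesssim M_1q\cdot q^{-\delta}$ with $\delta=\min(\chi/2,c)/(4k_1k_2)$, which is \eqref{eq:Gauss2}. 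Lemma \ref{lem:simplified_q} provides such a $\rho$ with $c=2^{-K}$ whenever the subvector $a':=(a_\gamma)_{\gamma\in\Gamma_2,\gamma_1\ge1}$ satisfies $(a',q)=1$. More generally, put $g:=(a',q)$ and apply the lemma to $a'/g$ and $q'=q/g$; this gives $\rho$ with $\rho_1\ge1$ and $q_\rho\ge (q')^{2^{-K}}$. So the main case covers $g\le q^{1/2}$, with $c=2^{-K-1}$.

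\textbf{Degenerate case $g>q^{1/2}$.} This is the step I expect to be the real obstacle, since here the large denominators may sit only on the coefficients $a_{0,\gamma_2}$, where $\rho_1=0$ and Theorem \ref{thm:weyl2par} does not apply. Every $a_\gamma$ with $\gamma_1\ge1$ is divisible by $g$. Because $(a,q)=1$, the vector $(a_{0,\gamma_2})_{\gamma_2}$ has no common factor with $g$. I would split $r_2=s+gt$ with $s\in[g]$ and $t\in[q']$; this is legitimate since $g\mid q$.

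In the phase, the $m_1$-dependent part equals $\sum_{\gamma_1\ge1}(a_\gamma/g)m_1^{\gamma_1}r_2^{\gamma_2}/q'$. Modulo $1$ it depends on $r_2$ only through $r_2 \bmod q'$. Therefore the $t$-sum and $s$-sum should factor, or at least be controllable, so that $|G_{m_1}|$ is dominated, uniformly in $m_1$, by a one-variable complete sum modulo $g$ with coefficients $(a_{0,\gamma_2})$ coprime to $g$. For that sum, Lemma \ref{lem:31} (one-variable instance) gives $\lesssim g^{-\delta_\Lambda}\le q^{-\delta_\Lambda/2}$.

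If the factorisation is not clean (it need not be when $(g,q')\ne1$), I would first try the CRT. Write $q=q_1q_2$ with $q_1$ collecting the primes where $g$ carries at least half the valuation, and use multiplicativity of the complete sum in $r_2$ to separate the two moduli. The $q_1$-part is then handled by Lemma \ref{lem:31} uniformly in $m_1$. The $q_2$-part is handled by the main-case argument.

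Combining the two cases gives \eqref{eq:Gauss2} with $\delta$ depending only on $\Gamma$ and $\chi$. The statement for \eqref{eq:GGamma2_def'} follows by interchanging the roles of the variables and using $S^2$ in \eqref{eq:86}.
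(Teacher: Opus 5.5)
Your argument is correct once the degenerate case is run through the Chinese remainder theorem as you suggest. Your main case is essentially the paper's Case 1: apply \eqref{eq:86} with $M_2=q$ at a coefficient $\rho$ with $\rho_1\ge1$ and large reduced denominator, and use $M_1\ge q^{\chi}$.

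The degenerate case is where you genuinely depart from the paper. The paper splits according to whether some $q_\gamma$ with $\gamma_1\neq0$ exceeds $\mathfrak q^{\rho_\gamma}$, using the threshold hierarchy \eqref{eq:rho_conds}. In its Case 2 it bounds $|G^{\Gamma_2}_{m_1}(a/q)|$ uniformly in $m_1$ by the one-variable Weyl inequality in $r_2$; the hierarchy isolates a top coefficient $(0,\sigma)$ whose denominator beats the small denominators contributed by $m_1$.

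You instead factor the complete $r_2$-sum exactly. The detour through $r_2=s+gt$ (which does not factor in general) and the split according to the size of $g$ can both be dropped. Take $q_1$ to be the part of $q$ supported on primes dividing $g=(a',q)$, and $q_2=q/q_1$.
\begin{itemize}
\item For every prime $p\mid q_1$ and every $m_1$, the $r_2$-coefficients of the $q_1$-factor are, up to a unit, $a_{(0,\gamma_2)}+\sum_{\gamma_1\ge1}a_{(\gamma_1,\gamma_2)}m_1^{\gamma_1}\equiv a_{(0,\gamma_2)}\pmod p$. These are not all divisible by $p$, so Lemma \ref{lem:31} with $\Lambda=\{0\}\times[d_2]$ bounds this factor by $q_1^{-\delta_\Lambda}$ uniformly in $m_1$.
\item Since $(a',q_2)=1$, your main case bounds the $m_1$-average of the $q_2$-factor by $q_2^{-\delta}$.
\end{itemize}
The supremum of the first factor times the average of the second gives \eqref{eq:Gauss2}. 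Your half-valuation threshold also works, because every prime of $q_1$ still divides $g$ and $(a',q_2)\le q_2^{1/2}$, which is all your main case needs.

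The arithmetic route is exact and uniform in $m_1$. It also sidesteps subcases where \eqref{eq:w4'} gives no saving. For instance, when the only large denominator sits on the linear coefficient $(0,1)$, one needs exact orthogonality over complete periods, a step the paper's sketch of Case 2 does not spell out. The paper's route, in turn, stays inside the Weyl-type machinery and reuses the threshold device that reappears in \eqref{eq:QqmaxM}.

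Finally, your $\delta$ depends on $\chi$, and this is unavoidable. Take $a_{(0,\gamma_2)}=-1$, $a_{(1,\gamma_2)}=1$ and all other entries $0$; then $G^{\Gamma_2}_1(a/q)=1$, so the average is at least $1/M_1$. The proposition must therefore be read with $\chi$ fixed, as in its application \eqref{eq:55}.
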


\begin{proof}
Since $\Gamma=\NN_{\le d_1}\times \NN_{\le d_2}\setminus\{(0, 0)\}$ as in \eqref{eq:2}, then $\Gamma_2=\mathbb N_{\le d_1}\times[d_2]$.
To prove \eqref{eq:Gauss2} we fix  $a/q\in\QQ$ with $a=(a_\gamma)_{\gamma\in\Gamma^2}\in\ZZ^{\Gamma_2}$ and $q\in\ZZ_+$ such that $(a, q)=1$. 
For each $\gamma\in\Gamma_2$ let $b_\gamma\in\ZZ$ and $q_\gamma\in\ZZ_+$ be such that ${b_\gamma}/{q_\gamma}={a_\gamma}/q$. 
Let $\mathfrak{q}:=q^{1/2^{|\Gamma|}}$,
by Lemma \ref{lem:simplified_q}  there exists $\gamma_0\in\Gamma_2$ such that 
\begin{equation}\label{eq:mathfrakq_gamma0}
q_{\gamma_0}\ge \mathfrak{q}.
\end{equation}
Define a sequence $(\rho_\gamma)_{\gamma\in\Gamma^2}\in\QQ^{\Gamma_2}$ by setting
$$
\rho_{\gamma}:=
\begin{cases}
\frac{1}{(10|\Gamma|)^{d_2+1}}, &\quad \textrm{if} \quad \gamma\in  \{(\gamma_1, \gamma_2)\in\Gamma_2: \gamma_1\neq 0\},
\\
\frac{1}{(10|\Gamma|)^{\gamma_2-1}}, &\quad \textrm{if} \quad \gamma=(0, \gamma_2)\in \{(\gamma_1, \gamma_2)\in\Gamma_2: \gamma_1=0\}.
\end{cases}
$$
Note that we have
\begin{equation}\label{eq:rho_conds}
(10|\Gamma|)^{d_2+1}\rho_{\gamma}=(10|\Gamma|)^{d_2-1}\rho_{(0,d_2)}=\ldots=10|\Gamma|\rho_{(0,2)} =\rho_{(0,1)} =   1
\end{equation}
for any $\gamma\in  \{(\gamma_1, \gamma_2)\in\Gamma_2: \gamma_1\neq 0\}$.
We now consider two cases.

\paragraph{\textbf{Case 1}} Assume first that $q_\gamma \ge \mathfrak{q}^{\rho_\gamma}$ for some $\gamma\in\{(\gamma_1, \gamma_2)\in\Gamma_2: \gamma_1\neq 0\}$.  Since $M_1\ge q^{\chi}$, we can appeal to inequality \eqref{eq:86} with $M_2=q$ to obtain for some $\delta\in(0, 1)$ that
\begin{align*}
\frac{1}{M_1}\sum_{m_1=1}^{M_1}|G^{\Gamma_2}_{m_1}(a/q)|\lesssim_\Gamma q^{-\delta}.
\end{align*}
Let us emphasize that the assumption $M_1\ge q^{\chi}$ was critical here, since if $M_1$ was small compared to $q$, then the decay given by \eqref{eq:86} would be insufficient.

\paragraph{\textbf{Case 2}}
To complete the proof of the proposition it remains to consider the case when $q_\gamma < \mathfrak{q}^{\rho_\gamma}$ for all $\gamma\in\{(\gamma_1, \gamma_2)\in\Gamma_2: \gamma_1\neq 0\}$. In this case, we  show that for some $\delta>0$ the estimate
$$
|G^{\Gamma_2}_{m_1}(a/q)|\lesssim q^{-\delta}
$$
holds uniformly in $m_1$. The latter follows by the application of  Weyl's inequality (see Theorem \ref{thm:weyl1par}), which gives
decay in terms of a power of $\mathfrak{q}$, and consequently also in
terms of a power of $q$.  This completes the proof of the proposition.
\end{proof}

Finally, let \(\delta_{\emptyset}:=\delta \in (0, 1)\) be the exponent from Proposition \ref{prop:32} for which inequality \eqref{eq:Gauss2} and its analogue for \eqref{eq:GGamma2_def'} hold. Let also $\delta_\Lambda\in(0, 1)$ be the exponent from Lemma \ref{lem:31}. For future reference, as in \eqref{eq:delta_Weyl},  we set
\begin{align}
\label{eq:12}
\delta_{\rm{Gauss}}:=\min_{\Lambda\subseteq\Gamma}\delta_\Lambda\in(0, 1).
\end{align}

\section{Multi-parameter Ionescu--Wainger theory}
\label{sec:IW}
One of the most important ingredients in our argument is the
Ionescu--Wainger multiplier theorem for the family of
\textit{canonical fractions} proved recently in \cite{KMPWW}. In this
section, we will prove its multi-parameter variants.  We begin with
fixing necessary notation and terminology.

\subsection{Ionescu--Wainger multiplier theorem}
For $d\in\ZZ_+$ and $N\ge1$ define $1$-periodic sets of \textit{canonical fractions} by
\begin{align}
\label{IWeq:372}
\mathcal{R}_{\le N}^d := \big\{{a}/{q}\in(\QQ\cap\TT)^d:  q \in [N] \text{ and } (a, q)=1\big\},
\end{align}
where $(a,d):=\gcd(a_1,\dots,a_d,q)$ is the greatest common divisor of $a_1,\dots,a_d,q$. 
We also let
\begin{align}
\label{eq:372}
\Sigma_{\le l}^d :=\mathcal{R}_{\le 2^l}^d,
\quad \text{ and } \quad
\Sigma_l^d := \Sigma_{\le l}^d \backslash \Sigma_{\le l-1}^d.
\end{align}
Then 
\begin{align}
\label{eq:373}
|\Sigma_{\le l}^d| \le 2^{(d+1)l}, \qquad \text{ and } \quad Q_{\le l}:=\lcm([2^l])\le 3^{2^l}.
\end{align}

We now recall the  Ionescu--Wainger
multiplier theorem for canonical fractions from \cite{KMPWW}.

\begin{theorem}
\label{thm:iw-semi}
Let $d\in\ZZ_+$ and $p\in[p_0',p_0]$ for some $p_0\in2\ZZ_+$.  There exist absolute constants
$C_{d, p_0}, {\textbf{C}}_{{\rm IW}}(p_0)\in\RR_+$, such that, for every $l\in\NN$, the following
holds.
Let $0<\varepsilon_l \le (2p_0 2^{l p_0})^{-1}$, and let
$\mathfrak m: \RR^d \to L(H_0,H_1)$ be a measurable function supported on
$\varepsilon_{l}[-1/2, 1/2)^d$,  with values in the space $L(H_{0},H_{1})$ of bounded linear
operators between separable Hilbert spaces $H_{0}$ and $H_{1}$. 
Then 
\begin{align}
\label{eq:376}
\|T_{\ZZ^d}^{\Sigma_{\le l}^d}[\mathfrak m]\|_{\ell^p(\ZZ^d;H_0)\to \ell^p(\ZZ^d;H_1)}
\le C_{d, p_0}
2^{{\textbf{C}}_{{\rm IW}}(p_0) \frac{l\log \log l}{\log l}} \|T_{\RR^d}[\mathfrak m]\|_{L^{p_0}(\RR^d;H_0)\to L^{p_0}(\RR^d;H_1)}.
\end{align}
\end{theorem}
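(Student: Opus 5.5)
The result is quoted from \cite{KMPWW}, so here I only outline the argument I would give. It is modelled on the Ionescu--Wainger/Tao approach, adapted to the full set of canonical fractions $\Sigma_{\le l}^d$. I would write $N:=2^l$. Then $T_{\ZZ^d}^{\Sigma_{\le l}^d}[\mathfrak m]f=\sum_{\theta\in\Sigma_{\le l}^d} f_\theta$, where $\widehat{f_\theta}=\tau_\theta\mathfrak m\,\hat f$ is supported in the cube $\theta+\varepsilon_l[-1/2,1/2)^d$. The cubes are pairwise disjoint, because distinct fractions with denominators at most $N$ are $N^{-2}$-separated and $\varepsilon_l\ll N^{-2}$. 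By duality and interpolation (the bound is symmetric under $p\mapsto p'$ once the adjoint multiplier is used), it suffices to treat $p=p_0\in 2\ZZ_+$. I would also take $H_0=H_1=\CC$ first and recover the vector-valued case at the end by the same argument with Hilbert-space-valued functions.

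\textbf{Step 1 (single-frequency pieces via sampling).} For each $q\le N$, group the fractions $\theta=a/q$ with that exact denominator. On this group I would apply the Magyar--Stein--Wainger sampling principle (Proposition~\ref{prop:msw}) with modulus $q$, which is admissible because $\varepsilon_l\le q^{-1}$. This gives
\[
\big\|\sum_{a}T_{\ZZ^d}[\tau_{a/q}\mathfrak m]\,\Pi_{a/q}f\big\|_{\ell^{p_0}}\lesssim \|T_{\RR^d}[\mathfrak m]\|_{L^{p_0}\to L^{p_0}}\|f\|_{\ell^{p_0}},
\]
where $\Pi_{a/q}$ denotes a smooth frequency projection to a slightly enlarged cube around $a/q$, chosen so that it is still $\varepsilon_l$-small. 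The problem therefore reduces to a square-function/orthogonality statement: I need to control $\|\sum_\theta g_\theta\|_{p_0}$ by $\|(\sum_\theta|g_\theta|^2)^{1/2}\|_{p_0}$, with a loss of only $2^{O(l\log\log l/\log l)}$, and conversely control the square function of the projected pieces $\Pi_\theta f$ by $\|f\|_{p_0}$.

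\textbf{Step 2 (even-exponent expansion).} I would expand
\[
\big\|\sum_\theta g_\theta\big\|_{p_0}^{p_0}=\sum_{x}\Big|\big(\sum_\theta g_\theta\big)^{p_0/2}\Big|^2 .
\]
The $p_0/2$-fold products have Fourier support near sums $\theta_1+\dots+\theta_{p_0/2}$. Such a sum has denominator at most $N^{p_0/2}$, so two distinct sums are at least $N^{-p_0}$ apart. The hypothesis $\varepsilon_l\le(2p_0N^{p_0})^{-1}$ ensures that the enlarged supports of products corresponding to distinct sums are disjoint. Plancherel then reduces the estimate to bounding, for each fixed sum $\sigma$, the number of representations $\sigma=\theta_1+\dots+\theta_{p_0/2}$ (weighted appropriately, through Cauchy--Schwarz) by a suitable multiplicity. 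This is exactly where canonical fractions cost more than the Ionescu--Wainger fractions: the number of representations is controlled by divisor-type counts of the denominators involved.

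\textbf{Step 3 (the main obstacle: the multiplicity bound).} The hard step is to show that this multiplicity is at most $2^{C\,l\log\log l/\log l}$, rather than a power of $N$. I would first decompose each $q\le N$ as $q=q_sq_b$. Here $q_s$ collects the prime powers $p^k$ with $p\le l^{C}$, which are all divisors of one fixed smooth modulus, so they can be handled jointly by a single sampling step. The factor $q_b$ has only $O(l/\log l)$ prime factors, each exceeding $l^C$. Given the denominators of the summands, the possible numerators are then determined up to the choice of which large primes are shared among the summands. The number of such choices is at most the maximal divisor count $\max_{q\le N^{p_0}}d(q)^{O(p_0)}$. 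By the classical bound $d(q)\le 2^{(1+o(1))\log q/\log\log q}$, this yields the loss $2^{C\log N\cdot\log\log\log N/\log\log N}=2^{C l\log\log l/\log l}$. Tao's trick of iterating the large-prime decomposition over $O(\log l)$ levels is what I would try first in order to keep the exponent at this size; making that counting rigorous and uniform in $d$ is where I expect the real work to lie.

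Finally, the reverse square-function inequality $\|(\sum_\theta|\Pi_\theta f|^2)^{1/2}\|_{p_0}\lesssim\|f\|_{p_0}$ (up to the same loss) follows from the forward estimate by duality and randomisation with Rademacher signs (Lemma~\ref{lem:10}). Combining this with Step~1 gives \eqref{eq:376}.
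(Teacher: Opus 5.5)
The paper does not prove this statement; it imports it from \cite{KMPWW}. Your outline nevertheless has a genuine gap at its core, in Steps 1--3.

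\textbf{The decoupling into individual fractions fails.} You decouple \emph{individual} fractions $\theta\in\Sigma_{\le l}^d$ and aim for a reverse square-function bound
$\|\sum_\theta g_\theta\|_{p_0}\le L\,\|(\sum_\theta|g_\theta|^2)^{1/2}\|_{p_0}$ with $L=2^{O(l\log\log l/\log l)}$. You plan to obtain it from a bound on the number $r(\sigma)$ of representations $\sigma=\theta_1+\dots+\theta_{p_0/2}$. No such bound exists, because $\Sigma_{\le l}^d$ contains all reduced fractions with a given denominator $q\le 2^l$.

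Take $d=1$, a prime $q\approx 2^l$, a function $h$ with $\hat h$ supported in $\frac{\varepsilon_l}{4}[-1/2,1/2)$, and $g_a(x)=e(ax/q)h(x)$ for $a=1,\dots,q-1$. Then $\sum_a g_a=(q\ind{q\ZZ}-1)h$. Since $h$ is essentially constant on scales much smaller than $\varepsilon_l^{-1}\ge 2p_02^{lp_0}\gg q$, one gets $\|\sum_a g_a\|_{p_0}\approx q^{1-1/p_0}\|h\|_{p_0}$. The square function, by contrast, has norm $(q-1)^{1/2}\|h\|_{p_0}$. The loss is therefore $q^{1/2-1/p_0}$, a positive power of $2^l$ once $p_0\ge 4$.

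This is a failure of the method, not of the theorem. Let $\mathfrak m$ be a bump supported in $\varepsilon_l[-1/2,1/2)$ and equal to $1$ on $\supp\hat h$, and let $f=\sum_a g_a$. Then $T_{\ZZ}^{\Sigma^1_{\le l}}[\mathfrak m]f=f$, yet the first inequality of your chain already forces $L\gtrsim q^{1/2-1/p_0}$.

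On the counting side, $\theta+(-\theta)=0$ gives $r(0)\ge|\Sigma_{\le l}^d|\gtrsim 2^{(d+1)l}$ for $p_0=4$. The claim in Step 3 that the denominators determine the numerators up to divisor-type choices is also false: $a/q+(-a)/q=0$ for every admissible numerator $a$.

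\textbf{The proposed remedy in Step 3 does not rescue this.} Handling all $l^C$-smooth parts ``by a single sampling step'' means applying Proposition~\ref{prop:msw} at a modulus divisible by every $l^C$-smooth $q\le 2^l$. Such a modulus is at least $\prod_{p\le l^C}p^{\lfloor\log_p 2^l\rfloor}\ge (2^l/l^C)^{\pi(l^C)}$, where $\pi(l^C)$ is the number of primes up to $l^C$. The proposition then requires $\supp\mathfrak m$ to lie in a cube whose side is the reciprocal of that modulus. Under the hypothesis here, $\varepsilon_l$ is only of size about $2^{-p_0 l}$, so the step is not admissible.

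Moreover, the rough part $q_b$ can itself be a prime of size about $2^l$, which is exactly where the example above lives. So the numerators can never be ``counted away''.

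A working argument has to absorb the additive structure inside each group $q^{-1}\ZZ^d/\ZZ^d$ through the sampling principle. This costs $O(1)$, or $2^{\omega(q)}$ after M\"obius inversion to pass to reduced fractions. Orthogonality and counting can then be used only across different denominators. There the polynomial support condition $\varepsilon_l\le(2p_02^{lp_0})^{-1}$ is what separates the Fourier supports of the $p_0/2$-fold products. That structural step is the real content of the theorem, and it is missing from your proposal.
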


Theorem~\ref{thm:iw-semi} transfers square function estimates from the
continuous to the discrete setting, which will be useful in the
following sections.  The norm in \eqref{eq:376}, unlike the support
hypothesis, is scale-invariant, in the sense that $\mathfrak m$ can be replaced by $\mathfrak m(A\cdot)$ for any invertible linear
transformation $A:\RR^d\to \RR^d$. As a corollary, we obtain the
following result.

\begin{corollary}
Under the assumptions of Theorem \ref{thm:iw-semi}, for any $\varepsilon\in(0, 1)$, one has
\begin{align}
\label{eq:376cor}
\|T_{\ZZ^d}^{\Sigma_{\le l}^d}[\mathfrak m]\|_{\ell^p(\ZZ^d;H_0)\to \ell^p(\ZZ^d;H_1)}
\le_{d,\varepsilon, p_0}
2^{\varepsilon l} \|T_{\RR^d}[\mathfrak m]\|_{L^{p_0}(\RR^d;H_0)\to L^{p_0}(\RR^d;H_1)}.
\end{align}
\end{corollary}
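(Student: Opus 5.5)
This is immediate from \eqref{eq:376}: it suffices to show that the factor $2^{{\textbf{C}}_{{\rm IW}}(p_0) \frac{l\log \log l}{\log l}}$ is bounded by $C_{\varepsilon,p_0}2^{\varepsilon l}$ uniformly in $l\in\NN$. All hypotheses of Theorem \ref{thm:iw-semi} are assumed, so we invoke it verbatim and then compare the two exponential factors. The dependence of the implied constant on $d$ enters only through $C_{d,p_0}$. The dependence on $\varepsilon$ and $p_0$ enters through the threshold in $l$ described below.

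Fix $\varepsilon\in(0,1)$ and set $\mathbf C:={\textbf{C}}_{{\rm IW}}(p_0)$. For small $l$ (say $l\le 16$), where $\log\log l$ may be undefined or negative, we interpret the bound in the natural way: the factor is then an absolute constant depending only on $p_0$, and it can be absorbed into $C_{d,\varepsilon,p_0}$.

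For larger $l$, observe that $\frac{\log\log l}{\log l}\to0$ as $l\to\infty$. Hence there is $L=L(\varepsilon,p_0)$ such that $\mathbf C\frac{\log\log l}{\log l}\le\varepsilon$ for all $l\ge L$, and for such $l$ the factor is at most $2^{\varepsilon l}$. For the finitely many $l<L$, the factor is bounded by $\max_{l<L}2^{\mathbf C l\log\log l/\log l}$, a constant depending only on $\varepsilon$ and $p_0$.

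Combining both ranges, we obtain $2^{\mathbf C l\log\log l/\log l}\lesssim_{\varepsilon,p_0}2^{\varepsilon l}$ for every $l\in\NN$. Inserting this into \eqref{eq:376} yields \eqref{eq:376cor}. There is no real obstacle here; the only care needed is handling the small values of $l$ where the expression $\log\log l$ is degenerate.
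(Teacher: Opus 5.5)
Your argument is correct and is the same as the paper's. The paper gives no separate proof: it states the corollary as an immediate consequence of \eqref{eq:376}, using only that $\frac{l\log\log l}{\log l}=o(l)$. Hence the factor $2^{{\textbf{C}}_{{\rm IW}}(p_0)\frac{l\log\log l}{\log l}}$ is $\lesssim_{\varepsilon,p_0}2^{\varepsilon l}$, and the finitely many small $l$ are absorbed into the constant, exactly as you do.
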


An important ingredient in the proof of Theorem \ref{thm:iw-semi} 
is the sampling principle of
Magyar--Stein--Wainger from \cite{MSW}, which we  recall for future reference.

\begin{proposition}
\label{prop:msw}
Let $d\in\ZZ_+$ be given. There exists an absolute constant $C_{\rm MSW}(d)\in\RR_+$ such that the following holds.
Let $p \in [1,\infty]$ and $q\in\ZZ_+$, and let
$B_1, B_2$ be finite-dimensional Banach spaces.  Let
$\mathfrak m : \RR^d \to L(B_1, B_2)$ be a bounded operator-valued
function supported on $q^{-1}[-1/2,1/2)^d$, then
\[
\|T_{\ZZ^d}^{q^{-1}[q]^d}[\mathfrak m]\|_{\ell^{p}(\ZZ^d;B_1)\to \ell^{p}(\ZZ^d;B_2)}\le
C_{\rm MSW}(d)\|T_{\RR^d}[\mathfrak m]\|_{L^{p}(\RR^d;B_1)\to L^{p}(\RR^d;B_2)}.
\]
\end{proposition}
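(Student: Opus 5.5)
The plan is to reduce everything to the case $q=1$ by splitting $\ZZ^d$ into residue classes modulo $q$ and rescaling. In the case $q=1$ I would then compare the discrete operator with the continuous one through a band-limited extension of $f$ to $\RR^d$, together with a Plancherel--P\'olya type sampling bound. All operators involved are convolutions with scalar kernels acting on $B_1$-valued functions, so the Banach-space setting adds nothing beyond Minkowski's inequality.

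\textbf{Step 1: reduction to $q=1$.} Identify $\TT^d$ with $[-1/2,1/2)^d$. Since $\mathfrak m$ is supported in $q^{-1}[-1/2,1/2)^d$, the translates $\tau_{r/q}\mathfrak m$ with $r\in[q]^d$ have disjoint supports in $\TT^d$. The convolution kernel of $T_{\ZZ^d}^{q^{-1}[q]^d}[\mathfrak m]$ at $x\in\ZZ^d$ is therefore
\[
\sum_{r\in[q]^d}e(-x\cdot r/q)\,\mathcal F_{\RR^d}^{-1}\mathfrak m(x)=q^d\ind{q\ZZ^d}(x)\,\mathcal F_{\RR^d}^{-1}\mathfrak m(x).
\]
Because the kernel lives on $q\ZZ^d$, the operator preserves each coset $s+q\ZZ^d$. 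On a fixed coset, writing $f_s(y):=f(s+qy)$, it acts as convolution on $\ZZ^d$ with the kernel $q^d\mathcal F^{-1}_{\RR^d}\mathfrak m(q\,\cdot)=\mathcal F_{\RR^d}^{-1}[\mathfrak m(\cdot/q)]$. Its multiplier is $\mathfrak m_q(\xi):=\mathfrak m(\xi/q)$, which is supported in $[-1/2,1/2)^d$. Summing the $p$-th powers over the $q^d$ cosets shows that the $\ell^p$ norm of the original operator equals the norm of $T_{\ZZ^d}[\mathfrak m_q]$. By dilation invariance of $L^p(\RR^d)$ multiplier norms, $\|T_{\RR^d}[\mathfrak m_q]\|=\|T_{\RR^d}[\mathfrak m]\|$. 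It therefore suffices to treat $q=1$ with $\supp\mathfrak m\subseteq[-1/2,1/2)^d$.

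\textbf{Step 2: the case $q=1$.} Fix a Schwartz function $\varphi$ on $\RR^d$ with $\mathcal F_{\RR^d}\varphi=1$ on $[-1/2,1/2]^d$ and $\mathcal F_{\RR^d}\varphi$ supported in $[-1,1]^d$. Since the support of $\mathfrak m$ lies in a fundamental domain, the discrete kernel is $K(n)=\mathcal F^{-1}_{\RR^d}\mathfrak m(n)$, and moreover $K=K*\varphi$ on $\RR^d$. Setting $F:=\sum_{k\in\ZZ^d}f(k)\varphi(\cdot-k)$, we obtain for every $n\in\ZZ^d$
\[
T_{\ZZ^d}[\mathfrak m]f(n)=\sum_k f(k)\,(K*\varphi)(n-k)=T_{\RR^d}[\mathfrak m]F(n).
\]
Two facts remain. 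First, $\|F\|_{L^p(\RR^d;B_1)}\lesssim\|f\|_{\ell^p(\ZZ^d;B_1)}$, which follows from the rapid decay of $\varphi$ by a Young/Schur estimate. Second, the function $G:=T_{\RR^d}[\mathfrak m]F$ has Fourier support in $[-1,1]^d$, so $G=G*\psi$ for a Schwartz function $\psi$ whose Fourier transform equals $1$ on $[-1,1]^d$. Hence $\|G(n)\|_{B_2}\le\int\|G(y)\|_{B_2}|\psi(n-y)|\,dy$, and summing over $n$ gives $\|G|_{\ZZ^d}\|_{\ell^p}\lesssim\|G\|_{L^p}$. Combining these with $\|G\|_{L^p}\le\|T_{\RR^d}[\mathfrak m]\|\,\|F\|_{L^p}$ yields the claim with a constant depending only on $d$, through $\varphi$ and $\psi$.

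\textbf{Main obstacle.} The delicate point is the sampling step. The support hypothesis is used twice: to identify the discrete kernel with the restriction of $\mathcal F^{-1}_{\RR^d}\mathfrak m$ to $\ZZ^d$, which needs disjoint supports of the periodized translates, and to insert $\varphi$ with $\mathcal F\varphi\equiv1$ on the support. Because $\mathcal F\varphi$ only has to equal $1$ on $[-1/2,1/2]^d$ and may extend beyond the torus, $K*\varphi=K$ still holds when $\mathfrak m$ fills the whole fundamental cube. I would check this carefully, since it is exactly where a smaller support, as in a $[-1/4,1/4]^d$ version of the statement, might seem to be needed.
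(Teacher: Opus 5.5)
Your argument is correct and is essentially the standard proof of the Magyar--Stein--Wainger sampling principle. The paper does not reprove this result but cites it from \cite[Corollary 2.1]{MSW}. You split into residue classes modulo $q$ and rescale to reduce to $q=1$, and there you factor $T_{\ZZ^d}[\mathfrak m]$ through three operators: the band-limited extension $f\mapsto\sum_k f(k)\varphi(\cdot-k)$, then $T_{\RR^d}[\mathfrak m]$, then sampling of a Fourier-localized function.

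One minor correction to your closing remark. The kernel formula $q^d\ind{q\ZZ^d}\,\mathcal F^{-1}_{\RR^d}\mathfrak m$ follows just by unfolding the integral over $\TT^d$ and does not need the translates to have disjoint supports. So the support hypothesis really enters through $\mathcal F_{\RR^d}\varphi\equiv1$ on $\supp\mathfrak m(\cdot/q)$.
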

We refer to \cite[Corollary 2.1, pp. 196]{MSW} for a proof.  A
generalization of Proposition \ref{prop:msw} to real interpolation
spaces can be found in \cite{MSZ1}.  We emphasize that $B_1$ and $B_2$
are general (finite dimensional) Banach spaces in Proposition
\ref{prop:msw}, in contrast to the Hilbert space--valued multipliers
appearing in Theorem \ref{thm:iw-semi} and so Proposition
\ref{prop:msw} includes maximal function formulations and can also
accommodate oscillation seminorms.

\subsection{One-parameter seminorm variant of the Ionescu--Wainger theorem} We fix $k\in\ZZ_+$ and 
let $\Lambda:=\{\lambda_1,\ldots,\lambda_k\}\subset\ZZ_+$ be a  set  of size $k$ of natural positive exponents,
 and consider the associated one-parameter family of dilations which for every $x\in\RR^k$, is defined by
\begin{align*}
(0,\infty)\ni t\mapsto t\circ x:=(t^{\lambda_1}x_1,\ldots,t^{\lambda_k}x_k)\in\RR^k.
\end{align*}
Let  $\Upsilon:=(\Upsilon_n:\RR^k\to\CC: n\in\NN)$ be a sequence of measurable
functions which define a sequence of positive operators in the sense that for every $n\in\NN$, one has
\begin{align}
\label{eq:377}
T_{\RR^k}[\Upsilon_n]f\ge0 \quad\text{if}\quad f\ge0.
\end{align}
Furthermore suppose there exist  $C_{\Upsilon}\in\RR_+$,
 $\delta_{\Upsilon}\in(0, 1)$ and $\tau\in (1, 2]$ such that for every $\xi\in\RR^k$ and
$n\in\NN$, one has
\begin{align}
\label{eq:378}
\abs{\Upsilon_n(\xi)}&\le
C_{\Upsilon}\min\big\{1, \abs{\tau^n\circ \xi}^{-\delta_{\Upsilon}}\big\},\\
\label{eq:379}
\abs{\Upsilon_n(\xi)-1}&\le
C_{\Upsilon}\min\big\{1, \abs{\tau^n\circ \xi}^{\delta_{\Upsilon}}\big\}.
\end{align}
The condition \eqref{eq:377} implies that the operator  $T_{\RR^k}[\Upsilon_n] f = f *\mu_n$ is a convolution
with a positive measure $\mu_n$ and condition \eqref{eq:379} implies that $\Upsilon_n(0) = 1$ and so each $\mu_n$ 
is a probability measure. Hence for every
$p\in[1, \infty]$, we have 
\begin{align}
\label{eq:380}
A_p^{\Upsilon}:=\sup_{n\in\NN}\|T_{\RR^k}[\Upsilon_n]\|_{L^p(\RR^k)\to L^p(\RR^k)}\le 1.
\end{align}
In this generality, $L^p$ estimates (with $1<p\le \infty$) for the maximal function 
$\sup_{n\in\NN}|T_{\RR^k}[\Upsilon_n] f(x)|$ were obtained in \cite{DR}
and corresponding $r$-variational and jump inequalities were established in
\cite{jsw} (see also \cite{MSZ2}). Here we extend these results further.

\begin{definition}[$\Gamma$-lifted extensions]
For $d\in\ZZ_+$ let $\Gamma=\{i_1,\ldots, i_k\}\subseteq [d]$ be a set of size $k\in[d]$. Let $\mathfrak m: \RR^k \to B$ be  a measurable function taking its values in a separable Banach space $B$. 
 We define a $\Gamma$-lifted extension of $\mathfrak m$  by setting
\begin{align}\label{eq:lift}
\mathfrak m^{\Gamma}(\xi):=\mathfrak m(\xi_{i_1},\ldots, \xi_{i_k}),  \quad \text{ for } \quad \xi=(\xi_1,\ldots, \xi_d)\in\RR^d.
\end{align}

\end{definition}

Our first main result is the following  one-parameter seminorm variant of Theorem
\ref{thm:iw-semi}.

\begin{theorem}
\label{thm:IW1}
Let $d\in\ZZ_+$ and $\Gamma\subseteq[d]$ be a set of size $k\in[d]$. Let $\Upsilon=(\Upsilon_n:\RR^k\to\CC: n\in\NN)$ be a sequence
of measurable functions satisfying conditions \eqref{eq:377},
\eqref{eq:378} and \eqref{eq:379}, and let
$\Upsilon^{\Gamma}:=(\Upsilon_n^{\Gamma}:\RR^d\to\CC: n\in\NN)$ be the
corresponding $\Gamma$-lifted sequence.  Let $p\in[p_0',p_0]$ for some
$p_0\in2\ZZ_+$ and let $\varepsilon\in(0, 1)$ be arbitrary. Then there
exists an absolute constant
$C=C(d, p,\tau, \varepsilon, \Gamma, A_p^{\Upsilon}, C_{\Upsilon})\in\RR_+$
such that for every integer $l\in\NN$ and $m\in\RR_+$ such that
$2^{-m}\le (2p_0 2^{l p_0})^{-1}$ the following holds. If
\begin{align}
\label{eq:167}
\supp \Upsilon_n\subseteq 2^{-m}[-1/2, 1/2)^k
\quad \text{ for all }\quad n\in\NN,
\end{align}
then for every
$f=(f_{\iota}:\iota\in\NN)\in \ell^p(\ZZ^d; \ell^2(\NN))$ one has
\begin{align}
\label{eq:381}
\sup_{J\in\ZZ_+}\sup_{I\in\mathfrak S_J(\NN)}
\norm[\bigg]{\Big(\sum_{\iota\in\NN}O_{I, J}\big(T_{\ZZ^d}^{\Sigma_{\le l}^d}\big[\Upsilon_n^{\Gamma}\eta_{\le m}^{\Gamma^c}\big]f_{\iota}:n\in\NN\big)^2\Big)^{1/2}}_{\ell^p(\ZZ^{d})}
\le C
2^{\varepsilon l} 
\|f\|_{\ell^p(\ZZ^{d}; \ell^2(\NN))}.
\end{align}
In particular, \eqref{eq:381} implies the maximal estimate
\begin{align}
\label{eq:382}
\norm[\bigg]{\Big(\sum_{\iota\in\NN}\sup_{n\in\NN}\big|T_{\ZZ^d}^{\Sigma_{\le l}^d}\big[\Upsilon_n^{\Gamma}\eta_{\le m}^{\Gamma^c}\big]f_{\iota}\big|^2\Big)^{1/2}}_{\ell^p(\ZZ^{d})}
\le C
2^{\varepsilon l} 
\|f\|_{\ell^p(\ZZ^{d}; \ell^2(\NN))}.
\end{align}
\end{theorem}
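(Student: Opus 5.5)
Our plan is to follow the standard one-parameter argument (as in \cite{MSZ3, MST1}): split the oscillation seminorm into long and short pieces, treat the long pieces by sampling plus Ionescu--Wainger transference of continuous bounds, and treat the short pieces by square functions built from the decay conditions.

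\textbf{Splitting into long and short oscillations.} We split the range of $n$ relative to the scale $m$ and the arithmetic level $l$. We fix a threshold $n_l:=\lceil C\,2^{l}\rceil$ large enough that $\tau^{n}\ge Q_{\le l}^{C}$ with $Q_{\le l}=\lcm([2^l])$ as in \eqref{eq:373}; since $\tau\in(1,2]$, $\log_\tau Q_{\le l}\lesssim 2^l$. On the large scales $n\ge n_l$ we use the Magyar--Stein--Wainger sampling principle, Proposition \ref{prop:msw}, with $q=Q_{\le l}$. Writing $\Sigma_{\le l}^d\subseteq Q_{\le l}^{-1}[Q_{\le l}]^d$, we insert a cutoff $\eta_{\le m'}^{[d]}$ with $2^{-m'}\approx Q_{\le l}^{-1}$, which is harmless because $\Upsilon_n$ is essentially supported near the origin at these scales. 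The discrete oscillation norm (with the $\ell^2(\NN)$ vector value in $\iota$, in a finite-dimensional Banach space after truncation) is then controlled by the continuous $L^p(\RR^d)$ oscillation norm of $T_{\RR^d}[\Upsilon_n^{\Gamma}\eta^{\Gamma^c}\eta_{\le m'}]$. The continuous oscillation estimate for $T_{\RR^k}[\Upsilon_n]$ follows from \cite{jsw} via jump or $r$-variation bounds dominating $O_{I,J}$, and it lifts to $\RR^d$ since the multiplier is a tensor product. Multiplying by the fixed $\eta$-bumps in $\Gamma^c$ and the extra cutoff costs only a bounded factor, because these commute and are bounded on $L^p(\ell^2)$ by Marcinkiewicz--Zygmund, Lemma \ref{lem:10}. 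The difference between the multipliers with and without the extra cutoff yields a square-summable error.

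\textbf{Short range.} On the small scales $n<n_l$ the oscillation seminorm is bounded by the Rademacher--Menshov inequality \eqref{eq:164}. This costs a factor $\log n_l\lesssim l$, which $2^{\varepsilon l}$ absorbs, times square functions of the form
\[
\Big(\sum_{j}\big|\textstyle\sum_{n\in U^i_j}T_{\ZZ^d}^{\Sigma_{\le l}^d}[(\Upsilon_{n+1}-\Upsilon_n)^{\Gamma}\eta_{\le m}^{\Gamma^c}]f_\iota\big|^2\Big)^{1/2}.
\]
Each such square function is handled by Theorem \ref{thm:iw-semi} in its $\varepsilon$-form \eqref{eq:376cor}, with Hilbert spaces $H_0=\ell^2(\NN)$ and $H_1=\ell^2(\NN\times\NN)$. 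The support hypothesis is exactly \eqref{eq:167} combined with $2^{-m}\le(2p_02^{lp_0})^{-1}$. This reduces matters to the corresponding continuous square function on $L^{p_0}(\RR^k)$, which is bounded by standard Littlewood--Paley theory via \eqref{eq:378}--\eqref{eq:379}. Specifically, we pick a smooth partition $\sum_s\psi_{s}$ adapted to the dilations $t\circ\xi$, apply Plancherel with decay $\min(|\tau^n\circ\xi|^{\delta_\Upsilon},|\tau^n\circ\xi|^{-\delta_\Upsilon})$, and interpolate with the trivial $L^{p_0}$ bound coming from \eqref{eq:380} and positivity \eqref{eq:377}, following Duoandikoetxea--Rubio de Francia. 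Interpolation in $p$ between $p_0'$ and $p_0$ then covers all $p\in[p_0',p_0]$.

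\textbf{Main obstacle.} We expect the hardest step to be the long-scale sampling step. It must be arranged so that transferring the continuous oscillation inequality does not lose more than $2^{\varepsilon l}$. It must also stay compatible with the $\Gamma^c$ cutoffs at scale $2^{-m}$, which are much coarser than $Q_{\le l}^{-1}$. We would first try inserting an intermediate multiplier $\Upsilon_n^\Gamma\eta_{\le m}^{\Gamma^c}\eta_{\le m''}^{[d]}$ with $2^{m''}\simeq \tau^{n\delta}$. Its error relative to the original multiplier gains $\tau^{-n\delta\delta_\Upsilon}$, which is square-summable in $n$ and is estimated by Theorem \ref{thm:iw-semi}. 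The main term is then supported in arcs small enough for Proposition \ref{prop:msw}.
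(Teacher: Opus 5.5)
\textbf{Verdict: the architecture matches the paper's, but the long-scale step has a genuine gap.} Your overall scheme is the one the paper uses: it refers to \cite[Theorem 6.14]{BMSW}, and the same scheme appears in Steps 3--6 of the proof of Theorem~\ref{thm:IW1'}. Below a threshold $n_l=2^{O(l)}$ you use Rademacher--Menshov together with Theorem~\ref{thm:iw-semi} and Duoandikoetxea--Rubio de Francia; above it you use approximation plus sampling. Your short-range part is fine.

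\textbf{First problem: the cutoff in the $\Gamma^c$ coordinates.} Inserting $\eta^{[d]}_{\le m'}$ in \emph{all} coordinates does not leave a square-summable error, because $\Upsilon_n^\Gamma$ gives no decay in the $\Gamma^c$ directions. Take any $\xi$ with $\xi_i=0$ for $i\in\Gamma$ and $2^{-m'}\le\max_{i\notin\Gamma}|\xi_i|\le 2^{-m}/8$. There the difference $\Upsilon_n^\Gamma\eta^{\Gamma^c}_{\le m}(1-\eta^{[d]}_{\le m'})$ equals $\Upsilon_n(0)=1$ for every $n$, by \eqref{eq:379}. This region is nonempty for large $l$: $2^{-m}$ may be as large as $(2p_02^{lp_0})^{-1}$, while $Q_{\le l}=\lcm([2^l])$ grows exponentially in $2^l$. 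Your fallback $\eta^{[d]}_{\le m''}$ with $2^{m''}\simeq\tau^{n\delta}$ has the same defect.

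\textbf{Second problem: the sampling step.} Proposition~\ref{prop:msw} bounds the periodization over \emph{all} of $Q_{\le l}^{-1}[Q_{\le l}]^d$, not over $\Sigma^d_{\le l}$. The inclusion $\Sigma^d_{\le l}\subseteq Q_{\le l}^{-1}[Q_{\le l}]^d$ gives no comparison between the two operators. Moreover, when $2^{-m}\gg Q_{\le l}^{-1}$, any multiplier carrying $\eta^{\Gamma^c}_{\le m}$ violates the support hypothesis of Proposition~\ref{prop:msw}.

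\textbf{The missing idea: a factorization separating the $n$-dependence from the arithmetic.} Cut off only in the $\Gamma$ variables, at a scale $2^{-m'}\simeq Q_{\le l}^{-1}$ with $2^{-m'}\le(2Q_{\le l})^{-1}$. Then, with $\xi_\Gamma:=(\xi_i)_{i\in\Gamma}$,
\begin{align*}
&\sum_{\theta\in\Sigma^d_{\le l}}\big(\Upsilon_n^\Gamma\eta^\Gamma_{\le m'}\eta^{\Gamma^c}_{\le m}\big)(\xi-\theta)\\
&\qquad=\Big(\sum_{\omega\in Q_{\le l}^{-1}[Q_{\le l}]^k}\big(\Upsilon_n\eta^{[k]}_{\le m'}\big)(\xi_\Gamma-\omega)\Big)\sum_{\theta\in\Sigma^d_{\le l}}\big(\eta^\Gamma_{\le m'-1}\eta^{\Gamma^c}_{\le m}\big)(\xi-\theta).
\end{align*}
This holds because $\eta_{\le m'-1}=1$ on $\supp\eta_{\le m'}$, each $\theta_\Gamma$ lies in $Q_{\le l}^{-1}\ZZ^k$, and distinct points of $Q_{\le l}^{-1}\ZZ^k$ are $Q_{\le l}^{-1}$-separated.
\begin{itemize}
\item The first factor acts only in the $\Gamma$ variables and satisfies the hypotheses of Proposition~\ref{prop:msw} on $\ZZ^k$. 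Applied fiberwise in $x_{\Gamma^c}$, it transfers the continuous oscillation bound for $T_{\RR^k}[\Upsilon_n\eta^{[k]}_{\le m'}]$.
\item The second factor does not depend on $n$. It is bounded on $\ell^p(\ZZ^d;\ell^2(\NN))$ by Theorem~\ref{thm:iw-semi} with loss $2^{\varepsilon l}$. This is where the $2^{\varepsilon l}$ really enters, and it leaves the coarse $\Gamma^c$ cutoffs untouched.
\item The remaining error $\Upsilon_n^\Gamma(1-\eta^\Gamma_{\le m'})\eta^{\Gamma^c}_{\le m}$ now genuinely decays like $(\tau^n2^{-m'})^{-\delta_\Upsilon}$. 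It can be summed over $n\ge n_l$ after Theorem~\ref{thm:iw-semi} and interpolation.
\end{itemize}

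\textbf{A further point on the continuous input.} What you need is the $\ell^2(\NN)$-valued oscillation inequality, with the $\ell^2$ sum outside $O_{I,J}$. The scalar results of \cite{jsw} do not give this formally. You need an additional argument, for instance comparison with a martingale together with vector-valued square functions.
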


\begin{proof}
The proof proceeds in essentially the same way as the proof of \cite[Theorem 6.14]{BMSW}.
\end{proof}
\subsection{Multi-parameter seminorm variant of the Ionescu--Wainger
theorem for tensor products of bumps} We will
generalize Theorem \ref{thm:IW1} to the multi-parameter setting. We
begin with a multi-parameter seminorm variant of Theorem
\ref{thm:iw-semi} or more precisely Theorem \ref{thm:IW1}.
\begin{theorem}
\label{thm:IW1'}
Let $d\in\ZZ_+$ be given. For each $i\in[d]$, let  $\Upsilon^i:=(\Upsilon_{n_i}^i:\RR\to\CC: n_i\in\NN)$ be a sequence of measurable functions
satisfying conditions \eqref{eq:377},
\eqref{eq:378} and \eqref{eq:379} with $k=1$.
Let $\Upsilon:=\big(\prod_{i\in[d]}\Upsilon_{n_i}^{(i)}:\RR^d\to\CC: (n_1,\ldots, n_d)\in\NN^d\big)$ be a
multi-parameter sequence  such that for each $i\in[d]$, the function $\Upsilon_{n_i}^{(i)}:\RR^d\to\CC$ is defined as follows
\begin{align}
\label{eq:Upsi}
\Upsilon_{n_i}^{(i)}(\xi):=\Upsilon_{n_i}^i(\xi_i)
\quad \text{ for } \quad \xi=(\xi_1,\ldots, \xi_d)\in\RR^d.
\end{align}
Let $p\in[p_0',p_0]$ for some $p_0\in2\ZZ_+$ and let $\varepsilon\in(0, 1)$. Then there exists an
absolute constant
\begin{align*}
C=C(d,p, \tau,\varepsilon, A_p^{\Upsilon^1},\ldots, A_p^{\Upsilon^d}, C_{\Upsilon^1},\ldots, C_{\Upsilon^d})\in\RR_+
\end{align*} 
such
that for every integer $l\in\NN$ and $m\in\RR_+$ such that $2^{-m}\le (2p_0 2^{l p_0})^{-1}$
the following holds. If for each $i\in[d]$ the following support condition is satisfied
\begin{align}
\label{eq:44}
\supp \Upsilon_{n_i}^i\subseteq 2^{-m}[-1/2, 1/2),
\end{align}
then for every
$f=(f_{\iota}:\iota\in\NN)\in \ell^p(\ZZ^d; \ell^2(\NN))$, we have
\begin{align}
\label{eq:62}
\begin{gathered}
\sup_{J\in\ZZ_+}\sup_{I\in\mathfrak S_J(\NN^d)}
\norm[\bigg]{\Big(\sum_{\iota\in\NN}O_{I, J}\big(T_{\ZZ^d}^{\Sigma_{\le l}^d}\big[\prod_{i\in[d]}\Upsilon_{n_i}^{(i)}\big]f_{\iota}:(n_1,\ldots, n_d)\in\NN^d\big)^2\Big)^{1/2}}_{\ell^p(\ZZ^{d})}\\
\le C
2^{\varepsilon l} 
\|f\|_{\ell^p(\ZZ^{d}; \ell^2(\NN))}
\end{gathered}
\end{align}
In particular, \eqref{eq:62} implies the maximal estimate
\begin{align*}
\norm[\bigg]{\Big(\sum_{\iota\in\NN}\sup_{(n_1,\ldots, n_d)\in\NN^d}\big|T_{\ZZ^d}^{\Sigma_{\le l}^d}\big[\prod_{i\in[d]}\Upsilon_{n_i}^{(i)}\big]f_{\iota}\big|^2\Big)^{1/2}}_{\ell^p(\ZZ^{d})}
\le C
2^{\varepsilon l}
\|f\|_{\ell^p(\ZZ^{d}; \ell^2(\NN))}.
\end{align*}
\end{theorem}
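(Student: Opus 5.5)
We argue by induction on the number of parameters, reducing each step to the one-parameter result, Theorem \ref{thm:IW1}.

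\textbf{Step 1: splitting the oscillation into single-parameter pieces.} Fix $J$ and $I\in\mathfrak S_J(\NN^d)$. For $n=(n_1,\ldots,n_d)\in\BB[I_j]$ we telescope
\[
\prod_{i}\Upsilon_{n_i}^{(i)}-\prod_i\Upsilon_{I_{j,i}}^{(i)}=\sum_{i=1}^d \Big(\prod_{r<i}\Upsilon_{I_{j,r}}^{(r)}\Big)\big(\Upsilon_{n_i}^{(i)}-\Upsilon_{I_{j,i}}^{(i)}\big)\Big(\prod_{r>i}\Upsilon_{n_r}^{(r)}\Big).
\]
By the triangle inequality it suffices to treat one summand, say $i=1$. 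For that summand we must control
\[
\Big(\sum_j\sup_{n\in\BB[I_j]}\big|T^{\Sigma_{\le l}^d}_{\ZZ^d}\big[(\Upsilon^{(1)}_{n_1}-\Upsilon^{(1)}_{I_{j,1}})\textstyle\prod_{r>1}\Upsilon^{(r)}_{n_r}\big]f\big|^2\Big)^{1/2}.
\]
The factors with $r<i$ are frozen at $I_j$. They are handled by the maximal bound of the induction hypothesis, applied as a vector-valued estimate in the index $j$.

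\textbf{Step 2: decoupling the variables.} The multipliers act on distinct coordinates. Since the supports lie in $2^{-m}[-1/2,1/2)$, the periodized product equals
\[
T^{\Sigma_{\le l}^d}\Big[\prod_i \Upsilon^{(i)}_{n_i}\Big]=T^{\Sigma_{\le l}^d}\big[\Upsilon^{(1)}_{n_1}\eta_{\le m}^{[d]\setminus\{1\}}\big]\circ T^{\Sigma_{\le l}^d}\big[\eta_{\le m-1}^{\{1\}}\textstyle\prod_{r>1}\Upsilon^{(r)}_{n_r}\big].
\]
This holds up to harmless cross terms between distinct fractions, which vanish because the supports of the shifted bumps are disjoint. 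We then dominate the supremum over $(n_2,\ldots,n_d)$ pointwise. Positivity \eqref{eq:377} together with the product structure shows that the sup over the remaining parameters of $T[\prod_{r>1}\Upsilon^{(r)}_{n_r}]$ is controlled by iterated one-parameter maximal operators. Concretely, we use the $\ell^2$-valued maximal estimate \eqref{eq:382} from Theorem \ref{thm:IW1} with $\Gamma=\{r\}$. This has to be applied to the differences $g=(T[\Upsilon^{(1)}_{n_1}-\Upsilon^{(1)}_{I_{j,1}}]f)$, indexed by $(j,n_1)$, which is a vector-valued $\ell^2$-sup quantity.

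The cleaner implementation is to prove the stronger statement in which the remaining parameters enter through a supremum norm. Induct on $d$ with the $\ell^p(\ell^2)$-vector-valued bound for
\[
\big\|\big(\textstyle\sum_\iota \sup_{n'}|T[\prod_{r>1}\Upsilon_{n_r}]F_{\iota}|^2\big)^{1/2}\big\|_p,
\]
with $F_\iota$ ranging over an $\ell^2$-valued family. This maximal bound follows by iterating \eqref{eq:382} one coordinate at a time, since the lifted one-parameter multipliers commute. Each iteration costs $2^{\varepsilon l/d}$, so the total loss is $2^{\varepsilon l}$. After this domination, the problem reduces to the one-parameter oscillation of $n_1\mapsto T^{\Sigma_{\le l}^d}[\Upsilon^{(1)}_{n_1}\eta_{\le m}^{[d]\setminus\{1\}}]f$ along $I_{\cdot,1}$, with values in $\ell^2$. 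That is exactly \eqref{eq:381} with $\Gamma=\{1\}$.

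\textbf{Main obstacle.} The subtle point is that in Step 1 the supremum over $n_1\in[I_{j,1},I_{j+1,1})$ and over the other $n_r$ are taken jointly. We cannot simply pull a maximal operator out of an oscillation: the operator $\sup_{n'}|T[\prod_{r>1}]\,\cdot\,|$ is sublinear, not a multiplier. Our first attempt would rely on positivity, writing $T[\prod_{r>1}\Upsilon_{n_r}]$ as convolution with a product probability measure. This gives
\[
\sup_{n}|\ldots|\le \mathcal M'\Big(\sup_{n_1\in[I_{j,1},I_{j+1,1})}\big|T[\Upsilon^{(1)}_{n_1}-\Upsilon^{(1)}_{I_{j,1}}]f\big|\Big),
\]
where $\mathcal M'$ is a discrete (periodized) strong maximal function in the remaining directions. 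We then need the Fefferman--Stein type $\ell^p(\ell^2)$ bound for $\mathcal M'$ in the index $j$.

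Positivity of the periodized multipliers fails, however. To get around this, we would first use the Magyar--Stein--Wainger sampling Proposition \ref{prop:msw} and Theorem \ref{thm:iw-semi} to transfer to $\RR^d$, where positivity holds. Concretely, we would express the discrete operator as Ionescu--Wainger projections composed with continuous-type operators, following the scheme of \cite[Theorem 6.14]{BMSW}. Making this transfer compatible with the mixed oscillation/supremum seminorm is the step we expect to require the most care.
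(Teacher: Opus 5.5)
Your telescoping in Step 1 and the factorization in Step 2 match the paper's starting point: the reduction to a single differenced parameter, and the key identity \eqref{eq:79}. You are also right that the induction must be vector-valued. However, there is a genuine gap exactly at the point you call the main obstacle, and the Step 2 ``cleaner implementation'' does not work as stated. Iterating \eqref{eq:382} one coordinate at a time does not bound $\sup_{n_2,\ldots,n_d}|T_{\ZZ^d}^{\Sigma_{\le l}^d}[\prod_{r>1}\Upsilon^{(r)}_{n_r}]F|$. To peel off $n_2$ you must control $\sup_{n_2}|T_{\ZZ^d}^{\Sigma_{\le l}^d}[\Upsilon^{(2)}_{n_2}]H_\alpha|$ uniformly over a further supremum in $\alpha=(n_3,\ldots,n_d)$, and in the oscillation problem also over $n_1\in[I_{j,1},I_{j+1,1})$. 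That requires an $\ell^\infty$-valued extension, whereas \eqref{eq:382} is only $\ell^2$-valued. Such extensions are automatic for positive operators, but the periodized multipliers are not positive. So neither the iteration nor the domination by $\mathcal M'(\sup_{n_1}|\cdots|)$, which was supposed to reduce everything to \eqref{eq:381}, is available. The transfer you sketch does not repair this on its own. Theorem \ref{thm:iw-semi} only transfers Hilbert-space-valued (linear, square-function) bounds, so it cannot carry a supremum or an oscillation seminorm. Proposition \ref{prop:msw} does allow maximal functions, but only for a single denominator $q$ and multipliers supported in $q^{-1}[-1/2,1/2)^d$. With $q=Q_{\le l}$, which may be as large as $3^{2^l}$, this support condition fails for multipliers living at frequency scale $2^{-m}\approx 2^{-lp_0}$.

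The missing idea is to split each newly added parameter at $2^{\kappa_l}$, with $\kappa_l\simeq l$ as in \eqref{eq:383}, inside an induction on the number of active parameters (the paper's \eqref{eq:78}).
\begin{itemize}
\item For $n_{k+1}<2^{\kappa_l}$, the Rademacher--Menshov inequality \eqref{eq:164} replaces the supremum by $O(\kappa_l)$ square functions of dyadic blocks of differences $\Upsilon^{(k+1)}_{j+1}-\Upsilon^{(k+1)}_{j}$. These are linear, so the vector-valued induction hypothesis applies to the enlarged family indexed by $(\iota,u)$. One then uses Theorem \ref{thm:iw-semi} together with the Duoandikoetxea--Rubio de Francia Littlewood--Paley bound, and the loss $\kappa_l$ is absorbed into $2^{\varepsilon l}$.
\item For $n_{k+1}\ge 2^{\kappa_l}$, one inserts a much narrower cutoff in the $(k+1)$-th variable. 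The remainder decays like $\tau^{-\delta n_{k+1}}$ by \eqref{eq:378}; for each fixed $n_{k+1}$ it is bounded via the induction hypothesis and Theorem \ref{thm:iw-semi}, and then summed.
\item The main large-scale piece factors through the single-denominator periodization $T_{\ZZ}^{Q_{\le l}^{-1}[Q_{\le l}]}$. Its kernel lives on $Q_{\le l}\ZZ$ and is dominated, uniformly in $Q_{\le l}$, by a Hardy--Littlewood maximal function along residue classes. Only here does the positivity-type domination you wanted become legitimate: the supremum over $n_{k+1}$ and the other parameters is pulled out, the Fefferman--Stein inequality handles the $\ell^2$ index, and the induction hypothesis finishes.
\end{itemize}
Without this scale split, your argument reaches neither \eqref{eq:62} nor even its maximal corollary.
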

We make some remarks about Theorem \ref{thm:IW1'}.
\begin{enumerate}[label*={\arabic*}.]
\item Theorem \ref{thm:IW1'} is a multi-parameter oscillation variant of the Ionescu--Wainger  theorem \cite{IW}. It has been formulated for the multipliers which have product structure, however it can be extended to non-product multipliers as well. An instance of such a situation will be detailed in the next subsection, see Theorem \ref{thm:osc_IW_2par}, for the multipliers that arise in our main result.  
\item In contrast to the one-parameter theory, it is not clear whether multi-parameter $r$-variational or jump counterparts of Theorem \ref{thm:IW1'} are available. As far as we know it is not even clear if there are useful multi-parameter definitions of $r$-variational or jump semi-norms. From this point of view the multi-parameter oscillation semi-norm is a crucial tool allowing us to handle pointwise convergence problems in the  multi-parameter setting.
\item Our proof can be adapted to the Euclidean setting  $\RR^d$ as well. Namely, for $d\in\ZZ_+$ and $i\in[d]$ let  $\Upsilon^i:=(\Upsilon_{n_i}^i:\RR\to\CC: n_i\in\ZZ)$ be a sequence of measurable functions
satisfying conditions \eqref{eq:377},
\eqref{eq:378} and \eqref{eq:379} with $d=1$. Then for
every $p\in(1, \infty)$, there exists a constant $C>0$ such that for
every $f=(f_{\iota}:\iota\in\NN)\in L^p(\RR^d; \ell^2(\NN))$ one has
\begin{align}
\label{eq:tensorbumpcont}
\begin{gathered}
\sup_{J\in\ZZ_+}\sup_{I\in\mathfrak S_J(\ZZ^d)}
\norm[\bigg]{\Big(\sum_{\iota\in\NN}O_{I, J}\big(T_{\RR^d}\big[\prod_{i\in[d]}\Upsilon_{n_i}^{(i)}\big]f_{\iota}:(n_1,\ldots, n_d)\in\ZZ^d\big)^2\Big)^{1/2}}_{L^p(\RR^{d})}\\
\le C
\|f\|_{L^p(\RR^{d}; \ell^2(\NN))},
\end{gathered}
\end{align}
where $\Upsilon=\big(\prod_{i\in[d]}\Upsilon_{n_i}^{(i)}:\RR^d\to\CC: (n_1,\ldots, n_d)\in\ZZ^d\big)$
is a multi-parameter sequence satisfying \eqref{eq:Upsi}. Inequality \eqref{eq:tensorbumpcont} can be also established for more general non-product multipliers, but we do not address this here.
\end{enumerate}

\begin{proof}[Proof of Theorem \ref{thm:IW1'}]
For each $l\in\NN$ define an integer
\begin{align}
\label{eq:383}
\kappa_l:=\big\lfloor \big(100 - \log_2\big({\delta_{\Upsilon}\log_2\tau}\big)\big)  (l+1) \big\rfloor+2.
\end{align}
We divide the proof into several steps to make the argument digestible.
\paragraph{\bf Step 1} We fix $J\in\ZZ_+$ and $I\in\mathfrak S_J(\NN^d)$ and observe that 
\begin{align*}
&O_{I, J}\big(T_{\ZZ^d}^{\Sigma_{\le l}^d}\big[\prod_{i\in[d]}\Upsilon_{n_i}^{(i)}\big]f_{\iota}:(n_1,\ldots, n_d)\in\NN^d\big)\\
&\qquad\lesssim
\sum_{s\in[d]}\Big(\sum_{j=0}^{J-1}\sup_{\substack{n_i\in\NN\\
i\in[d]\setminus\{s\}}} \sup_{I_j^s\le n_s<I_{j+1}^s}\big|T_{\ZZ^d}^{\Sigma_{\le l}^d}\big[\prod_{i\in[d]\setminus\{s\}}
\Upsilon_{n_i}^{(i)}(\Upsilon_{n_s}^{(s)}-\Upsilon_{I_{j}^s}^{(s)})\big]f_{\iota}\big|^2\Big)^{1/2}.
\end{align*}
This shows that the left-hand side of \eqref{eq:62} is dominated by the following expression
\begin{align*}
\sum_{s\in[d]}
\sup_{J\in\ZZ_+}\sup_{I\in\mathfrak S_J(\NN)}
\norm[\bigg]{\Big(\sum_{\iota\in\NN}\sum_{j=0}^{J-1}\sup_{\substack{n_i\in\NN\\
i\in[d]\setminus\{s\}}} \sup_{I_j\le n_s<I_{j+1}}
\big|T_{\ZZ^d}^{\Sigma_{\le l}^d}\big[\prod_{i\in[d]\setminus\{s\}}\Upsilon_{n_i}^{(i)}(\Upsilon_{n_s}^{(s)}-\Upsilon_{I_{j}}^{(s)})\big]f_{\iota}\big|^2\Big)^{1/2}}_{\ell^p(\ZZ^{d})}.
\end{align*}
We will show that for every $p\in(1, \infty)$,   every
$f=(f_{\iota}:\iota\in\NN)\in \ell^p(\ZZ^d; \ell^2(\NN))$, each $s\in[d]$ and for any $\varepsilon\in(0, 1)$, one has 
\begin{align}
\label{eq:65}
\nonumber&\sup_{J\in\ZZ_+}\sup_{I\in\mathfrak S_J(\NN)}
\norm[\bigg]{\Big(\sum_{\iota\in\NN}\sum_{j=0}^{J-1}\sup_{\substack{n_i\in\NN\\
i\in[d]\setminus\{s\}}} \sup_{I_j\le n_s<I_{j+1}}\big|T_{\ZZ^d}^{\Sigma_{\le l}^d}\big[\prod_{i\in[d]\setminus\{s\}}\Upsilon_{n_i}^{(i)}(\Upsilon_{n_s}^{(s)}-\Upsilon_{I_{j}}^{(s)})\big]f_{\iota}\big|^2\Big)^{1/2}}_{\ell^p(\ZZ^{d})}\\
&\hspace{6cm}\lesssim 2^{\varepsilon l} 
\|f\|_{\ell^p(\ZZ^{d}; \ell^2(\NN))}.
\end{align}
To prove \eqref{eq:65} we can assume that $s=1$. The proof for $s\in[d]\setminus\{1\}$ is the same. In the argument below $\varepsilon\in(0, 1)$ may change from occurrence to occurrence.
\paragraph{\bf Step 2}
In fact we shall prove a more general inequality that will imply \eqref{eq:65}. Our aim will be to show that for every $p\in(1, \infty)$,    every
$f=(f_{\iota}:\iota\in\NN)\in \ell^p(\ZZ^d; \ell^2(\NN))$,  and any $A_k:=[k]\setminus\{1\}\subseteq [d]\setminus\{1\}$ with $k\in[d]$ (we note that $A_1=\emptyset$),  one has
\begin{align}
\label{eq:78}
\nonumber&\sup_{J\in\ZZ_+}\sup_{I\in\mathfrak S_J(\NN)}
\norm[\bigg]{\Big(\sum_{\iota\in\NN}\sum_{j=0}^{J-1}\sup_{\substack{n_i\in\NN\\
i\in A_k}} \sup_{I_j\le n_1<I_{j+1}}\big|T_{\ZZ^d}^{\Sigma_{\le l}^d}\big[(\Upsilon_{n_1}^{(1)}-\Upsilon_{I_{j}}^{(1)})\Big(\prod_{i\in A_k}\Upsilon_{n_i}^{(i)}\Big)\eta_{\le m}^{A_k^c}\big]f_{\iota}\big|^2\Big)^{1/2}}_{\ell^p(\ZZ^{d})}\\
&\hspace{6cm}\lesssim 2^{\varepsilon l}
\|f\|_{\ell^p(\ZZ^{d}; \ell^2(\NN))}.
\end{align}
Here the complement is taken with respect to the ambient set $[d]\setminus\{1\}$. In particular $A_1^c = [d]\setminus\{1\}$. The estimate \eqref{eq:65} follows from \eqref{eq:78} by taking $k=d$.

The proof of \eqref{eq:78}  is by induction on $k\in[d]$. The base case $k=1$ immediately follows from Theorem \ref{thm:IW1}; see the oscillation inequality \eqref{eq:381}. Thus we can assume that \eqref{eq:78} is true for some  $k\in[d-1]$ and we will prove that it remains true for $k+1$. In our argument
it is essential that we are working in the vector-valued setting, otherwise it is not clear how to employ the induction.
We will prove
\begin{align}
\label{eq:67}
\begin{gathered}
\sup_{J\in\ZZ_+}\sup_{I\in\mathfrak S_J(\NN)}
\norm[\bigg]{\Big(\sum_{\iota\in\NN}\sum_{j=0}^{J-1}\sup_{I_j\le n_1<I_{j+1}}\sup_{\substack{n_i\in\NN\\
i\in A_{k+1}}} |\mathfrak a_{n_{i}; i\in A_{k+1}}^{n_1, I_j }(f_{\iota})|^2\Big)^{1/2}}_{\ell^p(\ZZ^{d})}
\lesssim 2^{\varepsilon l}
\|f\|_{\ell^p(\ZZ^{d}; \ell^2(\NN))},
\end{gathered}
\end{align}
where
\begin{align*}
\mathfrak a_{n_{i}; i\in A_k}^{n_1, I_j }(f_{\iota}):=T_{\ZZ^{d}}^{\Sigma_{\le l}^{d}}\big[(\Upsilon_{n_1}^{(1)}-\Upsilon_{I_{j}}^{(1)})\big(\prod_{i\in A_k}\Upsilon_{n_i}^{(i)}\big)\eta_{\le m}^{A_k^c}\big]f_{\iota}, \qquad k\in\NN.
\end{align*}
By the disjointness of supports (see condition \eqref{eq:44}) we have the following key formula
\begin{align}
\label{eq:79}
\mathfrak a_{n_{i}; i\in A_{k+1}}^{n_1, I_j}(f_{\iota})=
\mathfrak a_{n_{i}; i\in A_k}^{n_1, I_j }\big(T_{\ZZ^d}^{\Sigma_{\le l}^{d}}\big[\Upsilon_{n_{k+1}}^{(k+1)}\eta_{\le m-1}^{[d]\setminus\{k+1\}}\big] f_{\iota}\big).
\end{align}
As in the proof of Theorem \ref{thm:IW1} we split the supremum over $n_{k+1}$ into small ($0\le n_{k+1}< 2^{\kappa_l}$) and large ($n_{k+1}\ge 2^{\kappa_l}$) scales, where $\kappa_l$ is the quantity defined in \eqref{eq:383}.

\paragraph{\bf Step 3}
We first handle the small scale case and prove for any $\varepsilon\in(0, 1)$ that
\begin{align}
\label{eq:68}
\begin{gathered}
\sup_{J\in\ZZ_+}\sup_{I\in\mathfrak S_J(\NN)}
\norm[\bigg]{\Big(\sum_{\iota\in\NN}\sum_{j=0}^{J-1}\sup_{I_j\le n_1<I_{j+1}}\sup_{\substack{n_i\in\NN\\
i\in A_k}}\sup_{n_{k+1}\in \NN_{<2^{\kappa_l}}} |\mathfrak a_{n_{i}; i\in A_{k+1}}^{n_1, I_j }(f_{\iota})|^2\Big)^{1/2}}_{\ell^p(\ZZ^{d})}
\lesssim 2^{\varepsilon l}
\|f\|_{\ell^p(\ZZ^{d}; \ell^2(\NN))}.
\end{gathered}
\end{align}
Using the Rademacher--Menshov inequality \eqref{eq:164}  we obtain
\begin{align*}
\sup_{n_{k+1}\in\NN_{<2^{\kappa_l}}}|\mathfrak a_{n_{i}; i\in A_{k+1}}^{n_1, I_j}(f_{\iota})|\lesssim |\mathfrak a_{n_{i}; i\in A_k}^{n_1, I_j }(F_{\iota})|
+\sum_{v=0}^{\kappa_l}\Big(\sum_{u=0}^{2^{\kappa_l-v}-1}| \mathfrak a_{n_{i}; i\in A_k}^{n_1, I_j }(F_{\iota}^{u, v})|^2\Big)^{1/2},
\end{align*}
where
\begin{align*}
F_{\iota}:=&T_{\ZZ^d}^{\Sigma_{\le l}^d}\big[\Upsilon_{0}^{(k+1)}\eta_{\le m-1}^{[d]\setminus\{k+1\}}\big]f_{\iota},\\
F_{\iota}^{u, v}:=&T_{\ZZ^d}^{\Sigma_{\le l}^d}\big[\sum_{j\in U_{u}^{v}}(\Upsilon_{j+1}^{(k+1)}-\Upsilon_{j}^{(k+1)})\eta_{\le m-1}^{[d]\setminus\{k+1\}}\big]f_{\iota},
\end{align*}
and  $U_u^v:=[u2^v, (u+1)2^v)$.
Consequently, we obtain
\begin{gather}
\text{ LHS of \eqref{eq:68} }
\label{eq:70}\lesssim
\sup_{J\in\ZZ_+}\sup_{I\in\mathfrak S_J(\NN)}
\norm[\bigg]{\Big(\sum_{\iota\in\NN}\sum_{j=0}^{J-1}\sup_{I_j\le n_1<I_{j+1}}\sup_{\substack{n_i\in\NN\\
i\in A_k}} |\mathfrak a_{n_{i}; i\in A_k}^{n_1, I_j }(F_{\iota})|^2\Big)^{1/2}}_{\ell^p(\ZZ^{d})}\\
\label{eq:71}\qquad +\sum_{v=0}^{\kappa_l}
\sup_{J\in\ZZ_+}\sup_{I\in\mathfrak S_J(\NN)}
\norm[\bigg]{\Big(\sum_{\iota\in\NN}\sum_{u=0}^{2^{\kappa_l-v}-1}\sum_{j=0}^{J-1}\sup_{I_j\le n_1<I_{j+1}}\sup_{\substack{n_i\in\NN\\
i\in A_k}} |\mathfrak a_{n_{i}; i\in A_k}^{n_1, I_j }(F_{\iota}^{u, v})|^2\Big)^{1/2}}_{\ell^p(\ZZ^{d})}.
\end{gather}

By induction we see that
\begin{equation}
\label{eq:72}
\text{ \eqref{eq:70} }\lesssim 2^{\varepsilon l} \bigg\|\Big(\sum_{\iota\in\NN}|F_{\iota}|^2\Big)^{1/2}\bigg\|_{\ell^p(\ZZ^d)}
\lesssim 2^{\varepsilon l}
\|f\|_{\ell^p(\ZZ^{d}; \ell^2(\NN))},
\end{equation}
where the second inequality follows from Lemma \ref{lem:10} and Theorem \ref{thm:iw-semi} due to \eqref{eq:380}. 

By induction and the definition of $\kappa_l$ we also see that
\begin{align}
\label{eq:73}
\text{ \eqref{eq:71} }\lesssim 2^{\varepsilon l} \bigg\|\Big(\sum_{\iota\in\NN}\sum_{u=0}^{2^{\kappa_l-v}-1}|F_{\iota}^{u, v}|^2\Big)^{1/2}\bigg\|_{\ell^p(\ZZ^d)}.
\end{align}
We note that this is the point in the proof where the vector-valued nature of our induction hypothesis plays a crucial role.
Now it suffices to justify that
\begin{align}
\label{eq:74}
\bigg\|\Big(\sum_{\iota\in\NN}\sum_{u=0}^{2^{\kappa_l-v}-1}|F_{\iota}^{u, v}|^2\Big)^{1/2}\bigg\|_{\ell^p(\ZZ^d)}\lesssim 2^{\varepsilon l}
\|f\|_{\ell^p(\ZZ^{d}; \ell^2(\NN))}.
\end{align}
Indeed, collecting \eqref{eq:70}, \eqref{eq:71}, \eqref{eq:72}, \eqref{eq:73} and \eqref{eq:74} we obtain \eqref{eq:68} as claimed. To prove \eqref{eq:74} we apply Theorem \ref{thm:iw-semi}, which reduces the problem to showing that for every $p\in(1, \infty)$ and every $f=(f_{\iota}:\iota\in\NN)\in L^p(\RR^d; \ell^2(\NN))$ one has
\begin{align}
\label{eq:76}
\bigg\|\Big(\sum_{\iota\in\NN}\sum_{u=0}^{2^{\kappa_l-v}-1}\big|T_{\RR^d}\big[\sum_{m\in U_{u}^{v}}(\Upsilon_{m+1}^{(k+1)}-\Upsilon_{m}^{(k+1)})\eta_{\le m-1}^{[d]\setminus\{k+1\}}\big]f_{\iota}\big|^2\Big)^{1/2}\bigg\|_{L^p(\RR^d)}\lesssim 
\|f\|_{L^p(\RR^{d}; \ell^2(\NN))}.
\end{align}
Invoking Lemma \ref{lem:10}, the  inequality \eqref{eq:76} follows from
\begin{align*}
\sup_{(\omega_n:n\in\NN)\in\{-1, 1\}^{\NN}}\norm[\Big]{\sum_{n\in\NN}
\omega_nT_{\RR^d}[(\Upsilon_{{n+1}}^{(k+1)}-\Upsilon_{{n}}^{(k+1)})\eta_{\le m-1}^{[d]\setminus\{k+1\}}]f}_{L^p(\RR^d)}
\lesssim_p\norm{f}_{L^p(\RR^d)},
\end{align*}
which in turn is a consequence of \cite[Theorem B]{DR} by conditions 
\eqref{eq:377}, \eqref{eq:378},  \eqref{eq:379} and \eqref{eq:380}. 
\paragraph{\bf Step 4}
We now handle the large scale case and prove 
\begin{align}
\label{eq:81}
\begin{gathered}
\sup_{J\in\ZZ_+}\sup_{I\in\mathfrak S_J(\NN)}
\norm[\bigg]{\Big(\sum_{\iota\in\NN}\sum_{j=0}^{J-1}\sup_{I_j\le n_1<I_{j+1}}\sup_{\substack{n_i\in\NN\\
i\in A_k}}\sup_{n_{k+1}\in \NN_{\ge 2^{\kappa_l}}} |\mathfrak  b_{n_1,\ldots, n_d}|^2\Big)^{1/2}}_{\ell^p(\ZZ^{d})}
\lesssim 2^{\varepsilon l} 
\|f\|_{\ell^p(\ZZ^{d}; \ell^2(\NN))}
\end{gathered}
\end{align}
with $\mathfrak b_{n_1,\ldots, n_d}=\mathfrak a_{n_{i}; i\in A_{k+1}}^{n_1, I_j }(f_{\iota})$.
We define
\begin{align*}
G_{n_{k+1}}^{\iota, 1}:=&T_{\ZZ^d}^{\Sigma_{\le l}^d}\big[\Upsilon_{n_{k+1}}^{(k+1)}\eta_{\le m-1}^{[d]\setminus\{k+1\}}\big] f_{\iota},\\
G_{n_{k+1}}^{\iota, 2}:=&T_{\ZZ^d}^{\Sigma_{\le l}^d}\big[\Upsilon_{n_{k+1}}^{(k+1)}\eta_{\le 2^{10 l}}^{(k+1)}\eta_{\le m-1}^{[d]\setminus\{k+1\}}\big]f_{\iota}.
\end{align*}
Invoking \eqref{eq:79} we obtain
\begin{align*}
\mathfrak a_{n_{i}; i\in A_{k+1}}^{n_1, I_j}(f_{\iota})=\mathfrak a_{n_{i}; i\in A_k}^{n_1, I_j }(G_{n_{k+1}}^{\iota, 1}-G_{n_{k+1}}^{\iota, 2})+
\mathfrak a_{n_{i}; i\in A_k}^{n_1, I_j }(G_{n_{k+1}}^{\iota, 2}).
\end{align*}
It suffices to show \eqref{eq:81} with 
$\mathfrak b_{n_1,\ldots, n_d}=\mathfrak a_{n_{i}; i\in A_k}^{n_1, I_j }(G_{n_{k+1}}^{\iota, 1}-G_{n_{k+1}}^{\iota, 2})$ and $\mathfrak b_{n_1,\ldots, n_d}=\mathfrak a_{n_{i}; i\in A_k}^{n_1, I_j }(G_{n_{k+1}}^{\iota, 2})$.
\vskip 5pt
\paragraph{\bf Step 5}
To prove \eqref{eq:81} with $\mathfrak b_{n_1,\ldots, n_d}=\mathfrak a_{n_{i}; i\in A_k}^{n_1, I_j }(G_{n_{k+1}}^{\iota, 1}-G_{n_{k+1}}^{\iota, 2})$ it suffices to show that for every $p\in(1, \infty)$ there exists $\delta_p\in(0, 1)$ such that for any $n_{k+1}\ge 2^{\kappa_l}$ we have
\begin{align}
\label{eq:88tensor}
\begin{gathered}
\sup_{J\in\ZZ_+}\sup_{I\in\mathfrak S_J(\NN)}
\norm[\bigg]{\Big(\sum_{\iota\in\NN}\sum_{j=0}^{J-1}\sup_{I_j\le n_1<I_{j+1}}\sup_{\substack{n_i\in\NN\\
i\in A_k}} |\mathfrak a_{n_{i}; i\in A_k}^{n_1, I_j }(G_{n_{k+1}}^{\iota, 1}-G_{n_{k+1}}^{\iota, 2})|^2\Big)^{1/2}}_{\ell^p(\ZZ^{d})}\\
\lesssim \tau^{-\delta_pn_{k+1}}
\|f\|_{\ell^p(\ZZ^{d}; \ell^2(\NN))}.
\end{gathered}
\end{align}
By the inductive hypothesis we see that
\begin{align}
\label{eq:89}
\text{ LHS of \eqref{eq:88tensor} }\lesssim
\norm[\bigg]{\Big(\sum_{\iota\in\NN}\big|T_{\ZZ^d}^{\Sigma_{\le l}^d}\big[\Upsilon_{n_{k+1}}^{(k+1)}(1-\eta_{\le 2^{10 l}}^{(k+1)})\eta_{\le m-1}^{[d]\setminus\{k+1\}}\big]f_{\iota}\big|^2\Big)^{1/2}}_{\ell^p(\ZZ^{d})}.
\end{align}
To estimate the right-hand side of \eqref{eq:89} we use Lemma \ref{lem:10} and Theorem \ref{thm:iw-semi}, which reduce the problem to proving for every $p\in(1, \infty)$ and every $f\in L^p(\RR^d)$ that
\begin{align}
\label{eq:90}
\big\|T_{\RR^d}[\Upsilon_{n_{k+1}}^{(k+1)}(1-\eta_{\le 2^{10 l}}^{(k+1)})\eta_{\le m-1}^{[d]\setminus\{k+1\}}]f\big\|_{L^p(\RR^d)}
\lesssim \tau^{-\delta_pn_{k+1}}
\|f\|_{L^p(\RR^{d})}.
\end{align}
By simple interpolation and Plancherel's theorem, it suffices  to show
\begin{align*}
\big|\Upsilon_{n_{k+1}}^{(k+1)}(\xi)\big(1-\eta_{\le 2^{10 l}}^{(k+1)}(\xi)\big) \big|\lesssim \tau^{-\delta_{\Upsilon} n_{k+1}/2},
\end{align*}
for all $n_{k+1}\ge 2^{\kappa_l}$,
which follows from the definition of $\kappa_l$ and \eqref{eq:378}.
\vskip 5pt
\paragraph{\bf Step 6} We now prove \eqref{eq:81} with $\mathfrak b_{n_1,\ldots, n_d}=\mathfrak a_{n_{i}; i\in A_k}^{n_1, I_j }(G_{n_{k+1}}^{\iota, 2})$.  Recall the definition $Q_{\le l}$ from \eqref{eq:373}. Since $Q_{\le l}\le 2^{2^{\kappa_l}}$, we can write
\begin{align*}
T_{\ZZ^d}^{\Sigma_{\le l}^d}\big[\Upsilon_{n_{k+1}}^{(k+1)}\eta_{\le 2^{10 l}}^{(k+1)}\eta_{\le m-1}^{[d]\setminus\{k+1\}}\big]f_{\iota}
=T_{\ZZ}^{Q_{\le l}^{-1}[Q_{\le l}]}\big[\Upsilon_{n_{k+1}}^{(k+1)}\eta_{\le 2^{10 l}}^{(k+1)}\big]
T_{\ZZ^d}^{\Sigma_{\le l}^d}\big[\eta_{\le m-1}^{[d]}\big]f_{\iota}.
\end{align*}
Consequently,
\begin{align}
\label{eq:83}
\mathfrak a_{n_{i}; i\in A_k}^{n_1, I_j }(G_{n_{k+1}}^{\iota, 2})=
T_{\ZZ}^{Q_{\le l}^{-1}[Q_{\le l}]}\big[\Upsilon_{n_{k+1}}^{(k+1)}\eta_{\le 2^{10 l}}^{(k+1)}\big]
\mathfrak a_{n_{i}; i\in A_k}^{n_1, I_j }\big(T_{\ZZ^d}^{\Sigma_{\le l}^d}\big[\eta_{\le m-1}^{[d]}\big]f_{\iota}\big).
\end{align}
Furthermore, by basic properties of the Fourier transform for $x=(x_1,\ldots, x_d)\in\ZZ^d$ one has
\begin{align*}
T_{\ZZ}^{Q_{\le l}^{-1}[Q_{\le l}]}\big[\Upsilon_{n_{k+1}}^{(k+1)}\eta_{\le 2^{10 l}}^{(k+1)}\big](x)
&=
\begin{cases}
Q_{\le l}\phi_{n_{k+1}}(x_{k+1})\prod_{i\in[d]\setminus\{k+1\}}\ind{\{0\}}(x_i), & \text{ if } x_{k+1}\in Q_{\le l}\ZZ,\\
0, & \text{ if } x_{k+1}\not\in Q_{\le l}\ZZ
\end{cases}
\\
&=\varphi^{Q_{\le l}}_{n_{k+1}}(x),
\end{align*}
where 
$$
\varphi^{Q_{\le l}}_{n}(x):=Q_{\le l}\phi_{n}(x_{k+1})\ind{Q_{\le l}\ZZ}(x_{k+1})\prod_{i\in[d]\setminus\{k+1\}}\ind{\{0\}}(x_i), \qquad x\in\ZZ^d,
$$ 
with
$\phi_{n}(y):=\mathcal{F}^{-1}_{\ZZ}[\Upsilon_{n}^{(k+1)}\eta_{\le 2^{10 l}}^{(k+1)}](y)$ for $y\in\ZZ$. 

For $\ZZ^d\ni x=({\bm x}, x_{k+1})$, with ${\bm x}=(x_1,x_2,\dots,x_k,x_{k+2}, \dots, x_d)$ we define $f^{{\bm x}}_{\iota}:\ZZ\rightarrow \CC$ by setting
$$
f^{{\bm x}}_{\iota}(x_{k+1}):=f_{\iota}(x).
$$
Then for every $p\in(1, \infty)$ and every $f=(f_{\iota}:\iota\in\NN)\in \ell^p(\ZZ^d; \ell^2(\NN))$ we can write
\[
T_{\ZZ}^{Q_{\le l}^{-1}[Q_{\le l}]}\big[\Upsilon_{n_{k+1}}^{(k+1)}\eta_{\le 2^{10 l}}^{(k+1)}\big]f_{\iota}=\varphi^{Q_{\le l}}_{n_{k+1}}*_{\ZZ^d}f_{\iota},
\]
and consequently
\begin{align*}
&\bigg\|\Big(\sum_{\iota\in\NN}\sup_{n_{k+1}\in\NN}\big|T_{\ZZ}^{Q_{\le l}^{-1}[Q_{\le l}]}\big[\Upsilon_{n_{k+1}}^{(k+1)}\eta_{\le 2^{10 l}}^{(k+1)}\big]f_{\iota}\big|^2\Big)^{1/2}\bigg\|_{\ell^p(\ZZ^d)}^p
\\
&\quad\le\bigg\|\Big(\sum_{\iota\in\NN}\sup_{n_{k+1}\in\NN}(|\varphi^{Q_{\le l}}_{n_{k+1}}|*_{\ZZ^d}|f_{\iota}|)^2\Big)^{1/2}\bigg\|^p_{\ell^p(\ZZ^d)}
\\
&\quad=\sum_{{\bm x}\in\ZZ^{d-1}}\sum_{r_{k+1}\in[Q_{\le l}]}\sum_{j_{k+1}\in\ZZ}\Big(\sum_{\iota\in\NN}\sup_{n_{k+1}\in\NN}\big( |\phi^{Q_{\le l}}_{n_{k+1}}|*_{\ZZ} |f_{\iota}^{{\bm x},r_{k+1},Q_{\le l} }|(j_{k+1}) \big)^2\Big)^{p/2},
\end{align*}
with $\phi^{Q_{\le l}}_{n}(y):=Q_{\le l}\phi_{n}(Q_{\le l} y)$ and $f_{\iota}^{{\bm x}, r,Q_{\le l}}(y):=f^{{\bm x}}_{\iota}(r+Q_{\le l}y)$ for $y\in\ZZ$.

Observe that we have a pointwise domination
$$
\sup_{n_{k+1}\in\NN}|\phi^{Q_{\le l}}_{n_{k+1}}|*_\ZZ |g|(y)\lesssim \mathcal{M}_{HL}g(y), \qquad y\in\ZZ, \qquad g\in\ell^p(\ZZ),
$$
with $\mathcal{M}_{HL}$ denoting the classical Hardy--Littlewood maximal operator on $\ZZ$. The implicit constant in the above inequality does not depend on $Q_{\le l}$. By the Fefferman--Stein inequality we obtain
\begin{align}\nonumber
&\sum_{{\bm x}\in\ZZ^{d-1}}\sum_{r_{k+1}\in[Q_{\le l}]}\sum_{j_{k+1}\in\ZZ}\Big(\sum_{\iota\in\NN}\sup_{n_{k+1}\in\NN}\big( |\phi^{Q_{\le l}}_{n_{k+1}}|*_{\ZZ} |f_{\iota}^{{\bm x},r_{k+1},Q_{\le l} }|(j_{k+1}) \big)^2\Big)^{p/2}
\\ \label{eq:82}
&\quad\lesssim
\sum_{{\bm x}\in\ZZ^{d-1}}\sum_{r_{k+1}\in[Q_{\le l}]}\sum_{j_{k+1}\in\ZZ}\Big(\sum_{\iota\in\NN} |f_{\iota}^{{\bm x},r_{k+1},Q_{\le l} }(j_{k+1})|^2\Big)^{p/2}=\|f\|_{\ell^p(\ZZ^{d}; \ell^2(\NN))}^p.
\end{align}

Using the identity \eqref{eq:83}, the formula for the kernel
$T_{\ZZ}^{Q_{\le l}^{-1}[Q_{\le l}]}\big[\Upsilon_{n_{k+1}}^{(k+1)}\eta_{\le 2^{10 l}}^{(k+1)}\big]$,
inequality \eqref{eq:82} and the induction hypothesis we conclude
\begin{multline*}
\sup_{J\in\ZZ_+}\sup_{I\in\mathfrak S_J(\NN)}
\norm[\bigg]{\Big(\sum_{\iota\in\NN}\sum_{j=0}^{J-1}\sup_{I_j\le n_1<I_{j+1}}\sup_{\substack{n_i\in\NN\\
i\in A_k}}\sup_{n_{k+1}\in \NN_{\ge 2^{\kappa_l}}} |\mathfrak a_{n_{i}; i\in A_k}^{n_1, I_j }(G_{n_{k+1}}^{\iota, 2})|^2\Big)^{1/2}}_{\ell^p(\ZZ^{d})}\\
\lesssim\sup_{J\in\ZZ_+}\sup_{I\in\mathfrak S_J(\NN)}
\norm[\bigg]{\Big(\sum_{\iota\in\NN}\sum_{j=0}^{J-1}\sup_{I_j\le n_1<I_{j+1}}\sup_{\substack{n_i\in\NN\\
i\in A_k}} \big|\mathfrak a_{n_{i}; i\in A_k}^{n_1, I_j }\big(T_{\ZZ^d}^{\Sigma_{\le l}^d}\big[\eta_{\le m-1}^{[d]}\big]f_{\iota}\big)\big|^2\Big)^{1/2}}_{\ell^p(\ZZ^{d})}\\
\lesssim 2^{\varepsilon l}
\norm[\bigg]{\Big(\sum_{\iota\in\NN}\big|T_{\ZZ^d}^{\Sigma_{\le l}^d}\big[\eta_{\le m-1}^{[d]}\big]f_{\iota}\big|^2\Big)^{1/2}}_{\ell^p(\ZZ^{d})}.
\end{multline*}
Finally  Lemma \ref{lem:10} and Theorem \ref{thm:iw-semi} yield
\begin{align*}
\norm[\bigg]{\Big(\sum_{\iota\in\NN}\big|T_{\ZZ^d}^{\Sigma_{\le l}^d}\big[\eta_{\le m-1}^{[d]}\big]f_{\iota}\big|^2\Big)^{1/2}}_{\ell^p(\ZZ^{d})}\lesssim 2^{\varepsilon l}\|f\|_{\ell^p(\ZZ^{d}; \ell^2(\NN))}.
\end{align*}
Combining these two estimates we obtain the desired claim, completing the proof.
\end{proof}

\subsection{Multi-parameter seminorm variant of the Ionescu--Wainger
theorem for  the continuous multipliers}
Let $k\in\ZZ_+$ and a finite and nonempty set
$\Gamma\subseteq \NN^k\setminus\{{\bm 0}\}$ be given.   We will use Theorem \ref{thm:IW1'}
together with the square function techniques developed recently in
\cite{KLMP} to prove a multi--parameter oscillation seminorm
Ionescu--Wainger type estimate for the operator associated with the
multiplier
\begin{equation}\label{eq:PhiM1M2_def_kpar}
\mathfrak{m}_{M}^{\langle\Gamma\rangle}(\xi):=\frac{1}{(\tau-1)^k}\int_{[1, \tau]^k }\ex\big( \xi\cdot (M\otimes t)^{\Gamma} \big)dt, \qquad \xi\in\TT^\Gamma,\quad M\in\RR_+^k.
\end{equation}

For notation,
see Section \ref{section:2}. By Proposition \ref{thm:CCW},  there exist $C_{k, \Gamma}\in\RR_+$ and  $\delta_{\Gamma}\in(0, 1)$ such that
\begin{align}
\label{eq:9}
|\mathfrak{m}_{M}^{\langle\Gamma\rangle}(\xi)|&\le C_{k, \Gamma} \big(\max_{\gamma\in\Gamma}|M^{\gamma}\xi_{\gamma}| \big)^{-\delta_{\Gamma}},\\
\label{eq:11}
|\mathfrak{m}_{M}^{\langle\Gamma\rangle}(\xi)-1|&\le C_{k, \Gamma} \min\big(1, \max_{\gamma\in\Gamma}|M^{\gamma}\xi_{\gamma}| \big).
\end{align}
The van der Corput bound from \eqref{eq:9} and \eqref{eq:11} will be used here and throughout the paper.

We need some technical preliminaries that will allow us to deal with
the multi-parameter oscillations along subsets of a ``dyadic'' grid
$\DD_{\tau}^k$. Suppose that $\Gamma=\{\gamma_i: i\in[d]\}$ for some
$d\in\ZZ_+$ such that $|\Gamma|=d$,  then the polynomial $(t)^{\Gamma}$ can be further rewritten  as follows $(t)^{\Gamma}=(t^{\gamma_1},\ldots, t^{\gamma_d})$
for $t\in\RR^k$.

Each coordinate \( t^{\gamma_i} \) of \( (t)^{\Gamma} \) determines a certain
``direction'', which is uniquely defined by its exponents
\( \gamma_i = (\gamma_{i,1}, \ldots, \gamma_{i,k}) \). A naive thought
suggests that for each direction, one should perform an appropriate
Littlewood--Paley decomposition to control long variations with a
square function.

This idea is effective if there are \( d \)
non-degenerate directions but fails if the number of directions is
less than \( d \), which may happen. To overcome this difficulty, we
introduce an auxiliary parameter
\( r \coloneqq \mathrm{rank}(\mathbb{M}^{\Gamma}) \), where
\( \mathbb{M}^{\Gamma} \) is a matrix of size \( d \times k \) whose
\( i \)-th row \( R_i \) is the exponent
\( \gamma_i = (\gamma_{i,1}, \ldots, \gamma_{i,k}) \) of the monomial
\( t^{\gamma_i} \) from \( (t)^{\Gamma} \). There is a one-to-one correspondence between
these exponents and the ``directions'', and moreover the number of
non-degenerate directions is \( r \leq \min\{d, k\} \).

Now, choose a set $\Gamma_r:=\{\gamma_{i_1}, \dots, \gamma_{i_r}\} \subseteq \Gamma$ such that the
corresponding rows $R_{i_1}, \dots, R_{i_r}$ are linearly
independent. It is straightforward to check that the map
$s \mapsto (s)^{\Gamma_r}=(s^{\gamma_{i_1}}, \dots, s^{\gamma_{i_r}})$ is surjective. Moreover,
for each $s \in \RR^k_+$ the value of $(s)^{\Gamma}$ is completely determined
by the value of $(s)^{\Gamma_r}$, since each row in $\mathbb{M}^{\Gamma}$ can be
expressed as a linear combination of vectors
$R_{i_1}, \dots, R_{i_r}$. For each $n \in \ZZ^r$ we pick some
$s(n) \in \RR^k_+$ such that
$(s(n))^{\Gamma_r} = {\tau}^n=(\tau^{n_1}, \dots, \tau^{n_r}) \in \DD_{\tau}^r$ and
the long variations will be defined with the aid of the set
$\{ s(n) : n \in \ZZ^r\}$.

\begin{lemma}\label{lem:long_only}
Let $k\in \ZZ_+$ and $\Gamma\subseteq \NN^k\setminus\{{\bm 0}\}$ be given, and let  $\{a_s : s \in \RR^k_+\}\subseteq \CC$ be a family of numbers satisfying the following condition: $(s)^{\Gamma}=(t)^{\Gamma} \implies a_s = a_{t}$.  Then 
\begin{align}
\label{eq:7}
\{a_s : s \in \DD_\tau^k\} \subseteq \{a_{s(n)}  : n \in \ZZ^r\}.
\end{align}
\end{lemma}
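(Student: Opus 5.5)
The idea is to pass to logarithmic coordinates, where the monomial map becomes linear. Identify $s\in\RR^k_+$ with $\sigma=\log_\tau s\in\RR^k$, componentwise. For $s\in\RR_+^k$ we then have $(s)^{\Gamma}=\tau^{\mathbb M^{\Gamma}\sigma}$, where $\mathbb M^{\Gamma}$ is the $d\times k$ exponent matrix with rows $R_1,\ldots,R_d$. Consequently $(s)^{\Gamma}=(t)^{\Gamma}$ holds exactly when $\mathbb M^{\Gamma}\log_\tau s=\mathbb M^{\Gamma}\log_\tau t$. By hypothesis, $a_s$ depends only on the vector $\mathbb M^{\Gamma}\log_\tau s$. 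So, given $s\in\DD_\tau^k$, it suffices to produce some $n\in\ZZ^r$ with $(s(n))^{\Gamma}=(s)^{\Gamma}$; then $a_s=a_{s(n)}$, which is \eqref{eq:7}.

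The first step is to reduce to the independent rows. Write $\sigma=\log_\tau s\in\NN^k$, so $\sigma$ has integer coordinates. Define $n:=(R_{i_1}\cdot\sigma,\ldots,R_{i_r}\cdot\sigma)$. Since both $R_{i_j}$ and $\sigma$ are integral, $n\in\ZZ^r$, and in fact $n\in\NN^r$. Moreover $(s)^{\Gamma_r}=\tau^n$.

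By construction, $s(n)$ satisfies $(s(n))^{\Gamma_r}=\tau^n=(s)^{\Gamma_r}$. Set $\sigma'=\log_\tau s(n)$. Then $R_{i_j}\cdot\sigma'=R_{i_j}\cdot\sigma$ for every $j\in[r]$. Now let $R_i$ be any row of $\mathbb M^{\Gamma}$. Since $r=\mathrm{rank}(\mathbb M^{\Gamma})$ and the chosen rows are linearly independent, they span the row space. Hence $R_i=\sum_j c_{ij}R_{i_j}$ with real coefficients $c_{ij}$, and therefore $R_i\cdot\sigma'=R_i\cdot\sigma$. This gives $\mathbb M^{\Gamma}\sigma'=\mathbb M^{\Gamma}\sigma$, i.e. $(s(n))^{\Gamma}=(s)^{\Gamma}$. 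The hypothesis then yields $a_s=a_{s(n)}$.

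The only point needing care is that $s(n)$ exists for every $n\in\ZZ^r$, i.e. that $s\mapsto (s)^{\Gamma_r}$ maps $\RR_+^k$ onto $\RR_+^r$. The paper asserts this, and it is easily checked. In logarithmic coordinates the map is the linear map $\sigma\mapsto(R_{i_j}\cdot\sigma)_{j\in[r]}$ from $\RR^k$ to $\RR^r$. Its $r\times k$ matrix has rank $r$, so it is surjective. This step is mild; nothing is hard here. The key observation is simply that, in logarithmic coordinates, equality of $(\cdot)^{\Gamma}$ is determined by the $r$ independent rows.
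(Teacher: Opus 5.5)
Your proof is correct and follows the paper's route. For $s\in\DD_\tau^k$ you get $(s)^{\Gamma_r}=\tau^n$ with $n\in\ZZ^r$. The value $(s)^{\Gamma}$ is determined by $(s)^{\Gamma_r}$, since every row of $\mathbb{M}^{\Gamma}$ is a combination of $R_{i_1},\dots,R_{i_r}$, and hence $a_s=a_{s(n)}$. Your logarithmic coordinates only make explicit two facts the paper calls straightforward: this determination and the surjectivity of $s\mapsto(s)^{\Gamma_r}$.
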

This lemma will be applied with $a_M=F(\mathfrak{m}_M^{\langle\Gamma\rangle})$ for $M\in\RR_+^k$, where $F$ is a function that depends on $\mathfrak{m}_M^{\langle\Gamma\rangle}$.  
Since the coordinates of the polynomial $(t)^{\Gamma}$ from \eqref{eq:PhiM1M2_def_kpar} are monomials
they satisfy the following key identity
\begin{align}
\label{eq:6}
(s\otimes t)^{\Gamma}=(s)^{\Gamma}\otimes (t)^{\Gamma} \quad \text{ for any } \quad s, t\in\RR^k.
\end{align}
Using \eqref{eq:6}, we see that the condition from Lemma \ref{lem:long_only} holds.

\begin{proof}[Proof of Lemma \ref{lem:long_only}]
Take any $s\in\DD_\tau^k$. Since $(s)^{\Gamma}$ is completely determined by the value of $(s)^{\Gamma_r}$, so $a_s$ is in fact determined by $(s)^{\Gamma_r}$.  The mapping $s \mapsto (s)^{\Gamma_r}$ consists of monomials with all coefficients equal $1$, thus for $s \in \DD_\tau^k$ we clearly have 
$(s)^{\Gamma_r}=\tau^n$ for some $n\in\ZZ^r$. However, by the definition of $s(n)$ we also have  
$(s(n))^{\Gamma_r}=\tau^n$. It follows that $a_s=a_{s(n)}$, which proves \eqref{eq:7}. 
\end{proof}

We are ready to state the main result of the chapter.
\begin{theorem}\label{thm:osc_IW_2par}
Let $k\in \ZZ_+$ and $\Gamma\subseteq \NN^k\setminus\{{\bm 0}\}$ be given.
Let $p\in[p_0',p_0]$ for some $p_0\in2\ZZ_+$ and let
$\varepsilon\in(0, 1)$. Then there exists an absolute constant
$C=C(\Gamma,p_0, \tau, \varepsilon)\in\RR_+$ such that for
all integers $l\in\NN$ and $m\in\RR_+$ satisfying
$2^{-m}\le (2p_0 2^{l p_0})^{-1}$ the following holds
\begin{align}\label{eq:osc_IW_2par}
\sup_{J\in\ZZ_+}\sup_{I\in\mathfrak S_J(\DD_{\tau}^k) }\big\|O_{I, J}\big (T_{\ZZ^\Gamma}^{\Sigma^\Gamma_{\le l}}\big[ \mathfrak{m}_{M}^{\langle\Gamma\rangle}\eta^\Gamma_{\le m}\big]f: M \in\DD^k_{\tau}\big)\big\|_{\ell^p(\ZZ^\Gamma)}\le C 2^{\varepsilon l}\|f\|_{\ell^p(\ZZ^\Gamma)},
\end{align}
where $\mathfrak{m}_{M}^{\langle\Gamma\rangle}$ is the multiplier given by \eqref{eq:PhiM1M2_def_kpar}.
\end{theorem}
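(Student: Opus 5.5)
We argue by induction on $|\Gamma|$, following the outline in the introduction. The base case and the recurring ``pure bump'' terms are handled by Theorem \ref{thm:IW1'}. We first use Lemma \ref{lem:long_only}: the numbers $a_M:=T_{\ZZ^\Gamma}^{\Sigma^\Gamma_{\le l}}[\mathfrak m_M^{\langle\Gamma\rangle}\eta^\Gamma_{\le m}]f(x)$ depend on $M$ only through $(M)^{\Gamma_r}$. Hence the oscillation over $M\in\DD_\tau^k$ along $I\in\mathfrak S_J(\DD_\tau^k)$ is an oscillation of the $r$-parameter family indexed by $n\in\ZZ^r$, $M=s(n)$, with $r=\mathrm{rank}(\mathbb M^\Gamma)$. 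The boxes $\BB[I_j]$ map into a family of $r$-dimensional regions that are ordered coordinatewise in the $n$ variables, so no genuine loss occurs. The remaining $|\Gamma|-r$ frequency directions have scales $M^\gamma$, $\gamma\notin\Gamma_r$, which are products of powers of $\tau^{n_1},\dots,\tau^{n_r}$. This is the only place where the ``parameters-gluing'' matters.

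Next we apply the inclusion--exclusion decomposition. For a dyadic scale $\tau^n$ we compare $\mathfrak m_M^{\langle\Gamma\rangle}(\xi)$ with the product of bumps $\prod_{\gamma\in\Gamma}\eta(M^\gamma\xi_\gamma)$. More precisely, we write
\[
\mathfrak m_M^{\langle\Gamma\rangle}=\sum_{\Lambda\subseteq\Gamma}\mathfrak m_M^{\langle\Gamma\rangle}\prod_{\gamma\in\Lambda}\eta(M^\gamma\xi_\gamma)\prod_{\gamma\in\Gamma\setminus\Lambda}\bigl(1-\eta(M^\gamma\xi_\gamma)\bigr).
\]
We then split further. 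On the part where all $|M^\gamma\xi_\gamma|\lesssim1$, we replace $\mathfrak m_M^{\langle\Gamma\rangle}$ by $\mathfrak m_M^{\langle\Gamma\setminus\Lambda'\rangle}$ in the variables that are not localized, plus an error controlled by \eqref{eq:11}. The main terms are then tensor products of a lower-dimensional multiplier $\mathfrak m^{\langle\Gamma'\rangle}$ with $\Gamma'\subsetneq\Gamma$ and smooth bumps in the remaining variables. The lower-dimensional factor is handled by the induction hypothesis, used in vector-valued form as in the proof of Theorem \ref{thm:IW1'}, with the factorization \eqref{eq:79} available thanks to the disjoint supports. The bump factors are absorbed exactly as in Steps 3--6 of that proof. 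The term made purely of bumps is exactly Theorem \ref{thm:IW1'} (after lifting to the coordinates of $\Gamma_r$).

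The error terms carry a factor $\min(1,|M^\gamma\xi_\gamma|)^{\delta}$ or $|M^{\gamma'}\xi_{\gamma'}|^{-\delta}$ by \eqref{eq:9}--\eqref{eq:11}. We bound their oscillations by the square function \eqref{eq:91} summed over $n\in\ZZ^r$. We then insert a Littlewood--Paley decomposition in the $r$ independent directions: dyadic pieces $\Delta_{u}$, $u\in\ZZ^r$, adapted to the scales $(\tau^{u})$. Each term $\sum_n\mathfrak e_{s(n)}\Delta_{n+v}$ then gains a factor $\tau^{-\delta|v|}$ on $L^2$, by Plancherel and the decay estimates. The gain survives for $p\neq2$ by interpolating against the trivial bound from \eqref{eq:380}, via Lemma \ref{lem:10}. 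Summing over $v\in\ZZ^r$ gives the required bound.

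The discrete side is obtained at the level of the continuous square-function estimates via Theorem \ref{thm:iw-semi} (with the $2^{\varepsilon l}$ loss of \eqref{eq:376cor}). Large scales, i.e.\ $n$ beyond $\kappa_l$, are treated separately as in Steps 5--6, by factoring through $T_{\ZZ}^{Q_{\le l}^{-1}[Q_{\le l}]}$ and the Fefferman--Stein inequality.

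The main obstacle is the error term when $r<|\Gamma|$. A frequency $\xi_\gamma$ with $\gamma\notin\Gamma_r$ has no Littlewood--Paley projection of its own tied to $n$. We must therefore show that the decay in $|M^\gamma\xi_\gamma|$ can be dominated by decay in some $|M^{\gamma_{i}}\xi_{\gamma_i}|$ after localizing, or else that it gives summable gains along the glued scale $M^\gamma=\tau^{\ell\cdot n}$ for a fixed rational vector $\ell$. Our first attempt is to use a Littlewood--Paley decomposition in all $|\Gamma|$ coordinates but index the resulting sums only by $n\in\ZZ^r$. Each fixed dyadic block in $\xi_\gamma$ then interacts with only $O(1)$ values of $\ell\cdot n$ per line, which should restore summability.
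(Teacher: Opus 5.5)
There is a genuine gap, and it lies in the error terms rather than in the gluing. Your decomposition has two stages: a partition of unity $\eta$ versus $1-\eta$ in each coordinate, then replacing $\mathfrak m_M^{\langle\Gamma\rangle}$ by a lower-dimensional multiplier, with the error bounded via \eqref{eq:11} or the mean value theorem. The resulting errors have small-frequency decay of max type, i.e.\ coming from a single coordinate. Such errors are not square-summable over $n\in\ZZ^r$ once $r\ge2$, even with no gluing at all ($r=|\Gamma|$). Take $k=2$ and $\Gamma=\{(1,0),(0,1)\}$. Your $\Lambda=\Gamma$ term leaves the error $(\mathfrak m_M^{\langle\Gamma\rangle}(\xi)-1)\eta(M_1\xi_1)\eta(M_2\xi_2)$. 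At $\xi=(\xi_1,0)$ this equals $(\mathfrak m_M^{\langle\Gamma\rangle}(\xi_1,0)-1)\eta(M_1\xi_1)$, which does not depend on $M_2$. Hence $\sum_n|\cdot|^2\gtrsim\log(1/|\xi_2|)$ near $\xi_2=0$, and the square function in \eqref{eq:91} is already unbounded on $\ell^2$. The same failure occurs for any error that is a difference in only some of the coordinates, multiplied by bumps equal to $1$ near the origin. So the gain $\tau^{-\delta|v|}$ you claim for $\sum_n\mathfrak e_{s(n)}\Delta_{n+v}$ holds only in the directions where the error actually decays, and the argument does not close. Your main terms also still carry factors $1-\eta(M^\gamma\xi_\gamma)$, so they are not tensor products to which an induction hypothesis on $|\Gamma|$ applies.

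The missing idea is the iterated inclusion--exclusion of \cite{KLMP} (formula \eqref{eq:deco4.3}), which keeps the full alternating sum together as one error. Take $\Upsilon=\mathcal{F}_{\RR}\phi$, and let $\pi^D$ set the $D$-coordinates to zero. Then $\mathfrak m_M^{\langle\Gamma\rangle}$ is a finite combination of $\prod_{\gamma\in\Gamma}\Upsilon(M^\gamma\xi_\gamma)$ and of terms $\tilde{\mathfrak m}^{\langle\Gamma\rangle,D}_M\prod_{\gamma\in D}\Upsilon(M^\gamma\xi_\gamma)$, where
\[
\tilde{\mathfrak m}^{\langle\Gamma\rangle,D}_M(\xi)=\sum_{D_1\subseteq\Gamma\setminus D}(-1)^{|D_1|}\,\mathfrak m_M^{\langle\Gamma\rangle}(\pi^{D\cup D_1}\xi)\prod_{\gamma\in D_1}\Upsilon(M^\gamma\xi_\gamma).
\]
Pairing $D_1$ with $D_1\cup\{\gamma\}$ shows this is $O(|M^\gamma\xi_\gamma|)$ for \emph{every} $\gamma\in\Gamma\setminus D$. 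Each summand is also $O(|M^\gamma\xi_\gamma|^{-\delta})$, by \eqref{eq:9} when $\gamma\notin D_1$ and by the decay of $\Upsilon$ when $\gamma\in D_1$. So $\tilde{\mathfrak m}^{\langle\Gamma\rangle,D}_M$ has two-sided decay in all coordinates of $\Gamma\setminus D$ at once. In particular it decays in the $r=r(D)$ independent coordinates, which already gives $\tau^{-\delta|v|_1/r}$ using Littlewood--Paley projections in those coordinates only. The glued coordinates therefore never need projections of their own, and your counting argument along $\ell\cdot n$ is unnecessary. The bump factors are split off by a product rule that decouples their scales $u\in\NN^D$ from $M$. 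They are then handled by Theorem \ref{thm:IW1'}, together with an induction on the number of bumps modelled on \eqref{eq:78}. A smaller point: before applying Theorem \ref{thm:IW1'}, lift the pure product $\prod_{\gamma\in\Gamma}\Upsilon(M^\gamma\xi_\gamma)$ to $u=(M^\gamma)_{\gamma\in\Gamma}\in\DD_\tau^\Gamma$, i.e.\ to all of $\Gamma$. Lifting only to $\Gamma_r$ does not work, because there the glued bumps are not independent tensor factors.
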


\begin{proof}
Fix $k\in \ZZ_+$ and
$\Gamma\subseteq \NN^k\setminus\{{\bm 0}\}$. Given a fixed Schwartz
function $\phi \colon \RR\rightarrow \RR_+$ such that
$\int_{\RR}\phi(x) \, \dif x=1$ we set
$\Upsilon \coloneqq \calF_\RR \phi$.  Then for $D\subseteq\Gamma$ let
\begin{equation*}
\mathfrak{m}_{M}^{\langle\Gamma\rangle, D}(\xi):=\mathfrak{m}_{M}^{\langle\Gamma\rangle}(\pi^D(\xi))\prod_{\gamma\in D}\Upsilon(M^\gamma \xi_\gamma),
\end{equation*}
where $\pi^D(\xi)\in\TT^\Gamma$ satisfies $(\pi^D(\xi))_\gamma := 0$ if $\gamma \in D$ and $(\pi^D(\xi))_\gamma := \xi_\gamma$ otherwise. In particular, $\mathfrak{m}_{M}^{\langle\Gamma\rangle, \emptyset}(\xi)=\mathfrak{m}_{M}^{\langle\Gamma\rangle}(\xi)$.  Now, we will divide the proof into several steps.

\paragraph{\bf Step 1} By the inclusion-exclusion formula, see \cite[Formula (4.3)]{KLMP}, we can  write
\begin{equation}\label{eq:deco4.3}
\mathfrak{m}_{M}^{\langle\Gamma\rangle}=\sum_{\emptyset\neq D\subseteq\Gamma}(-1)^{|D|+1}\mathfrak{m}^{\langle\Gamma\rangle, D}_{M}+\tilde{\mathfrak{m}}_{M}^{\langle\Gamma\rangle},
\end{equation}
where $\tilde{\mathfrak{m}}_{M}^{\langle\Gamma\rangle}:= \sum_{D \subseteq\Gamma} (-1)^{|D|} \mathfrak{m}^{\langle\Gamma\rangle, D}_{M}$. Each of the terms $\mathfrak{m}^{\langle\Gamma\rangle, D}_{M}$ in the sum on the right-hand side of \eqref{eq:deco4.3} can be further decomposed by writing
\begin{equation*}
\mathfrak{m}_{M}^{\langle\Gamma\rangle}(\pi^D(\xi))=\sum_{\emptyset\neq D_1\subseteq\Gamma\setminus D}(-1)^{|D_1|+1}\mathfrak{m}^{\langle\Gamma\rangle, D,D_1}_{M}(\xi)+\tilde{\mathfrak{m}}^{\langle\Gamma\rangle, D}_{M}(\xi),
\end{equation*}
with
$$
\mathfrak{m}^{\langle\Gamma\rangle, D,D_1}_{M}(\xi):=\mathfrak{m}_{M}^{\langle\Gamma\rangle}(\pi^{D\cup D_1}(\xi))\prod_{\gamma\in D_1}\Upsilon(M^\gamma\xi_\gamma),
$$
and
$$
\tilde{\mathfrak{m}}^{\langle\Gamma\rangle, D}_{M}:=\sum_{D_1\subseteq \Gamma\setminus D}(-1)^{|D_1|}\mathfrak{m}^{\langle\Gamma\rangle, D,D_1}_{M}.
$$
Consequently,  for each $\emptyset\neq D\subseteq \Gamma$, we have
\begin{equation*}
\mathfrak{m}_{M}^{\langle\Gamma\rangle, D}(\xi)=\sum_{\emptyset\neq D_1\subseteq\Gamma\setminus D}(-1)^{|D_1|+1}\mathfrak{m}_{M}^{\langle\Gamma\rangle}(\pi^{D\cup D_1}(\xi))\prod_{\gamma\in D\cup D_1}\Upsilon(M^\gamma \xi_\gamma)+\tilde{\mathfrak{m}}^{\langle\Gamma\rangle, D}_{M}(\xi)\prod_{\gamma\in D}\Upsilon(M^\gamma \xi_\gamma).
\end{equation*}
Plugging this to \eqref{eq:deco4.3}, we obtain
\begin{align*}
\mathfrak{m}_{M}^{\langle\Gamma\rangle}(\xi)&=\sum_{\emptyset\neq D\subseteq\Gamma}(-1)^{|D|+1}\sum_{\emptyset\neq D_1\subseteq\Gamma\setminus D}(-1)^{|D_1|+1}\mathfrak{m}_{M}^{\langle\Gamma\rangle}(\pi^{D\cup D_1}(\xi))\prod_{\gamma\in D\cup D_1}\Upsilon(M^\gamma \xi_\gamma)
\\
&\quad+\sum_{\emptyset\neq D\subseteq\Gamma}(-1)^{|D|+1}\tilde{\mathfrak{m}}^{\langle\Gamma\rangle, D}_{M}(\xi)\prod_{\gamma\in D}\Upsilon(M^\gamma \xi_\gamma)+\tilde{\mathfrak{m}}_{M}^{\langle\Gamma\rangle}(\xi).
\end{align*}
Next, we apply the above decomposition to $\mathfrak{m}_{M}^{\langle\Gamma\rangle}(\pi^{D\cup D_1}(\xi))$, then to $\mathfrak{m}_{M}^{\langle\Gamma\rangle}(\pi^{D\cup D_1\cup D_2}(\xi))$ with $D_2\subseteq\Gamma\setminus{D\cup D_1}$ and so on. After doing it sufficiently many times we obtain the representation of $\mathfrak{m}_{M}^{\langle\Gamma\rangle}$ as a finite linear combination of $
\prod_{\gamma\in\Gamma}\Upsilon(M^\gamma \xi_\gamma)
$
and terms of the form
$$
\tilde{\mathfrak{m}}^{\langle\Gamma\rangle, D \cup D_1\cup\dots\cup D_n}_{M}(\xi)\prod_{\gamma\in D\cup D_1\cup \dots \cup D_n}\Upsilon(M^\gamma \xi_\gamma),
$$
for some $D\subseteq\Gamma, D_1\subseteq\Gamma\setminus{D}, D_2\subseteq\Gamma\setminus{D\cup D_1}, \dots, D_n\subseteq \Gamma\setminus{D\cup D_1\cup\dots\cup D_{n-1}}$. 

\paragraph{\bf Step 2} Consequently, to prove \eqref{eq:osc_IW_2par} it suffices to show for any $D\subseteq\Gamma$ the following two estimates
\begin{align}\label{eq:osc_IW_2par_p1}
\sup_{J\in\ZZ_+}\sup_{I\in\mathfrak S_J(\DD_{\tau}^k) }\big\|O_{I, J}\big (T_{\ZZ^\Gamma}^{\Sigma^\Gamma_{\le l}}\big[\prod_{\gamma\in\Gamma}\Upsilon(M^\gamma \xi_\gamma)\eta^\Gamma_{\le m}(\xi)\big]f: M \in\DD^k_{\tau}\big)\big\|_{\ell^p(\ZZ^\Gamma)}\lesssim_{p, \tau, \varepsilon, \Gamma}2^{\varepsilon l}\|f\|_{\ell^p(\ZZ^\Gamma)},
\end{align}
and
\begin{align}\label{eq:osc_IW_2par_p2} 
\sup_{J\in\ZZ_+}\sup_{I\in\mathfrak S_J(\DD_{\tau}^k) }\big\|O_{I, J}\big (T_{\ZZ^\Gamma}^{\Sigma^\Gamma_{\le l}}\big[\tilde{\mathfrak{m}}^{\langle\Gamma\rangle, D}_{M}(\xi)\prod_{\gamma\in D}\Upsilon(M^\gamma \xi_\gamma)\big]f: M \in\DD^k_{\tau}\big)\big\|_{\ell^p(\ZZ^\Gamma)}
\lesssim_{p, \tau, \varepsilon, \Gamma}2^{\varepsilon l}\|f\|_{\ell^p(\ZZ^\Gamma)}.
\end{align}

\paragraph{\bf Step 3} We begin with showing \eqref{eq:osc_IW_2par_p1}. We fix $J\in\ZZ_+$ and $I\in\mathfrak S_J(\DD_{\tau}^k)$.  Note that if $M \in\DD^k_{\tau}$ then  $(M^\gamma)_{\gamma\in\Gamma}\in \DD^\Gamma_{\tau}$, thus
\begin{align*}
&O_{I, J}\big (T_{\ZZ^\Gamma}^{\Sigma^\Gamma_{\le l}}\big[\prod_{\gamma\in\Gamma}\Upsilon(M^\gamma \xi_\gamma)\eta^\Gamma_{\le m}(\xi)\big]f: M \in\DD^k_{\tau}\big)
\\
&\quad\le
O_{(I)^{\Gamma}, J}\big (T_{\ZZ^\Gamma}^{\Sigma^\Gamma_{\le l}}\big[\prod_{\gamma\in\Gamma}\Upsilon(u_\gamma \xi_\gamma)\eta^\Gamma_{\le m}(\xi)\big]f:(u_\gamma)_{\gamma\in\Gamma} \in\DD_{\tau}^\Gamma\big).
\end{align*} 
Thus, it remains to apply the estimate
\begin{align*}
\sup_{J\in\ZZ_+}\sup_{I\in\mathfrak S_J(\DD_{\tau}^k) }\big\|O_{I, J}\big (T_{\ZZ^\Gamma}^{\Sigma^\Gamma_{\le l}}\big[\prod_{\gamma\in\Gamma}\Upsilon(u_\gamma \xi_\gamma)\eta^\Gamma_{\le m}(\xi)\big]f:(u_\gamma)_{\gamma\in\Gamma} \in\DD_{\tau}^\Gamma\big)\big\|_{\ell^p(\ZZ^\Gamma)}\lesssim2^{\varepsilon l}\|f\|_{\ell^p(\ZZ^\Gamma)},
\end{align*}
which follows from Theorem \ref{thm:IW1'}. This completes the proof of \eqref{eq:osc_IW_2par_p1}.

\paragraph{\bf Step 4}  We turn to showing \eqref{eq:osc_IW_2par_p2}. Fix $J\in\ZZ_+, I\in\mathfrak S_J(\DD_{\tau}^k)$ and $j\in\mathbb{N}_{<J}$, and observe that
\begin{align*}
\sup_{M\in\mathbb B[I_j]\cap \DD_{\tau}^k}&\Big|T_{\ZZ^\Gamma}^{\Sigma^\Gamma_{\le l}}\big[\tilde{\mathfrak{m}}^{\langle\Gamma\rangle, D}_{M}(\xi) \prod_{\gamma\in D}\Upsilon(M^\gamma \xi_\gamma)\big]f-T_{\ZZ^\Gamma}^{\Sigma^\Gamma_{\le l}}\big[\tilde{\mathfrak{m}}^{\langle\Gamma\rangle, D}_{I_j}(\xi) \prod_{\gamma\in D}\Upsilon(I_j^\gamma\xi_\gamma)\big]f\Big|\\
&\le
\sup_{M\in\mathbb B[I_j]\cap \DD_{\tau}^k}\Big|T_{\ZZ^\Gamma}^{\Sigma^\Gamma_{\le l}}\big[\big(\tilde{\mathfrak{m}}^{\langle\Gamma\rangle, D}_{M}(\xi)-\tilde{\mathfrak{m}}^{\langle\Gamma\rangle, D}_{I_j}(\xi)\big)\prod_{\gamma\in D}\Upsilon(M^\gamma \xi_\gamma)\big]f\Big|\\
&+\sup_{M\in\mathbb B[I_j]\cap \DD_{\tau}^k}\Big|T_{\ZZ^\Gamma}^{\Sigma^\Gamma_{\le l}}\big[\tilde{\mathfrak{m}}^{\langle\Gamma\rangle, D}_{M}(\xi)\big(\prod_{\gamma\in D}\Upsilon(M^\gamma \xi_\gamma)- \prod_{\gamma\in D}\Upsilon(I_j^\gamma\xi_\gamma)\big)\big]f\Big|.
\end{align*}
To complete the proof of \eqref{eq:osc_IW_2par_p2} we need to show, for any $\varrho\in(0, 1)$, that
\begin{align}\label{eq:(1)} \nonumber
\sup_{J\in\ZZ_+}\sup_{I\in\mathfrak S_J(\ZZ^\Gamma)}&\norm[\Big]{\Big(\sum_{j=0}^{J-1}\sup_{M\in\mathbb B[I_j]\cap \DD_{\tau}^k}\sup_{u\in\NN^D}  \Big| T_{\ZZ^\Gamma}^{\Sigma^\Gamma_{\le l}}\big[(\tilde{\mathfrak{m}}^{\langle\Gamma\rangle, D}_{M}(\xi)-\tilde{\mathfrak{m}}^{\langle\Gamma\rangle, D}_{I_j}(\xi))\prod_{\gamma\in D}\Upsilon(u_\gamma \xi_\gamma)\big] f \Big|^2 \Big)^{1/2}}_{\ell^p(\ZZ^\Gamma)}
\\ 
&\qquad\lesssim 2^{\varrho l}\norm{f}_{\ell^p(\ZZ^\Gamma)},
\end{align}
and
\begin{align}\label{eq:(2)} \nonumber
\sup_{J\in\ZZ_+}\sup_{U\in\mathfrak S_J(\DD_{\tau}^D)}&\norm[\Big]{\Big(\sum_{j=0}^{J-1}\sup_{u\in\mathbb B[U_j]\cap \DD_{\tau}^D}\sup_{M\in\DD^k_\tau}  \Big| T_{\ZZ^\Gamma}^{\Sigma^\Gamma_{\le l}}\big[
\tilde{\mathfrak{m}}^{\langle\Gamma\rangle, D}_{M}(\xi)
\big(\prod_{\gamma\in D}\Upsilon(u_\gamma\xi_\gamma)
-\prod_{\gamma\in D}\Upsilon(U^j_\gamma\xi_\gamma)\big)\big]f \Big|^2 \Big)^{1/2}}_{\ell^p(\ZZ^\Gamma)}
\\ 
&\qquad\lesssim 2^{\varrho l}\norm{f}_{\ell^p(\ZZ^\Gamma)},
\end{align}
where $U_j=(U_\gamma^j)_{\gamma\in D}$ for $j\in\NN_{<J}$. 

\paragraph{\bf Step 5} We first treat \eqref{eq:(1)}. We begin with showing the vector-valued estimate
\begin{align}
\label{eq:tildedvecval}
\begin{gathered}
\sup_{J\in\ZZ_+}\sup_{I\in\mathfrak S_J(\ZZ^\Gamma)}
\norm[\bigg]{\Big(\sum_{\iota\in\NN}O_{I, J}\big(T_{\ZZ^\Gamma}^{\Sigma^\Gamma_{\le l}}\big[\tilde{\mathfrak{m}}^{\langle\Gamma\rangle, D}_M\eta^D_{\le m}\big]f_{\iota}: M\in\DD_\tau^k\big)^2\Big)^{1/2}}_{\ell^p(\ZZ^\Gamma)}
\lesssim
2^{\varepsilon l}\|f\|_{\ell^p(\ZZ^\Gamma; \ell^2(\NN))}.
\end{gathered}
\end{align}
Note that $\tilde{\mathfrak{m}}^{\langle\Gamma\rangle, D}_{M}$ is fully determined by the
mapping $M\mapsto (M)^{\Gamma_r}$, where $r=r(D)\in\ZZ_+$ is the
number of independent monomials in the family
$(t)^{\Gamma\setminus D}$.  Using 
\eqref{eq:7} and  \eqref{eq:91}, we obtain
\begin{align*}
\sup_{J\in\ZZ_+}\sup_{I\in\mathfrak S_J(\ZZ^\Gamma)}&
\norm[\bigg]{\Big(\sum_{\iota\in\NN}O_{I, J}\big(T_{\ZZ^\Gamma}^{\Sigma^\Gamma_{\le l}}\big[\tilde{\mathfrak{m}}^{\langle\Gamma\rangle, D}_M\eta^D_{\le m}\big]f_{\iota}: M\in\DD_\tau^k\big)^2\Big)^{1/2}}_{\ell^p(\ZZ^\Gamma)}
\\
&\quad\lesssim
\norm[\bigg]{\Big(\sum_{\iota\in\NN}\sum_{n\in\ZZ^r}\big|T_{\ZZ^\Gamma}^{\Sigma^\Gamma_{\le l}}\big[\tilde{\mathfrak{m}}^{\langle\Gamma\rangle, D}_{s(n)}\eta^D_{\le m}\big]f_{\iota}\big|^2\Big)^{1/2}}_{\ell^p(\ZZ^\Gamma)}.
\end{align*}
We shall show, for every $\varepsilon\in(0, 1)$, that
\begin{align}\label{eq:tildedsqf}
\norm[\bigg]{\Big(\sum_{\iota\in\NN}\sum_{n\in\ZZ^r}\big|T_{\ZZ^\Gamma}^{\Sigma^\Gamma_{\le l}}\big[\tilde{\mathfrak{m}}^{\langle\Gamma\rangle, D}_{s(n)}\eta^D_{\le m}\big]f_{\iota}\big|^2\Big)^{1/2}}_{\ell^p(\ZZ^\Gamma)}
\lesssim 2^{\varepsilon l}\|f\|_{\ell^p(\ZZ^\Gamma; \ell^2(\NN))}.
\end{align}
By Theorem \ref{thm:iw-semi}, to prove \eqref{eq:tildedsqf} it suffices to show a continuous estimate
\begin{align*}
\norm[\bigg]{\Big(\sum_{\iota\in\NN}\sum_{n\in\ZZ^r}\big|T_{\RR^\Gamma}\big[\tilde{\mathfrak{m}}^{\langle\Gamma\rangle, D}_{s(n)}\eta^D_{\le m}]f_{\iota}\big|^2\Big)^{1/2}}_{L^p(\RR^\Gamma)}\lesssim \norm[\Big]{\big(\sum_{\iota\in\NN}|f_{\iota}|^2\big)^{1/2}}_{L^p(\RR^\Gamma)}.
\end{align*}
In view of the Marcinkiewicz--Zygmund inequality, see Lemma
\ref{lem:10}, the above bound will follow from a scalar--valued
estimate
\begin{align}\label{eq:LPalaKLMP}
\sup_{(\omega_n)_{n\in\NN^r}\in \{-1,1\}^{\NN^r }}\norm[\big]{\sum_{n\in\ZZ^r}\omega_{n} H_{s(n)}f}_{L^p(\RR^\Gamma)}\lesssim \norm{f}_{L^p(\RR^\Gamma)},
\end{align}
with $H_s f:=T_{\RR^\Gamma}\big[\tilde{\mathfrak{m}}^{\langle\Gamma\rangle, D}_{s}\eta^D_{\le m}]f$ for $s\in \RR_+^k$.

\paragraph{\bf Step 6} To complete the proof of \eqref{eq:LPalaKLMP}, we will use the Littlewood--Paley theory. Following the notation from \cite{KLMP}, we define 
$$
R(n) \coloneqq  \{ x \in \RR^r_+ : \tau^n \preceq x \prec_{\rm s} \tau^{n + {\bf 1}}\}=\mathbb B[\tau^n],
\quad\text{ for } \quad n \in \ZZ^r
$$
and  let $S_n$ be the Littlewood--Paley projection corresponding to the cube $R(-n)$. In other words, we set $S_n f \coloneqq   T_{\RR^\Gamma} [\Xi_n] f$, where 
\[
\Xi_n ( \xi) \coloneqq  \ind{R(-n)} (|\xi_{i_1}|, \dots, |\xi_{i_r}|), \qquad \xi=(\xi_\gamma)_{\gamma\in\Gamma}\in\RR^\Gamma,
\]
with $i_1, \dots, i_r\in [d]$ denoting the indices corresponding to the maximal linearly independent set of monomials in $(t)^{\Gamma\setminus D}$. Then the identity
	\begin{align}\label{eq:PL1}
	\sum_{n \in \ZZ^r} S_{n}= \operatorname{Id}
	\end{align}
	holds in the strong operator topology on $L^2(\RR^\Gamma)$ and almost everywhere on $\RR^\Gamma$ for every $f\in L^2(\RR^\Gamma)$.   We also have
	\begin{align}\label{eq:PL2}
	\norm[\Big]{\Big(\sum_{n \in \ZZ^r} \abs{S_n f}^{2} \Big)^{\frac{1}{2}}}_{L^p(\RR^\Gamma)}
	\simeq
	\norm{f}_{L^p(\RR^\Gamma)}, \qquad f\in L^p(\RR^\Gamma).
	\end{align}
 Moreover, for each $m, n \in \ZZ^r$ and $\xi \in \RR^\Gamma$ such that $(\xi_{i_1}, \dots, \xi_{i_r}) \in R(-n-m)$ we have
\[
|\tilde{\mathfrak{m}}_{s(n)}^{\langle\Gamma\rangle, D}(\xi) | \lesssim \min \big\{ |\tau^{n_1} \xi_{i_1}|^{\pm \delta}, \dots, |\tau^{n_r} \xi_{i_r}|^{\pm  \delta} \big\} \lesssim \tau^{- \frac{ \delta |m|_1}{r}}
\]
for some $\delta\in(0, 1)$. This bound follows from \eqref{eq:9} and \eqref{eq:11} and the definition of $\tilde{\mathfrak{m}}_{s(n)}^{\langle\Gamma\rangle, D}(\xi)$. 
Using the above Littlewood--Paley projections we can write for $\omega_n\in\{-1,1\}$ that
\begin{align*}
\norm[\big]{\sum_{n\in\ZZ^r}\omega_{n} H_{s(n)}}_{L^p(\RR^\Gamma)}
&=\norm[\big]{\sum_{n\in\ZZ^r}\omega_{n} H_{s(n)}  \sum_{h\in\ZZ^r}  S_{n+h}S_{n+h} f}_{L^p(\RR^\Gamma)}\\
&\lesssim
\sum_{h\in\ZZ^r}\norm[\big]{\big(\sum_{n\in\ZZ^r}| H_{s(n)} S_{n+h} f|^2\big)^{1/2}}_{L^p(\RR^\Gamma)}.
\end{align*}
Finally, one can argue as in \cite[Section 3.3]{KLMP} to conclude
\begin{align*}
\sum_{h\in\ZZ^r}\norm[\big]{\big(\sum_{n\in\ZZ^r}| H_{s(n)} S_{n+h} f|^2\big)^{1/2}}_{L^p(\RR^\Gamma)}\lesssim \|f\|_{L^p(\RR^\Gamma)}.
\end{align*}
We refer to \cite[Inequality (3.11)]{KLMP} for more details. This concludes the proof of \eqref{eq:tildedvecval}.

\paragraph{\bf Step 7} To complete the proof of inequality \eqref{eq:(1)} we will show a stronger statement: for any $D,D'\subseteq \Gamma$, and all $J\in\ZZ_+$ and $I\in\mathfrak S_J(\ZZ^\Gamma)$ one has
\begin{align*}\label{eq:(1)} \nonumber
&\norm[\Big]{\Big(\sum_{\iota\in\NN}\sum_{j=0}^{J-1}\sup_{M\in\mathbb B[I_j]\cap\DD_{\tau}^k}\sup_{u\in\NN^{D'}}  \Big| T_{\ZZ^\Gamma}^{\Sigma^\Gamma_{\le l}}\big[(\tilde{\mathfrak{m}}^{\langle\Gamma\rangle, D}_{M}(\xi)-\tilde{\mathfrak{m}}^{\langle\Gamma\rangle, D}_{I_j}(\xi))\prod_{\gamma\in D'}\Upsilon(u_\gamma\xi_\gamma)\eta^\Gamma_{\le m}(\xi)\big]f_\iota \Big|^2 \Big)^{1/2}}_{\ell^p(\ZZ^\Gamma)}
\\
&\quad\lesssim 2^{\varrho l}\|f\|_{\ell^p(\ZZ^\Gamma; \ell^2(\NN))}.
\end{align*}
The above estimate can be proved by induction on the number of elements in $D'$. The base case $D'=\emptyset$ follows from inequality \eqref{eq:tildedvecval}. The rest of the proof is almost identical to the proof of inequality \eqref{eq:78}. We omit the details.

\paragraph{\bf Step 8} We pass to showing \eqref{eq:(2)}. By inequality \eqref{eq:91}, for any $J\in\ZZ_+$ and $U\in\mathfrak S_J(\NN^D)$, we have
\begin{align*}
&\norm[\Big]{\Big(\sum_{j=0}^{J-1}\sup_{u\in\mathbb B[U_j]\cap\DD_{\tau}^D}\sup_{M\in\DD^k_\tau}  \big| T_{\ZZ^\Gamma}^{\Sigma^\Gamma_{\le l}}\big[\tilde{\mathfrak{m}}^{\langle\Gamma\rangle, D}_{M}(\xi)\big(\prod_{\gamma\in D}\Upsilon(u_\gamma\xi_\gamma)
-\prod_{\gamma\in D}\Upsilon(U^j_\gamma\xi_\gamma)\big)\big]f) \big|^2 \Big)^{1/2}}_{\ell^p(\ZZ^\Gamma)}
\\
&\quad\le
\norm[\Big]{\Big(\sum_{n\in\NN^r}\sum_{j=0}^{J-1}\sup_{u\in\mathbb B[U_j]\cap\DD_{\tau}^D} \big| T_{\ZZ^\Gamma}^{\Sigma^\Gamma_{\le l}}\big[\tilde{\mathfrak{m}}^{\langle\Gamma\rangle, D}_{s(n)}(\xi)\big(\prod_{\gamma\in D}\Upsilon(u_\gamma\xi_\gamma)
-\prod_{\gamma\in D}\Upsilon(U^j_\gamma\xi_\gamma)\big)\big]f \big|^2 \Big)^{1/2}}_{\ell^p(\ZZ^\Gamma)}.
\end{align*}
Consider the following factorization
\begin{align*}
T_{\ZZ^\Gamma}^{\Sigma^\Gamma_{\le l}}\big[\tilde{\mathfrak{m}}^{\langle\Gamma\rangle, D}_{s(n)}(\xi)\big(\prod_{\gamma\in D}\Upsilon(u_\gamma\xi_\gamma)
-\prod_{\gamma\in D}\Upsilon(U^j_\gamma\xi_\gamma)\big)\big]f
=
T_{\ZZ^\Gamma}^{\Sigma^\Gamma_{\le l}}\big[
\eta^{\Gamma\setminus D}_{\le m}(\xi)\big(\prod_{\gamma\in D}\Upsilon(u_\gamma\xi_\gamma)
-\prod_{\gamma\in D}\Upsilon(U^j_\gamma\xi_\gamma)\big)\big]F_n,
\end{align*}
where
$$
F_n:=T_{\ZZ^\Gamma}^{\Sigma^\Gamma_{\le l}}\big[\tilde{\mathfrak{m}}^{\langle\Gamma\rangle, D}_{s(n)}\eta^D_{\le m}\big]f.
$$
Using the vector-valued oscillation inequality for the tensor product of bumps from Theorem \ref{thm:IW1'}, we obtain
\begin{align*}
&\norm[\Big]{\Big(\sum_{n\in\NN^r}\sum_{j=0}^{J-1}\sup_{u\in\mathbb B[U_j]\cap\DD_{\tau}^D}
\Big| T_{\ZZ^\Gamma}^{\Sigma^\Gamma_{\le l}}\big[
\big(\prod_{\gamma\in D}\Upsilon(u_\gamma\xi_\gamma)
-\prod_{\gamma\in D}\Upsilon(U^j_\gamma\xi_\gamma)\big)\big]F_n \Big|^2 \Big)^{1/2}}_{\ell^p(\ZZ^\Gamma)}
\\
&\quad\lesssim
2^{\varepsilon l}\norm[\big]{(\sum_{n\in\NN^r}|F_n|^2)^{1/2}}_{\ell^p(\ZZ^\Gamma)}.
\end{align*}
Finally, applying the estimate \eqref{eq:tildedsqf} (only the scalar-valued variant is needed here) we obtain
\begin{align*}
\norm[\big]{(\sum_{n\in\NN^r}|F_n|^2)^{1/2}}_{\ell^p(\ZZ^\Gamma)}
\lesssim
2^{\varepsilon l}\norm{f}_{\ell^p(\ZZ^\Gamma)}.
\end{align*}
That concludes the proof of \eqref{eq:(2)}, as well as \eqref{eq:osc_IW_2par_p2}. 
This completes the proof of the theorem.
\end{proof}

\section{Multi-parameter circle method: Proof of Theorem \ref{thm:osc}}
\label{sec:circle}
For ease of exposition, we only prove Theorem \ref{thm:osc} in the
two-parameter setting $k=2$. All our arguments are  adaptable to the general
multi-parameter setup at the expense of additional work. Fix $d_1, d_2\in\ZZ_+$, and as in Section \ref{sec:exp}, see \eqref{eq:2}, we let
\[
\Gamma:=\Gamma(d_1, d_2):=\NN_{\le d_1}\times \NN_{\le d_2}\setminus\{(0, 0)\},
\]
and we define four other sets
\begin{align}
\label{eq:47}
\begin{split}
\Gamma_1:=&\{\gamma\in \Gamma: \gamma_1\neq0\}=[d_1]\times\NN_{\le d_2}, \qquad \text{ and } \qquad \Gamma_1^c:=\Gamma\setminus\Gamma_1=\{0\}\times [d_2],\\
\Gamma_2:=&\{\gamma\in \Gamma: \gamma_2\neq0\}=\NN_{\le d_1}\times[d_2], \qquad \text{ and } \qquad
\Gamma_2^c:=\Gamma\setminus\Gamma_2=[d_1]\times\{0\}.
\end{split}
\end{align}
For every real number $K\ge1$ define
\begin{align}\label{eq:chiN_def}
\chi_K(x):=\frac{1}{|(K, \tau K]\cap\ZZ|}\ind{(K, \tau K]}(x), \qquad x\in \RR,
\end{align}
and for any real numbers $M_1, M_2\ge1$ and $\xi\in\RR^\Gamma$, define the multiplier 
\begin{align}\label{eq:mM1M2_def}
m_{M_1, M_2}^{\langle \Gamma \rangle}(\xi):=\sum_{m_1\in\ZZ}\sum_{m_2\in\ZZ}\chi_{M_1}(m_1)\chi_{M_2}(m_2)\ex\big({\xi}\cdot (m_1, m_2)^\Gamma\big).
\end{align}
From now on $\tau\in(1, 2]$ is fixed and we allow all the implied constants to depend on $\tau$. 
If suffices to show that for every $p\in(1, \infty)$, there is $C_{p, \tau, \Gamma}\in\RR_+$ such that for any  $f\in \ell^p(\ZZ^\Gamma)$, we have
\begin{equation}\label{eq:5goal}
\sup_{J\in\ZZ_+}\sup_{I\in\mathfrak S_J(\DD_{\tau}^2) }
\big\|O_{I, J}(T_{\ZZ^\Gamma}[m_{M_1,M_2}^{\langle \Gamma \rangle}]f: (M_1,M_2)\in\DD^2_{\tau})\big\|_{\ell^p(\ZZ^\Gamma)}
\le C_{p, \tau, \Gamma}\|f\|_{\ell^p(\ZZ^\Gamma)},
\end{equation}
since taking $P(t)=(t)^{\Gamma}$, we have $\tilde{A}_{M_1, M_2; \ZZ^{\Gamma}}^{P}f=T_{\ZZ^\Gamma}[m_{M_1,M_2}^{\langle \Gamma \rangle}]f$ with $\tilde{A}_{M_1, M_2; \ZZ^{\Gamma}}^P$ defined in \eqref{eq:3}.

It is not difficult  to see that once inequality \eqref{eq:5goal} is
established, Theorem \ref{thm:osc} readily follows.  By a simple
density argument, it suffices to prove inequality \eqref{eq:5goal}
only for compactly supported functions $f:\ZZ^\Gamma\to \CC$.  We now concentrate on proving
inequality \eqref{eq:5goal} for compactly supported functions, the
proof of which makes up the bulk of this paper.

\subsection{Preliminaries and approximating multipliers}
To prove inequality \eqref{eq:5goal}, we will need to construct an
approximating multiplier for which Theorem \ref{thm:osc_IW_2par} can
be applied. For this purpose, we develop a multi-parameter circle
method by adopting ideas from the classical approach.

For $\emptyset\neq\Lambda'\subseteq\Lambda\subseteq \Gamma$ and   an arithmetic function $G: (\QQ\cap\TT)^{\Lambda'}\rightarrow\CC$, a continuous multiplier  $m: \TT^\Lambda\rightarrow\CC$, 
a finite set of fractions $\Sigma\subset (\QQ\cap\TT)^{\Lambda'}$, and $\xi=(\xi_\gamma)_{\gamma\in\Lambda}\in\TT^\Lambda$, we define a multiplier
\begin{align}
\label{eq:15}
\big\llbracket G \mid  m\big\rrbracket_{\Sigma}(\xi)=\sum_{\theta\in\Sigma}G(\theta)m(\xi'-\theta, \xi''),
\end{align}
where $\xi=(\xi', \xi'')\in \TT^{\Lambda'}\times \TT^{\Lambda''}$ and $\Lambda''=\Lambda\setminus \Lambda'$.
Using this notation with appropriate $G, m$ and $\Sigma$ we will construct approximating multipliers.

\subsubsection{\bf\textit{Continuous multipliers}}
For $\emptyset\neq\Lambda\subseteq \Gamma$ the multiplier
$\mathfrak m_{M_1,M_2}^{\langle \Lambda \rangle}$ from \eqref{eq:PhiM1M2_def_kpar} is
a continuous counterpart of the multiplier $m_{M_1,M_2}^{\langle \Lambda \rangle}$ from \eqref{eq:mM1M2_def}
and it has the following form
\begin{equation}\label{eq:PhiM1M2_def}
\mathfrak m_{M_1,M_2}^{\langle \Lambda \rangle}(\xi):=\frac{1}{(\tau-1)^2}\int_{1}^{\tau}\int_{1}^{\tau}\ex\big( \xi\cdot (M_1t_1,M_2t_2)^{\Lambda} \big)dt_2dt_1, \qquad \xi\in\TT^\Lambda.
\end{equation}
By inequality \eqref{eq:9}, there exists $\delta_{\Lambda}\in(0, 1)$ such that
\begin{align}
\label{eq:10}
|\mathfrak{m}_{M}^{\langle \Lambda \rangle}(\xi)|\lesssim_{\Lambda} \big(\max_{\gamma\in\Lambda}|M^{\gamma}\xi_{\gamma}| \big)^{-\delta_{\Lambda}},\qquad M=(M_1, M_2),
\end{align}
and with the exponent $\delta_{\Lambda}\in(0, 1)$ satisfying \eqref{eq:10}, (similarly as in \eqref{eq:delta_Weyl} and \eqref{eq:12}), we set
\begin{align}
\label{eq:13}
\delta_{\rm{vdC}}:=\min_{\emptyset\neq\Lambda\subseteq \Gamma}\delta_{\Lambda}\in(0, 1).
\end{align}
In particular, inequality \eqref{eq:10} holds with $\delta_{\rm{vdC}}$ in place of $\delta_{\Lambda}$ uniformly for all 
$\emptyset\neq\Lambda\subseteq \Gamma$.

\subsubsection{\bf \textit{Logarithmic scales and rational fractions}} For $\emptyset\neq\Lambda\subseteq \Gamma$ and $(M_1,M_2) \in \RR_+^2$ and $K\in\RR_+$ we define two (logarithmic) quantities
\begin{align}
\label{eq:14}
\lo(K)=\Log K, \qquad \text{and } \qquad   \lo_{M_1,M_2}^{\Lambda}(K)=\big(\Log (M_1^{\gamma_1} M_2^{\gamma_2}/K) \big)_{\gamma\in\Lambda}\in\RR^\Lambda,
\end{align}
where $\Log x := \lfloor \log_2 x \rfloor$ for $x\in\mathbb{R}_+$.

As in Section \ref{sec:IW} for $N\ge1$ define $1$-periodic sets of \textit{canonical fractions} by
\begin{align}
\label{eq:16}
\mathcal{R}_{\le N}^\Lambda := \big\{{a}/{q}\in(\QQ\cap\TT)^\Lambda:  q \in [N] \text{ and } (a, q)=1\big\},
\end{align}
and for any $l\in\mathbb N$, we also let $\Sigma_{\le l}^\Lambda :=\mathcal{R}_{\le 2^l}^\Lambda$, and
$\Sigma_l^\Lambda := \Sigma_{\le l}^\Lambda \backslash \Sigma_{\le l-1}^\Lambda$.

\subsubsection{\bf \textit{Approximating multipliers}} For any real
numbers $M_1, M_2\ge 1$ and $u_1, \chi_1\in\RR_+$, using notation from \eqref{eq:15}, we define
an approximating multiplier for $m_{M_1,M_2}^{\langle \Gamma \rangle}$ by setting
\begin{align}
\label{eq:18}
\Phi_{M_1, M_2}^{\langle \Gamma \rangle}:=\Big \llbracket G^{\Gamma} \mid  \mathfrak m_{M_1,M_2}^{\langle \Gamma \rangle}\eta_{\le \lo_{M_1,M_2}^{\Gamma}(M_0^{\chi_1})}^{\Gamma}\Big\rrbracket_{\Sigma_{\le \lo(M_0^{u_1})}^{\Gamma}},
\end{align}
with $M_0:=\min(M_1, M_2)$, 
where $\eta^\Lambda_{\le n}$  is a cut-off function defined by \eqref{eq:789}, namely
\begin{align}\label{eq:etatilde_def}
\eta_{\le n}^{\Lambda}(\xi):=\prod_{\gamma\in\Lambda}\eta_{\le n_{\gamma}}(\xi_{\gamma}), \qquad \xi\in\TT^{\Lambda}, \quad n=(n_{\gamma})_{\gamma\in\Lambda}\in\ZZ^{\Lambda}, \quad \emptyset\neq\Lambda\subseteq \Gamma,
\end{align}
and $G^\Lambda$ is the complete exponential sum from
\eqref{eq:323}. By Proposition \ref{prop:32}, for all
$a/q\in (\TT\cap\QQ)^{\Lambda}$ such that $(a, q)=1$, we have
\begin{equation}\label{eq:Gauss}
    |G^\Lambda(a/q)| \lesssim q^{-\delta_{\textrm{Gauss}}},
\end{equation}
where $\delta_{\textrm{Gauss}}\in(0, 1)$ is the exponent from \eqref{eq:12}.

\subsubsection{\bf \textit{Choice of parameters}} 
To prove inequality \eqref{eq:5goal}, we fix $p\in(1,\infty)$ and choose $p_0\in2\ZZ$, an auxiliary interpolation parameter, such that $p\in(p_0',p_0)$.
The constants that will appear in our arguments are allowed to depend on $p$ and $p_0$. We also set an interpolation exponent
\begin{equation}\label{eq:thetap_def}
\theta_p:=\Big(\frac{1}{\min\{p,p'\}
}-\frac{1}{p_0}\Big) \Big(\frac{1}2-\frac{1}{p_0}
\Big)^{-1}\in(0,1).
\end{equation}

We further define
\begin{equation}\label{eq:uniform_delta}
\delta:=\min(\delta_{\textrm{Weyl}},  \delta_{\textrm{Gauss}}, \delta_{\textrm{vdC}})\in(0, 1),
\end{equation}
with the exponents $\delta_{\textrm{Weyl}}$, $\delta_{\textrm{Gauss}}$ and $\delta_{\textrm{vdC}}$  given by \eqref{eq:delta_Weyl}, \eqref{eq:12} and \eqref{eq:13}, respectively.

Let $\chi_1, u_1, \chi_2, u_2, \alpha\in\RR_+$  be positive constants such that
\begin{align}\label{eq:parameter_tuning}
\frac{1}{10^6|\Gamma|p_0}>\chi_1>4(|\Gamma|+1)u_1>\frac{10^3}{\delta}(|\Gamma|+1)^3\chi_2>\frac{10^3}{\delta}(|\Gamma|+1)^3 u_2>\frac{10^6}{\delta^2}(|\Gamma|+1)^6 \alpha,
\end{align}
where $\delta$ is given by \eqref{eq:uniform_delta}.
Moreover, let $\varrho\in(0, 1)$ be a small constant satisfying
\begin{equation}\label{eq:epsilon_def}
1/\varrho > 100(|\Gamma|+1+C_\Gamma)
\end{equation}
with $C_\Gamma\in\RR_+$ to be specified later. In fact, $C_\Gamma$ will be the constant arising in  \eqref{eq:weak_err}.

\subsection{Key $\ell^2(\ZZ^\Gamma)$ estimates}
Our aim is to prove that for every $M_1\in\DD_\tau$ and  $f\in \ell^2(\ZZ^\Gamma)$, we have
\begin{align}\label{eq:error_M1_dec_l2}
\big\| \sup_{M_2\in\DD_\tau:M_2\ge M_1}|T_{\ZZ^\Gamma}[ m_{M_1,M_2}^{\langle \Gamma \rangle}-\Phi_{M_1,M_2}^{\langle \Gamma \rangle}] f| \big\|_{\ell^2(\ZZ^\Gamma)}\lesssim M_1^{-\alpha\varrho/2}\|f\|_{\ell^2(\ZZ^\Gamma)}, 
\end{align}
with $\alpha\in(0, 1)$ as in \eqref{eq:parameter_tuning}, and $\varrho\in(0, 1)$ as in \eqref{eq:epsilon_def}. A similar inequality is true when we change the roles of $M_1$ with $M_2$. Therefore, we restrict our attention to \eqref{eq:error_M1_dec_l2}.

\subsubsection{\bf \textit{Large and small scale maximal estimates}}
In fact, inequality \eqref{eq:error_M1_dec_l2} is reduced to proving the following  large-scale maximal estimate 
\begin{align}\label{eq:l2_main_int}
    \big\|\sup_{M_2\in\DD_\tau: M_2^{\varrho} \le M_1 \le M_2} | T_{\ZZ^\Gamma}[m_{M_1,M_2}^{\langle \Gamma \rangle}-\Phi_{M_1,M_2}^{\langle \Gamma \rangle}]f| \big\|_{\ell^2(\ZZ^\Gamma)} \lesssim M_1^{-\alpha\varrho/2}\|f\|_{\ell^2(\ZZ^\Gamma)},
\end{align}
and  the small-scale maximal estimate 
\begin{align}\label{eq:l2_main_int_case2}
    \big\|\sup_{M_2\in \DD_\tau:M_1 \le M_2^\varrho} | T_{\ZZ^\Gamma}[ m_{M_1,M_2}^{\langle \Gamma \rangle}-\Phi_{M_1,M_2}^{\langle \Gamma \rangle}]f| \big\|_{\ell^2(\ZZ^\Gamma)} \lesssim M_1^{-\alpha}\|f\|_{\ell^2(\ZZ^\Gamma)}.
\end{align}

We now give a few comments about the proof of inequalities \eqref{eq:l2_main_int} and \eqref{eq:l2_main_int_case2}.

\begin{enumerate}[label*={\arabic*}.]
\item The proof of inequality \eqref{eq:l2_main_int} is fairly straightforward, as the multi-parameter (in fact, two-parameter) Weyl's inequality from Theorem \ref{thm:weyl2par} works efficiently in the large-scale regime \((M_2^{\varrho} \le M_1 \le M_2)\), which, in fact, is a regime where the scales are comparable upon taking logarithms. In this case, the contribution from minor arcs, as well as the approximation on major arcs, produces a decay with respect to the larger parameter \(M_2\), which is handled by simple square function arguments. The proof of inequality \eqref{eq:l2_main_int} then goes much the same way as in the one-parameter theory; we refer, for instance, to \cite[Section 7]{BMSW}.

\smallskip

\item The proof of inequality \eqref{eq:l2_main_int_case2} is quite intricate. Working in the small-scale regime \((M_1 \le M_2^{\varrho})\), the parameters are no longer comparable, and one has to apply the classical circle method iteratively, variable by variable, which is a technically demanding process. By applying Weyl's inequality with respect to the larger parameter \(M_2\), one must understand the behavior of the multiplier along polynomials whose coefficients are polynomials with variables corresponding to the smaller scale \(M_1\). Then we encounter a new phenomenon, which we call the ``major arc rigidity'', which can be formulated as the following dichotomy:
\smallskip
\begin{itemize}
\item[(i)] The coefficients of the polynomial corresponding to the larger scale \(M_2\) --- which are polynomials with variables corresponding to the smaller scale \(M_1\) --- belong to the major arcs, but this happens very rarely, and we gain a power saving with respect to the smaller scales \(M_1\). So the maximal and square function techniques are efficient in this case.

\smallskip

\item[(ii)] Otherwise, the original two-variable polynomial is
equidistributed in the sense that its frequencies on the Fourier
transform side are all well-structured and must belong to the
two-parameter major arcs. This also reveals that the multiplier
\(\Phi_{M_1,M_2}^{\langle \Gamma \rangle}\) is an approximating multiplier for
\(m_{M_1,M_2}^{\langle \Gamma \rangle}\). Once again, the maximal and square function
techniques are efficient and lead to the conclusion 
\eqref{eq:l2_main_int_case2}.
\end{itemize}

\smallskip

\item The proof of \eqref{eq:error_M1_dec_l2} follows if we show
inequalities \eqref{eq:l2_main_int} and
\eqref{eq:l2_main_int_case2}. This will be done in Sections
\ref{sec:l2easy} and \ref{sec:l2hard}, respectively. Assuming
momentarily \eqref{eq:error_M1_dec_l2}, we show how to complete the
proof of inequality \eqref{eq:5goal}, and consequently Theorem
\ref{thm:osc}.
\end{enumerate}

\subsection{Key $\ell^p(\ZZ^\Gamma)$ estimates}
Inequality \eqref{eq:5goal} will be proved once we show the error term estimate
\begin{align}\label{eq:osc_error}
\sup_{J\in\ZZ_+}\sup_{I\in\mathfrak S_J(\DD_{\tau}^2) }\big\|O_{I, J}(T_{\ZZ^\Gamma}[m_{M_1,M_2}^{\langle \Gamma \rangle}-\Phi_{M_1,M_2}^{\langle \Gamma \rangle}]f: (M_1,M_2) \in\DD^2_{\tau})\big\|_{\ell^p(\ZZ^\Gamma)}\lesssim_{p, \tau, \Gamma}\|f\|_{\ell^p(\ZZ^\Gamma)},
\end{align}
and the main term estimate
\begin{align}\label{eq:osc_period}
\sup_{J\in\ZZ_+}\sup_{I\in\mathfrak S_J(\DD_{\tau}^2) }\big\|O_{I, J}\big (T_{\ZZ^\Gamma}[\Phi_{M_1,M_2}^{\langle \Gamma \rangle}]f: (M_1,M_2) \in\DD^2_{\tau}\big)\big\|_{\ell^p(\ZZ^\Gamma)}\lesssim_{p, \tau, \Gamma}\|f\|_{\ell^p(\ZZ^\Gamma)}.
\end{align}
It suffices to prove inequalities  \eqref{eq:osc_error} and \eqref{eq:osc_period} with $\DD_{\tau, \le }^2:=\{(M_1, M_2)\in \DD_\tau^2: M_1\le M_2\}$ and $\DD_{\tau, \ge }^2:=\{(M_1, M_2)\in \DD_\tau^2: M_1\ge M_2\}$ in place of $\DD_{\tau}^2$. We  focus on $\DD_{\tau, \le }^2$, the other case is similar.
\subsubsection{\bf \textit{Proof of the error term estimate \eqref{eq:osc_error}} assuming \eqref{eq:osc_period}}
Using inequality \eqref{eq:95} with $\II=\DD_{\tau, \le }^2$, we have 
\begin{align*}
&\sup_{J\in\ZZ_+}\sup_{I\in\mathfrak S_J(\DD_{\tau, \le }^2) }\big\|O_{I, J}(T_{\ZZ^\Gamma}[ m_{M_1,M_2}^{\langle \Gamma \rangle}-\Phi^{\langle \Gamma \rangle}_{M_1,M_2}]f: (M_1,M_2) \in\DD_{\tau, \le }^2)\big\|_{\ell^p(\ZZ^\Gamma)}
\\
&\quad\lesssim\sum_{M_1\in\DD_\tau}\big\| \sup_{M_2\in\DD_\tau:M_2\ge M_1}|T_{\ZZ^\Gamma}[ m_{M_1,M_2}^{\langle \Gamma \rangle}-\Phi^{\langle \Gamma \rangle}_{M_1,M_2}]f| \big\|_{\ell^p(\ZZ^\Gamma)}.
\end{align*}
To finish the proof of \eqref{eq:osc_error} it suffices to show that for any $M_1\in\DD_\tau$,   one has
\begin{align}\label{eq:error_M1_dec_lp}
\big\| \sup_{M_2\in\DD_\tau:M_2\ge M_1}|T_{\ZZ^\Gamma}[ m_{M_1,M_2}^{\langle \Gamma \rangle}-\Phi^{\langle \Gamma \rangle}_{M_1,M_2}]f| \big\|_{\ell^p(\ZZ^\Gamma)}\lesssim M_1^{-\frac{1}{2}\alpha\varrho\theta_p}\|f\|_{\ell^p(\ZZ^\Gamma)}
\end{align}
with $\theta_p\in(0, 1)$ as in \eqref{eq:thetap_def}. It suffices to prove \eqref{eq:error_M1_dec_lp} 
for all  $M_1\ge C_{p_0}$, which are sufficiently large in terms of the auxiliary interpolation parameter $p_0\in2\ZZ$.
By the one-parameter maximal theory from \cite{MST2}, which gives the estimates independent of the coefficients of the underlying polynomial, we obtain 
\begin{align*}
\sup_{M_1\in\DD_\tau}\big\| \sup_{M_2\in\DD_\tau}|T_{\ZZ^\Gamma}[ m_{M_1,M_2}^{\langle \Gamma \rangle} ]f| \big\|_{\ell^u(\ZZ^\Gamma)}\lesssim \|f\|_{\ell^u(\ZZ^\Gamma)}, \qquad  u\in\{p_0,p_0'\}.
\end{align*}
Moreover, since the oscillation seminorm dominates a supremum, it follows by \eqref{eq:osc_period} that
\begin{align*}
\sup_{M_1\in\DD_\tau}\big\| \sup_{M_2\in\DD_\tau:M_2\ge M_1}|T_{\ZZ^\Gamma}[ \Phi^{\langle \Gamma \rangle}_{M_1,M_2} ]f| \big\|_{\ell^u(\ZZ^\Gamma)}\lesssim \|f\|_{\ell^u(\ZZ^\Gamma)}, \qquad u\in\{p_0,p_0'\}.
\end{align*}
Combining the above estimates we obtain
\begin{align*}
\sup_{M_1\in\DD_\tau}\big\|\sup_{M_2\in\DD_\tau:M_2\ge M_1}|T_{\ZZ^\Gamma}[ m_{M_1,M_2}^{\langle \Gamma \rangle}-\Phi^{\langle \Gamma \rangle}_{M_1,M_2}]f| \big\|_{\ell^u(\ZZ^\Gamma)}\lesssim \|f\|_{\ell^u(\ZZ^\Gamma)}, \qquad  u\in\{p_0,p_0'\}.
\end{align*}
Interpolating the above bound with \eqref{eq:error_M1_dec_l2} gives \eqref{eq:error_M1_dec_lp}, and implies \eqref{eq:osc_error}, given \eqref{eq:osc_period}.

\subsubsection{\bf \textit{Proof of the main term estimate \eqref{eq:osc_period}}}
Let 
\begin{equation}\label{eq:Ns_def}
N_s:=2^{\frac{s}{1000u_1}}.
\end{equation}
Taking $\delta_p:=\delta \theta_p$, where $\theta_p\in(0,1)$ is given by \eqref{eq:thetap_def}, 
it suffices to show
\begin{align}\label{eq:osc_period_s}
\sup_{J\in\ZZ_+}\sup_{I\in\mathfrak S_J(\DD_{\tau}^2) }\big\|O_{I, J}\big (T_{\ZZ^\Gamma}[\mathfrak h^{s}_{M_1,M_2}\mathfrak g_s]f: (M_1,M_2) \in\DD^{2, s}_{\tau, \le}\big)\big\|_{\ell^p(\ZZ^\Gamma)}\lesssim_{p, \tau, \Gamma}2^{-s \delta_p/2}\|f\|_{\ell^p(\ZZ^\Gamma)},
\end{align}
where $\DD^{2, s}_{\tau, \le}:=\{(M_1, M_2)\in \DD^{2}_{\tau, \le}: M_1\ge 2^{s/u_1}\}$, and
\begin{align*}
\mathfrak h^{s}_{M_1,M_2}:=&\Big\llbracket 1 \mid  \mathfrak m_{M_1,M_2}^{\langle \Gamma \rangle}\eta_{\le \lo_{M_1,M_2}^{\Gamma}(M_1^{\chi_1})}^{\Gamma}\eta_{\le \lo_{N_s,N_s}^{\Gamma}(N_s^{\chi_1})}^{\Gamma}\Big\rrbracket_{\Sigma_{s}^{\Gamma}},\\
\mathfrak g_s:=&\Big\llbracket G^{\Gamma} \mid  \eta_{\le \lo_{N_s,N_s}^{\Gamma}(N_s^{\chi_1})}^{\Gamma}\Big\rrbracket_{\Sigma_{s}^{\Gamma}}.
\end{align*}
Indeed, for $(M_1,M_2) \in \DD^{2}_{\tau, \le}$, we can write
\begin{align*}
\Phi_{M_1,M_2}^{\langle \Gamma \rangle}=\sum_{s=0}^{\infty} \Big\llbracket G^{\Gamma} \mid  \mathfrak m_{M_1,M_2}^{\langle \Gamma \rangle}\eta_{\le \lo_{M_1,M_2}^{\Gamma}(M_1^{\chi_1})}^{\Gamma}\Big\rrbracket_{\Sigma_{s}^{\Gamma}} \ind{\DD^{2, s}_{\tau, \le}}(M_1,M_2),
\end{align*}
and if $(M_1,M_2)\in \DD^{2, s}_{\tau, \le}$, then $M_1\ge N_s$, and we have a factorization
\[
\Big\llbracket G^{\Gamma} \mid  \mathfrak m_{M_1,M_2}^{\langle \Gamma \rangle}\eta_{\le \lo_{M_1,M_2}^{\Gamma}(M_1^{\chi_1})}^{\Gamma}\Big\rrbracket_{\Sigma_{s}^{\Gamma}}=\mathfrak h^{s}_{M_1,M_2}\mathfrak g_s.
\]
Note that for every $I\in\mathfrak S_J(\DD_{\tau}^2)$, there is at most one $i_0 \in \NN_{<J}$ for which the box ${\mathbb B}[I_{i_0}]$ intersects both 
$\DD^{2, s}_{\tau, \le}$ and its complement in $\DD^{2, s}_{\tau, \le}$. Therefore
\[
O_{I, J}\big (T_{\ZZ^\Gamma}[\mathfrak h^{s}_{M_1,M_2}\mathfrak g_s \ind{\DD^{2,s}_{\tau, \le}}]f: (M_1,M_2) \in\DD^{2}_{\tau, \le}\big) 
\]
\[
\le O_{I, J}\big (T_{\ZZ^\Gamma}[\mathfrak h^{s}_{M_1,M_2}\mathfrak g_s]f: (M_1,M_2) \in\DD^{2, s}_{\tau, \le}\big) + \sup_{(M_1,M_2) \in \DD^{2,s}_{\tau,\le}} \big| T_{\ZZ^\Gamma}[\mathfrak h^{s}_{M_1,M_2}\mathfrak g_s]f\bigr|
\]
and so by \eqref{eq:135}, we see that it suffices to prove \eqref{eq:osc_period_s}.

The proof of \eqref{eq:osc_period_s} will follow if we show that for any $\varepsilon\in(0, 1)$, we have 
\begin{align}\label{eq:osc_period_IW_s}
\sup_{J\in\ZZ_+}\sup_{I\in\mathfrak S_J(\DD_{\tau}^2) }\big\|O_{I, J}\big (T_{\ZZ^\Gamma}[ \mathfrak h^{s}_{M_1,M_2}]f: (M_1,M_2) \in\DD^{2, s}_{\tau, \le}\big)\big\|_{\ell^p(\ZZ^\Gamma)}\lesssim_{p, \tau, \Gamma,\varepsilon}2^{\varepsilon s}\|f\|_{\ell^p(\ZZ^\Gamma)}
\end{align}
and
\begin{align}\label{eq:Gauss_final_s}
\big\|T_{\ZZ^\Gamma}[\mathfrak g_s]f\big\|_{\ell^p(\ZZ^\Gamma)}\lesssim_{p, \tau, \Gamma,\varepsilon}2^{-\delta_p s}\|f\|_{\ell^p(\ZZ^\Gamma)}.
\end{align}
To prove inequality \eqref{eq:Gauss_final_s}, one can follow the ideas in the proof of \cite[Lemma 7.43]{BMSW}. We omit the details. To estimate \eqref{eq:osc_period_IW_s}, we introduce the multiplier
\begin{align*}
\tilde{\mathfrak h}^{s}_{M_1,M_2}:=\Big\llbracket 1 \mid  \mathfrak m_{M_1,M_2}^{\langle \Gamma \rangle}\eta_{\le \lo_{N_s,N_s}^{\Gamma}(N_s^{\chi_1})}^{\Gamma}\Big\rrbracket_{\Sigma_{s}^{\Gamma}}.
\end{align*}
We observe that, by Theorem \ref{thm:osc_IW_2par}, we can write
\begin{align}
\label{eq:17}
\sup_{J\in\ZZ_+}\sup_{I\in\mathfrak S_J(\DD_{\tau}^2) }\big\|O_{I, J}\big (T_{\ZZ^\Gamma}[ \tilde{\mathfrak h}^{s}_{M_1,M_2}]f: (M_1,M_2) \in\DD^{2, s}_{\tau, \le}\big)\big\|_{\ell^p(\ZZ^\Gamma)}\lesssim_{p, \tau, \Gamma,\varepsilon}2^{\varepsilon s}\|f\|_{\ell^p(\ZZ^\Gamma)},
\end{align}
whereas by Theorem \ref{thm:iw-semi} there exists $\alpha_p\in(0, 1)$ such that for every $M_1\in \DD^{2, s}_{\tau, \le}$,  we have
\begin{align}
\label{eq:19}
\bigg\|\Big(\sum_{\substack{M_2\in \DD_{\tau}\\M_2 \ge M_1}}\big|T_{\ZZ^\Gamma}[{\mathfrak h}^{s}_{M_1,M_2}- \tilde{\mathfrak h}^{s}_{M_1,M_2}]f\big|^2\Big)^{1/2}\bigg\|_{\ell^p(\ZZ^\Gamma)}\lesssim_{p, \tau, \Gamma,\varepsilon}2^{\varepsilon s}M_1^{-\alpha_p}\|f\|_{\ell^p(\ZZ^\Gamma)}.
\end{align}
To obtain inequality \eqref{eq:19} we can proceed as in the proof of \cite[Claim 7.55]{BMSW}, we omit the details.

\section{Multi-parameter circle method: Proof of inequality (\ref{eq:l2_main_int})}
\label{sec:l2easy}
The aim of this section is to prove the large-scale maximal estimate 
from \eqref{eq:l2_main_int}. We shall treat the
multiplier $m_{M_1,M_2}^{\langle \Gamma \rangle}$ defined in \eqref{eq:mM1M2_def} separately on the two-parameter minor
and major arcs. An efficient way to do this is to introduce suitable major-arc projection multipliers.  Namely, for $\emptyset\neq \Lambda\subseteq \Gamma$ and $u, \chi\in\RR_+$, we define a  major-arc projection multiplier corresponding to $\Lambda$ by
\begin{align}
\label{eq:20}
\Pi_{M_1, M_2}^{\Lambda, u, \chi}:=\Big\llbracket 1 \mid  \eta_{\le \lo_{M_1,M_2}^{\Lambda}(M_1^{\chi})}^{\Lambda}\Big\rrbracket_{\Sigma_{\le \lo(M_1^{u})}^{\Lambda}}.
\end{align}
The  minor-arc projection multiplier is defined by  $1-\Pi_{M_1, M_2}^{\Lambda, u, \chi}$. We begin with the minor-arc analysis.

\subsection{Minor arc estimate} 
Our goal will be to prove that for any fixed $M_1\in\DD_\tau$,  we have
\begin{align}\label{eq:minor_l2_main}
    \big\|\sup_{M_2\in\DD_\tau: M_2^{\varrho} \le M_1 \le M_2} | T_{\ZZ^\Gamma}[m_{M_1,M_2}^{\langle \Gamma \rangle}(1-\Pi^\Gamma_{M_1,M_2})]f| \big\|_{\ell^2(\ZZ^\Gamma)} \lesssim M_1^{-\alpha\varrho}\|f\|_{\ell^2(\ZZ^\Gamma)},
\end{align}
with $\alpha\in\RR_+$ as in \eqref{eq:parameter_tuning} and $\varrho\in\RR_+$ as in \eqref{eq:epsilon_def}, and with $\Pi^\Gamma_{M_1,M_2}:=\Pi^{\Gamma, u_1, \chi_1}_{M_1,M_2}$ defined in \eqref{eq:20}, where the parameters $u_1,\chi_1\in\RR_+$ have been specified in \eqref{eq:parameter_tuning}.

\begin{proof}[Proof of inequality \eqref{eq:minor_l2_main}]
It suffices to show that for all $M_2\in\DD_\tau$ and $M_2^{\varrho}\le M_1\le M_2$ we have 
\begin{equation*}
\big\|T_{\ZZ^\Gamma}[m_{M_1,M_2}^{\langle \Gamma \rangle}(1-\Pi^\Gamma_{M_1,M_2})]f \big\|_{\ell^2(\ZZ^\Gamma)}
\lesssim M_2^{-\alpha\varrho}\|f\|_{\ell^2(\ZZ^\Gamma)}.
\end{equation*}
By Plancherel's theorem this is equivalent to showing
\begin{equation}\label{eq:minor_l2}
     |m_{M_1,M_2}^{\langle \Gamma \rangle}(\xi)(1-\Pi^\Gamma_{M_1,M_2}(\xi))| \lesssim M_2^{-\alpha\varrho}, \qquad \xi\in\TT^\Gamma.
\end{equation}
Define a sequence $(\varepsilon_\gamma)_{\gamma\in\Gamma}$ by setting
$$
\varepsilon_{\gamma}:=
\frac{u_1}{2|\Gamma|}
\begin{cases}
\big(\frac{\delta_{\textrm{Weyl}}}{10|\Gamma|}\big)^{d_1+d_2+1}, &\quad \textrm{if} \quad \gamma\in  [d_1]\times[d_2],
\\
\big(\frac{\delta_{\textrm{Weyl}}}{10|\Gamma|}\big)^{\gamma_2-1}, &\quad \textrm{if} \quad \gamma=(0, \gamma_2)\in \{0\}\times[d_2],
\\
\big(\frac{\delta_{\textrm{Weyl}}}{10|\Gamma|}\big)^{\gamma_1-1}, &\quad \textrm{if} \quad \gamma=(\gamma_1,0)\in 
[d_1]\times\{0\},
\end{cases}
$$
with $u_1$ as in \eqref{eq:parameter_tuning} and
$\delta_{\textrm{Weyl}}$ as in \eqref{eq:delta_Weyl}. 

To prove \eqref{eq:minor_l2} we fix $\xi \in \mathbb{T}^{\Gamma}$. By Dirichlet’s principle for every $\gamma \in \Gamma$
 there exist integers $0 \le a_{\gamma} \le q_{\gamma}$ such that $(a_{\gamma},q_{\gamma})=1$, and  $1 \le q_{\gamma} \le M_1^{\gamma_1}M_2^{\gamma_2}M_1^{-\varepsilon_\gamma}$ and
 \begin{align*}
     \Big|\xi_{\gamma}-\frac{a_{\gamma}}{q_{\gamma}}\Big| \le \frac{M_1^{\varepsilon_\gamma}}{q_{\gamma}M_1^{\gamma_1}M_2^{\gamma_2}}\le \frac{1}{q_{\gamma}^2}.
 \end{align*}

\paragraph{\bf Step 1} If $1 \le q_{\gamma} \le M_1^{\varepsilon_\gamma}$ for all $\gamma\in \Gamma$. Then the left-hand side of \eqref{eq:minor_l2} vanishes.

\paragraph{\bf Step 2} Assume now that $ M_1^{\varepsilon_{\omega}} < q_{\omega}$ for some $\omega=(\omega_1, \omega_2)\in \Gamma$.  

If $\omega_1, \omega_2\ge 1$, then we can use the two-parameter Weyl's inequality, see Theorem \ref{thm:weyl2par}, and Plancherel's theorem to obtain
\begin{equation*}
     |m_{M_1,M_2}^{\langle \Gamma \rangle}(\xi)(1-\Pi^\Gamma_{M_1,M_2}(\xi))| \lesssim M_1^{-\alpha}\le M_2^{-\alpha\varrho}, \qquad \xi\in\TT^\Gamma.
\end{equation*}
where we have  used the fact that $M_2^{\varrho}\le M_1\le M_2$. We emphasize that the condition $M_2^{\varrho}\le M_1$ is crucial for getting sufficient decay from the application of Theorem \ref{thm:weyl2par}.

\paragraph{\bf Step 3} To complete the proof of the proposition it remains to consider the case when $q_\gamma \le M_1^{\varepsilon_\gamma}$ for all $\gamma\in\{(\gamma_1, \gamma_2)\in\Gamma: \gamma_1\neq 0, \gamma_2\neq0\}$ and $q_{\omega} > M_1^{\varepsilon_{\omega}}$ for some $\omega\in( \{0\}\times[d_2] ) \cup ([d_1]\times\{0\})$. Without the loss of generality assume that $\omega=(0,\omega_2)$ for some $\omega_2\in [d_2]$. The treatment of the case when $\omega$ is of the form $\omega=(\omega_1,0)$  for some $\omega_1\in [d_1]$ is analogous.

Recalling that $\Gamma_2=\{\gamma\in\Gamma: \gamma_2\neq0\}$, we write
\begin{align*}
|m_{M_1,M_2}^{\langle \Gamma \rangle}(\xi)| \le \sum_{m_1\in\ZZ}\chi_{M_1}(m_1)\Big| \sum_{m_2\in\ZZ}\chi_{M_2}(m_2)\ex\big(\sum_{\gamma\in\Gamma_2}\xi_{\gamma}(m_1, m_2)^{\gamma}\big)\Big|,
\end{align*}
and we show, uniformly in $m_1\in\supp\chi_{M_1}$, that
$$
\Big| \sum_{m_2\in\ZZ}\chi_{M_2}(m_2)\ex\big(\sum_{\gamma\in\Gamma_2}\xi_{\gamma} (m_1, m_2)^{\gamma}\big)\Big|\lesssim M_1^{-\alpha}.
$$
Let $\sigma:=\max\{    \gamma_2\in[d_2]: q_{(0,\gamma_2)} \ge M_1^{  \varepsilon_{(0,\gamma_2)}   }  \}$, and define
$$
Q:=\begin{cases}
\lcm\big(\{q_\gamma: \gamma\in  \{(\gamma_1, \gamma_2)\in\Gamma_2: \gamma_1\neq 0\} \} \big), &\quad \textrm{if} \quad \sigma=d_2,
\\
\lcm \big(\{q_\gamma: \gamma\in  \{(\gamma_1, \gamma_2)\in\Gamma_2: \gamma_1\neq 0\}\}
\\
 \qquad\qquad\cup \{ q_{(0,\sigma+1)}, q_{(0,\sigma+2)}, \dots, q_{(0, d_2)} \} \big), &\quad \textrm{if} \quad \sigma < d_2,
\end{cases}
$$
and note that 
\begin{equation}\label{eq:QqmaxM}
Q\le M_1^{    \varepsilon_{(0,\sigma+1)}+\dots+\varepsilon_{(0,d_2)}+ |\Gamma|\varepsilon_{(1,1)}       } 
\le  
M_1^{     \frac{\delta_{\textrm{Weyl}}}{5|\Gamma|}   \varepsilon_{(0,\sigma)    } }.
\end{equation}
Splitting the summation into classes modulo $Q$, we obtain 
\begin{align*}
\Big| \sum_{m_2\in\ZZ}\chi_{M_2}(m_2)\ex\big(\sum_{\gamma\in\Gamma_2}\xi_{\gamma} (m_1, m_2)^{\gamma}\big)\Big|
&\lesssim M_2^{-1}
\sum_{r=1}^Q \bigg| \sum_{n=U_2+1}^{V_2}  \ex\big(\sum_{\gamma\in\Gamma_2}\xi_{\gamma} (m_1, (Qn+r))^\gamma\big)\bigg|,
\end{align*}
where 
$U_2:=\lfloor \frac{M_2-r}{Q} \rfloor$ and $V_2:=\lfloor\frac{\tau M_2 -r}{Q}\rfloor$. It suffices to show that for a fixed $r$ one has 
\begin{align}\label{eq:bazur}
\Big|\sum_{n=U_2+1}^{V_2}  \ex\big(\sum_{\gamma\in\Gamma_2}\xi_{\gamma} (m_1, (Qn+r))^\gamma\big)\Big|\lesssim Q^{-1} M_1^{-\alpha}M_2.
\end{align}
Inequality \eqref{eq:bazur} follows by the summation by parts combined
with classical Weyl's inequality from Theorem \ref{thm:weyl1par},
which can be applied in view of Lemma \ref{lem:3} and condition
\eqref{eq:QqmaxM}.  This concludes the proof of \eqref{eq:minor_l2} as
desired, provided that $\alpha$  satisfies condition \eqref{eq:parameter_tuning}.
\end{proof}

\subsection{Major arc approximation}
Now, we approximate $m_{M_1,M_2}^{\langle \Gamma \rangle}$ on the major arcs.  Our goal is to show that for a fixed $M_1\in\DD_\tau$
one has
\begin{align}\label{eq:major_l2_main}
\big\|\sup_{M_2\in\DD_\tau:M_2^{\varrho} \le M_1 \le M_2} | T_{\ZZ^\Gamma}[ m_{M_1,M_2}^{\langle \Gamma \rangle}\Pi^\Gamma_{M_1,M_2}-\Phi_{M_1,M_2}^{\langle \Gamma \rangle}]f| \big\|_{\ell^2(\ZZ^\Gamma)} \lesssim M_1^{-\alpha\varrho/2}\|f\|_{\ell^2(\ZZ^\Gamma)},
\end{align}
with $\alpha\in\RR_+$ as in \eqref{eq:parameter_tuning} and
$\varrho\in\RR_+$ as in \eqref{eq:epsilon_def}, and with
$\Pi^\Gamma_{M_1,M_2}:=\Pi^{\Gamma, u_1, \chi_1}_{M_1,M_2}$ defined in
\eqref{eq:20}, where the parameters $u_1,\chi_1\in\RR_+$ have been
specified in \eqref{eq:parameter_tuning}.

Combining \eqref{eq:minor_l2_main} and \eqref{eq:major_l2_main} we obtain inequality \eqref{eq:l2_main_int} as claimed at the beginning of this section.

\begin{proof}[Proof of inequality \eqref{eq:major_l2_main}]
Inequality \eqref{eq:major_l2_main} will clearly follow from the bound
\begin{align}\label{eq:major_l2}
\|T_{\ZZ^\Gamma}[m_{M_1,M_2}^{\langle \Gamma \rangle}\Pi^\Gamma_{M_1,M_2}-\Phi_{M_1,M_2}^{\langle \Gamma \rangle}] f\|_{\ell^2(\ZZ^\Gamma)}\lesssim M_2^{-3\varrho/4}\|f\|_{\ell^2(\ZZ^\Gamma)}, \qquad M_2^\varrho \le M_1\le M_2.
\end{align}

To prove \eqref{eq:major_l2}, we use the mean value theorem to show that
\begin{align*}
\big|m_{M_1,M_2}^{\langle \Gamma \rangle}(\xi)\Pi^\Gamma_{M_1,M_2}(\xi)-\Phi_{M_1,M_2}^{\langle \Gamma \rangle}(\xi)\big|
 \lesssim M_1^{-3/4}\le M_2^{-3\varrho/4},
\end{align*}
so \eqref{eq:major_l2} follows by Plancherel's theorem, which consequently also implies \eqref{eq:major_l2_main}  as desired.
\end{proof}

\section{Multi-parameter circle method: Proof of inequality  (\ref{eq:l2_main_int_case2})}
\label{sec:l2hard}
The aim of this section is to prove the small-scale maximal estimate from \eqref{eq:l2_main_int_case2}, which is quite intricate and constitutes
the core of the paper. In contrast to \cite{BMSW}, we will apply the classical circle method iteratively, which will reveal a new phenomenon that we refer to as  ``major arc rigidity'', with Lemma \ref{lem:comb} revealing the key feature of the method. We begin by introducing further notation and terminology and by outlining the strategy of the proof of inequality \eqref{eq:l2_main_int_case2}.

We now prove the minor arc estimate with respect to $\Gamma_2$, see \eqref{eq:47}, which reads as follows
\begin{align}
\label{eq:48}
\big\|\sup_{M_2\in \DD_\tau:M_1 \le M_2^\varrho} | T_{\ZZ^\Gamma}[ (1-\Pi^{\Gamma_2}_{M_1,M_2}) m_{M_1,M_2}^{\langle \Gamma \rangle}] f | \big\|_{\ell^2(\ZZ^\Gamma)} \lesssim M_1^{-\alpha}\|f\|_{\ell^2(\ZZ^\Gamma)},
\end{align}
as well as the  major arc estimate with respect to $\Gamma_2$, which reads as follows 
\begin{align}
\label{eq:49}
\big\|\sup_{M_2\in \DD_\tau:M_1 \le M_2^\varrho} | T_{\ZZ^\Gamma}[ \Pi^{\Gamma_2}_{M_1,M_2} m_{M_1,M_2}^{\langle \Gamma \rangle}-\Phi_{M_1,M_2}^{\langle \Gamma \rangle}]f| \big\|_{\ell^2(\ZZ^\Gamma)} \lesssim M_1^{-\alpha}\|f\|_{\ell^2(\ZZ^\Gamma)},
\end{align}
with $\alpha\in\RR_+$ as in \eqref{eq:parameter_tuning} and $\varrho\in\RR_+$ as in \eqref{eq:epsilon_def}, and with $\Pi^{\Gamma_2}_{M_1,M_2}:=\Pi^{\Gamma_2, u_1, \chi_1}_{M_1,M_2}$ defined in \eqref{eq:20}, where the parameters $u_1,\chi_1\in\RR_+$ have been specified in \eqref{eq:parameter_tuning}.

Once inequalities \eqref{eq:48} and \eqref{eq:49} are established, the
triangle inequality then yields inequality
\eqref{eq:l2_main_int_case2}. To prove inequalities \eqref{eq:48} and
\eqref{eq:49}, we will apply the circle method in the
one-parameter setting. The key tool to handle inequality \eqref{eq:48}
is Lemma \ref{lem:comb}, which will reveal the ``major arc rigidity''
phenomenon. To handle inequality \eqref{eq:49}, we will delicately use
summation by parts, which will allow us to gradually reduce the matter
to the multiplier $\Phi_{M_1, M_2}^{\langle \Gamma \rangle}$. 

\subsection{Minor arc estimates: Proof of inequality \eqref{eq:48}}
For $\gamma_2\in[d_2]$, we let
\begin{gather}
\label{eq:P2gamma_def}
\xi^{\Gamma_2}_{\gamma_2}(m_1):=\sum_{\gamma_1=0}^{d_1} \xi_{(\gamma_1, \gamma_2)}m_1^{\gamma_1},
\qquad \text{ and } \qquad
\xi^{\Gamma_2^c}_{0}(m_1):=\sum_{\gamma_1=1}^{d_1} \xi_{(\gamma_1,0)}m_1^{\gamma_1}, \\
\label{eq:P2_def} \xi_{\Gamma_2}(m_1):=(\xi^{\Gamma_2}_{\gamma_2}(m_1))_{\gamma_2 \in [d_2]}\in\RR^{[d_2]}.
\end{gather}
Similarly, one can define
$\xi^{\Gamma_1^c}_{0}(m_2), \xi^{\Gamma_1}_{\gamma_1}(m_2)$ and
$\xi_{\Gamma_1}(m_2)$. We decompose
\begin{equation}\label{eq:variable_separate}
\xi\cdot (m_1, m_2)^{\Gamma}=\xi^{\Gamma_2^c}_{0}(m_1)+\sum_{\gamma_2 \in [d_2]}\xi^{\Gamma_2}_{\gamma_2}(m_1)m_2^{\gamma_2}
=\xi^{\Gamma_2^c}_{0}(m_1)+\xi_{\Gamma_2}(m_1) \cdot (m_2^{\gamma_2})_{\gamma_2 \in [d_2]}.
\end{equation}
Recalling that $\{x\}$ denotes the fractional part of $x\in\RR$, we set
$$
{\bm \{}\xi_{\Gamma_2}(m_1){\bm \}} := \big( \{\xi^{\Gamma_2}_{\gamma_2}(m_1)\} \big)_{\gamma_2\in[d_2]}\in\TT^{[d_2]}.
$$
We further write
\begin{align}
\label{eq:63}
m_{M_1,M_2}^{\langle \Gamma \rangle}=m_{M_1,M_2}^{\langle [d_2], \textrm{major}\rangle}+m_{M_1,M_2}^{\langle [d_2], \textrm{minor}\rangle},
\end{align}
where
\begin{align}
\label{eq:minor_major_split}
\begin{split}
m_{M_1,M_2}^{\langle [d_2], \textrm{major}\rangle}(\xi):=&\sum_{m_1\in\ZZ}\chi_{M_1}(m_1)
\Pi^{[d_2]}_{M_2}\big({\bm \{}\xi_{\Gamma_2}(m_1){\bm \}}\big)\sum_{m_2\in\ZZ}\chi_{M_2}(m_2)\ex(\xi\cdot (m_1, m_2)^{\Gamma}),\\
m_{M_1,M_2}^{\langle [d_2], \textrm{minor} \rangle}(\xi):=&\sum_{m_1\in\ZZ}\chi_{M_1}(m_1)\Big(1-\Pi^{[d_2]}_{M_2}\big({\bm \{}\xi_{\Gamma_2}(m_1){\bm \}}\big)\Big)
\sum_{m_2\in\ZZ}\chi_{M_2}(m_2)\ex(\xi\cdot (m_1, m_2)^{\Gamma}),
\end{split}
\end{align}
and $\Pi^{[d_2]}_{M_2}:=\Pi^{\Gamma_1^c, u_2, \chi_2}_{M_2, M_2}$ defined in \eqref{eq:20}, where  $u_2,\chi_2\in\RR_+$ have been specified in \eqref{eq:parameter_tuning}.

\subsubsection{\textbf{One-parameter minor arc estimates}}\label{step1} 
We first establish the minor arc estimate
\begin{align}\label{eq:minor_l2_main_case2}
\big\|\sup_{M_2\in \DD_\tau:M_1 \le M_2^\varrho} | T_{\ZZ^\Gamma} [(1-\Pi^{\Gamma_2}_{M_1,M_2}) m_{M_1,M_2}^{\langle [d_2], \textrm{minor}\rangle}] f | \big\|_{\ell^2(\ZZ^\Gamma)} \lesssim M_1^{-\alpha}\|f\|_{\ell^2(\ZZ^\Gamma)}.
\end{align}

\begin{proof}[Proof of inequality \eqref{eq:minor_l2_main_case2}]
It suffices to show that
\begin{align}
\label{eq:23}
\big\|T_{\ZZ^\Gamma} [(1-\Pi^{\Gamma_2}_{M_1,M_2}) m_{M_1,M_2}^{\langle [d_2], \textrm{minor}\rangle}] f \big\|_{\ell^2(\ZZ^\Gamma)} \lesssim M_2^{-\alpha}\|f\|_{\ell^2(\ZZ^\Gamma)}.
\end{align}
Since  $\varrho < 1$,   by a simple square function argument summing $\eqref{eq:23}$ over $M_2\in \DD_\tau$ such that $M_2\ge M_1$ we obtain \eqref{eq:minor_l2_main_case2}. By Plancherel's theorem, \eqref{eq:23} will follow if we  
show the pointwise estimate
\begin{align}\label{eq:minor_l2_case2}
|(1-\Pi^{\Gamma_2}_{M_1,M_2}(\xi))  m_{M_1,M_2}^{\langle [d_2], \textrm{minor}\rangle}(\xi)|\lesssim M_2^{-\alpha}, \qquad \xi\in\TT^\Gamma.
\end{align}
Using a one-parameter Weyl's inequality from Theorem \ref{thm:weyl1par} (see also Remark \ref{remark:weyl1par}) and \eqref{eq:parameter_tuning} we obtain \eqref{eq:minor_l2_case2}, which completes the proof of \eqref{eq:minor_l2_main_case2}.
\end{proof}

\subsubsection{\textbf{One-parameter major arc approximations}}\label{step2}  
We now establish the major arc estimate
\begin{align}
\label{eq:25}
\big\|\sup_{M_2\in \DD_\tau: M_1 \le M_2^\varrho}
| T_{\ZZ^\Gamma}[ (1-\Pi^{\Gamma_2}_{M_1,M_2}) (m_{M_1,M_2}^{\langle [d_2], \textrm{major}\rangle}-\Psi^{\langle [d_2]\rangle }_{M_1,M_2}) ] f | \big\|_{\ell^2(\ZZ^\Gamma)}
\lesssim M_1^{-\alpha}\|f\|_{\ell^2(\ZZ^\Gamma)},
\end{align}
where
\begin{align}
\label{eq:mtilde_def}
\Psi^{\langle [d_2]\rangle}_{M_1,M_2}(\xi):=&\sum_{m_1\in\ZZ}\chi_{M_1}(m_1) \ex(\xi_{0}^{\Gamma_2^c}(m_1))
\Big\llbracket G^{\Gamma_1^c}\mid \mathfrak{m}^{\langle \Gamma_1^c\rangle }_{M_2, M_2}\eta^{\Gamma_1^c}_{\le \lo_{M_2, M_2}^{\Gamma_1^c}(M_2^{\chi_2})}\Big\rrbracket_{\Sigma^{\Gamma_1^c}_{\le \lo(M_2^{u_2})}}({\bm\{}\xi_{\Gamma_2}(m_1){\bm \}}),
\end{align}
with the complete exponential sum as in \eqref{eq:323}, given by
\begin{align}
\label{eq:24}
G^{\Gamma_1^c}(a/q):=\frac{1}{q}\sum_{r=1}^q\ex\big(\sum_{\gamma_2 \in [d_2]} r^{\gamma_2}a_{(0, \gamma_2)}/q\big), \qquad a=(a_{(0, \gamma_2)})_{\gamma_2\in[d_2]}\in\ZZ^{\Gamma_1^c}, \quad q\in\ZZ_+,
\end{align}
and the multiplier  as in \eqref{eq:PhiM1M2_def_kpar}, given by 
\begin{equation}\label{eq:mN_def}
\mathfrak{m}^{\langle \Gamma_1^c\rangle}_{M_2, M_2}(\zeta):= \frac{1}{\tau-1}\int_1^\tau\ex\big( \sum_{\gamma_2 \in [d_2]}(M_2 t)^{\gamma_2}\zeta_{(0, \gamma_2)} \big)dt, \qquad \zeta=(\zeta_{(0, \gamma_2)} )_{\gamma_2\in [d_2]}\in\TT^{\Gamma_1^c}, \quad M_2\in\RR_+.
\end{equation}
Inequality \eqref{eq:25} will follow if we show that 
\begin{align}\label{eq:major_l2_main_case2}
    \big\|T_{\ZZ^\Gamma} [ (1-\Pi^{\Gamma_2}_{M_1,M_2}) (m_{M_1,M_2}^{\langle [d_2], \textrm{major}\rangle}-\Psi^{\langle [d_2]\rangle}_{M_1,M_2}) ] f  \big\|_{\ell^2(\ZZ^\Gamma)} \lesssim M_2^{-1/2}\|f\|_{\ell^2(\ZZ^\Gamma)}.
\end{align}

\begin{proof}[Proof of inequality \eqref{eq:major_l2_main_case2}]
By Plancherel's theorem, to prove inequality \eqref{eq:major_l2_main_case2} it suffices to show
\begin{equation}\label{eq:major_l2_case2}
\|m_{M_1,M_2}^{\langle [d_2], \textrm{major}\rangle}-\Psi^{\langle [d_2]\rangle }_{M_1,M_2}\|_{L^\infty(\TT^\Gamma)}\lesssim M_2^{-1/2}.
\end{equation}
This readily follows by the mean value theorem. That completes the proof of  \eqref{eq:major_l2_case2}, and consequently yields inequality  \eqref{eq:major_l2_main_case2} (and hence \eqref{eq:25}) as desired.
\end{proof}

\subsubsection{\textbf{Abstract maximal estimate}}\label{stepABSTRACT} 
In view of decomposition \eqref{eq:63} and inequalities  
\eqref{eq:minor_l2_main_case2} and \eqref{eq:25},
the proof of inequality \eqref{eq:48} is reduced to proving that
\begin{equation}\label{eq:maingoal}
\big\|\sup_{M_2\in \DD_\tau: M_1 \le M_2^\varrho} | T_{\ZZ^\Gamma} [(1-\Pi^{\Gamma_2}_{M_1,M_2}) \Psi^{\langle [d_2]\rangle}_{M_1,M_2}] f | \big\|_{\ell^2(\ZZ^\Gamma)} \lesssim M_1^{-\alpha}\|f\|_{\ell^2(\ZZ^\Gamma)}.
\end{equation}
To handle inequality \eqref{eq:maingoal}  we will need the following abstract lemma. 
\begin{lemma}\label{lem:padova}
Let $(d_{M_1,M_2})_{(M_1, M_2)\in\DD_{\tau}^2}$ be a family of bounded functions on $\TT^{\Gamma}$. Suppose that for every  $M_1\in\DD_{\tau}$ there exist constants ${\bf M}_{M_1}, {\bf S}_{M_1}\in\RR_+$ such that for all $f\in\ell^2(\ZZ^\Gamma)$ the following conditions are satisfied:
\begin{itemize}

\item[\bf (M)]\label{cond1}   The maximal-function estimate
\begin{equation}\label{eq:cond1}
\big\|\sup_{M_2\in \DD_\tau: M_1 \le M_2^\varrho} | T_{\ZZ^\Gamma} [ d_{M_1,M_2}] f | \big\|_{\ell^2(\ZZ^\Gamma)}
\le {\bf M}_{M_1}\|f\|_{\ell^2(\ZZ^\Gamma)}.
\end{equation}

\item[\bf (S)]\label{cond2} The square-function estimate
\begin{equation}\label{eq:cond2}
\bigg( \sum_{j=0}^{2^{m-i}-1} 
\Big\|T_{\ZZ^\Gamma} \big[
\sum_{\substack{k\in U_j^i \\ U_j^i\subseteq [j_\varrho,2^m)}} (d_{M_1, \tau^{k+1}}-d_{M_1, \tau^k}) 
\big] f   
\Big\|^2_{\ell^2(\ZZ^\Gamma)} \bigg)^{1/2}\le {\bf S}_{M_1}\|f\|_{\ell^2(\ZZ^\Gamma)},
\end{equation}
for any $i\in\NN_{\le m}$, where $U_j^i:=[j 2^i, (j+1) 2^i)$, and 
\begin{equation}\label{eq:mabstr_def}
m:=\big\lfloor 3 u_1 \log_2  M_1 \big\rfloor\quad \text{ and } \quad
j_\varrho:=j_\varrho(M_1):=\big\lfloor {\varrho}^{-1}\log_\tau M_1\big\rfloor.
\end{equation}
\end{itemize}
Then we have 
\begin{equation}\label{eq:lemgoal}
\big\|\sup_{M_2\in \DD_\tau: M_1 \le M_2^\varrho} | T_{\ZZ^\Gamma}[ \Pi^{\Gamma_2}_{M_1,M_2} d_{M_1,M_2}] f | \big\|_{\ell^2(\ZZ^\Gamma)} \lesssim m\big({\bf M}_{M_1}+{\bf S}_{M_1}\big)\|f\|_{\ell^2(\ZZ^\Gamma)}.
\end{equation}
In \eqref{eq:cond2}, we assume that the constant ${\bf S}_{M_1}$ is uniform for all $i\in\NN_{\le m}$.
\end{lemma}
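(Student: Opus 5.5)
The plan is to fix $M_1\in\DD_\tau$ and write $M_2=\tau^k$. The condition $M_1\le M_2^\varrho$ means $k\ge j_\varrho$, so the range of $k$ splits into a \emph{short range} $j_\varrho\le k<2^m$ and a \emph{long range} $k\ge 2^m$, with $m=\lfloor 3u_1\log_2M_1\rfloor$ as in \eqref{eq:mabstr_def}. Write $\Pi_k:=\Pi^{\Gamma_2}_{M_1,\tau^k}$ and $g_k:=T_{\ZZ^\Gamma}[d_{M_1,\tau^k}]f$.

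The structural fact I will use is this. In \eqref{eq:20} the set of fractions $\Sigma_{\le \lo(M_1^{u_1})}^{\Gamma_2}$ depends only on $M_1$. Only the widths of the cutoffs $\eta_{\le \lo^{\Gamma_2}_{M_1,\tau^k}(M_1^{\chi_1})}$ shrink as $k$ grows. Since $u_1\ll\chi_1$ by \eqref{eq:parameter_tuning}, the arcs around distinct fractions are disjoint for every $k$, so each $\Pi_k$ has $\ell^2$ norm at most $1$.

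\textbf{Long range $k\ge 2^m$.} Let $Q:=Q_{\le \lo(M_1^{u_1})}\le 3^{2M_1^{u_1}}$ as in \eqref{eq:373}. The choice of $m$ gives $\tau^{2^m}\ge 2^{cM_1^{3u_1}}$, which is astronomically larger than $Q$. Hence for $k\ge 2^m$ the cutoff widths are far below $Q^{-1}$ in every coordinate $\gamma\in\Gamma_2$, and we can factor as in Step 6 of the proof of Theorem \ref{thm:IW1'}:
\[
\Pi_k=T^{Q^{-1}[Q]^{\Gamma_2}}\big[\eta_{\le n_k}^{\Gamma_2}\big]\circ T^{\Sigma}\big[\eta_{\le n_*}^{\Gamma_2}\big].
\]
Here $n_k$ denotes the $k$-dependent scale, and $\eta_{\le n_*}$ is one fixed cutoff attached to the scale $\tau^{2^m}$, chosen so that $\eta_{\le n_k}\eta_{\le n_*}=\eta_{\le n_k}$.

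The second factor $E:=T^{\Sigma}[\eta^{\Gamma_2}_{\le n_*}]$ is independent of $k$, is a Fourier multiplier, and has $\ell^2$ norm at most $1$ because the supports are disjoint. It therefore commutes with $T_{\ZZ^\Gamma}[d_{M_1,\tau^k}]$.

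The first factor has a kernel supported on $Q\ZZ$ in the $\Gamma_2$ coordinates, as computed in Step 6. Its supremum over $k$ is pointwise dominated by a Hardy--Littlewood maximal operator $\mathcal M_Q$ acting on each residue class mod $Q$, and $\mathcal M_Q$ is $\ell^2$ bounded uniformly in $Q$. This gives
\[
\sup_{k\ge 2^m}|\Pi_k g_k|\lesssim \mathcal M_Q\Big(\sup_k\big|T_{\ZZ^\Gamma}[d_{M_1,\tau^k}]Ef\big|\Big).
\]
By \textbf{(M)}, the $\ell^2$ norm of the right-hand side is $\lesssim {\bf M}_{M_1}\|f\|_{\ell^2}$.

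\textbf{Short range $j_\varrho\le k<2^m$.} Write $g_k=g_{j_\varrho}+(g_k-g_{j_\varrho})$.

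For the first piece, the ratio between consecutive cutoff scales is fixed, so the differences $\Pi_{k+1}-\Pi_k$ are multipliers supported on disjoint annuli around each fraction. I will apply the sup-version of the Rademacher--Menshov inequality \eqref{eq:164} in $k$. On each dyadic level $i$ the blocks $\sum_{k\in U^i_j}(\Pi_{k+1}-\Pi_k)$ are orthogonal, so Plancherel bounds each level by $\|g_{j_\varrho}\|_{\ell^2}$. Summing the $m+1$ levels gives $\lesssim m\,{\bf M}_{M_1}\|f\|_{\ell^2}$.

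For the second piece, I expand $g_k-g_{j_\varrho}$ dyadically, as in \eqref{eq:164}, into at most $m+1$ blocks
\[
B_{i,j}=\sum_{k'\in U_j^i}T_{\ZZ^\Gamma}[d_{M_1,\tau^{k'+1}}-d_{M_1,\tau^{k'}}]f,
\]
with at most one block from each level $i$. For a fixed level $i$ this gives
\[
\sup_k|\Pi_kB_{i,j(k)}|\le\Big(\sum_j\sup_k|\Pi_kB_{i,j}|^2\Big)^{1/2}.
\]
Granting an $\ell^2$ maximal bound $\|\sup_k|\Pi_k h|\|_{\ell^2}\lesssim\|h\|_{\ell^2}$, each level contributes at most ${\bf S}_{M_1}\|f\|_{\ell^2}$ by \textbf{(S)}. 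Summing over $i$ yields $m\,{\bf S}_{M_1}$.

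That maximal bound for the projections is itself proved by the same Rademacher--Menshov and orthogonality argument used for the first piece. This costs at most a factor of order $m$, and I expect to avoid even that by rearranging the two telescopings jointly. If this is not achievable, the fallback is to accept an extra factor of $m$ in the short range.

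\textbf{Main obstacle.} I expect the hardest step to be keeping the bound at $O(m)$ without any $M_1^{\varepsilon}$ loss. The route of Theorem \ref{thm:iw-semi} would lose $2^{\varepsilon l}$, which is precisely such a loss. The $\ell^2$ setting with disjoint arcs should make Plancherel sufficient throughout. The delicate point is arranging the two telescopings, in $\Pi_k$ and in $d_{M_1,\tau^k}$, so that each dyadic level is controlled either by \textbf{(S)} or by orthogonality.
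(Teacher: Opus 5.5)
\textbf{Gap in the short range.} Your long range $k\ge 2^m$ is sound and follows the paper's route: the Step~6-type factorization through $Q^{-1}[Q]^{\Gamma_2}$, a (strong) maximal function on residue classes mod $Q$, and then \textbf{(M)}. Putting the coarse cutoff at the scale $\tau^{2^m}$, whose width is already far below $Q^{-1}$, is what makes the factorization exact.

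The problem is the short range. You write $g_k=g_{j_\varrho}+(g_k-g_{j_\varrho})$ and expand the second piece into dyadic blocks $B_{i,j}$. The $k$-dependent projection $\Pi_k$ still acts on each block, so you need $\|\sup_{j_\varrho\le k<2^m}|\Pi_k h|\|_{\ell^2}\lesssim\|h\|_{\ell^2}$ with a constant independent of $m$. Nothing in the proposal supplies this. Your own Rademacher--Menshov argument gives only $O(m)$, and the available multi-frequency tools (Bourgain's logarithmic lemma, Theorem~\ref{thm:iw-semi}) lose a power of $\log M_1$ or a factor $M_1^{\varepsilon}$. What you actually prove is therefore $m\,\mathbf{M}_{M_1}+m^2\,\mathbf{S}_{M_1}$, not \eqref{eq:lemgoal}.

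\textbf{The fix.} The ``joint telescoping'' you allude to is the right repair, and it makes the split of $g_k$ unnecessary. Apply the supremum form of \eqref{eq:164} directly to $a_k:=T_{\ZZ^\Gamma}[\Pi_k d_{M_1,\tau^k}]f$. This gives
\[
\Big\|\sup_k|a_k|\Big\|_{\ell^2}\le\|a_{j_\varrho}\|_{\ell^2}+\sqrt2\sum_{i=0}^m\Big(\sum_j\|a_{(j+1)2^i}-a_{j2^i}\|_{\ell^2}^2\Big)^{1/2},
\]
where the inner sums run over $j$ with $U_j^i\subseteq[j_\varrho,2^m)$. No supremum is left inside. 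Writing $D_{i,j}:=\sum_{k\in U_j^i}(d_{M_1,\tau^{k+1}}-d_{M_1,\tau^k})$, one has
\[
a_{(j+1)2^i}-a_{j2^i}=T_{\ZZ^\Gamma}\big[\Pi_{(j+1)2^i}D_{i,j}\big]f+T_{\ZZ^\Gamma}\big[(\Pi_{(j+1)2^i}-\Pi_{j2^i})\,d_{M_1,\tau^{j2^i}}\big]f.
\]
Since $0\le\Pi_k\le 1$ on the Fourier side, the first terms are bounded level by level by \textbf{(S)}.

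For the second terms, \textbf{(M)} at a single scale together with Plancherel gives $\|d_{M_1,\tau^k}\|_{L^\infty(\TT^\Gamma)}\le\mathbf{M}_{M_1}$. For each $\xi$ only one fraction $\theta$ is relevant: the arc widths $\lesssim M_1^{\chi_1}M_2^{-1}$ lie far below the spacing $M_1^{-2u_1}$ of the fractions. That, rather than $u_1\ll\chi_1$, is why the arcs are disjoint. Let $n_{k,\gamma}$ denote the $\gamma$-th entry of $\lo^{\Gamma_2}_{M_1,\tau^k}(M_1^{\chi_1})$. Each factor $\eta(2^{n_{k,\gamma}}(\xi_\gamma-\theta_\gamma))$ equals $1$ for small $k$, equals $0$ for large $k$, and lies strictly in between for only $O_\tau(1)$ values of $k$. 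Hence
\[
\sum_j|\Pi_{(j+1)2^i}(\xi)-\Pi_{j2^i}(\xi)|^2\le\sum_k|\Pi_{k+1}(\xi)-\Pi_k(\xi)|\lesssim 1,
\]
so the second terms contribute $\lesssim\mathbf{M}_{M_1}\|f\|_{\ell^2}$ per level.

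Each of the $m+1$ levels thus costs $\lesssim\mathbf{M}_{M_1}+\mathbf{S}_{M_1}$, which is \eqref{eq:lemgoal}. This is the Step~3-type argument the paper refers to, and it needs no maximal inequality for the projections $\Pi_k$ at all.
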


\begin{proof}[Proof of Lemma \ref{lem:padova}]
The proof of inequality \eqref{eq:lemgoal} proceeds in essentially the same  way as the proof of Theorem \ref{thm:IW1'}. We split the supremum in \eqref{eq:lemgoal} into small-scale and large-scale regimes. The small-scale regime is handled using the Rademacher--Menshov inequality for the supremum, as in \eqref{eq:164}; see Step~3 in the proof of Theorem \ref{thm:IW1'}. The large-scale regime is handled by the sampling principle, as in Step~6 of the proof of Theorem \ref{thm:IW1'}. We leave the details to the reader.
\end{proof}

\subsubsection{\textbf{Change of scales of fractions}}\label{step4}
Recall that our goal now is to show \eqref{eq:maingoal}. We begin with changing the scale of fractions in $\Psi^{\langle [d_2]\rangle}_{M_1,M_2}$. More precisely, we will show that 
\begin{equation}\label{eq:step4goal}
    \big\|\sup_{M_2\in \DD_\tau: M_1 \le M_2^{\varrho}} | T_{\ZZ^\Gamma}\big[ (1-\Pi^{\Gamma_2}_{M_1,M_2})(\Psi^{\langle [d_2]\rangle}_{M_1,M_2}-\Psi^{\langle [d_2]; 1\rangle}_{M_1,M_2}) \big] f | \big\|_{\ell^2(\ZZ^\Gamma)} \lesssim M_1^{-\alpha}\|f\|_{\ell^2(\ZZ^\Gamma)}, 
\end{equation}
with $\Psi^{\langle [d_2]\rangle}_{M_1,M_2}$ as in \eqref{eq:mtilde_def} and
\begin{align}
\label{eq:mdoubletilde_def}
    \Psi^{\langle [d_2]; 1\rangle}_{M_1,M_2}(\xi)
:= \sum_{m_1\in\ZZ}\chi_{M_1}(m_1) \ex(\xi_{0}^{\Gamma_2^c}(m_1))
\Big\llbracket G^{\Gamma_1^c}\mid \mathfrak{m}^{\langle \Gamma_1^c\rangle }_{M_2, M_2}\eta^{\Gamma_1^c}_{\le \lo_{M_2, M_2}^{\Gamma_1^c}(M_2^{\chi_2})}\Big\rrbracket_{\Sigma^{\Gamma_1^c}_{\le \lo(M_1^{u_2})}}({\bm\{}\xi_{\Gamma_2}(m_1){\bm \}}).
\end{align}
The only difference between $\Psi^{\langle [d_2]\rangle}_{M_1,M_2}$ and $\Psi^{\langle [d_2]; 1\rangle}_{M_1,M_2}$ is that
the set $\Sigma^{\Gamma_1^c}_{\le \lo(M_2^{u_2})}$ is replaced with $\Sigma^{\Gamma_1^c}_{\le \lo(M_1^{u_2})}$.

We begin with  decomposing
\begin{align*}
\Psi^{\langle [d_2]\rangle}_{M_1,M_2}(\xi)-\Psi^{\langle [d_2]; 1\rangle}_{M_1,M_2}(\xi)
=
\sum_{\lo(M_1^{u_2}) < s \le \lo(M_2^{u_2})}
\Delta_{M_1,M_2}^{s}(\xi),
\end{align*}
with
\begin{align}\label{eq:hs_def}
\Delta_{M_1,M_2}^{s}(\xi):=\sum_{m_1\in\ZZ}\chi_{M_1}(m_1) \ex(\xi_{0}^{\Gamma_2^c}(m_1))
\Big\llbracket G^{\Gamma_1^c}\mid \mathfrak{m}^{\langle \Gamma_1^c\rangle }_{M_2, M_2}\eta^{\Gamma_1^c}_{\le \lo_{M_2, M_2}^{\Gamma_1^c}(M_2^{\chi_2})}\Big\rrbracket_{\Sigma^{\Gamma_1^c}_{s}}({\bm\{}\xi_{\Gamma_2}(m_1){\bm \}}).
\end{align}
Observe that \eqref{eq:step4goal} will follow if we show that for a fixed $s\in\NN$, one has
\begin{equation}
\label{eq:28}
\big\|\sup_{    \substack{M_2\in \DD_\tau: M_1 \le M_2^{\varrho} \\ \lo(M_2^{u_2})\ge s}    }  | T_{\ZZ^\Gamma}[ (1-\Pi^{\Gamma_2}_{M_1,M_2})\Delta_{M_1,M_2}^{s} ] f | \big\|_{\ell^2(\ZZ^\Gamma)} \lesssim m 2^{-\delta s/2}\|f\|_{\ell^2(\ZZ^\Gamma)}, 
\end{equation}
where $\delta\in(0, 1)$ appears in \eqref{eq:uniform_delta} and $m\in\ZZ_+$ is defined as in Lemma \ref{lem:padova}. Indeed, summing \eqref{eq:28} over $s>\lo(M_1^{u_2})$ and using \eqref{eq:parameter_tuning} we obtain inequality \eqref{eq:step4goal} as desired.

We now recall the definition of $N_s=2^{\frac{s}{1000u_1}}$ from \eqref{eq:Ns_def}.
Clearly, for $2^s\le M_2^{u_2}$ (and even for $2^s\le M_2^{u_1}$, which we will need later) one has $N_s\le M_2^{1/1000}$. On the other hand, the scale $N_s$ is large enough for the application of Ionescu--Wainger theorem,  see Theorem \ref{thm:IW1} with the family of fractions $\Sigma^{[d_2]}_{s}$. 
Note that for $2^s \le M_2^{u_2}$, we can write 
\begin{align*}
\eta^{\Gamma_1^c}_{\le \lo_{M_2, M_2}^{\Gamma_1^c}(M_2^{\chi_2})}(\zeta)=
\eta^{\Gamma_1^c}_{\le \lo_{M_2, M_2}^{\Gamma_1^c}(M_2^{\chi_2})}(\zeta)
\eta^{\Gamma_1^c}_{\le \lo_{N_s, N_s}^{\Gamma_1^c}(N_s^{\chi_2})}(\zeta)
\eta^{\Gamma_1^c}_{\le \lo_{N_s, N_s}^{\Gamma_1^c}(N_s^{\chi_2})}(\zeta), \qquad \zeta\in\TT^{\Gamma_1^c}.
\end{align*}
Thus, for a fixed $m_1\in(M_1,\tau M_1)$ we have a factorization
\begin{align*}
\Big\llbracket G^{\Gamma_1^c}\mid \mathfrak{m}^{\langle \Gamma_1^c\rangle }_{M_2, M_2}\eta^{\Gamma_1^c}_{\le \lo_{M_2, M_2}^{\Gamma_1^c}(M_2^{\chi_2})}\Big\rrbracket_{\Sigma^{\Gamma_1^c}_{s}}({\bm\{}\xi_{\Gamma_2}(m_1){\bm \}})
=\mathfrak g_{m_1}^s(\xi)\mathfrak h_{m_1, M_2}^s(\xi),
\end{align*}
where
\begin{align*}
\mathfrak g_{m_1}^s(\xi):=&\Big\llbracket G^{\Gamma_1^c}\mid \eta^{\Gamma_1^c}_{\le \lo_{N_s, N_s}^{\Gamma_1^c}(N_s^{\chi_2})}\Big\rrbracket_{\Sigma^{\Gamma_1^c}_{s}}({\bm\{}\xi_{\Gamma_2}(m_1){\bm \}}),\\
\mathfrak h_{m_1, M_2}^s(\xi):= &\Big\llbracket 1\mid \mathfrak{m}^{\langle \Gamma_1^c\rangle }_{M_2, M_2}\eta^{\Gamma_1^c}_{\le \lo_{M_2, M_2}^{\Gamma_1^c}(M_2^{\chi_2})}\eta^{\Gamma_1^c}_{\le \lo_{N_s, N_s}^{\Gamma_1^c}(N_s^{\chi_2})}\Big\rrbracket_{\Sigma^{\Gamma_1^c}_{\le s}}({\bm\{}\xi_{\Gamma_2}(m_1){\bm \}}).
\end{align*}
From now on we can assume that
\begin{align}
\label{eq:fac}
\Delta_{M_1,M_2}^{s}(\xi):=\sum_{m_1\in\ZZ} \chi_{M_1}(m_1) \ex(\xi_{0}^{\Gamma_2^c}(m_1))\mathfrak g_{m_1}^s(\xi)\mathfrak h_{m_1, M_2}^s(\xi).
\end{align}

In view of Lemma \ref{lem:padova}, to show \eqref{eq:28}, it suffices to
prove that $d_{M_1,M_2}=\Delta_{M_1,M_2}^{s}$ satisfies conditions
\eqref{eq:cond1} and \eqref{eq:cond2} with
${\bf M}_{M_1}, {\bf S}_{M_1}\simeq 2^{-\delta s/2}$. We verify each
of these conditions below.

\subsubsection{\textbf{Verification of \eqref{eq:cond1} for
$d_{M_1,M_2}=\Delta_{M_1,M_2}^{s}$}}\label{715}
We show that for a fixed $s\in\NN$, one has
\begin{align}\label{eq:goals}
\big\|\sup_{M_2\in \DD_\tau: M_1 \le M_2^{\varrho}} | T_{\ZZ^\Gamma}[  \Delta_{M_1,M_2}^{s} ] f | \big\|_{\ell^2(\ZZ^\Gamma)} \lesssim 2^{-\delta s/2}\|f\|_{\ell^2(\ZZ^\Gamma)},
\end{align}
with $\Delta_{M_1,M_2}^{s}$ as in \eqref{eq:fac}.
\begin{proof}[Proof of inequality \eqref{eq:goals}]
The proof of inequality \eqref{eq:goals} is technically intricate, so we will proceed in a few steps.

\paragraph{\bf Step 1}
Using \eqref{eq:fac} we can estimate
\begin{align*}
\big\|\sup_{M_2\in \DD_\tau: M_1 \le M_2^{\varrho}} | T_{\ZZ^\Gamma} [ \Delta_{M_1,M_2}^{s} ] f  | \big\|_{\ell^2(\ZZ^{\Gamma})} 
&\le
\sum_{m_1\in\ZZ}\chi_{M_1}(m_1) J^s(m_1)\big\| T_{\ZZ^\Gamma} [ \mathfrak g_{m_1}^s ]f \big\|_{\ell^2(\ZZ^{\Gamma})},
\end{align*}
where
\begin{align*}
J^s(m_1):=\sup_{\|g\|_{\ell^2(\ZZ^\Gamma)}\le 1}\big \|\sup_{M_2\in \DD_\tau: M_1 \le M_2^{\varrho}}|T_{\ZZ^\Gamma} [ \mathfrak h_{m_1,M_2}^{s} ]g|\big \|_{\ell^2(\ZZ^{\Gamma})}.
\end{align*}
Thus, we are reduced to showing that for a fixed $m_1\in[M_1,\tau M_1]$, one has
\begin{align}\label{eq:IWsup}
J^s(m_1)\lesssim 2^{\varepsilon s}
\end{align}
for arbitrarily small $\varepsilon\in(0, 1)$ (for our purposes $\varepsilon<\delta/2$ will suffice),
and
\begin{align}\label{eq:gdecay}
\big\| T_{\ZZ^\Gamma} [ \mathfrak g_{m_1}^s ]f \big\|_{\ell^2(\ZZ^{\Gamma})}\lesssim 2^{-\delta s}\|f\|_{\ell^2(\ZZ^{\Gamma})}.
\end{align}

\paragraph{\bf Step 2} We begin with showing \eqref{eq:IWsup}. Let
\begin{equation}\label{eq:mM2_def}
\mathfrak{n}^{\langle \Gamma_1^c\rangle }_{M_2}:=\mathfrak{m}^{\langle \Gamma_1^c\rangle }_{M_2, M_2}\eta^{\Gamma_1^c}_{\le \lo_{M_2, M_2}^{\Gamma_1^c}(M_2^{\chi_2})},
\end{equation}
with $\mathfrak{m}^{\langle \Gamma_1^c\rangle }_{M_2, M_2}$ as in \eqref{eq:mN_def}.
For further reference, by \eqref{eq:9}, we have 
\begin{equation}\label{eq:mNbd}
|\mathfrak{n}^{\langle \Gamma_1^c\rangle }_{M_2}(\zeta)|\lesssim |(M_2^{\gamma_2}\zeta_{(0, \gamma_2)})_{\gamma_2\in[d_2]}|^{-\delta_{\textrm{vdC}}},\qquad \zeta\in\TT^{\Gamma_1^c}.
\end{equation} 
Furthermore, by \eqref{eq:mNbd} and \eqref{eq:11}, we also have
\begin{equation}\label{eq:mNdiff}
|\mathfrak{n}^{\langle \Gamma_1^c\rangle }_{\tau M_2}(\zeta)-\mathfrak{n}^{\langle \Gamma_1^c\rangle }_{M_2}(\zeta)|
\lesssim\min( |(M_2^{\gamma_2}\zeta_{(0, \gamma_2)})_{\gamma_2\in[d_2]}|, |(M_2^{\gamma_2}\zeta_{(0, \gamma_2)})_{\gamma_2\in[d_2]}|^{-\delta_{\textrm{vdC}}} ).  
\end{equation}

For $x\in\ZZ^\Gamma$ we have
\begin{align*}
T_{\ZZ^\Gamma} [ \mathfrak h_{m_1,M_2}^{s} ]f(x)&=\int\limits_{\TT^{\Gamma}} 
\hat{f}(\xi) \Big\llbracket 1\mid \mathfrak{n}^{\langle \Gamma_1^c\rangle }_{M_2}\eta^{\Gamma_1^c}_{\le \lo_{N_s, N_s}^{\Gamma_1^c}(N_s^{\chi_2})}\Big\rrbracket_{\Sigma^{\Gamma_1^c}_{\le s}}({\bm\{}\xi_{\Gamma_2}(m_1){\bm \}}) \ex(\xi\cdot x) \, d\xi.
\end{align*}
We will apply the change variables $\xi\mapsto u$ given by the map
\begin{equation}\label{eq:chgvbl}
u_\gamma := 
\begin{cases}
\xi_{\gamma_2}^{\Gamma_2}(m_1), &\qquad \gamma=(0,\gamma_2)\in\Gamma_1^c,
\\
\xi_{\gamma}, &\qquad \gamma\in \Gamma_1.
\end{cases}
\end{equation}
Note that since for $\gamma=(0,\gamma_2)\in\Gamma_1^c$ we have $\xi_{\gamma_2}^{\Gamma_2}(m_1)=\xi_\gamma+\sum_{\gamma_1=1}^{d_1}\xi_{(\gamma_1,\gamma_2)}m_1^{\gamma_1}$, thus the Jacobian matrix of the above change of variables is upper triangular with all diagonal entries equal to $1$; hence the associated Jacobian determinant is $1$. Therefore, using the translation invariance of the Lebesgue measure on the torus, and setting
\begin{align*}
U_{m_1}^{[d_2]}(u):=\Big(u_{(0,\gamma_2)}-\sum\limits_{\gamma_1=1}^{d_1}m_1^{\gamma_1}u_{(\gamma_1,\gamma_2)} \Big)_{\gamma_2\in[d_2]},
\end{align*}
we obtain
\begin{align*}
T_{\ZZ^\Gamma} [ \mathfrak h_{m_1,M_2}^{s} ]f(x)&=
 \int\limits_{\TT^{\Gamma}}\Big\llbracket 1\mid \mathfrak{n}^{\langle \Gamma_1^c\rangle }_{M_2}\eta^{\Gamma_1^c}_{\le \lo_{N_s, N_s}^{\Gamma_1^c}(N_s^{\chi_2})}\Big\rrbracket_{\Sigma^{\Gamma_1^c}_{\le s}}(u'') 
\hat{f}\big( u', U_{m_1}^{[d_2]}(u)\big) 
\ex\big( u'\cdot x' + U_{m_1}^{[d_2]}(u)\cdot x''\big)du,
\end{align*}
where we wrote $u=(u', u'')$ with $u'\in\TT^{\Gamma_1}$ and $u''=( u_{(0,\gamma_2)})_{\gamma_2\in [d_2]}\in\TT^{\Gamma_1^c}$, and $x=(x', x'')$ with $x'\in\TT^{\Gamma_1}$ and $x''\in\TT^{\Gamma_1^c}$.
Therefore, upon regrouping the terms inside $\ex(\cdot)$, we obtain
\begin{align*}
T_{\ZZ^\Gamma} [ \mathfrak h_{m_1,M_2}^{s} ]f(x)&= 
 \int\limits_{\TT^{\Gamma}}\Big\llbracket 1\mid \mathfrak{n}^{\langle \Gamma_1^c\rangle }_{M_2}\eta^{\Gamma_1^c}_{\le \lo_{N_s, N_s}^{\Gamma_1^c}(N_s^{\chi_2})}\Big\rrbracket_{\Sigma^{\Gamma_1^c}_{\le s}}(u'') 
\hat{f}\big( u', U_{m_1}^{[d_2]}(u)\big) 
\\
&\qquad\quad\times \ex\Big(  \sum_{\gamma\in \Gamma_1^c\cup\Gamma_2^c} u_\gamma x_\gamma +  \sum_{\gamma\in [d_1]\times[d_2]} u_{\gamma} (x_\gamma- m_1^{\gamma_1} x_{(0,\gamma_2)})\Big)du.
\end{align*}
 Note that coordinates $(x_\gamma)_{\gamma\in[d_1]\times[d_2]}$ are shifted by integers $m_1^{\gamma_1}x_{(0,\gamma_2)}$ and these shifts can be disregarded when summing over $x\in\ZZ^\Gamma$ due to the translation invariance of the counting measure. Consequently, we obtain
\begin{align}
\nonumber
&\Big \| \sup_{M_2\in \DD_\tau: M_1 \le M_2^{\varrho}} |T_{\ZZ^\Gamma} [ \mathfrak h_{m_1,M_2}^{s} ]f| \Big \|^2_{\ell^2(\ZZ^\Gamma)} \\
&\label{eq:27}\quad=
\sum_{x\in\ZZ^\Gamma} \sup_{M_2\in \DD_\tau: M_1 \le M_2^{\varrho}} \Big | \int_{\TT^{\Gamma_1^c}}
 \ex(u''\cdot x'')\Big\llbracket 1\mid \mathfrak{n}^{\langle \Gamma_1^c\rangle }_{M_2}\eta^{\Gamma_1^c}_{\le \lo_{N_s, N_s}^{\Gamma_1^c}(N_s^{\chi_2})}\Big\rrbracket_{\Sigma^{\Gamma_1^c}_{\le s}}(u'') 
F_{x^1}(u'')du''
\Big|^2,
\end{align}
where
\begin{align}\label{eq:F_def}
F_{x'}(u'')&=
 \int_{\TT^{ \Gamma_1} }
\hat{f}\big( u', U_{m_1}^{[d_2]}(u)\big) 
\ex\big(  u'\cdot x' \big) du'.
\end{align}
Changing the variable $u$ back to $\xi$, see \eqref{eq:chgvbl}; that is, taking
\begin{equation}\label{eq:chgvblrev}
\xi_\gamma := 
\begin{cases}
u_\gamma-\sum_{\gamma_1=1}^{d_1}m_1^{\gamma_1}u_{(\gamma_1, \gamma_2)}, &\qquad \gamma\in\Gamma_1^c,
\\
u_{\gamma}, &\qquad \gamma\in \Gamma_1,
\end{cases}
\end{equation}
we obtain
\begin{align}
\label{eq:FinvFT}
\begin{split}
\big(\mathcal{F}^{-1}_{\TT^{ \Gamma_1^c  } }F_{x'}\big)(x'')
&= 
 \int_{\TT^{\Gamma}} \hat{f}\big( u', U_{m_1}^{[d_2]}(u)\big)\ex( u \cdot x )du
\\ 
&=\int_{\TT^\Gamma}\hat{f}(\xi)  \ex\Big(\xi'\cdot x'
+ 
\sum_{\gamma_2\in[d_2] }  \xi_{\gamma_2}^{\Gamma_2}(m_1) x_{(0,\gamma_2)}
\Big)d\xi
\\ 
&=f \big(   (x_\gamma)_{\gamma\in\Gamma\setminus{([d_1]\times [d_2])}},   (x_\gamma +m_1^{\gamma_1}x_{(0,\gamma_2)})_{\gamma\in [d_1]\times [d_2]}\big),
\end{split}
\end{align}
where as before we set  $\xi=(\xi', \xi'')$ with $\xi'\in\TT^{\Gamma_1 }$ and $\xi''=( u_{(0,\gamma_2)})_{\gamma_2\in [d_2]}\in\TT^{\Gamma_1^c}$.
Now, we  aim to apply the one-parameter Ionescu--Wainger theorem, Theorem \ref{thm:IW1}. However, since the convolution operator associated with the Fourier multiplier $\mathfrak{n}^{\langle \Gamma_1^c\rangle }_{M_2}$ is not positive in the sense of definition \eqref{eq:377}, we need to decompose our expression on the right-hand side of \eqref{eq:27}. We have
\begin{align*}
&\int_{\TT^{\Gamma_1^c}}
 \ex(u''\cdot x'')\Big\llbracket 1\mid \mathfrak{n}^{\langle \Gamma_1^c\rangle }_{M_2}\eta^{\Gamma_1^c}_{\le \lo_{N_s, N_s}^{\Gamma_1^c}(N_s^{\chi_2})}\Big\rrbracket_{\Sigma^{\Gamma_1^c}_{\le s}}(u'') 
F_{x'}(u'')du''
\\
&=\int_{\TT^{\Gamma_1^c}}
 \ex(u''\cdot x'')\Big\llbracket 1\mid \mathfrak{m}^{\langle \Gamma_1^c\rangle }_{M_2, M_2}\eta^{\Gamma_1^c}_{\le \lo_{N_s, N_s}^{\Gamma_1^c}(N_s^{\chi_2})}\Big\rrbracket_{\Sigma^{\Gamma_1^c}_{\le s}}(u'') 
F_{x'}(u'')du''
\\
&\quad
+\int_{\TT^{\Gamma_1^c}}
 \ex(u''\cdot x'')\Big\llbracket 1\mid \mathfrak{m}^{\langle \Gamma_1^c\rangle }_{M_2, M_2}\big(\eta^{\Gamma_1^c}_{\le \lo_{M_2, M_2}^{\Gamma_1^c}(M_2^{\chi_2})}-1\big)\eta^{\Gamma_1^c}_{\le \lo_{N_s, N_s}^{\Gamma_1^c}(N_s^{\chi_2})}\Big\rrbracket_{\Sigma^{\Gamma_1^c}_{\le s}}(u'') 
F_{x'}(u'')du''.
\end{align*}

To treat the first term above we notice that the multipliers
$\mathfrak{m}^{\langle \Gamma_1^c\rangle }_{M_2, M_2}$ give rise to a family of positive
operators. Thus, applying Theorem \ref{thm:IW1} (on
$\ZZ^{\Gamma_1^c}$), we obtain
\begin{align*}
\sum_{x\in\ZZ^\Gamma} \sup_{M_2\in \DD_\tau: M_1 \le M_2^{\varrho}} &\Big | \int_{\TT^{\Gamma_1^c}}
 \ex(u''\cdot x'')\Big\llbracket 1\mid \mathfrak{m}^{\langle \Gamma_1^c\rangle }_{M_2, M_2}\eta^{\Gamma_1^c}_{\le \lo_{N_s, N_s}^{\Gamma_1^c}(N_s^{\chi_2})}\Big\rrbracket_{\Sigma^{\Gamma_1^c}_{\le s}}(u'') 
F_{x'}(u'')du''
\Big|^2
\\
&\quad\lesssim 2^{2\varepsilon s}
\sum_{x'\in\ZZ^{\Gamma_1} } \sum_{ x''\in\ZZ^{\Gamma_1^c} }\big|\big(\mathcal{F}^{-1}_{\TT^{ \Gamma_1^c  } }F_{x'}\big)(x'')\big|^2
= 2^{2\varepsilon s} \|f\|^2_{\ell^2(\ZZ^\Gamma)},
\end{align*}
where in the last relation we used again the translation invariance of the counting measure.

It remains to show
\begin{align}\label{eq:rybki}
\sum_{x\in\ZZ^\Gamma} \sup_{M_2\in \DD_\tau: M_1 \le M_2^{\varrho}} &\Big|\mathcal{F}^{-1}_{\TT^{ \Gamma_1^c  } }\Big(\Big\llbracket 1\mid \mathfrak{m}^{\langle \Gamma_1^c\rangle }_{M_2, M_2}\big(\eta^{\Gamma_1^c}_{\le \lo_{M_2, M_2}^{\Gamma_1^c}(M_2^{\chi_2})}-1\big)\eta^{\Gamma_1^c}_{\le \lo_{N_s, N_s}^{\Gamma_1^c}(N_s^{\chi_2})}\Big\rrbracket_{\Sigma^{\Gamma_1^c}_{\le s}}
F_{x'}\Big)(x'')\Big|^2\\
\nonumber &\quad\lesssim 2^{2\varepsilon s} \|f\|^2_{\ell^2(\ZZ^\Gamma)}.
\end{align}
The argument is similar to the one above, but easier, because it relies only on orthogonality and does not require the Ionescu--Wainger theory. For $\zeta\in\TT^{\Gamma_1^c}$, notice that
\begin{equation*}
\Big|\eta^{\Gamma_1^c}_{\le \lo_{M_2, M_2}^{\Gamma_1^c}(M_2^{\chi_2})}(\zeta)-1\Big|\lesssim\min(1, |(M_2^{\gamma_2}\zeta_{(0, \gamma_2)})_{\gamma_2\in [d_2]}|),
\end{equation*}
which together with \eqref{eq:9} and \eqref{eq:uniform_delta} gives 
\begin{equation}\label{eq:etatilde_dec}
\Big|\mathfrak{m}^{\langle \Gamma_1^c\rangle }_{M_2, M_2}(\zeta)\big(\eta^{\Gamma_1^c}_{\le \lo_{M_2, M_2}^{\Gamma_1^c}(M_2^{\chi_2})}(\zeta)-1\big)\Big|\lesssim\min(|(M_2^{\gamma_2}\zeta_{(0, \gamma_2)})_{\gamma_2\in [d_2]}|^{-\delta_{\textrm{vdC}}}, |(M_2^{\gamma_2}\zeta_{(0, \gamma_2)})_{\gamma_2\in [d_2]}|).
\end{equation}
Using \eqref{eq:etatilde_dec} and the disjointness of supports of functions $\eta^{\Gamma_1^c}_{\le \lo_{M_2, M_2}^{\Gamma_1^c}(M_2^{\chi_2})}( u''-\theta )$ as $\theta$ varies over $\Sigma^{\Gamma_1^c}_{\le s}$, we obtain
\begin{align*}
\sum_{M_2\in \DD_\tau: M_1 \le M_2^{\varrho}}\Big|\Big\llbracket 1\mid \mathfrak{m}^{\langle \Gamma_1^c\rangle }_{M_2, M_2}\big(\eta^{\Gamma_1^c}_{\le \lo_{M_2, M_2}^{\Gamma_1^c}(M_2^{\chi_2})}-1\big)\eta^{\Gamma_1^c}_{\le \lo_{N_s, N_s}^{\Gamma_1^c}(N_s^{\chi_2})}\Big\rrbracket_{\Sigma^{\Gamma_1^c}_{\le s}}(u'')\Big|^2\lesssim 1.
\end{align*}
Now, dominating the supremum in \eqref{eq:rybki} by a square function, applying Plancherel's identity and using the last bound and \eqref{eq:FinvFT} with $F_{x'}$ as in \eqref{eq:F_def}, we conclude that
\begin{align*}
\sum_{x\in\ZZ^\Gamma} &\sup_{M_2\in \DD_\tau: M_1 \le M_2^{\varrho}} \Big|\mathcal{F}^{-1}_{\TT^{ \Gamma_1^c  } }\Big(\Big\llbracket 1\mid \mathfrak{m}^{\langle \Gamma_1^c\rangle }_{M_2, M_2}\big(\eta^{\Gamma_1^c}_{\le \lo_{M_2, M_2}^{\Gamma_1^c}(M_2^{\chi_2})}-1\big)\eta^{\Gamma_1^c}_{\le \lo_{N_s, N_s}^{\Gamma_1^c}(N_s^{\chi_2})}\Big\rrbracket_{\Sigma^{\Gamma_1^c}_{\le s}}
F_{x'}\Big)(x'')\Big|^2
\\
&\quad\lesssim
\sum_{x'\in\ZZ^{\Gamma_1}} 
\int_{\TT^{\Gamma_1^c}} 
|F_{x'} (u'')|^2 
du''
= 
\sum_{x'\in\ZZ^{\Gamma_1}} 
\bigg( 
\sum_{x''\in\ZZ^{\Gamma_1^c}} 
\Big| 
\big(\mathcal{F}^{-1}_{\TT^{\Gamma_1^c}}  F_{x'} \big)(x'') 
\Big|^2
\bigg)= 
\| f \|^2_{\ell^2(\ZZ^\Gamma)}.
\end{align*}
That completes the proof of \eqref{eq:rybki}, and consequently of \eqref{eq:IWsup}.

\paragraph{\bf Step 3} We turn to showing inequality \eqref{eq:gdecay}. 
By the change of variables \eqref{eq:chgvbl}, we have 
\begin{align*}
T_{\ZZ^\Gamma}[\mathfrak g_{m_1}^s] f(x)
&=
\int\limits_{\TT^{\Gamma}} \Big\llbracket G^{\Gamma_1^c}\mid \eta^{\Gamma_1^c}_{\le \lo_{N_s, N_s}^{\Gamma_1^c}(N_s^{\chi_2})}\Big\rrbracket_{\Sigma^{\Gamma_1^c}_{s}}(u'')
\hat{f}\big( u', U_{m_1}^{[d_2]}(u)\big)
\\
&\quad\times \ex\Big(  \sum_{\gamma\in\Gamma_1^c\cup\Gamma_2^c} u_\gamma x_\gamma +  \sum_{\gamma\in [d_1]\times[d_2]} u_{\gamma} (x_\gamma- m_1^{\gamma_1} x_{(0,\gamma_2)})\Big)du.
\end{align*}
Shifting the summation index, applying Plancherel's identity and using
the disjointness of supports of functions
$\eta^{\Gamma_1^c}_{\le \lo_{M_2, M_2}^{\Gamma_1^c}(M_2^{\chi_2})}( u''-\theta )$ as $\theta$ varies over $\Sigma^{\Gamma_1^c}_{\le s}$, we obtain, with
$F_{x''}$ as in \eqref{eq:F_def}, that
\begin{align*}
\|T_{\ZZ^\Gamma}[\mathfrak g_{m_1}^s] f\|^2_{\ell^2(\ZZ^\Gamma)}
&=
\sum_{x'\in\ZZ^{\Gamma_1}} 
\int_{\TT^{\Gamma_1^c}} 
|F_{x'}(u'')|^2 
\Big|\Big\llbracket G^{\Gamma_1^c}\mid \eta^{\Gamma_1^c}_{\le \lo_{N_s, N_s}^{\Gamma_1^c}(N_s^{\chi_2})}\Big\rrbracket_{\Sigma^{\Gamma_1^c}_{s}}(u'')\Big|^2du''.
\end{align*}
Finally, using the decay of Gaussian sums, see \eqref{eq:Gauss}, and Plancherel's theorem we obtain
\begin{align*}
\|T_{\ZZ^\Gamma}[\mathfrak g_{m_1}^s] f\|^2_{\ell^2(\ZZ^\Gamma)}
\lesssim 2^{-2\delta s} 
\sum_{x'\in\ZZ^{\Gamma_1}} 
\int_{\TT^{\Gamma_1^c}} 
|F_{x'} (u'')|^2du''
=
2^{-2\delta s}  \| f \|^2_{\ell^2(\ZZ^\Gamma)},
\end{align*}
so inequality \eqref{eq:gdecay} follows. This concludes the proof of inequality \eqref{eq:goals} as desired.
\end{proof}

\subsubsection{\textbf{Verification of \eqref{eq:cond2} for
$d_{M_1,M_2}=\Delta_{M_1,M_2}^{s}$}} We show that for a fixed $s\in\NN$, one has
\begin{align}
\label{eq:29}
\bigg( \sum_{j=0}^{2^{m-i}-1} 
\Big\|T_{\ZZ^\Gamma} \big[
\sum_{\substack{k\in U_j^i \\ U_j^i\subseteq [j_\varrho,2^m)}} (d_{M_1, \tau^{k+1}}-d_{M_1, \tau^k}) 
\big] f   
\Big\|^2_{\ell^2(\ZZ^\Gamma)} \bigg)^{1/2}\lesssim 2^{-s\delta/2}\|f\|_{\ell^2(\ZZ^\Gamma)},
\end{align}
uniformly in $i\in\mathbb{N}_{\le m}$, with $d_{M_1,M_2}=\Delta_{M_1,M_2}^{s}$ as in \eqref{eq:fac}.

\begin{proof}[Proof of inequality \eqref{eq:29}]
By Minkowski's inequality
\begin{align*} 
&\bigg( \sum_{j=0}^{2^{m-i}-1} 
\Big\|T_{\ZZ^\Gamma} \big[
\sum_{\substack{k\in U_j^i \\ U_j^i\subseteq [j_\varrho,2^m)}} (d_{M_1, \tau^{k+1}}-d_{M_1, \tau^k}) 
\big] f   
\Big\|^2_{\ell^2(\ZZ^\Gamma)} \bigg)^{1/2}
\\
&\quad\le
\sum_{m_1\in\ZZ} \chi_{M_1}(m_1) 
\bigg\| 
\Big( \sum_{\substack{j=0 \\ j: U_j^i\subseteq [j_\varrho, 2^m)}}^{2^{m-i}-1}   
\big|
T_{\ZZ^\Gamma}
[ 
\mathfrak h^s_{m_1,\tau^{(j+1)2^i}} - \mathfrak h^s_{m_1,\tau^{j 2^i}} 
] T_{\ZZ^\Gamma}
[\mathfrak g_{m_1}^s ]f
\big|^2   
\Big)^{1/2}  \bigg\|_{\ell^2(\ZZ^\Gamma)}.
\end{align*}
In view of inequality \eqref{eq:gdecay}, it suffices to show that  for any  $f\in \ell^2(\ZZ^\Gamma)$, we have 
\begin{equation}\label{eq:alphas2}
\bigg\| 
\Big( \sum_{j=0}^{2^{m-i}-1}   
\big|
T_{\ZZ^\Gamma}
[ 
\mathfrak h^s_{m_1,\tau^{(j+1)2^i}} - \mathfrak h^s_{m_1,\tau^{j 2^i}} 
] f
\big|^2   
\Big)^{1/2}  \bigg\|_{\ell^2(\ZZ^\Gamma)}\lesssim  \| f \|_{\ell^2(\ZZ^\Gamma)}.
\end{equation}

Using the change of variables \eqref{eq:chgvbl}, with the notation from \eqref{eq:mM2_def}, we obtain
\begin{align*}
&\bigg\| 
\Big( \sum_{j=0}^{2^{m-i}-1}   
\big|
T_{\ZZ^\Gamma}
[ 
\mathfrak h^s_{m_1,\tau^{(j+1)2^i}} - \mathfrak h^s_{m_1,\tau^{j 2^i}} 
] f
\big|^2   
\Big)^{1/2}  \bigg\|_{\ell^2(\ZZ^\Gamma)}^2=
\sum_{x'\in\ZZ^{\Gamma_1}}\sum_{x''\in\ZZ^{\Gamma_1^c}} G(x', x''),
\end{align*}
where
\begin{align*}
G(x', x''):=\sum_{j=0}^{2^{m-i}-1}\Big | \int_{\TT^{\Gamma_1^c}}
\ex(u''\cdot x'')\Big\llbracket 1\mid (\mathfrak{n}^{\langle\Gamma_1^c\rangle}_{\tau^{(j+1)2^i}}-\mathfrak{n}^{\langle\Gamma_1^c\rangle}_{\tau^{j2^i}})\eta^{\Gamma_1^c}_{\le \lo_{N_s, N_s}^{\Gamma_1^c}(N_s^{\chi_2})}\Big\rrbracket_{\Sigma^{\Gamma_1^c}_{\le s}}(u'')
F_{x'}(u'')du''
\Big|^2,
\end{align*}
with $F_{x'}$ as in \eqref{eq:F_def}. Using \eqref{eq:mNdiff} and a simple orthogonality argument, we conclude that
\begin{align*}
\sum_{x'\in\ZZ^{\Gamma_1}}\sum_{x''\in\ZZ^{\Gamma_1^c}}G(x', x'') 
\lesssim 
\sum_{x'\in\ZZ^{\Gamma_1}}\sum_{x''\in\ZZ^{\Gamma_1^c}}
\Big | \int_{\TT^{\Gamma_1^c}}
 \ex(u''\cdot x'') F_{x'}(u'')du''\Big|^2.
\end{align*}
Applying the inverse Fourier transform formula \eqref{eq:FinvFT}, we obtain \eqref{eq:alphas2} and 
\eqref{eq:29} follows.
\end{proof}

\subsubsection{\textbf{Change of scales in the cutoff function}} \label{step5}
In view of inequality \eqref{eq:step4goal} the proof of inequality \eqref{eq:maingoal} will be complete if we show that
\begin{equation}\label{eq:goal_2tilde}
\big\|\sup_{M_2\in \DD_\tau: M_1 \le M_2^{\varrho}} | T_{\ZZ^\Gamma}\big[ (1-\Pi^{\Gamma_2}_{M_1,M_2}) \Psi^{\langle [d_2]; 1\rangle}_{M_1,M_2} \big]f | \big\|_{\ell^2(\ZZ^\Gamma)} \lesssim M_1^{-\alpha}\|f\|_{\ell^2(\ZZ^\Gamma)},
\end{equation}
with $\Psi^{\langle [d_2]; 1\rangle}_{M_1,M_2}$ defined in \eqref{eq:mdoubletilde_def}.
In this step we will tighten the cutoff scales in $\Psi^{\langle [d_2]; 1\rangle}_{M_1,M_2}$. More precisely, we will show 
\begin{equation}\label{eq:step5goal}
    \big\|\sup_{M_2\in \DD_\tau: M_1 \le M_2^{\varrho}} | T_{\ZZ^\Gamma}\big[ (1-\Pi^{\Gamma_2}_{M_1,M_2})(\Psi^{\langle [d_2]; 1\rangle}_{M_1,M_2}-\Psi^{\langle [d_2]; 2\rangle}_{M_1,M_2}) \big] f | \big\|_{\ell^2(\ZZ^\Gamma)} \lesssim M_1^{-\alpha}\|f\|_{\ell^2(\ZZ^\Gamma)}, 
\end{equation}
with 
\begin{align}
\label{eq:mtripletilde_def}
    \Psi^{\langle [d_2]; 2\rangle}_{M_1,M_2}(\xi)
:= \sum_{m_1\in\ZZ}\chi_{M_1}(m_1) \ex(\xi_{0}^{\Gamma_2^c}(m_1))
\Big\llbracket G^{\Gamma_1^c}\mid \mathfrak{m}^{\langle \Gamma_1^c\rangle }_{M_2, M_2}\eta^{\Gamma_1^c}_{\le \lo_{M_2, M_2}^{\Gamma_1^c}(M_1^{\chi_2})}\Big\rrbracket_{\Sigma^{\Gamma_1^c}_{\le \lo(M_1^{u_2})}}({\bm\{}\xi_{\Gamma_2}(m_1){\bm \}}).
\end{align}
Observe that the only difference between $\Psi^{\langle [d_2]; 1\rangle}_{M_1,M_2}$ and $\Psi^{\langle [d_2]; 2\rangle}_{M_1,M_2}$ is that in the definition of the latter the cutoff function $\eta^{\Gamma_1^c}_{\le \lo_{M_2, M_2}^{\Gamma_1^c}(M_2^{\chi_2})}$ is replaced by $\eta^{\Gamma_1^c}_{\le \lo_{M_2, M_2}^{\Gamma_1^c}(M_1^{\chi_2})}$.

As before, we decompose 
\begin{align*}
\Psi^{\langle [d_2]; 2\rangle}_{M_1,M_2}(\xi)-\Psi^{\langle [d_2]; 1\rangle}_{M_1,M_2}(\xi)
=
\sum_{0\le  s \le \lo(M_1^{u_2})}
\Delta_{M_1,M_2}^{s}(\xi),
\end{align*}
with
\begin{align}\label{eq:30}
\Delta_{M_1,M_2}^{s}(\xi):=\sum_{m_1\in\ZZ} \chi_{M_1}(m_1) \ex(\xi_0^{\Gamma_2^c}(m_1))\Big\llbracket G^{\Gamma_1^c}\mid \mathfrak{m}^{\langle \Gamma_1^c\rangle }_{M_2, M_2}\tilde\eta^{\Gamma_1^c}_{\le \lo_{M_2, M_2}^{\Gamma_1^c}(M_1^{\chi_2})}\Big\rrbracket_{\Sigma^{\Gamma_1^c}_{s}}({\bm\{}\xi_{\Gamma_2}(m_1){\bm \}}),
\end{align}
where
\begin{align*}
\tilde\eta^{\Gamma_1^c}_{\le \lo_{M_2, M_2}^{\Gamma_1^c}(M_1^{\chi_2})}:=\eta^{\Gamma_1^c}_{\le \lo_{M_2, M_2}^{\Gamma_1^c}(M_1^{\chi_2})}-\eta^{\Gamma_1^c}_{\le \lo_{M_2, M_2}^{\Gamma_1^c}(M_2^{\chi_2})}.
\end{align*}
Observe that \eqref{eq:step5goal} will follow if we show that for a fixed $s\in\NN$ with $s \le \lo(M_1^{u_2})$,
one has
\begin{equation}
\label{eq:31}
\big\|\sup_{M_2\in \DD_\tau: M_1 \le M_2^{\varrho} }  | T_{\ZZ^\Gamma}[ (1-\Pi^{\Gamma_2}_{M_1,M_2})\Delta_{M_1,M_2}^{s} ] f | \big\|_{\ell^2(\ZZ^\Gamma)} \lesssim 2^{-\delta s}M_1^{-\alpha}\|f\|_{\ell^2(\ZZ^\Gamma)}, 
\end{equation}
where $\delta\in(0, 1)$ is the constant in \eqref{eq:uniform_delta}. Indeed, summing \eqref{eq:31} over $s\ge 0$ immediately yields inequality \eqref{eq:step5goal}.

Recalling the definition of $N_s=2^{\frac{s}{1000u_1}}$ from \eqref{eq:Ns_def}, we note that for $2^s \le M_1^{u_2}$, we can write 
\begin{align*}
\tilde\eta^{\Gamma_1^c}_{\le \lo_{M_2, M_2}^{\Gamma_1^c}(M_1^{\chi_2})}(\zeta)=
\tilde\eta^{\Gamma_1^c}_{\le \lo_{M_2, M_2}^{\Gamma_1^c}(M_1^{\chi_2})}(\zeta)
\eta^{\Gamma_1^c}_{\le \lo_{N_s, N_s}^{\Gamma_1^c}(N_s^{\chi_2})}(\zeta)
\eta^{\Gamma_1^c}_{\le \lo_{N_s, N_s}^{\Gamma_1^c}(N_s^{\chi_2})}(\zeta), \qquad \zeta\in\TT^{\Gamma_1^c}.
\end{align*}
Thus, for a fixed $m_1\in(M_1,\tau M_1)$ we have a factorization
\begin{align*}
\Big\llbracket G^{\Gamma_1^c}\mid \mathfrak{m}^{\langle \Gamma_1^c\rangle }_{M_2, M_2}\tilde\eta^{\Gamma_1^c}_{\le \lo_{M_2, M_2}^{\Gamma_1^c}(M_1^{\chi_2})}\Big\rrbracket_{\Sigma^{\Gamma_1^c}_{s}}({\bm\{}\xi_{\Gamma_2}(m_1){\bm \}})
=\mathfrak g_{m_1}^s(\xi)\mathfrak h_{m_1, M_2}^s(\xi),
\end{align*}
where
\begin{align*}
\mathfrak g_{m_1}^s(\xi):=&\Big\llbracket G^{\Gamma_1^c}\mid \eta^{\Gamma_1^c}_{\le \lo_{N_s, N_s}^{\Gamma_1^c}(N_s^{\chi_2})}\Big\rrbracket_{\Sigma^{\Gamma_1^c}_{s}}({\bm\{}\xi_{\Gamma_2}(m_1){\bm \}}),\\
\mathfrak h_{m_1, M_2}^s(\xi):= &\Big\llbracket 1\mid \mathfrak{m}^{\langle \Gamma_1^c\rangle }_{M_2, M_2}\tilde\eta^{\Gamma_1^c}_{\le \lo_{M_2, M_2}^{\Gamma_1^c}(M_1^{\chi_2})}\eta^{\Gamma_1^c}_{\le \lo_{N_s, N_s}^{\Gamma_1^c}(N_s^{\chi_2})}\Big\rrbracket_{\Sigma^{\Gamma_1^c}_{\le s}}({\bm\{}\xi_{\Gamma_2}(m_1){\bm \}}).
\end{align*}
From now on we can assume that
\begin{align}
\label{eq:32}
\Delta_{M_1,M_2}^{s}(\xi)=\sum_{m_1\in\ZZ} \chi_{M_1}(m_1) \ex(\xi_{0}^{\Gamma_2^c}(m_1))\mathfrak g_{m_1}^s(\xi)\mathfrak h_{m_1, M_2}^s(\xi).
\end{align}

In view of Lemma \ref{lem:padova}, to show \eqref{eq:31}, it suffices to
prove that $d_{M_1,M_2}=\Delta_{M_1,M_2}^{s}$ satisfies conditions
\eqref{eq:cond1} and \eqref{eq:cond2} with
${\bf M}_{M_1}, {\bf S}_{M_1}\simeq 2^{-\delta s}M_1^{-2\alpha}$. We verify each
of these conditions below.

\subsubsection{\textbf{Verification of \eqref{eq:cond1} for
$d_{M_1,M_2}=\Delta_{M_1,M_2}^{s}$}}
We show that for a fixed $s\in\NN$, one has
\begin{align}\label{eq:goal_tilde}
\big\|\sup_{M_2\in \DD_\tau: M_1 \le M_2^{\varrho}} | T_{\ZZ^\Gamma}[  \Delta_{M_1,M_2}^{s} ] f | \big\|_{\ell^2(\ZZ^\Gamma)} \lesssim 2^{-\delta s}M_1^{-2\alpha}\|f\|_{\ell^2(\ZZ^\Gamma)},
\end{align}
with $\Delta_{M_1,M_2}^{s}$ defined in \eqref{eq:32}.

\begin{proof}[Proof of inequality \eqref{eq:goal_tilde}]
Using \eqref{eq:fac} we can estimate
\begin{align*}
\big\|\sup_{M_2\in \DD_\tau: M_1 \le M_2^{\varrho}} | T_{\ZZ^\Gamma} [ \Delta_{M_1,M_2}^{s} ] f  | \big\|_{\ell^2(\ZZ^{\Gamma})} 
&\le
\sum_{m_1\in\ZZ}\chi_{M_1}(m_1) J^s(m_1)\big\| T_{\ZZ^\Gamma} [ \mathfrak g_{m_1}^s ]f \big\|_{\ell^2(\ZZ^{\Gamma})},
\end{align*}
where
\begin{align*}
J^s(m_1):=\sup_{\|g\|_{\ell^2(\ZZ^\Gamma)}\le 1}\big \|\sup_{M_2\in \DD_\tau: M_1 \le M_2^{\varrho}}|T_{\ZZ^\Gamma} [ \mathfrak h_{m_1,M_2}^{s} ]g|\big \|_{\ell^2(\ZZ^{\Gamma})}.
\end{align*}
Invoking inequality \eqref{eq:gdecay}, we are reduced to showing that for a fixed $m_1\in[M_1,\tau M_1]$, one has
\begin{align}
\label{eq:33}
J^s(m_1)\lesssim M_1^{-2\alpha}.
\end{align}
 Inequality \eqref{eq:gdecay} combined with
estimate \eqref{eq:33}  yield inequality \eqref{eq:goal_tilde}.

To prove \eqref{eq:33}, we will proceed in much the same way as in the proof of inequality \eqref{eq:rybki}. We first observe that for any $\zeta\in\TT^{\Gamma_1^c}$, we have 
\begin{align}
\label{eq:34}
\big|\mathfrak{m}^{\langle \Gamma_1^c\rangle }_{M_2, M_2}\tilde\eta^{\Gamma_1^c}_{\le \lo_{M_2, M_2}^{\Gamma_1^c}(M_1^{\chi_2})}(\zeta)\big|\lesssim M_1^{-\delta \chi_2/2}\min\big(|(M_2^{\gamma_2}\zeta_{(0, \gamma_2)})_{\gamma_2\in [d_2]}|, |(M_2^{\gamma_2}\zeta_{(0, \gamma_2)})_{\gamma_2\in [d_2]}|^{-1}\big)^{\delta/2}.
\end{align}
Then by the change of variables \eqref{eq:chgvbl}, dominating the  supremum by a square function and using Plancherel's theorem, combined with estimate \eqref{eq:34}, we deduce \eqref{eq:33} as desired.
\end{proof}

\subsubsection{\textbf{Verification of \eqref{eq:cond2} for
$d_{M_1,M_2}=\Delta_{M_1,M_2}^{s}$}} We show that for a fixed $s\in\NN$, one has
\begin{align}
\label{eq:35}
\bigg( \sum_{j=0}^{2^{m-i}-1} 
\Big\|T_{\ZZ^\Gamma} \big[
\sum_{\substack{k\in U_j^i \\ U_j^i\subseteq [j_\varrho,2^m)}} (d_{M_1, \tau^{k+1}}-d_{M_1, \tau^k}) 
\big] f   
\Big\|^2_{\ell^2(\ZZ^\Gamma)} \bigg)^{1/2}\lesssim 2^{-s\delta} M_1^{-2\alpha} \|f\|_{\ell^2(\ZZ^\Gamma)},
\end{align}
uniformly in $i\in\mathbb{N}_{\le m}$, with $d_{M_1,M_2}=\Delta_{M_1,M_2}^{s}$ as in \eqref{eq:32}.
\begin{proof}[Proof of inequality \eqref{eq:35}]
As in the proof of inequality \eqref{eq:29}, if suffices to prove that
\begin{align}
\label{eq:36}
\bigg\| 
\Big( \sum_{j=0}^{2^{m-i}-1}   
\big|
T_{\ZZ^\Gamma}
[ 
\mathfrak h^s_{m_1,\tau^{(j+1)2^i}} - \mathfrak h^s_{m_1,\tau^{j 2^i}} 
] f
\big|^2   
\Big)^{1/2}  \bigg\|_{\ell^2(\ZZ^\Gamma)}\lesssim M_1^{-2\alpha} \| f \|_{\ell^2(\ZZ^\Gamma)}.
\end{align}
For this purpose, applying the change of variables \eqref{eq:chgvbl}, with the notation from \eqref{eq:mM2_def}, we obtain
\begin{align*}
\bigg\| 
\Big( \sum_{j=0}^{2^{m-i}-1}   
\big|
T_{\ZZ^\Gamma}
[ 
\mathfrak h^s_{m_1,\tau^{(j+1)2^i}} - \mathfrak h^s_{m_1,\tau^{j 2^i}} 
] f
\big|^2   
\Big)^{1/2}  \bigg\|_{\ell^2(\ZZ^\Gamma)}^2=
\sum_{x'\in\ZZ^{\Gamma_1}}\sum_{x''\in\ZZ^{\Gamma_1^c}} G(x', x''),
\end{align*}
where
\begin{align*}
G(x', x''):=\sum_{j=0}^{2^{m-i}-1}\Big | \int_{\TT^{\Gamma_1^c}}
\ex(u''\cdot x'')\llbracket 1\mid (\mathfrak{n}^{\langle \Gamma_1^c\rangle}_{\tau^{(j+1)2^i}}-
\mathfrak{n}^{\langle \Gamma_1^c\rangle}_{\tau^{j2^i}})\eta^{\Gamma_1^c}_{\le \lo_{N_s, N_s}^{\Gamma_1^c}(N_s^{\chi_2})}\rrbracket_{\Sigma^{\Gamma_1^c}_{\le s}}(u'')
F_{x'}(u'')du''
\Big|^2,
\end{align*}
with $F_{x'}$ as in \eqref{eq:F_def}, and
\begin{align*}
\mathfrak{n}^{\langle \Gamma_1^c\rangle }_{M_2}:=\mathfrak{m}^{\langle \Gamma_1^c\rangle }_{M_2, M_2}\tilde\eta^{\Gamma_1^c}_{\le \lo_{M_2, M_2}^{\Gamma_1^c}(M_1^{\chi_2})}.
\end{align*}
By a simple orthogonality argument exploiting \eqref{eq:34} and formula \eqref{eq:FinvFT}, we conclude that
\begin{align*}
\sum_{x'\in\ZZ^{\Gamma_1}}\sum_{x''\in\ZZ^{\Gamma_1^c}} G(x', x'')
\lesssim 
 M_1^{-4\alpha}\|f\|_{\ell^2(\ZZ^\Gamma)}^2.
 \end{align*}
 This implies inequality \eqref{eq:36} and concludes the verification of \eqref{eq:cond2}.
\end{proof}

\subsubsection{\textbf{Major-arc rigidity phenomena}}\label{step6}
In view of inequality \eqref{eq:step5goal} the proof of inequality
\eqref{eq:goal_2tilde} will be complete if we show that
\begin{equation}\label{eq:goal_mm_2old}
\big\|\sup_{M_2\in \DD_\tau: M_1 \le M_2^{\varrho}} | T_{\ZZ^\Gamma}\big[ (1-\Pi^{\Gamma_2}_{M_1,M_2}) \Psi^{\langle [d_2]; 2\rangle}_{M_1,M_2} \big]f | \big\|_{\ell^2(\ZZ^\Gamma)} \lesssim M_1^{-\alpha}\|f\|_{\ell^2(\ZZ^\Gamma)},
\end{equation}
with $\Psi^{\langle [d_2]; 2\rangle}_{M_1,M_2}$ defined in \eqref{eq:mtripletilde_def}. 
Setting
\begin{align}
\label{eq:43}
\mathfrak h_{m_1, M_2}(\xi):=
\Big\llbracket G^{\Gamma_1^c}\mid \mathfrak{n}^{\langle \Gamma_1^c\rangle }_{M_2}\eta^{\Gamma_1^c}_{\le \lo_{M_1, M_1}^{\Gamma_1^c}(M_1^{\chi_2})}\Big\rrbracket_{\Sigma^{\Gamma_1^c}_{\le \lo(M_1^{u_2})}}({\bm\{}\xi_{\Gamma_2}(m_1){\bm \}}),
\end{align}
with
\begin{align}
\label{eq:42}
\mathfrak{n}^{\langle \Gamma_1^c\rangle }_{M_2}:=\mathfrak{m}^{\langle \Gamma_1^c\rangle }_{M_2, M_2}\eta^{\Gamma_1^c}_{\le \lo_{M_2, M_2}^{\Gamma_1^c}(M_1^{\chi_2})},
\end{align}
we can write
\begin{align*}
(1-\Pi^{\Gamma_2}_{M_1,M_2}(\xi)) \Psi^{\langle [d_2];2\rangle}_{M_1,M_2}(\xi)
&= (1-\Pi^{\Gamma_2}_{M_1,M_2}(\xi)) \sum_{m_1\in\ZZ} \chi_{M_1}(m_1) \ex(\xi_{0}^{\Gamma_2^c}(m_1))\mathfrak h_{m_1, M_2}(\xi),
\end{align*}
since \
$\eta^{\Gamma_1^c}_{\le \lo_{M_2, M_2}^{\Gamma_1^c}(M_1^{\chi_2})}=
\eta^{\Gamma_1^c}_{\le \lo_{M_2, M_2}^{\Gamma_1^c}(M_1^{\chi_2})}
\eta^{\Gamma_1^c}_{\le \lo_{M_1, M_1}^{\Gamma_1^c}(M_1^{\chi_2})}$. Setting
\begin{equation*}
    A_{M_1,M_2}^{m_1}:=\bigcup_{\theta \in \Sigma^{\Gamma_1^c}_{\le \lo(M_1^{u_2})}}A^{m_1}_{M_1,M_2}(\theta),
\end{equation*}
with 
\begin{equation*}
A^{m_1}_{M_1,M_2}(\theta):=
\big\{\xi \in \mathbb{T}^{\Gamma}:  (1-\Pi^{\Gamma_2}_{M_1,M_2}(\xi)) \neq 0 \quad\textrm{and}\quad \eta^{\Gamma_1^c}_{\le \lo_{M_2, M_2}^{\Gamma_1^c}(M_1^{\chi_2})}({\bm\{}\xi_{\Gamma_2}(m_1){\bm \}}-\theta) \neq 0  \big\},
\end{equation*}
we can further write that
\begin{align*}
(1-\Pi^{\Gamma_2}_{M_1,M_2}(\xi)) \Psi^{\langle [d_2];2\rangle}_{M_1,M_2}(\xi)
&=(1-\Pi^{\Gamma_2}_{M_1,M_2}(\xi)) \sum_{m_1\in\ZZ} \chi_{M_1}(m_1) \ex(\xi_{0}^{\Gamma_2^c}(m_1)) \mathbbm{1}_{A_{M_1,M_2}^{m_1}}(\xi)\mathfrak h_{m_1, M_2}(\xi).
\end{align*}

We now replace $A_{M_1,M_2}^{m_1}$ with a larger set independent of $M_2$. To this end we note that for any $M_2\in \DD_\tau$ satisfying $M_2\ge M_1^{1/\varrho}$ one can write
\begin{align}
\label{eq:39}
&A^{m_1}_{M_1,M_2}(\theta)\subseteq A^{m_1}_{M_1}(\theta):= \bigcup_{ N \in \DD_\tau: N\ge M_1^{1/\varrho} } A^{m_1}_{M_1,N}(\theta), \quad \text{ for } \quad \theta \in \Sigma^{\Gamma_1^c}_{\le \lo(M_1^{u_2})}.
\end{align}
Consequently, letting 
\begin{equation}\label{eq:AM1m1_def}
A_{M_1}^{m_1}:=\bigcup_{\theta \in \Sigma^{\Gamma_1^c}_{\le \lo(M_1^{u_2})}}A^{m_1}_{M_1}(\theta),
\end{equation}
we see, by \eqref{eq:39} and \eqref{eq:AM1m1_def},  that it suffices to prove inequality\eqref{eq:goal_mm_2old} with
\begin{align}
\label{eq:45}
(1-\Pi^{\Gamma_2}_{M_1,M_2} )\Psi^{\langle [d_2];2\rangle}_{M_1,M_2}
=(1-\Pi^{\Gamma_2}_{M_1,M_2})\Delta_{M_1, M_2},
\end{align}
where
\begin{align}
\label{eq:46}
\Delta_{M_1, M_2}(\xi):=\sum_{m_1\in\ZZ} \chi_{M_1}(m_1) \ex(\xi_{0}^{\Gamma_2^c}(m_1))\mathbbm{1}_{A_{M_1}^{m_1}}(\xi)
\mathfrak h_{m_1, M_2}(\xi).
\end{align}    

The key ingredient to prove inequality \eqref{eq:goal_mm_2old} will be the following  counting lemma.

\begin{lemma}\label{lem:comb}
The estimate
\begin{equation}
\label{eq:40}
\sum_{m_1\in\ZZ} \chi_{M_1}(m_1) \mathbbm{1}_{A_{M_1}^{m_1}(\theta)}(\xi) \lesssim M_1^{-u_1/2(|\Gamma|+1)^2}
\end{equation}
holds uniformly in $\theta \in \Sigma^{\Gamma_1^c}_{\le \lo(M_1^{u_2})}$ and $\xi\in\TT^\Gamma$. Consequently, in view of \eqref{eq:AM1m1_def}, we also have
\begin{equation}\label{eq:m1averagedec}
\sum_{m_1\in\ZZ} \chi_{M_1}(m_1) \mathbbm{1}_{A_{M_1}^{m_1}}(\xi) \lesssim M_1^{-4\alpha}, \qquad \xi\in\TT^{\Gamma}.
\end{equation}

\end{lemma}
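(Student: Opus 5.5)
The plan is to prove \eqref{eq:40} by contradiction. The bound \eqref{eq:m1averagedec} then follows by summing over the at most $|\Sigma^{\Gamma_1^c}_{\le \lo(M_1^{u_2})}|\le M_1^{(|\Gamma|+1)u_2}$ fractions $\theta$, since \eqref{eq:parameter_tuning} gives $(|\Gamma|+1)u_2+4\alpha<u_1/2(|\Gamma|+1)^2$.

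\textbf{Setup.} Fix $\theta=a/q$ with $q\le M_1^{u_2}$, and fix $\xi\in\TT^\Gamma$. Suppose the set $E$ of $m_1\in(M_1,\tau M_1]$ with $\xi\in A^{m_1}_{M_1}(\theta)$ satisfies $|E|\ge M_1^{1-\kappa}$, where $\kappa:=u_1/2(|\Gamma|+1)^2$.

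Unwinding the definitions, each $m_1\in E$ comes with a scale $N=N(m_1)\ge M_1^{1/\varrho}$ such that two things hold:
\[
\Big\|\sum_{\gamma_1=0}^{d_1}\xi_{(\gamma_1,\gamma_2)}m_1^{\gamma_1}-\frac{a_{\gamma_2}}{q}\Big\|\lesssim \frac{M_1^{\chi_2}}{N^{\gamma_2}}\quad\text{for all }\gamma_2\in[d_2],
\]
and $\Pi^{\Gamma_2}_{M_1,N}(\xi)\neq1$, i.e. $\xi$ is not in the $\Gamma_2$ major arcs at scale $(M_1,N)$.

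Two reductions are available at the outset:
\begin{itemize}
\item By pigeonholing over the $O(\log M_1)$ dyadic values of $\Log N$ relevant to the error (the error is only meaningful down to $N\le$ some power of $M_1$), we may fix a single $N$ for a subset of $E$ of size $\gtrsim M_1^{1-2\kappa}$.
\item Since $\Pi^{\Gamma_2}_{M_1,N}(\xi)\neq 1$ is a condition only on $\xi$ and $N$, we are reduced to the following claim. Fix $\gamma_2\in[d_2]$ and put $P(m):=q\,\xi^{\Gamma_2}_{\gamma_2}(m)$. If $\|P(m_1)\|\le qM_1^{\chi_2}N^{-\gamma_2}$ for $\gtrsim M_1^{1-2\kappa}$ values of $m_1$, then every coefficient $\xi_{(\gamma_1,\gamma_2)}$ lies within $M_1^{\chi_1}/(M_1^{\gamma_1}N^{\gamma_2})$ of a rational with denominator at most $M_1^{u_1}$. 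Applying the claim for every $\gamma_2\in[d_2]$, with a common denominator $\le M_1^{d_2 u_1/(|\Gamma|+1)}$ after adjusting exponents, puts $\xi$ inside the support where $\Pi^{\Gamma_2}_{M_1,N}=1$ (the bumps $\eta$ equal $1$ on the inner quarter). This is the desired contradiction, and it is the ``major-arc rigidity''.
\end{itemize}

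\textbf{Step 1: coarse approximation.} The first step of the claim produces a rational approximation at the coarse scale $M_1^{\gamma_1}$. The indicator of $\{\|P(m)\|\le \epsilon\}$ is majorised by a Fejér-type trigonometric polynomial of degree $\approx\epsilon^{-1}$, so density $M_1^{-2\kappa}$ forces
\[
\Big|\sum_{m_1\in(M_1,\tau M_1]}\ex(hP(m_1))\Big|\gtrsim M_1^{1-3\kappa}
\]
for some $1\le h\le M_1^{3\kappa}$. Applying Theorem \ref{thm:weyl1par} and Remark \ref{remark:weyl1par} in the contrapositive, with Lemma \ref{lem:3} used to move between $hq\xi$ and $\xi$, gives the following. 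For each $\gamma_1\in[d_1]$ there are $q'\le M_1^{C\kappa/\delta}$ and $b_{\gamma_1}$ with
\[
|hq\,\xi_{(\gamma_1,\gamma_2)}-b_{\gamma_1}/q'|\le M_1^{C\kappa/\delta}M_1^{-\gamma_1}.
\]
The coefficient $\gamma_1=0$ is the constant term and imposes no constraint here.

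\textbf{Step 2: lifting to the fine scale $N^{-\gamma_2}$.} This is the step I expect to be the main obstacle. Restrict $m_1$ to a residue class $r$ mod $Q:=hqq'$ that still carries $\gtrsim M_1^{1-C\kappa}$ good points. Writing $m_1=r+Qn$, the polynomial $P(r+Qn)$ is congruent modulo $1$ to a rational constant plus a real polynomial
\[
\beta(n)=\sum_{j\ge 0}\beta_j n^j,\qquad |\beta_j|\lesssim M_1^{C\kappa}(M_1/Q)^{-j}\ \ (j\ge 1).
\]
Hence $\beta$ varies by $O(M_1^{C\kappa})$ across the range of $n$. It lies within $\epsilon$ of a coset of $\frac1{q'}\ZZ$ at $\gtrsim M_1^{1-C\kappa}$ points, so it lies within $\epsilon$ of one fixed value $c$ at $\gtrsim M_1^{1-C'\kappa}$ points. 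Choosing $d_1+1$ of these points, pairwise separated by $\gtrsim M_1^{1-C'\kappa}/Q$, and applying Lagrange interpolation (the Vandermonde inverse has entries controlled by powers of the separation) yields
\[
|\beta_j|\lesssim \epsilon\,M_1^{C''\kappa}(M_1/Q)^{-j}\qquad (j\ge 1).
\]
Translating back, $\xi_{(\gamma_1,\gamma_2)}$ lies within $q\,M_1^{\chi_2+C''\kappa}M_1^{-\gamma_1}N^{-\gamma_2}$ of $b_{\gamma_1}/(hqq')$. For $\gamma_1=0$ the same interpolation also pins down $\xi_{(0,\gamma_2)}$ modulo the rational constant.

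\textbf{Closing the exponents.} The hierarchy \eqref{eq:parameter_tuning}, namely $\chi_1>4(|\Gamma|+1)u_1$, with $u_1$ much larger than $\chi_2,u_2$ and $\kappa=u_1/2(|\Gamma|+1)^2$, is what should make both conditions hold:
\[
hqq'\le M_1^{u_1}\quad\text{and}\quad M_1^{\chi_2+u_2+C''\kappa}\le M_1^{\chi_1}.
\]
Verifying that the constants $C,C',C''$, which depend only on $\Gamma$ and $\delta_{\rm Weyl}$, are absorbed by these gaps is where the care is needed. If the Weyl step loses too much, the fallback is to shrink $\kappa$ further by a factor depending on $\delta$; the stated exponent $u_1/2(|\Gamma|+1)^2$ leaves room for this.
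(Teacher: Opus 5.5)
\textbf{There is a genuine gap: your Weyl-based Step 1 cannot deliver the stated exponent, and the exponent check you postpone does not close.} Your reduction of \eqref{eq:m1averagedec} to \eqref{eq:40} is fine. Your Step 2 is also sound: restricting to a residue class and using Lagrange interpolation to upgrade the error to $M_1^{\chi_2+O(\kappa)}M_1^{-\gamma_1}N^{-\gamma_2}$ is essentially the paper's final step.

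The contrapositive of Theorem \ref{thm:weyl1par} only gives denominators $q'\lesssim M_1^{c\,\tau(d_1)\kappa}$. Your $h$ and $q'$ are chosen separately for each $\gamma_2\in[d_2]$, so the common denominator needed to land in $\Sigma^{\Gamma_2}_{\le \lo(M_1^{u_1})}$ is only bounded by $M_1^{2u_2+c\,d_2\tau(d_1)\kappa}$. With $\kappa=u_1/2(|\Gamma|+1)^2$ fixed, this is at most $M_1^{u_1}$ only if $\tau(d_1)\lesssim(|\Gamma|+1)^2/d_2$. Nothing supplies that: Theorem \ref{thm:weyl1par} gives only \emph{some} $\tau(d_1)$, and with, say, the classical exponent $2^{d_1-1}$ the inequality fails once $d_1$ is large. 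Your fallback of shrinking $\kappa$ proves a strictly weaker bound than \eqref{eq:40}, so as written the lemma is not established.

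\textbf{The missing idea: no exponential-sum input is needed.} Because $N\ge M_1^{1/\varrho}$, every error $M_1^{\chi_2}N^{-\gamma_2}$ is smaller than any fixed power $M_1^{-C}$, so the approximate Vandermonde systems behave like exact ones.
\begin{itemize}
\item By Cramer's rule, for each $(d_1+1)$-tuple of good points, $(\xi_{(\gamma_1,\gamma_2)})_{\gamma_1}$ lies within $M_1^{C_\Gamma+\chi_2}N^{-\gamma_2}$ of $(qV)^{-1}\ZZ^{d_1+1}$, where $V$ is the Vandermonde determinant of that tuple.
\item Lattice points coming from different tuples must coincide. Hence the approximation lies in $(qD)^{-1}\ZZ^{d_1+1}$, where $D$ is the gcd of all these determinants. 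This is one $D$ for every $\gamma_2$, so there is no lcm loss.
\end{itemize}

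Now split into two cases.
\begin{itemize}
\item If $qD\ge M_1^{u_1}$, then $D\ge M_1^{u_1/2}$, and $D$ divides the Vandermonde determinant of every block of $d_1+1$ consecutive good points. So each block has diameter at least $D^{1/(d_1+1)^2}$, and the number of good points is $\lesssim M_1D^{-1/(d_1+1)^2}\le M_1^{1-\kappa}$. This is exactly where the exponent $u_1/2(|\Gamma|+1)^2$ comes from.
\item If $qD<M_1^{u_1}$, the denominator is already admissible. The quantity $k_{j,\gamma_2}/q-(qD)^{-1}\sum_{\gamma_1}\mathfrak a_{(\gamma_1,\gamma_2)}m_j^{\gamma_1}$ lies in $(qD)^{-1}\ZZ$ and is tiny, so it vanishes. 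The residual polynomial is then $O(M_1^{\chi_2}N^{-\gamma_2})$ at every good point, and your interpolation step gives $\Pi^{\Gamma_2}_{M_1,N}(\xi)=1$, a contradiction.
\end{itemize}

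\textbf{A smaller point.} Your pigeonholing over $N$ is unjustified, since the admissible $N$ are not confined to $O(\log M_1)$ scales. Take instead $N_0:=\min_j N(m_j)$. The $\eta^{\Gamma_1^c}$-support condition only weakens as $N$ decreases, and $1-\Pi^{\Gamma_2}_{M_1,N_0}(\xi)\neq 0$ holds because $N_0$ is one of the $N(m_j)$.
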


\begin{proof}
Inequality \eqref{eq:40} implies \eqref{eq:m1averagedec}, in view of \eqref{eq:parameter_tuning} as $|\Sigma^{\Gamma_1^c}_{\le \lo(M_1^{u_2})}|\le M_1^{u_2(|\Gamma|+1)}$.
It suffices to prove inequality \eqref{eq:40}. We will proceed in a few steps.

\paragraph{\bf Step 1}
Fix $\theta=a/q \in \Sigma^{\Gamma_1^c}_{\le \lo(M_1^{u_2})}$. Let $\kappa=u_1/2(|\Gamma|+1)^2$ and suppose that for some $\xi\in\TT^\Gamma$ there exist $K$
points $m_1<m_2< \dots <m_K$ in the interval $(M_1,\tau M_1)$ such that 
\begin{equation}\label{eq:xicond}
\xi \in A^{m_j}_{M_1}(a/q), \qquad j\in [K].
\end{equation}
Our goal is to show that $K \lesssim M_1^{1-\kappa}$.

Observe that for each $j\in[K]$ there exists $N_j\ge M_1^{1/\varrho}$ such that 
\begin{align*}
\begin{cases}
\eta^{\Gamma_1^c}_{\le \lo_{N_j, N_j}^{\Gamma_1^c}(M_1^{\chi_2})}({\bm\{}\xi_{\Gamma_2}(m_j){\bm \}}-a/q) \neq 0 
\\
1-\Pi^{\Gamma_2}_{M_1,N_j}(\xi) \neq 0.
\end{cases}
\end{align*}
The second condition above is independent of $m_j$, thus letting $N_0:=\min(N_j: j\in[K])$, we obtain
\begin{align*}
\begin{cases}
\eta^{\Gamma_1^c}_{\le \lo_{N_0, N_0}^{\Gamma_1^c}(M_1^{\chi_2})}({\bm\{}\xi_{\Gamma_2}(m_j){\bm \}}-a/q) \neq 0 , \qquad j\in[K],
\\
1-\Pi^{\Gamma_2}_{M_1,N_0}(\xi) \neq 0.
\end{cases}
\end{align*}
The above conditions imply
\begin{align}\label{eq:Nzero}
\begin{cases}
|\{\xi_{\gamma_2}^{\Gamma_2}(m_j)\}-a_{\gamma_2}/q|\le {M_1^{\chi_2}}{N_0^{-\gamma_2}}, \qquad\gamma_2\in[d_2], \quad j\in[K],
\\ 
1-\Pi^{\Gamma_2}_{M_1,N_0}(\xi) \neq 0.
\end{cases} 
\end{align}
In particular, for each $\gamma_2\in[d_2]$ and each $j\in[K]$ there exists $k_{j,\gamma_2}\in\ZZ$ such that  
\begin{equation}\label{eq:Nzerok}
|\xi_{\gamma_2}^{\Gamma_2}(m_j)-k_{j,\gamma_2}/q|\le {M_1^{\chi_2}}{N_0^{-\gamma_2}}.
\end{equation}
It follows that for each $\gamma_2\in[d_2]$ the system of equations 
\begin{align}\label{eq:syst_mtx}
\begin{bmatrix} 
    1 & m_1 & \dots  & m_1^{d_1}\\
    1 & m_2 & \dots  & m_2^{d_1}\\
    \vdots & \vdots &\ddots & \vdots\\
    1 & m_K &\dots  & m_{K}^{d_1} 
    \end{bmatrix}
\begin{bmatrix} 
    \xi_{(0,\gamma_2)}\\
    \xi_{(1,\gamma_2)}\\
    \vdots \\
    \xi_{(d_1,\gamma_2)} 
    \end{bmatrix}
=
\begin{bmatrix} 
    k_{1,\gamma_2}/q\\
    k_{2,\gamma_2}/q\\
    \vdots \\
    k_{K,\gamma_2}/q 
    \end{bmatrix}
\end{align}
is satisfied up to the error of order $O\big({M_1^{\chi_2}}{N_0^{-\gamma_2}}\big)$. In particular, for any choice of distinct indices $i_0, i_1,\dots, i_{d_1}\in [K]$ the vector $\xi$ satisfies, up to a small error, a square system
\begin{equation}\label{eq:Vsys}
W_{i_0, i_1,\dots, i_{d_1}}
\begin{bmatrix} 
    \xi_{(0,\gamma_2)}\\
    \xi_{(1,\gamma_2)}\\
    \vdots \\
    \xi_{(d_1,\gamma_2)} 
    \end{bmatrix}
=
\begin{bmatrix} 
    k_{i_1,\gamma_2}/q\\
    k_{i_2,\gamma_2}/q\\
    \vdots \\
    k_{i_{d_1},\gamma_2}/q 
    \end{bmatrix},
\end{equation}
with the Vandermonde matrix given by
\[
W_{i_0, i_1,\dots, i_{d_1}}:= \begin{bmatrix} 
    1 & m_{i_0} & \dots  & m_{i_0}^{d_1}\\
    1 & m_{i_1} & \dots  & m_{i_1}^{d_1}\\
    \vdots & \vdots &\ddots & \vdots\\
    1 & m_{i_{d_1}} &\dots  & m_{i_{d_1}}^{d_1} 
    \end{bmatrix}.
\]
It is a well-known fact from linear algebra that the determinant of $W_{i_0, i_1,\dots, i_{d_1}}$ is 
\begin{align*}
\det (W_{i_0, i_1,\dots, i_{d_1}}) = \prod_{0\le j<k\le d_1}(m_{i_j}-m_{i_k}).
\end{align*}
Therefore, by multiplying \eqref{eq:Vsys} by the inverse of $W_{i_0, i_1,\dots, i_{d_1}}$, we see that for each $\gamma_2\in[d_2]$  the coordinates of the vector  $(\xi_{(0,\gamma_2)}, \xi_{(1,\gamma_2)},\dots, \xi_{(d_1,\gamma_2)})$ satisfy, up to a small error, the following relation
\begin{align}\label{eq:Zqprod}
\xi_{(0,\gamma_2)}, \xi_{(1,\gamma_2)},\dots, \xi_{(d_1,\gamma_2)}\in\Big(q\prod_{0\le j<k\le d_1}(m_{i_j}-m_{i_k})\Big)^{-1}\ZZ.
\end{align}
Note that upon multiplying by $W^{-1}_{i_0, i_1,\dots, i_{d_1}}$ the initial error terms of order $O\big({M_1^{\chi_2}}{N_0^{-\gamma_2}}\big)$ increase only by a factor of $M_1^{C_\Gamma}$, with a constant $C_\Gamma\in\RR_+$ depending on $\Gamma$. This is acceptable, since $\varrho\in(0, 1)$ in \eqref{eq:epsilon_def} can be chosen to be  very small, and consequently $N_0^{\gamma_2}\ge M_1^{1/\varrho}$ dominates the numerator in $O\big({M_1^{\chi_2+C_\Gamma}}{N_0^{-\gamma_2}}\big)$.  Since \eqref{eq:Zqprod} holds for any choice of $(d_1+1)$-tuples $(i_0,i_1,\dots, i_{d_1})$, then for each $\gamma_2\in[d_2]$, up to a small error, it follows that
\begin{align}\label{eq:Zqprodint}
\xi_{(0,\gamma_2)}, \xi_{(1,\gamma_2)},\dots, \xi_{(d_1,\gamma_2)}\in\bigcap_{   \substack{(i_0,i_1,\dots, i_{d_1}) \\ 0\le i_0<i_1<\dots<i_{d_1}\le K }  }\Big(q\prod_{0\le j<k\le d_1}(m_{i_j}-m_{i_k})\Big)^{-1}\ZZ.
\end{align}
Let $D$ denote the greatest common divisor of products $\prod_{0\le j<k\le d_1}(m_{i_j}-m_{i_k})$ over all possible choices of $(d_1+1)$-tuples $(i_0,i_1,\dots, i_{d_1})$, that is
$$
D:=\gcd_{   \substack{(i_0,i_1,\dots, i_{d_1}) \\ 0\le i_0<i_1<\dots<i_{d_1}\le K }  }\Big(\prod_{0\le j<k\le d_1}(m_{i_j}-m_{i_k})\Big).
$$
It follows from \eqref{eq:Zqprodint} that for each $\gamma_2\in[d_2]$, again up to a small error, one has
\begin{align*}
\xi_{(0,\gamma_2)}, \xi_{(1,\gamma_2)},\dots, \xi_{(d_1,\gamma_2)}\in(qD)^{-1}\ZZ.
\end{align*}

\paragraph{\bf Step 2}
Assume that $qD\ge M_1^{u_1}$. We will show that in this case one can
select from $m_1,\dots, m_K$ a subsequence with large gaps between its
elements, which will lead to an upper bound for $K$.

To see that this is indeed the case, consider first the
$(d_1+1)$-tuple of indexes
$(i_0,i_1,\dots, i_{d_1})=(1,2\dots, d_1+1)$. Note that the product
$\prod_{0\le j<k\le d_1}(m_{i_j}-m_{i_k}) \ge D$ and consists of 
$(d_1+1)^2$ numbers.
Therefore, there exist
$j, k\in [d_1+1]$ such that $j\neq k$ and $m_k-m_j \ge D^{1/(d_1+1)^2}$. Then
$m_{d_1+1}-m_1\ge m_k-m_j\ge D^{1/(d_1+1)^2}$.

Next, consider  the $(d_1+1)$-tuple
$(i_0,i_1,\dots, i_{d_1})=(d_1+1,d_1+2,\dots, 2d_1+1)$. Arguing in the
analogous way as before we can show that
$m_{2d_1+1}-m_{d_1+1}\ge D^{1/(d_1+1)^2}$. Continuing this procedure
we obtain a sequence $m_1<m_{d_1+1}<m_{2d_1+1}<\dots$ consisting of
roughly $K/(d_1+1)$ numbers in $(M_1,\tau M_1)$ which are separated by
$D^{1/(d_1+1)^2}$. The sum of gaps between elements of this sequence
cannot exceed the length of the interval $(M_1,\tau M_1)$, which leads
to the estimate
\begin{align*}
\frac{K}{(d_1+1)}D^{1/(d_1+1)^2}\lesssim M_1. 
\end{align*}
Since $q\le M_1^{u_2}<  M_1^{u_1/2}$, it follows that 
$$
K\lesssim M_1 D^{-1/(d_1+1)^2}\le M_1 D^{-1/(|\Gamma|+1)^2}\le  M_1^{1-u_1/(|\Gamma|+1)^2}q^{1/(|\Gamma|+1)^2}<M_1^{1-\kappa},
$$
which proves the lemma in the case $qD\ge M_1^{u_1}$.

\paragraph{\bf Step 3}
Assume now $qD< M_1^{u_1}$. It suffices to show that  either $K\lesssim M_1^{1-\kappa}$ or $\xi\notin\supp\big( 1-\Pi^{\Gamma_2}_{M_1,N_0} \big)$. If $K\lesssim M_1^{1-\kappa}$, then there is nothing to do. So we can assume that $K\gtrsim M_1^{1-\kappa}$.

Observe that the previous argument shows that for $\TT^\Gamma\ni\xi=(\xi',\xi'')\in \TT^{\Gamma_2}\times\TT^{\Gamma_2^c}$,  satisfying \eqref{eq:xicond}, we have $\xi'\in\big((qD)^{-1}\ZZ\big)^{\Gamma_2}$ up to a small error. More precisely, there exists a constant $C_\Gamma\in\RR_+$, depending only on $\Gamma$ and integers $(\mathfrak{a}_\gamma)_{\gamma\in\Gamma_2}\subseteq[qD]^{\Gamma_2}$  such that
\begin{equation}\label{eq:weak_err}
\Big|\xi_\gamma-\frac{\mathfrak{a}_\gamma}{qD}\Big| \le {M_1^{C_\Gamma+\chi_2}}{N_0^{-\gamma_2}}, \qquad \gamma\in\Gamma_2.
\end{equation}  
The factor $M_1^{C_\Gamma}$ appears because the original error term in \eqref{eq:Nzerok} is amplified by a factor of order $M_1^{C_\Gamma}$ through the process of solving the system \eqref{eq:syst_mtx}.

 Note that the above condition shows $\xi'$ is near fractions with a small denominator $qD$, but this alone does not guarantee that $\xi$ lies outside $\supp (1-\Pi^{\Gamma_2}_{M_1,N_0})$. To see that this estimate is, in fact, self-improving, we need to combine it with the original system \eqref{eq:Nzero}. The details are as follows.

\paragraph{\bf Step 4} Fix $\gamma_2\in[d_2]$ and $j\in[K]$. By \eqref{eq:weak_err} and \eqref{eq:Nzerok} there exist 
$\zeta_{(\gamma_1,\gamma_2)}=O\big({M_1^{C_\Gamma+\chi_2}}{N_0^{-\gamma_2}}\big)$ for $\gamma_1\in\NN_{\le d_1}$ and $\rho_{j, \gamma_2}=O\big({M_1^{\chi_2}}{N_0^{-\gamma_2}}\big)$ such that 
\begin{align} \label{eq:syst_main}
\begin{cases}
\xi_{(\gamma_1,\gamma_2)}={\mathfrak{a}_{(\gamma_1,\gamma_2)}}/{(qD)}+\zeta_{(\gamma_1,\gamma_2)},\qquad \gamma_1\in\NN_{\le d_1},
\\
\xi_{\gamma_2}^{\Gamma_2}(m_j)=k_{j,\gamma_2}/q+\rho_{j, \gamma_2}.
\end{cases}
\end{align}
 By \eqref{eq:syst_main} we have
\begin{align}\label{eq:pre-poly}
k_{j,\gamma_2}/q+\rho_{j, \gamma_2}=\xi_{\gamma_2}^{\Gamma_2}(m_j)
=\sum_{\gamma_1=0}^{d_1}\xi_{(\gamma_1,\gamma_2)}m_j^{\gamma_1}
=\frac{1}{qD}\sum_{\gamma_1=0}^{d_1}\mathfrak{a}_{(\gamma_1,\gamma_2)}m_j^{\gamma_1}+\sum_{\gamma_1=0}^{d_1}\zeta_{(\gamma_1,\gamma_2)}m_j^{\gamma_1}.
\end{align}
Moving all the error terms to the right side, we obtain
\begin{align}\label{eq:fracvserr}
k_{j,\gamma_2}/q-\frac{1}{qD}\sum_{\gamma_1=0}^{d_1}\mathfrak{a}_{(\gamma_1,\gamma_2)}m_j^{\gamma_1}=\sum_{\gamma_1=0}^{d_1}\zeta_{(\gamma_1,\gamma_2)}m_j^{\gamma_1}-\rho_{j, \gamma_2}=O\big(  { M_1^{d_1+C_{\Gamma}+1} }{N_0^{-\gamma_2}} \big).
\end{align}
Note that the right-hand side is very small since $N_0\ge M_1^{1/\varrho}$. On the other hand, we have
\begin{align*}
k_{j,\gamma_2}/q-\frac{1}{qD}\sum_{\gamma_1=0}^{d_1}\mathfrak{a}_{(\gamma_1,\gamma_2)}m_j^{\gamma_1}\in (qD)^{-1}\ZZ.
\end{align*}
Since $qD\lesssim M_1^{2u_1}$ the equation \eqref{eq:fracvserr} forces
$$
k_{j,\gamma_2}/q-\frac{1}{qD}\sum_{\gamma_1=0}^{d_1}\mathfrak{a}_{(\gamma_1,\gamma_2)}m_j^{\gamma_1}=0
$$
and then \eqref{eq:pre-poly} reduces to
$$
\sum_{\gamma_1=0}^{d_1}\zeta_{(\gamma_1,\gamma_2)}m_j^{\gamma_1}=\rho_{j, \gamma_2}.
$$
Applying the above reasoning to each $j$ and $\gamma_2$ we obtain a system
\begin{align*}
Q_{\gamma_2}(m_j):=\sum_{\gamma_1=0}^{d_1}\zeta_{(\gamma_1,\gamma_2)}m_j^{\gamma_1}=\rho_{j, \gamma_2}, \qquad j\in[K], \quad \gamma_2\in[d_2],
\end{align*}
which implies 
\begin{align*}
|Q_{\gamma_2}(m_j)|\le {M_1^{\chi_2}}{N_0^{-\gamma_2}}, \qquad j\in[K], \quad \gamma_2\in[d_2].
\end{align*}
We shall show that for a fixed $\gamma_2\in[d_2]$ the above system can hold for a large number of $m_j$'s only if all the coefficients of the polynomial on the left--hand side are very small.

\paragraph{\bf Step 5} We begin with observing that there exist $K^{(0)}\ge \log M_1$ elements $m_1^{(0)}<\dots<m_{K^{(0)}}^{(0)}$ among $m_1, \dots, m_K$ which are $(K/\log M_1)$-separated, that is
$$
|m_{j+1}^{(0)}-m_j^{(0)}|\ge K/\log M_1, \qquad j\in[K^{(0)}-1].
$$ 
Then, by the mean value theorem, for each $j\in[K^{(0)}]$ there exists $\tilde m_j^{(1)}$ such that
$$
|Q_{\gamma_2}'(\tilde m_j^{(1)})| =  \frac{|Q_{\gamma_2}(m_{j+1}^{(0)})-Q_{\gamma_2}(m_j^{(0)})|} {|m_{j+1}^{(0)}-m_j^{(0)}|}\le 2\frac{M_1^{\chi_2}\log M_1}{K N_0^{\gamma_2}}.
$$
Omitting every other element in the (increasing) sequence $(\tilde m_j^{(1)})_{j\in[K^{(0)}]}$ we obtain a subsequence consisting of $K^{(1)}\ge \frac{1}{2}\log M_1$ numbers
$m_1^{(1)}<\dots<m_{K^{(1)}}^{(1)}$ which are again $(K/\log M_1)$-separated and satisfy
$$
|Q_{\gamma_2}'(m_j^{(1)})| \le 2\frac{M_1^{\chi_2}\log M_1}{K N_0^{\gamma_2}}.
$$
Using the mean value theorem we obtain an increasing sequence of
$K^{(1)}$ numbers $(\tilde m_j^{(2)})_{j\in[K^{(1)}]}$ satisfying
$$
|Q_{\gamma_2}^{(2)}(\tilde m_j^{(2)})| =  \frac{|Q_{\gamma_2}'(m_{j+1}^{(1)})-Q_{\gamma_2}'(m_j^{(1)})|} {|m_{j+1}^{(1)}-m_j^{(1)}|}\le 4\frac{M_1^{\chi_2}(\log M_1)^2}{K^2 N_0^{\gamma_2}}.
$$
Then, as before, we can leave out every other element in the sequence $\tilde m_1^{(2)}< \dots < \tilde m_{K^{(1)}}^{(2)}$ obtaining a sequence of  $K^{(2)}\ge \frac{1}{4}\log M_1$ numbers $(m_j^{(2)})_{j\in[K^{(2)}]}$ which are 
$(K/\log M_1)$-separated and satisfy
$$
|Q_{\gamma_2}^{(2)}(m_j^{(2)})| \le 4\frac{M_1^{\chi_2}(\log M_1)^2}{K^2 N_0^{\gamma_2}}.
$$
Iterating the above procedure $d_1$ times we obtain an increasing sequence $(m_j^{(d_1)})_{_{j\in[K^{(d_1)}]}}$ consisting of $K^{(d_1)}\ge \frac{1}{2^{d_1}}\log M_1$ numbers satisfying $(K/\log M_1)$-separation condition and such that
$$
|Q_{\gamma_2}^{(d_1)}( m_j^{(d_1)} )| \le {2^{d_1} }\frac{M_1^{\chi_2}(\log M_1)^{d_1}}{K^{d_1} N_0^{\gamma_2}}.
$$
Note that 
\begin{align*}
Q_{\gamma_2}^{(d_1)}(m)&=\partial_m^{d_1}\Big( \sum_{k=0}^{d_1} \zeta_{(k, \gamma_2)}m^k\Big)=d_1! \zeta_{(d_1, \gamma_2)}.
\end{align*}
Recalling that $K\gtrsim M_1^{1-\kappa}$ and using \eqref{eq:parameter_tuning}, we obtain
$$
|\zeta_{(d_1, \gamma_2)}|\lesssim\frac{M_1^{\chi_2}(\log M_1)^{d_1}}{K^{d_1} N_0^{\gamma_2}}\lesssim\frac{M_1^{\chi_2+d_1\kappa}(\log M_1)^{d_1}}{M_1^{d_1} N_0^{\gamma_2}}
\lesssim\frac{ M_1^{\chi_1/2} }{  M_1^{d_1} N_0^{\gamma_2}  }.
$$
Now observe that the above estimate together with the bound
$$
|Q_{\gamma_2}^{(d_1-1)}( m_j^{(d_1-1)} )| \le {2^{d_1-1} }\frac{M_1^{\chi_2}(\log M_1)^{d_1-1}}{K^{d_1-1} N_0^{\gamma_2}}
$$
give
$$
|\zeta_{(d_1-1, \gamma_2)}|\lesssim\frac{ M_1^{\chi_1/2} }{  M_1^{d_1-1} N_0^{\gamma_2}  }.
$$
Iterating this procedure until we descend to the index $\gamma_1=0$, we obtain
$$
|\zeta_{(\gamma_1, \gamma_2)}|\lesssim\frac{ M_1^{\chi_1/2} }{  M_1^{\gamma_1} N_0^{\gamma_2}  }, \qquad \gamma_1\in \NN_{\le d_1}, \quad \gamma_2\in[d_2].
$$
Combining this with \eqref{eq:syst_main}, we obtain
\begin{equation*}
\Big|\xi_\gamma-\frac{\mathfrak{a}_\gamma}{qD}\Big| \lesssim \frac{ M_1^{\chi_1/2} }{  M_1^{\gamma_1} N_0^{\gamma_2}  }, \qquad \gamma\in\Gamma_2.
\end{equation*}  
Since $qD< M_1^{u_1}$ we have ${\mathfrak{a}}/{(qD)} \in \Sigma^{\Gamma_2}_{\le \lo(M_1^{u_1})}$ and consequently $1-\Pi^{\Gamma_2}_{M_1,N_0}(\xi)=0$ as desired.
\end{proof}

To prove inequality \eqref{eq:goal_mm_2old}, by Lemma \ref{lem:padova},  it is enough to check that  $d_{M_1,M_2}=\Delta_{M_1,M_2}$ with $\Delta_{M_1,M_2}$ from \eqref{eq:45} and \eqref{eq:46} satisfies conditions \eqref{eq:cond1} and \eqref{eq:cond2} with ${\bf M}_{M_1}, {\bf S}_{M_1}\simeq M_1^{-2\alpha}$. 

\subsubsection{\textbf{Verification of \eqref{eq:cond1} for $d_{M_1,M_2}=\Delta_{M_1,M_2}$}}
We aim at showing
\begin{equation}\label{eq:goal_mm_2_part1_main}
\big\|\sup_{M_2\in \DD_\tau: M_1 \le M_2^{\varrho}} |T_{\ZZ^\Gamma}[d_{M_1,M_2}] f |  \big\|_{\ell^2(\ZZ^\Gamma)}  \lesssim M_1^{-2\alpha}\|f\|_{\ell^2(\ZZ^\Gamma)},
\end{equation}
with  $d_{M_1,M_2}=\Delta_{M_1,M_2}$ as in \eqref{eq:46}.
\begin{proof}[Proof of inequality \eqref{eq:goal_mm_2_part1_main}]
By \eqref{eq:46}, observe that
\begin{align*}
&\big\|\sup_{M_2\in \DD_\tau: M_1 \le M_2^{\varrho}} |T_{\ZZ^\Gamma}[\Delta_{M_1,M_2}] f |  \big\|_{\ell^2(\ZZ^\Gamma)}
\le\sum_{m_1\in\ZZ} \chi_{M_1}(m_1)\big\|\sup_{M_2\in \DD_\tau: M_1 \le M_2^{\varrho}}
\big| T_{\ZZ^\Gamma}\big[\mathfrak h_{m_1, M_2}\mathbbm{1}_{A_{M_1}^{m_1}} \big] f \big| \big\|_{\ell^2(\ZZ^\Gamma)}.
\end{align*}
Changing the variables and applying the Ionescu--Wainger multiplier
theorem as in the proof of inequality \eqref{eq:goals}, see Section
\ref{715}, we may conclude for any $\varepsilon\in(0, 1)$ that
\begin{align*}
\big\|\sup_{M_2\in \DD_\tau: M_1 \le M_2^{\varrho}}
\big| T_{\ZZ^\Gamma}\big[\mathfrak h_{m_1, M_2}\mathbbm{1}_{A_{M_1}^{m_1}} \big] f \big| \big\|_{\ell^2(\ZZ^\Gamma)}
 \lesssim M_1^{\varepsilon}
\big\| T_{\ZZ^\Gamma} [ \mathbbm{1}_{A_{M_1}^{m_1}}] f  \big\|_{\ell^2(\ZZ^\Gamma)}.
\end{align*}
Combining these two estimates, and using the Cauchy--Schwarz inequality followed by the Plancherel
theorem, the triangle inequality, Lemma \ref{lem:comb} and condition \eqref{eq:parameter_tuning}, we can
further write
\begin{align*}
&\big\|\sup_{M_2\in \DD_\tau: M_1 \le M_2^{\varrho}} |T_{\ZZ^\Gamma}[\Delta_{M_1,M_2}] f |  \big\|_{\ell^2(\ZZ^\Gamma)}
\\
&\quad \lesssim 
M_1^\varepsilon \sum_{\theta \in \Sigma^{\Gamma_1^c}_{\le \lo(M_1^{u_2})}}  \Big( \sum_{m_1\in\ZZ} \chi_{M_1}(m_1) \int_{\TT^\Gamma} \mathbbm{1}_{A_{M_1}^{m_1}(\theta) }(\xi) |\hat{f}(\xi)|^2d\xi     \Big)^{1/2} \lesssim  M_1^{-2\alpha} \|f\|_{\ell^2(\ZZ^\Gamma)}.
\end{align*}
This completes the proof of inequality \eqref{eq:goal_mm_2_part1_main}. 
\end{proof}

\subsubsection{\textbf{Verification of \eqref{eq:cond2} for $d_{M_1,M_2}=\Delta_{M_1,M_2}$}} We prove that the following inequality
\begin{align}
\label{eq:41}
\Big(\sum_{j=0}^{2^{m-i}-1} \big\| T_{\ZZ^\Gamma} \big[  
\sum_{\substack{k\in U_j^i \\ U_j^i\subseteq [j_\varrho,2^m)}} (d_{M_1, \tau^{k+1}}-d_{M_1, \tau^k})
\big] f    
\big\|^2_{\ell^2(\ZZ^\Gamma)}\Big)^{1/2}
\lesssim M_1^{-2\alpha}\| f \|_{\ell^2(\ZZ^\Gamma)}
\end{align} 
holds uniformly for every $i\in\mathbb{N}_{\le m}$ with
$d_{M_1,M_2}=\Delta_{M_1,M_2}$ as in \eqref{eq:46}. The argument will
be similar to the arguments verifying \eqref{eq:cond2} for the
multipliers in the earlier sections.

\begin{proof}[Proof of inequality \eqref{eq:41}]
Arguing in the same way as in the proof of inequality \eqref{eq:29}, by \eqref{eq:43}, we see that  
\begin{equation*}
\Big\| 
\Big( \sum_{\substack{j=0 \\ j: U_j^i\subseteq [j_\varrho, 2^m)}}^{2^{m-i}-1}   
\big|
T_{\ZZ^\Gamma} [ 
\mathfrak h_{m_1,\tau^{(j+1)2^i}} - \mathfrak h_{m_1,\tau^{j 2^i}} ] f
\big|^2   
\Big)^{1/2}  \Big\|_{\ell^2(\ZZ^\Gamma)}\lesssim \|f\|_{\ell^2(\ZZ^\Gamma)},
\end{equation*}
holds uniformly in $i\in\mathbb{N}_{\le m}$ and $m_1\in(M_1,\tau M_1)$. Thus, by \eqref{eq:46}, we have
\begin{align*} 
&\sum_{j=0}^{2^{m-i}-1} \Big\| T_{\ZZ^\Gamma} \big[  
\sum_{\substack{k\in U_j^i \\ U_j^i\subseteq [j_\varrho,2^m)}} (\Delta_{M_1, \tau^{k+1}}-\Delta_{M_1, \tau^k})
\big] f    
\Big\|^2_{\ell^2(\ZZ^\Gamma)}
\\
&\qquad\lesssim
\sum_{m_1\in\ZZ} \chi_{M_1}(m_1) \big\|\mathbbm{1}_{A_{M_1}^{m_1}} \hat{f}\big\|^2_{L^2(\TT^\Gamma)}\lesssim M_1^{-4\alpha}\| f \|^2_{\ell^2(\ZZ^\Gamma)},
\end{align*}
where the last estimate follows by \eqref{eq:m1averagedec}. This yields inequality \eqref{eq:41} as desired.
\end{proof}

Lemma \ref{lem:padova}  yields inequality \eqref{eq:goal_mm_2old} and consequently inequality \eqref{eq:48} follows.

\subsection{Major arc estimates: Proof of inequality \eqref{eq:49}}\label{step8} 
Writing  $\TT^\Gamma\ni\xi=(\xi',\xi'')$ with $\xi'\in\TT^{\Gamma_2}$ and $\xi''\in\TT^{\Gamma_2^c}$, (see \eqref{eq:47} for the definitions of $\Gamma_2$ and $\Gamma_2^c$), we consider a multiplier
\begin{align}\label{eq:hGamma2_def}
\Phi_{M_1,M_2}^{\langle \Gamma_2; u, \chi\rangle}
&:=\sum_{m_1\in\ZZ}\chi_{M_1}(m_1)
\Big\llbracket G^{\Gamma_2}_{m_1} \mid \mathfrak m^{\langle \Gamma_2^c, \Gamma_2\rangle}_{m_1, M_2} \eta^{\Gamma_2}_{\le \lo_{M_1, M_2}^{\Gamma_2}(M_1^{\chi})}\Big\rrbracket_{\Sigma^{\Gamma_2}_{\le \lo(M_1^{u})}},
\end{align}
where $G^{\Gamma^2}_{m_1}$ is the partially complete exponential sum defined in \eqref{eq:GGamma2_def}
and
\begin{align}\label{eq:PhiGamma2_def}
\mathfrak m^{\langle \Gamma_2^c, \Gamma_2\rangle}_{m_1,M_2}(\xi):
=\frac{1}{\tau-1}\int_1^\tau
\ex\big(\xi\cdot(m_1, M_2y_2)^\Gamma\big)d y_2, \qquad \xi\in\TT^\Gamma.
\end{align}
Using the polynomial $\xi_{0}^{\Gamma_2^c}(m_1)$ from \eqref{eq:P2gamma_def}, note that 
 the multiplier $\mathfrak m^{\langle \Gamma_2^c, \Gamma_2\rangle}_{m_1,M_2}$ can be rewritten as follows
\begin{align*}
\mathfrak m^{\langle \Gamma_2^c, \Gamma_2\rangle}_{m_1,M_2}(\xi)
=\ex(\xi_{0}^{\Gamma_2^c}(m_1))\frac{1}{\tau-1}\int_1^\tau
\ex\big(\xi'\cdot (m_1, M_2y_2)^{\Gamma_2}\big)d y_2, \qquad \xi\in\TT^\Gamma.
\end{align*}
We also note that by \eqref{eq:Gauss2}, with $\delta_{\textrm{Gauss}}\in\RR_+$ as in \eqref{eq:12}, one has
\begin{equation}
\label{eq:55}
\sum_{m_1\in\ZZ}\chi_{M_1}(m_1)|G^{\Gamma_2}_{m_1}(a/q)|\lesssim q^{-\delta_{\textrm{Gauss}}},\qquad a/q\in(\TT\cap\QQ)^{\Gamma_2}, 
\end{equation}
uniformly in $q\le M_1^{10|\Gamma|}$, see Proposition \ref{prop:32}.

A one-parameter major arc approximation, similar as in the proof of inequality  \eqref{eq:major_l2_case2}, leads to
\begin{equation}\label{eq:hFTapprox}
\big\|\Pi^{\Gamma_2}_{M_1,M_2}m_{M_1,M_2}^{\langle \Gamma\rangle }-\Phi_{M_1,M_2}^{\langle \Gamma_2; u_1, \chi_1\rangle}\big\|_{L^\infty(\TT^\Gamma)}\lesssim M_2^{-1/4},
\end{equation}
with
$\Pi^{\Gamma_2}_{M_1,M_2}:=\Pi^{\Gamma_2, u_1, \chi_1}_{M_1,M_2}$ defined in \eqref{eq:20}, and  $u_1,\chi_1\in\RR_+$  specified in \eqref{eq:parameter_tuning}.
Dominating the supremum by the sum, applying Parseval's identity and using  \eqref{eq:hFTapprox}, we obtain that
\begin{equation}
\label{eq:50}
\big\|\sup_{M_2\in \DD_\tau: M_1 \le M_2^{\varrho}} | T_{\ZZ^\Gamma}[\Pi^{\Gamma_2}_{M_1,M_2}m_{M_1,M_2}^{\langle \Gamma\rangle}-\Phi_{M_1,M_2}^{\langle\Gamma_2; u_1, \chi_1\rangle}] f |  \big\|_{\ell^2(\ZZ^\Gamma)} \lesssim M_1^{-1/4}\|f\|_{\ell^2(\ZZ^\Gamma)}.
\end{equation}
The estimate \eqref{eq:50} reduces the proof of inequality \eqref{eq:49} to the following bound
\begin{align}
\label{eq:51}
\big\|\sup_{M_2\in \DD_\tau: M_1 \le M_2^{\varrho}} | T_{\ZZ^\Gamma}[\Phi_{M_1,M_2}^{\langle \Gamma_2; u_1, \chi_1\rangle}-\Phi_{M_1,M_2}^{\langle\Gamma\rangle}] f |  \big\|_{\ell^2(\ZZ^\Gamma)} \lesssim M_1^{-\alpha}\|f\|_{\ell^2(\ZZ^\Gamma)},
\end{align}
with $\alpha\in\RR_+$ as in \eqref{eq:parameter_tuning} and $\varrho\in\RR_+$ as in \eqref{eq:epsilon_def}. To handle inequality \eqref{eq:51} we will further perform a one-parameter major/minor arc decomposition with the aid of the projection
$\Pi^{\Gamma}_{M_1}:=\Pi^{\Gamma, u_1, \chi_1}_{M_1,M_1}$ defined in \eqref{eq:20} with  $u_1,\chi_1\in\RR_+$  as in \eqref{eq:parameter_tuning}. More precisely, we prove the minor arc inequality
\begin{equation}
\label{eq:52}
\big\|\sup_{M_2\in \DD_\tau: M_1 \le M_2^{\varrho}} | T_{\ZZ^\Gamma}[(1-\Pi^\Gamma_{M_1})\Phi_{M_1,M_2}^{\langle \Gamma_2; u_1, \chi_1\rangle}  ] f |  \big\|_{\ell^2(\ZZ^\Gamma)}\lesssim M_1^{-\alpha} \|f\|_{\ell^2(\ZZ^\Gamma)},
\end{equation}
as well as the corresponding major arc inequality
\begin{equation}
\label{eq:53}
\big\|\sup_{M_2\in \DD_\tau: M_1 \le M_2^{\varrho}} | T_{\ZZ^\Gamma}[\Pi^\Gamma_{M_1}\Phi_{M_1,M_2}^{\langle \Gamma_2; u_1, \chi_1\rangle}  -\Phi_{M_1,M_2}^{\langle \Gamma\rangle} ] f |  \big\|_{\ell^2(\ZZ^\Gamma)}\lesssim M_1^{-\alpha} \|f\|_{\ell^2(\ZZ^\Gamma)},
\end{equation}
Once inequalities \eqref{eq:52} and \eqref{eq:53} are proved, the triangle inequality yields \eqref{eq:51} immediately, and consequently inequality \eqref{eq:49} follows as desired. We focus on proving  inequalities \eqref{eq:52} and \eqref{eq:53}.

\subsubsection{\textbf{One-parameter minor arc estimates: Proof of inequality \eqref{eq:52}}}
We begin with the following reduction
\begin{equation}
\label{eq:54}
\big\|\sup_{M_2\in \DD_\tau: M_1 \le M_2^{\varrho}} | T_{\ZZ^\Gamma}[(1-\Pi^\Gamma_{M_1})(\Phi_{M_1,M_2}^{\langle \Gamma_2; u_1, \chi_1\rangle}-\Phi_{M_1,M_2}^{\langle \Gamma_2; u_2, \chi_2\rangle} ) ] f |  \big\|_{\ell^2(\ZZ^\Gamma)}\lesssim M_1^{-\alpha} \|f\|_{\ell^2(\ZZ^\Gamma)},
\end{equation}
which follows by observing that
\[
\Phi_{M_1,M_2}^{\langle \Gamma_2; u_1, \chi_1\rangle}-\Phi_{M_1,M_2}^{\langle \Gamma_2; u_2, \chi_2\rangle}=(\Phi_{M_1,M_2}^{\langle \Gamma_2; u_1, \chi_1\rangle}-\Phi_{M_1,M_2}^{\langle \Gamma_2; u_2, \chi_1\rangle})+(\Phi_{M_1,M_2}^{\langle\Gamma_2; u_2, \chi_1\rangle}-\Phi_{M_1,M_2}^{\langle\Gamma_2; u_2, \chi_2\rangle}).
\]

The first term produces the decay in \eqref{eq:54} by inequality \eqref{eq:55} if $a/q\in \Sigma^{\Gamma_2}_{\le \lo(M_1^{u_1})}\setminus \Sigma^{\Gamma_2}_{\le \lo(M_1^{u_2})}$. Indeed, one can write
\begin{align*}
\Phi_{M_1,M_2}^{\langle \Gamma_2; u_1, \chi_1\rangle}-\Phi_{M_1,M_2}^{\langle \Gamma_2; u_2, \chi_1\rangle}=\sum_{m_1\in\ZZ}\chi_{M_1}(m_1)\mathfrak h_{m_1, M_2}\mathfrak g_{m_1},
\end{align*}
where
\begin{align*}
\mathfrak h_{m_1, M_2}:=&\Big\llbracket 1 \mid \mathfrak m^{\langle \Gamma_2^c, \Gamma_2\rangle}_{m_1, M_2} \eta^{\Gamma_2}_{\le \lo_{M_1, M_2}^{\Gamma_2}(M_1^{\chi_1})}\Big\rrbracket_{\Sigma^{\Gamma_2}_{\le \lo(M_1^{u_1})}},\\
\mathfrak g_{m_1}:=&\Big\llbracket G^{\Gamma_2}_{m_1} \mid  \eta^{\Gamma_2}_{\le \lo_{M_1, M_1}^{\Gamma_2}(M_1^{\chi_1})}\Big\rrbracket_{\Sigma^{\Gamma_2}_{\le \lo(M_1^{u_1})}\setminus \Sigma^{\Gamma_2}_{\le \lo(M_1^{u_2})}}.
\end{align*}
By the Ionescu--Wainger theorem (see Theorem \ref{thm:IW1}) for arbitrary $\varepsilon\in(0, 1)$, we have
\begin{align}
\label{eq:64}
\sup_{m_1\in[M_1, \tau M_1]}\big\|\sup_{M_2\in \DD_\tau: M_1 \le M_2^{\varrho}} | T_{\ZZ^\Gamma}[(1-\Pi^\Gamma_{M_1}) \mathfrak h_{m_1, M_2} ] f |  \big\|_{\ell^2(\ZZ^\Gamma)}\lesssim M_1^{\varepsilon} \|f\|_{\ell^2(\ZZ^\Gamma)}.
\end{align}
Moreover, by the Cauchy--Schwarz inequality and inequality \eqref{eq:55}, we obtain
\begin{align}
\label{eq:66}
\sum_{m_1\in\ZZ}\chi_{M_1}(m_1)\| T_{\ZZ^\Gamma}[(1-\Pi^\Gamma_{M_1})\mathfrak g_{m_1}] f   \|_{\ell^2(\ZZ^\Gamma)}\lesssim M_1^{-\delta/2} \|f\|_{\ell^2(\ZZ^\Gamma)},
\end{align}
with $\delta\in(0, 1)$ as in \eqref{eq:uniform_delta}. Taking $\varepsilon<\delta/4$  and combining \eqref{eq:64} with \eqref{eq:66}, we readily obtain inequality \eqref{eq:54} with $\Phi_{M_1,M_2}^{\langle \Gamma_2; u_1, \chi_1\rangle}-\Phi_{M_1,M_2}^{\langle \Gamma_2; u_2, \chi_1\rangle}$ in place of $\Phi_{M_1,M_2}^{\langle \Gamma_2; u_1, \chi_1\rangle}-\Phi_{M_1,M_2}^{\langle \Gamma_2; u_2, \chi_2\rangle}$ in view of \eqref{eq:parameter_tuning}.

The second term can be handled by a simple square function estimate as in the proof of inequality \eqref{eq:rybki}.
Namely, we have 
\begin{align}
\label{eq:75}
\Phi_{M_1,M_2}^{\langle\Gamma_2; u_2, \chi_1\rangle}-\Phi_{M_1,M_2}^{\langle\Gamma_2; u_2, \chi_2\rangle}=\sum_{m_1\in\ZZ}\chi_{M_1}(m_1)\mathfrak h_{m_1, M_2},
\end{align}
where
\begin{align*}
\mathfrak h_{m_1, M_2}:=&\Big\llbracket G^{\Gamma_2}_{m_1} \mid \mathfrak m^{\langle \Gamma_2^c, \Gamma_2\rangle}_{m_1, M_2} \big(\eta^{\Gamma_2}_{\le \lo_{M_1, M_2}^{\Gamma_2}(M_1^{\chi_1})}-\eta^{\Gamma_2}_{\le \lo_{M_1, M_2}^{\Gamma_2}(M_1^{\chi_2})}\big)\eta^{\Gamma_2}_{\le \lo_{M_1, M_1}^{\Gamma_2}(M_1^{\chi_1})}\Big\rrbracket_{\Sigma^{\Gamma_2}_{\le \lo(M_1^{u_2})}}.
\end{align*}
Then one can show that
\begin{align}
\label{eq:77}
\sup_{m_1\in[M_1, \tau M_1]}\bigg\|\Big(\sum_{M_2\in\DD_{\tau}}|T_{\ZZ^\Gamma}[(1-\Pi^\Gamma_{M_1})\mathfrak h_{m_1, M_2}] f |^2\Big)^{1/2}\bigg\|_{\ell^2(\ZZ^\Gamma)}\lesssim M_1^{-\alpha} \|f\|_{\ell^2(\ZZ^\Gamma)}.
\end{align}
Using this bound we easily obtain inequality \eqref{eq:54} with $\Phi_{M_1,M_2}^{\langle\Gamma_2; u_2, \chi_1\rangle}-\Phi_{M_1,M_2}^{\langle\Gamma_2; u_2, \chi_2\rangle}$ in place $\Phi_{M_1,M_2}^{\langle \Gamma_2; u_1, \chi_1\rangle}-\Phi_{M_1,M_2}^{\langle \Gamma_2; u_2, \chi_2\rangle}$. The estimate \eqref{eq:77} follows, by a simple square function argument, from the bound
\begin{align*}
&\big|\mathfrak m^{\langle \Gamma_2^c, \Gamma_2\rangle}_{m_1, M_2}(\zeta) \big(\eta^{\Gamma_2}_{\le \lo_{M_1, M_2}^{\Gamma_2}(M_1^{\chi_1})}(\zeta)-\eta^{\Gamma_2}_{\le \lo_{M_1, M_2}^{\Gamma_2}(M_1^{\chi_2})}(\zeta)\big)\big|\\
&\qquad \lesssim M_1^{-\delta \chi_2/2}\min\big(|(M_1^{\gamma_1}M_2^{\gamma_2}\zeta_{\gamma})_{\gamma\in \Gamma_2}|, |(M_1^{\gamma_1}M_2^{\gamma_2}\zeta_{\gamma})_{\gamma\in \Gamma_2}|^{-1}\big)^{\delta /2}.
\end{align*}
The latter bound can be derived from van der Corput's type estimate  for the multiplier $\mathfrak m^{\langle \Gamma_2^c, \Gamma_2\rangle}_{m_1,M_2}$ like in \eqref{eq:9}, the fact that $m_1\in[M_1, \tau M_1]$, and conditions \eqref{eq:parameter_tuning}. Hence inequality \eqref{eq:54} follows.

In view of \eqref{eq:54}, it suffices to prove that
\begin{equation}
\label{eq:56}
\big\|\sup_{M_2\in \DD_\tau: M_1 \le M_2^{\varrho}} | T_{\ZZ^\Gamma}[(1-\Pi^\Gamma_{M_1})\Phi_{M_1,M_2}^{\langle \Gamma_2; u_2, \chi_2\rangle}  ] f |  \big\|_{\ell^2(\ZZ^\Gamma)}\lesssim M_1^{-\alpha} \|f\|_{\ell^2(\ZZ^\Gamma)}.
\end{equation}

\begin{proof}[Proof of inequality \eqref{eq:56}]
In view of \eqref{eq:parameter_tuning}, there is a sequence
$(\beta_{\gamma}:\gamma\in\Gamma_2^c)\subseteq \RR_+$ 
such that
\begin{align}
\label{eq:110}
u_2+\sum_{\gamma\in\Gamma_2^c}\beta_{\gamma}<  u_1,
\qquad \text{ and } \qquad
(|\Gamma|+2)\chi_2+\sum_{\gamma\in\Gamma_2^c\setminus\{e_1\}}\beta_{\gamma}
<\frac{1}{4}\beta_{e_1},
\end{align}
where $e_1=(1,0)$,
and such that for every $\gamma\in\Gamma_2^c\setminus\{e_1\}$, we also have
\begin{align}
\label{eq:u0cond}
\frac{1}{\delta}(|\Gamma|+2)\chi_2<\frac{1}{4}\beta_{\gamma}.
\end{align}

The proof of inequality \eqref{eq:56} is fairly intricate, so we will proceed in four steps.

\paragraph{\bf Step 1}
For every $\theta'=(a_{\gamma}/q)_{\gamma\in\Gamma_2}\in \Sigma^{\Gamma_2}_{\le \lo(M_1^{u_2})}$, it suffices to show that
\begin{align}
\label{eq:59}
\big\|\sup_{M_2\in \DD_\tau: M_1 \le M_2^{\varrho}} | T_{\ZZ^\Gamma}[(1-\Pi^\Gamma_{M_1})\Phi_{M_1,M_2}^{\langle \Gamma_2; u_2, \chi_2; \theta'\rangle}  ] f |  \big\|_{\ell^2(\ZZ^\Gamma)}\lesssim \big(M_1^{-\beta_{e_1}/2}+ M_1^{-\beta_{\gamma_0}\delta_{\rm Weyl}/2}\big) \|f\|_{\ell^2(\ZZ^\Gamma)},
\end{align}
where $\beta_{{\gamma_0}} := \min(\beta_{\gamma}: \gamma \in \Gamma_2^c\setminus\{e_1\})
$ and
\begin{align*}
\Phi_{M_1,M_2}^{\langle \Gamma_2; u_2, \chi_2;\theta'\rangle}
&:=\sum_{m_1\in\ZZ}\chi_{M_1}(m_1)
\Big\llbracket G^{\Gamma_2}_{m_1} \mid \mathfrak m^{\langle \Gamma_2^c, \Gamma_2\rangle}_{m_1, M_2} \eta^{\Gamma_2}_{\le \lo_{M_1, M_2}^{\Gamma_2}(M_1^{\chi_2})}\Big\rrbracket_{\{\theta'\}}, \qquad \theta'\in\mathbb Q^{\Gamma_2}.
\end{align*}
Since $|\Sigma^{\Gamma_2}_{\le \lo(M_1^{u_2})}|\lesssim M_1^{u_2(|\Gamma|+1)}$, then by the triangle inequality and estimate \eqref{eq:59} combined with conditions \eqref{eq:110} and \eqref{eq:u0cond}, we readily obtain inequality \eqref{eq:56} as desired. 

\paragraph{\bf Step 2}
Fix $\theta'=(a_{\gamma}/q)_{\gamma\in\Gamma_2}\in \Sigma^{\Gamma_2}_{\le \lo(M_1^{u_2})}$ and  $\xi\in\supp \Phi_{M_1,M_2}^{\langle \Gamma_2; u_2, \chi_2; \theta'\rangle}$. Then for every $\gamma\in\Gamma_2$, we have 
$$
|\xi_{\gamma}-a_{\gamma}/q|
\le M_1^{-\gamma_1}M_2^{-\gamma_2}   M_1^{\chi_2}\le
M_1^{ -\gamma_1-\gamma_2+\chi_1}.
$$
Moreover, by the Dirichlet's principle, for every
$\gamma\in\Gamma_2^c$ there exist integers $0\le a_{\gamma}\le q_{\gamma}$
such that $(a_{\gamma}, q_{\gamma})=1$, and
$1\le q_{\gamma}\le M_1^{\gamma_1+\gamma_2-\beta_\gamma}$
and
\[
|\xi_{\gamma}-a_{\gamma}/q_{\gamma}|
\le
q_{\gamma}^{-1} M_1^{ -\gamma_1-\gamma_2+\beta_\gamma}
\le
q_{\gamma}^{-2}.
\]

We will show that there exists a coordinate $\gamma\in\Gamma_2^c$ such that $\xi_\gamma$ has a diophantine approximation with a relatively large denominator, more precisely $q_{\gamma}>  M_1^{\beta_\gamma}$. Suppose for a contradiction that $1\le q_{\gamma}\le  M_1^{\beta_\gamma}$ for every
$\gamma\in\Gamma_2^c$ and let
$q_0:= \lcm(\{q_{\gamma}: \gamma\in\Gamma_2^c\}\cup\{q\})$. Note that $1\le q\le  M_1^{u_2}$, since $(a_{\gamma}/q)_{\gamma\in\Gamma_2}\in\Sigma^{\Gamma_2}_{\le  \lo(M_1^{u_2})}$. Moreover, we
have assumed that $q_\gamma\le  M_1^{\beta_\gamma}$ for $\gamma\in\Gamma_2^c$, thus 
by \eqref{eq:110} we see that $q_0\le  M_1^{u_1}$.
We define $b \in \ZZ^\Gamma$ by setting 
\begin{align*}
b_\gamma/q_0
:=
\begin{cases}
a_\gamma/q, &\qquad \gamma\in\Gamma_2.\\
a_\gamma/q_\gamma, &\qquad \gamma\in\Gamma_2^c.
\end{cases}
\end{align*}
Then
$(b_\gamma/q_0)_{\gamma\in\Gamma}\in\Sigma^\Gamma_{\le \lo(M_1^{u_1})}$
and for every $\gamma\in\Gamma$ we have
\[
|\xi_{\gamma}-b_\gamma/q_0|
\le
M_1^{ -\gamma_1-\gamma_2}   M_1^{\chi_1}.
\]
This shows that
$1-\Pi^\Gamma_{M_1}(\xi)=0$, which is impossible for $\xi\in\supp \Phi_{M_1,M_2}^{\langle \Gamma_2; u_2, \chi_2; \theta'\rangle}$. Hence, we can assume that
$ M_1^{\beta_\gamma}\le q_{\gamma}\le M_1^{\gamma_1+\gamma_2-\beta_\gamma}$
for some $\gamma\in\Gamma_2^c$. We now distinguish two cases.

\paragraph{\bf Step 3} Assume that 
$ M_1^{\beta_{e_1}}\le q_{e_1}\le M_1^{1-\beta_{e_1}}$
and $q_{\gamma}\le  M_1^{\beta_\gamma}$ for all
$\gamma\in\Gamma_2^c\setminus\{e_1\}$.  We shall use Lemma \ref{lem:3}.
Take
$Q:=\lcm{(\{q_{\gamma}:\gamma\in\Gamma_2^c\setminus\{e_1\}\}\cup\{q\})}$. We will show that not only $\xi_{e_1}$ but also $Q\xi_{e_1}$ has a diophantine approximation by a fraction with a relatively large denominator. To this end observe that by
\eqref{eq:110} we have $Q\le  M_1^{\beta_{e_1}/4}$.
Since
\begin{align*}
|\xi_{e_1}-a_{e_1}/q_{e_1}|\le1/q_{e_1}^2, 
\end{align*}
for some integers $0\le a_{e_1}\le q_{e_1}$ such that
$(a_{e_1}, q_{e_1})=1$ and
$ M_1^{\beta_{e_1}}\le q_{e_1}\le M_1^{1-\beta_{e_1}}$,  Lemma \ref{lem:3} applies and we obtain
\begin{align}
\label{eq:95Q}
\abs{Q\xi_{e_1}-a_{e_1}'/q_{e_1}'}
\le\frac{ 1}{2q_{e_1}'M_1^{1-\beta_{e_1}}}\le \frac{1}{(q_{e_1}')^2},
\end{align}
for some integers $0\le a_{e_1}'\le q_{e_1}'$ such that $(a_{e_1}', q_{e_1}')=1$ and 
\[
\frac{1}{2Q}  M_1^{\beta_{e_1}}\le q_{e_1}'\le 2M_1^{1-\beta_{e_1}}.
\]
That in turn implies that $q_{e_1}'\ge \frac{1}{2Q} M_1^{\beta_{e_1}}\ge \frac{1}{2} M_1^{\beta_{e_1}/2}$.

Our argument will rely on an application of the summation by parts formula \eqref{eq:sbpformula} to the sum over $m_1$ in the definition of $\Phi_{M_1,M_2}^{\langle \Gamma_2; u_2, \chi_2; \theta'\rangle}$. We begin with eliminating the dependence on $m_1$ in the partially complete exponential sums $G^{\Gamma_2}_{m_1}$. That will be accomplished by splitting the summation into classes modulo $Q$.  We take
$U_1=\big\lfloor\frac{M_1-r_1}{Q}\big\rfloor$ and
$V_1=\big\lfloor\frac{\tau M_1-r_1}{Q}\big\rfloor$ and observe 
that
\begin{align*}
\Phi_{M_1,M_2}^{\langle \Gamma_2; u_2, \chi_2; \theta'\rangle}=\frac{1}{|(M_1,\tau M_1]\cap\ZZ|}\sum_{r_1=1}^{Q}
\Big\llbracket G^{\Gamma_2}_{r_1} \mid \sum_{m_1=U_1+1}^{V_1}\mathfrak m^{\langle \Gamma_2^c, \Gamma_2\rangle}_{Qm_1+r_1, M_2} \eta^{\Gamma_2}_{\le \lo_{M_1, M_2}^{\Gamma_2}(M_1^{\chi_2})}  \Big\rrbracket_{\{\theta'\}}.
\end{align*}

Let $\theta:=(a_{\gamma}/q_{\gamma})_{\gamma\in\Gamma_2^c\setminus\{e_1\}}\in\QQ^{\Gamma_2^c\setminus\{e_1\}}$ and $\theta'':=(0, \theta)\in\QQ\times \QQ^{\Gamma_2^c\setminus\{e_1\}}$. Then we can write

\begin{align*}
\sum_{m_1=U_1+1}^{V_1}
\mathfrak m^{\langle \Gamma_2^c, \Gamma_2\rangle }_{Qm_1+r_1, M_2}(\xi'-\theta',\xi'')=&E_{r_1}(\theta'')
\sum_{U_1< m_1\le V_1}
\ex\big((Qm_1+r_1)\xi_{e_1}\big)F_{m_1, M_2}^{r_1}(\xi'-\theta',  \xi''-\theta''),
\end{align*}
with $E_{r_1}(\theta''):=\ex\big(\theta\cdot  (r_1, 1)^{\Gamma_2^c\setminus\{e_1\}}\big)$,
and 
\begin{align*}
F_{m_1, M_2}^{r_1}(\xi', \xi''):=
\ex\Big(\sum_{\gamma\in \Gamma_2^c\setminus\{e_1\}}\xi_{\gamma}''(Qm_1+r_1)^{\gamma}\Big)
\mathfrak m^{\langle \emptyset, \Gamma_2;0\rangle }_{Qm_1+r_1,M_2}
(\xi'),
\end{align*}
where for $y_1\in \RR$, we set 
\begin{align}\label{eq:PsiGamma2zero}
\mathfrak m^{\langle \emptyset, \Gamma_2;\gamma\rangle }_{y_1,M_2}(\zeta):
=\frac{1}{\tau-1}\int_1^\tau  
\ex\big(\zeta\cdot(y_1,M_2y_2)^{\Gamma_2}\big)
y_2^{\gamma_2}d y_2,  \qquad \zeta\in\TT^{\Gamma_2}, \quad \gamma\in\Gamma_2\cup\{0\}.
\end{align}

Using the summation by parts formula \eqref{eq:sbpformula}, we obtain
\begin{align*}
\sum_{U_1< m_1\le V_1}&
\ex\big((Qm_1+r_1)\xi_{e_1}\big)F_{m_1, M_2}^{r_1}(\xi'-\theta',  \xi''-\theta'')\\
&=S_{V_1}(\xi)F_{V_1+1, M_2}^{r_1}(\xi'-\theta',  \xi''-\theta'')
-\sum_{U_1< m_1\le V_1}
S_{m_1}(\xi)G_{m_1, M_2}^{r_1}(\xi'-\theta',  \xi''-\theta''),
\end{align*}
where for an integer $M>U_1$, we let
\begin{align*}
S_M(\xi):=\sum_{k=U_1+1}^{M} \ex((Qk+r_1)\xi_{e_1}), \quad \text{ and }\quad 
G_{m_1, M_2}^{r_1}(\xi):=\int_{m_1}^{m_1+1}\partial_{y_1}
F_{y_1, M_2}^{r_1}(\xi)d y_1,\qquad \xi\in\TT^{\Gamma}.
\end{align*}

Thus, we obtain
\begin{equation}\label{eq:ab_def}
\Phi_{M_1,M_2}^{\langle \Gamma_2; u_2, \chi_2; \theta'\rangle}
={\mathfrak a}_{M_1,M_2}^{\langle \Gamma_2; \theta', \theta''\rangle}+
{\mathfrak b}_{M_1,M_2}^{\langle \Gamma_2; \theta', \theta''\rangle},
\end{equation}
where
\begin{align*}
{\mathfrak a}_{M_1,M_2}^{\langle \Gamma_2; \theta', \theta''\rangle}:=
-\frac{1}{|(M_1,\tau M_1]\cap\ZZ|}\sum_{r_1=1}^{Q}
\Big\llbracket G^{\Gamma_2}_{r_1}E_{r_1} \mid \sum_{m_1=U_1+1}^{V_1}S_{m_1}G_{m_1, M_2}^{r_1} \eta^{\Gamma_2}_{\le \lo_{M_1, M_2}^{\Gamma_2}(M_1^{\chi_2})}  \Big\rrbracket_{\{\theta'\}\times\{\theta''\}},
\end{align*}
and
\begin{align*}
{\mathfrak b}_{M_1,M_2}^{\langle \Gamma_2; \theta', \theta''\rangle}:=
\frac{1}{|(M_1,\tau M_1]\cap\ZZ|}\sum_{r_1=1}^{Q}
\Big\llbracket G^{\Gamma_2}_{r_1}E_{r_1} \mid S_{V_1}F_{V_1+1, M_2}^{r_1} \eta^{\Gamma_2}_{\le \lo_{M_1, M_2}^{\Gamma_2}(M_1^{\chi_2})}  \Big\rrbracket_{\{\theta'\}\times\{\theta''\}},
\end{align*}
By a direct computation, we obtain
\begin{align*}
\partial_{y_1}F_{y_1, M_2}^{r_1}
=F_{y_1, M_2}^{r_1; 1}+F_{y_1, M_2}^{r_1; 2},
\end{align*}
with
\begin{align*}
F_{y_1, M_2}^{r_1; 1}(\xi):=&\sum_{\gamma\in\Gamma_2^c\setminus\{e_1\}} 2\pi {\bm i}
\gamma_1Q(Qy_1+r_1)^{\gamma_1-1} \xi_{\gamma}''\ F_{y_1, M_2}^{r_1}(\xi)
=
E_{y_1}^{r_1}(\xi'')\mathfrak m^{\langle \emptyset, \Gamma^2;0\rangle}_{Qy_1+r_1,M_2}
(\xi'),
\end{align*}
where
\begin{align*}
E_{y_1}^{r_1}(\xi'')&:=\sum_{\gamma\in\Gamma_2^c\setminus\{e_1\}}2\pi {\bm i}
\gamma_1Q
(Qy_1+r_1)^{\gamma_1-1}\xi_{\gamma}''\ \ex\Big(\sum_{\gamma\in \Gamma^1\setminus\{e_1\}} \xi_{\gamma}''(Qy_1+r_1)^{\gamma}
\Big), 
\end{align*}
and
\begin{align*}
F_{y_1, M_2}^{r_1; 2}(\xi):=
\sum_{\gamma\in\Gamma_2}
E_{y_1}^{r_1, \gamma}(\xi'')
\big(M_1^{\gamma_1-\chi_2}M_2^{\gamma_2}
\xi_{\gamma}'\big)
\mathfrak m^{\langle \emptyset, \Gamma^2;\gamma\rangle}_{Qy_1+r_1,M_2}
(\xi'),
\end{align*}
where
\begin{align*}
E_{y_1}^{r_1, \gamma}(\xi''):=
2\pi{\bm i}\gamma_1Q(Qy_1+r_1)^{\gamma_1-1}
M_1^{\chi_2-\gamma_1}
\ \ex\Big(\sum_{\gamma\in \Gamma^1\setminus\{e_1\}} \xi_{\gamma}''(Qy_1+r_1)^{\gamma}
\Big),  \qquad \gamma\in\Gamma_2.
\end{align*}
It is important that neither  $E_{y_1}^{r_1}$ nor  $E_{y_1}^{r_1, \gamma}$ depends on $M_2$.
Define
\begin{align*}
\mathfrak n^{\langle \emptyset, \Gamma_2;\gamma\rangle}_{y_1,M_2}(\zeta):=
\begin{cases}
\eta^{\Gamma_2}_{\le \lo_{M_1, M_2}^{\Gamma_2}(M_1^{\chi_2})}(\zeta)\mathfrak m^{\langle \emptyset, \Gamma_2;0\rangle}_{y_1,M_2}(\zeta) & \text{ if } \gamma=0,\\
(M_1^{\gamma_1-\chi_2}M_2^{\gamma_2}
\zeta_{\gamma})\eta^{\Gamma_2}_{\le \lo_{M_1, M_2}^{\Gamma_2}(M_1^{\chi_2})}(\zeta)\mathfrak m^{\langle \emptyset, \Gamma_2;\gamma\rangle}_{y_1,M_2}(\zeta)
& \text{ if } \gamma\in\Gamma_2.
\end{cases}
\end{align*}
Now by Proposition \ref{prop:msw},  we obtain
\begin{align}
\label{eq:maxLambda}
\sup_{r_1\in [Q]}\sup_{y_1\in\ZZ}\big\|\sup_{M_2\in \DD_\tau:M_1 \le M_2^\varrho}
|T_{\ZZ^\Gamma}[\mathfrak n^{\langle \emptyset, \Gamma_2;\gamma\rangle}_{Qy_1+r_1,M_2}
(\cdot - \theta')]f|\big\|_{\ell^2(\ZZ^\Gamma)}
\lesssim \norm{f}_{\ell^2(\ZZ^\Gamma)}.
\end{align}
In view of \eqref{eq:ab_def} and using \eqref{eq:maxLambda} we will prove that
for every $M_1\in\DD_\tau$, we have
\begin{align}
\label{eq:57}
\big\|\sup_{M_2\in \DD_\tau: M_1 \le M_2^{\varrho}} | T_{\ZZ^\Gamma}[(1-\Pi^\Gamma_{M_1})({\mathfrak a}_{M_1,M_2}^{\langle\Gamma_2; \theta', \theta''\rangle}+
{\mathfrak b}_{M_1,M_2}^{\langle\Gamma_2; \theta', \theta''\rangle})  ] f |  \big\|_{\ell^2(\ZZ^\Gamma)}\lesssim M_1^{-\beta_{e_1}/2}
\|f\|_{\ell^2(\ZZ^\Gamma)}
\end{align}
which in turn gives \eqref{eq:59} (by \eqref{eq:ab_def}) in this case.
Indeed, note that for every $r_1\in[Q]$, $m_1\in[U_1,V_1]$ and $m_1\le y_1\le m_1+1$ we have
\begin{align}
\label{eq:115}
|E_{r_1}(\theta'')|\le 1,
\end{align}
and
\begin{align}
\label{eq:116}
|S_{m_1}(\xi_{e_1})|
\lesssim \|Q\xi_{e_1}\|^{-1}
\lesssim q'_{e_1}\lesssim M_1^{1-\beta_{e_1}},
\end{align}
since by \eqref{eq:95Q} and the fact that $2\le \frac{1}{2} M_1^{\beta_{e_1}/2}\le q_{e_1}'\le M_1^{1-\beta_{e_1}}$ we can bound from below
$$
\|Q\xi_{e_1}\|\ge 1/q'_{e_1}-1/(q'_{e_1})^2\ge 1/(2q'_{e_1}).
$$
Moreover, 
\begin{align}
\label{eq:117}
|E_{y_1}^{r_1}(\xi''-\theta'')|
\lesssim Q  \max_{\gamma\in\Gamma_2^c\setminus\{e_1\}}M_1^{\beta_{\gamma}}M_1^{-1},
\end{align}
and for every $\gamma\in\Gamma_2$, we have  
\begin{align}
\label{eq:118}
|E_{y_1}^{r_1, \gamma}(\xi''-\theta'')|
\lesssim Q  M_1^{\chi_2}M_1^{-1}.
\end{align}
In view of \eqref{eq:maxLambda} and Plancherel's theorem, inequalities \eqref{eq:115}-\eqref{eq:118}, and the second condition in \eqref{eq:110},  we obtain
\begin{align*}
\big\|\sup_{M_2\in \DD_\tau: M_1 \le M_2^{\varrho}} | T_{\ZZ^\Gamma}[(1-\Pi^\Gamma_{M_1})({\mathfrak a}_{M_1,M_2}^{\langle\Gamma_2; \theta', \theta''\rangle}+
{\mathfrak b}_{M_1,M_2}^{\langle\Gamma_2; \theta', \theta''\rangle})  ] f |  \big\|_{\ell^2(\ZZ^\Gamma)}
\lesssim  M_1^{-\beta_{e_1}/2}\|f\|_{\ell^2(\ZZ^\Gamma)},
\end{align*}
since $Q\le  M_1^{\beta_{e_1}/4}$.
This completes the proof of inequality \eqref{eq:57}.

\paragraph{\bf Step 4} Assume that
$ M_1^{\beta_{\gamma_0}}\le q_{{\gamma_0}}\le M_1^{\gamma_0-\beta_{\gamma_0}}$
for some $\gamma_0\in\Gamma_2^c\setminus\{e_1\}$. Take
$U_1=\lfloor  M_1 \rfloor $ and
$V_1=\lfloor \tau M_1 \rfloor$. For $q\in\ZZ_+$ and
$r\in[q]$ define the set
$P_{r,q}:=\{m\in\ZZ: m\equiv r\pmod{q}\}$ and note  that
\begin{align*}
\ind{P_{r,q}}(m)=\frac{1}{q}\sum_{l=1}^q\ex({(m-r)l/q}), \qquad m\in\ZZ,
\end{align*}
and the multiplier $\Phi_{M_1,M_2}^{\langle\Gamma_2; u_2, \chi_2; \theta'\rangle}$ can be rewritten as 
\begin{align*}
\frac{1}{|(M_1,\tau M_1]\cap\ZZ|q}\sum_{l_1=1}^{q}\sum_{r_1=1}^{q}
\Big\llbracket G^{\Gamma_2}_{r_1}\ex({-r_1l_1/q}) \mid \eta^{\Gamma_2}_{\le \lo_{M_1, M_2}^{\Gamma_2}(M_1^{\chi_2})}\sum_{m_1=U_1+1}^{V_1}\ex({-m_1l_1/q})
\mathfrak m^{\langle\Gamma_2^c, \Gamma_2\rangle}_{m_1, M_2}  \Big\rrbracket_{\{\theta'\}}.
\end{align*}
By the summation by parts formula \eqref{eq:sbpformula}, note that
\begin{align*}
\sum_{m_1=U_1+1}^{V_1}&
\ex({m_1l_1/q})\mathfrak m^{\langle \Gamma_2^c, \Gamma_2\rangle}_{m_1, M_2}(\xi'-\theta',\xi'')=
\sum_{m_1=U_1+1}^{V_1}\ex({m_1l_1/q})\ex\big(\xi''\cdot (m_1,1)^{\Gamma_2^c} \big)\ \mathfrak m^{\langle \emptyset, \Gamma_2; 0\rangle}_{m_1, M_2}(\xi'-\theta')\\
&=S^{l_1/q}_{V_1}(\xi'')\mathfrak m^{\langle\emptyset, \Gamma_2; 0\rangle}_{V_1+1, M_2}(\xi'-\theta')-
\sum_{m_1=U_1+1}^{V_1}S^{l_1/q}_{m_1}(\xi'')\int_{m_1}^{m_1+1}\partial_{y_1}\mathfrak m^{\emptyset, \Gamma_2; 0}_{y_1, M_2}(\xi'-\theta')dy_1,
\end{align*}
where
\begin{equation*}
S^{l_1/q}_{m_1}(\xi'')
:=\sum_{k= U_1+1}^{m_1}
\ex({m_1l_1/q})\ex\big(\xi''\cdot (k,1)^{\Gamma_2^c} \big), \qquad  \xi''\in\TT^{\Gamma_2^c}.
\end{equation*}

Now note that
\begin{align*}
\int_{m_1}^{m_1+1}\partial_{y_1}\mathfrak m^{\langle\emptyset, \Gamma_2; 0\rangle}_{y_1, M_2}(\xi')dy_1=
\sum_{\gamma\in\Gamma_2}2\pi{\bm i}\gamma_1 (M_1^{\gamma_1-\chi_2}M_2^{\gamma_2} \xi_\gamma)\int_{m_1}^{m_1+1} M_1^{\chi_2-\gamma_1}y_1^{\gamma_1-1}\mathfrak m^{\langle\emptyset, \Gamma_2; \gamma\rangle}_{y_1, M_2}(\xi')d y_1.
\end{align*}
This leads to the decomposition
\begin{align}
\label{eq:22}
\Phi_{M_1,M_2}^{\langle\Gamma_2; u_2, \chi_2; \theta'\rangle}
={\mathfrak a}_{M_1,M_2}^{\langle\Gamma_2; \theta'\rangle}+
{\mathfrak b}_{M_1,M_2}^{\langle\Gamma_2; \theta'\rangle},
\end{align}
with
\begin{align*}
{\mathfrak a}_{M_1,M_2}^{\langle\Gamma_2; \theta'\rangle}:=
\frac{1}{|(M_1,\tau M_1]\cap\ZZ|q}\sum_{l_1=1}^{q}\sum_{r_1=1}^{q}
\Big\llbracket G^{\Gamma_2}_{r_1}\ex({-r_1l_1/q}) \mid S^{l_1/q}_{V_1}\mathfrak n^{\langle\emptyset, \Gamma_2; 0\rangle}_{V_1+1, M_2}  \Big\rrbracket_{\{\theta'\}},
\end{align*}
and 
\begin{align*}
{\mathfrak b}_{M_1,M_2}^{\langle\Gamma_2; \theta'\rangle}:=
\frac{1}{|(M_1,\tau M_1]\cap\ZZ|q}\sum_{l_1=1}^{q}\sum_{r_1=1}^{q}
\Big\llbracket G^{\Gamma_2}_{r_1}\ex({-r_1l_1/q}) \mid \sum_{m_1=U_1+1}^{V_1}S^{l_1/q}_{m_1}G_{m_1, M_2}  \Big\rrbracket_{\{\theta'\}},
\end{align*}
where
\begin{align*}
G_{m_1, M_2}(\xi'):=\sum_{\gamma\in\Gamma_2}2\pi{\bm i}\gamma_1 \int_{m_1}^{m_1+1} M_1^{\chi_2-\gamma_1}y_1^{\gamma_1-1}\mathfrak n^{\langle\emptyset, \Gamma_2; \gamma\rangle}_{y_1, M_2}(\xi')d y_1.
\end{align*}
Since $ M_1^{\beta_{\gamma_0}}\le q_{{\gamma_0}}\le M_1^{\gamma_0-\beta_{\gamma_0}}$
for some $\gamma_0\in\Gamma_2^c\setminus\{e_1\}$, we can apply the one-parameter Weyl's inequality, see Theorem \ref{thm:weyl1par}, and get
\begin{align}
\label{eq:121}
\abs{S^{l_1/q}_m( \xi'')}\lesssim M_1^{1-\beta_{\gamma_0}\delta_{\textrm{Weyl}}}, \qquad m\in [M_1,\tau M_1],
\end{align}
uniformly in $q$ and $l_1\in[q]$. In view of decomposition \eqref{eq:22}, taking into account \eqref{eq:maxLambda}, \eqref{eq:121} and \eqref{eq:u0cond}, we obtain
\begin{align*}
\big\|\sup_{M_2\in \DD_\tau:M_1 \le M_2^\varrho}
|T_{\ZZ^\Gamma}\big[ (1-\Pi^\Gamma_{M_1})({\mathfrak a}_{M_1,M_2}^{\langle\Gamma_2; \theta'\rangle}+
{\mathfrak b}_{M_1,M_2}^{\langle\Gamma_2; \theta'\rangle}) \big] f|\big\|_{\ell^2(\ZZ^\Gamma)}
\lesssim  M_1^{-\beta_{\gamma_0}\delta_{\textrm{Weyl}}/2}\|f\|_{\ell^2(\ZZ^\Gamma)},
\end{align*}
since  $1\le q\le  M_1^{u_2}$. That concludes the proof of inequality \eqref{eq:59} and hence \eqref{eq:52}.
\end{proof}

\subsubsection{\textbf{One-parameter major arc approximations: Proof of inequality \eqref{eq:53}}}
The final stage of the proof is to establish inequality
\eqref{eq:53}. Once this is proved, inequality
\eqref{eq:l2_main_int_case2} follows.

\begin{proof}[Proof of inequality \eqref{eq:53}]
Note that, with  $\TT^\Gamma\ni\xi=(\xi',\xi'')\in \TT^{\Gamma_2}\times\TT^{\Gamma_2^c}$, we have 
\begin{align*}
\Pi^\Gamma_{M_1}(\xi)\Phi_{M_1,M_2}^{\langle\Gamma_2; u_1, \chi_1\rangle} (\xi)
&=
\sum_{m_1\in\ZZ}\chi_{M_1}(m_1)
\sum_{\theta'\in\Sigma^{\Gamma_2}_{\le  \lo(M_1^{u_1})}}\sum_{\substack{\omega\in\Sigma^{\Gamma}_{\le  \lo(M_1^{u_1})}\\\omega'=\theta'}} G^{\Gamma_2}_{m_1}(\theta')
\\
&\quad\times\mathfrak m^{\langle\Gamma_2^c, \Gamma_2\rangle}_{m_1,M_2}
(\xi'-\theta',\xi'')\eta^{\Gamma_2}_{\le \lo_{M_1, M_2}^{\Gamma_2}(M_1^{\chi_1})}(\xi'-\theta')
\eta^{\Gamma}_{\le \lo_{M_1, M_1}^{\Gamma}(M_1^{\chi_1})}(\xi-\omega),
\end{align*}
which leads to the formula
\begin{align*}
\Pi^\Gamma_{M_1}(\xi)\Phi_{M_1,M_2}^{\langle\Gamma_2; u_1, \chi_1\rangle} (\xi)
&=
\sum_{m_1\in\ZZ}\chi_{M_1}(m_1)
\sum_{\theta\in\Sigma^{\Gamma}_{\le  \lo(M_1^{u_1})}} G^{\Gamma_2}_{m_1}(\theta')
\mathfrak m^{\langle\Gamma_2^c, \Gamma_2\rangle}_{m_1,M_2}
(\xi'-\theta',\xi'')
\eta^{\Gamma}_{\le \lo_{M_1, M_2}^{\Gamma}(M_1^{\chi_1})}(\xi-\theta).
\end{align*}
Fix $\theta=(a_{\gamma}/q)_{\gamma\in\Gamma}\in \Sigma^{\Gamma}_{\le  \lo(M_1^{u_1})}$, and 
let
$U_1=\frac{M_1-r_1}{q}$ and
$V_1=\frac{\tau M_1 -r_1}{q}$. Then
\begin{align*}
&\sum_{m_1\in\ZZ}\chi_{M_1}(m_1)
G^{\Gamma_2}_{m_1}(\theta')
\mathfrak m^{\langle\Gamma_2^c, \Gamma_2\rangle}_{m_1,M_2}
(\xi'-\theta',\xi'')
\\
&\quad=\frac{1}{|(M_1,\tau M_1]\cap\ZZ|}\sum_{r_1=1}^qG^{\Gamma_2}_{r_1}(\theta')\sum_{\floor{U_1}<m_1\le \floor{V_1}}
\mathfrak m^{\langle\Gamma_2^c, \Gamma_2\rangle}_{qm_1+r_1,M_2}
(\xi'-\theta',\xi'').
\end{align*}
Next, suppose that $\theta''=(a_{\gamma}/q)_{\gamma\in\Gamma_2^c}\in\QQ^{\Gamma_2^c}$ is such that
\[
\abs{\xi_{\gamma}-\theta_{\gamma}''}
\le M_1^{-\gamma_1} M_1^{\chi_1},\qquad \gamma\in\Gamma_2^c.
\] 
Then we can write
\begin{align*}
\mathfrak m^{\langle\Gamma_2^c, \Gamma_2\rangle}_{qm_1+r_1,M_2}
(\xi'-\theta',\xi'')=\ex\big(\theta''\cdot(r_1, 1)^{\Gamma_2^c}\big)\mathfrak m^{\langle\Gamma_2^c, \Gamma_2\rangle}_{qm_1+r_1,M_2}
(\xi-\theta).
\end{align*}
Consequently, we have 
\begin{align*}
&\sum_{m_1\in\ZZ}\chi_{M_1}(m_1)
G^{\Gamma_2}_{m_1}(\theta')
\mathfrak m^{\langle\Gamma_2^c, \Gamma_2\rangle}_{m_1,M_2}
(\xi'-\theta',\xi'')\\
&\quad=\frac{1}{|(M_1,\tau M_1]\cap\ZZ|}\sum_{r_1=1}^qG^{\Gamma_2}_{r_1}(\theta')\ex\big(\theta''\cdot(r_1, 1)^{\Gamma_2^c}\big)\sum_{\floor{U_1}<m_1\le \floor{V_1}}
\mathfrak m^{\langle \Gamma_2^c, \Gamma_2\rangle}_{qm_1+r_1,M_2}
(\xi-\theta).
\end{align*}
Now, we will approximate the last sum over $m_1$ by an integral. The details go as follows. 
\begin{align*}
\sum_{\floor{U_1}<m_1\le \floor{V_1}}
\mathfrak m^{\langle\Gamma_2^c, \Gamma_2\rangle}_{qm_1+r_1,M_2}
&=\frac{M_1(\tau-1)}{q}\mathfrak m^{\langle\Gamma_2^c, \Gamma_2\rangle}_{M_1,M_2}
+
\mathfrak m^{\langle\Gamma_2^c, \Gamma_2; r_1, 1\rangle}_{M_1,M_2}
+
\mathfrak m^{\langle\Gamma_2^c, \Gamma_2; r_1, 2\rangle}_{M_1,M_2},
\end{align*}
where $\mathfrak m_{M_1,M_2}^{\langle \Gamma\rangle}$ is given by \eqref{eq:PhiM1M2_def},  and 
\begin{align}\label{eq:PhiGamma21_def}
\mathfrak m^{\langle\Gamma_2^c, \Gamma_2; r_1, 1\rangle}_{M_1,M_2}
(\zeta)
:=\sum_{\floor{U_1}<m_1\le \floor{V_1}}
\int_{m_1-1}^{m_1}\int_{y_1}^{m_1}\Big(\partial_{x_1}
\mathfrak m^{\langle\Gamma_2^c, \Gamma_2\rangle}_{qx_1+r_1,M_2}
(\zeta)\Big)
d x_1d y_1, \qquad \zeta\in\TT^\Gamma,
\end{align}
and
\begin{align}\label{eq:PhiGamma22_def}
&\mathfrak m^{\langle\Gamma_2^c, \Gamma_2; r_1, 2\rangle}_{M_1,M_2}
(\zeta):=\bigg(\int_{\floor{U_1}}^{U_1}-\int_{\floor{V_1}}^{V_1}\bigg)
\mathfrak m^{\langle\Gamma_2^c, \Gamma_2\rangle}_{qy_1+r_1,M_2}
(\zeta)d y_1, \qquad \zeta\in\TT^\Gamma.
\end{align}
Taking into account these definitions and the identity 
$$
\sum_{r_1=1}^qG^{\Gamma_2}_{r_1}(\theta')\ex\big(\theta''\cdot (r_1, 1)^{\Gamma_2^c}\big)=qG^\Gamma(\theta),
$$
we conclude that
\begin{align*}
\Pi^\Gamma_{M_1}(\xi)\Phi_{M_1,M_2}^{\langle\Gamma_2; u_1, \chi_1\rangle} (\xi)
=\Phi_{M_1,M_2}^{\langle\Gamma\rangle}(\xi)+\sum_{l\in[3]}\Phi_{M_1,M_2}^{\langle\Gamma; l\rangle}(\xi),
\end{align*}
where for $l\in[2]$, we have
\begin{align}\label{eq:mathfrakgl_def}
\Phi_{M_1,M_2}^{\langle\Gamma; l\rangle}:=\frac{1}{|(M_1,\tau M_1]\cap\ZZ|}
\sum_{a/q\in\Sigma^\Gamma_{\le  \lo(M_1^{u_1})}}
\sum_{r_1=1}^q\Big\llbracket
G^{\Gamma_2}_{r_1}E_{r_1}\mid \mathfrak m^{\langle\Gamma_2^c, \Gamma_2; r_1, l\rangle}_{M_1,M_2}
\eta^{\Gamma}_{\le \lo_{M_1, M_2}^{\Gamma}(M_1^{\chi_1})}\Big\rrbracket_{\{a/q\}},
\end{align}
and $\Phi_{M_1,M_2}^{\langle\Gamma; 3\rangle}:=w_{M_1}\Phi_{M_1,M_2}^{\langle\Gamma\rangle}$, where
\begin{align*}
E_{r_1}(\theta''):=\ex\big(\theta''\cdot(r_1, 1)^{\Gamma_2^c}\big),\quad \text{ and } \quad
 w_{M_1}:=\bigg(\frac{M_1(\tau-1)}{|(M_1,\tau M_1]\cap\ZZ|}-1\bigg).
\end{align*}
The proof of \eqref{eq:53} is reduced to showing that for every $l\in[3]$, we have
\begin{align}
\label{eq:60}
\big\|\sup_{M_2\in \DD_\tau: M_1 \le M_2^{\varrho}}
|T_{\ZZ^\Gamma}[ \Phi_{M_1,M_2}^{\langle\Gamma; l\rangle} ] f|\big\|_{\ell^2(\ZZ^{\Gamma})}
\lesssim M_1^{-1/2}\|f\|_{\ell^2(\ZZ^{\Gamma})}.
\end{align}
The proof of inequality \eqref{eq:60} for $l=3$ immediately follows from Theorem \ref{thm:iw-semi}, since $|w_{M_1}|\lesssim M_1^{-1}$.

For $l\in[2]$ we will proceed much in the same way as in the proof of inequality \eqref{eq:56}.  Note that
\begin{align*}
\partial_{x_1}\mathfrak m^{\langle\Gamma_2^c, \Gamma_2\rangle}_{qx_1+r_1,M_2}(\xi)
=\sum_{\gamma\in\Gamma}
E^\gamma(r_1,x_1)
\big(M_1^{\gamma_1-\chi_1}M_2^{\gamma_2}\xi_{\gamma}\big)
\mathfrak m^{\langle\Gamma_2^c, \Gamma_2; \gamma\rangle}_{qx_1+r_1,M_2}
(\xi),
\end{align*}
where for $x_1\in\RR$, we have $E^\gamma(r_1,x_1):=
2\pi{\bm i}\gamma_1q(qx_1+r_1)^{\gamma_1-1}
M_1^{\chi_1-\gamma_1}$,
and
\begin{align*}
\mathfrak m^{\langle\Gamma_2^c, \Gamma_2; \gamma\rangle}_{x_1,M_2}(\zeta):
=\frac{1}{\tau-1}\int_1^\tau
\ex\big(\zeta\cdot (x_1,M_2y_2)^\Gamma\big)
y_2^{\gamma_2}d y_2, \qquad \zeta\in\TT^\Gamma, \qquad \gamma\in\Gamma\cup\{0\}.
\end{align*}
Define
\begin{align*}
\mathfrak n^{\langle\Gamma_2^c, \Gamma_2;\gamma\rangle}_{x_1,M_2}(\zeta):=
\begin{cases}
\eta^{\Gamma}_{\le \lo_{M_1, M_2}^{\Gamma}(M_1^{\chi_1})}(\zeta)\mathfrak m^{\langle\Gamma_2^c, \Gamma_2;0\rangle}_{x_1,M_2}(\zeta) & \text{ if } \gamma=0,\\
(M_1^{\gamma_1-\chi_2}M_2^{\gamma_2}
\zeta_{\gamma})\eta^{\Gamma}_{\le \lo_{M_1, M_2}^{\Gamma}(M_1^{\chi_1})}(\zeta)\mathfrak m^{\langle\Gamma_2^c, \Gamma_2;\gamma\rangle}_{x_1,M_2}(\zeta)
& \text{ if } \gamma\in\Gamma.
\end{cases}
\end{align*}
Now by Proposition \ref{prop:msw},  we obtain
\begin{align}
\label{eq:61}
\sup_{\theta\in \Sigma^\Gamma_{\le  \lo(M_1^{u_1})}}\sup_{x_1\in[M_1,\tau M_1]}\big\|\sup_{M_2\in \DD_\tau:M_1 \le M_2^\varrho}
|T_{\ZZ^\Gamma}[\mathfrak n^{\langle\Gamma_2^c, \Gamma_2;\gamma\rangle}_{x_1,M_2}
(\cdot - \theta)]f|\big\|_{\ell^2(\ZZ^\Gamma)}
\lesssim \norm{f}_{\ell^2(\ZZ^\Gamma)}.
\end{align}
To finish the proof of \eqref{eq:60}, we note that $E^\gamma$ does not depend on $M_2$ and 
\begin{align}
\label{eq:129}
|E^\gamma(r_1,x_1)|
\lesssim q  M_1^{\chi_1-1}.
\end{align}
Therefore, in view of inequality \eqref{eq:61} and  \eqref{eq:129} (for $l=1$), and condition \eqref{eq:parameter_tuning}, we obtain
\begin{align*}
\big\|\sup_{M_2\in \DD_\tau: M_1 \le M_2^{\varrho}}
|T_{\ZZ^\Gamma}[ \Phi_{M_1,M_2}^{\langle\Gamma; l\rangle}] f|\big\|_{\ell^2(\ZZ^\Gamma)}
\lesssim |\Sigma_{\le  \lo(M_1^{u_1})}|
M_1^{\chi_1+u_1}M_1^{-1}\|f\|_{\ell^2(\ZZ^\Gamma)}
\lesssim M_1^{-1/2}\|f\|_{\ell^2(\ZZ^\Gamma)}.
\end{align*}
This concludes the proof of \eqref{eq:60} and hence \eqref{eq:53}, and consequently inequality \eqref{eq:l2_main_int_case2} follows as desired.
\end{proof}


\end{document}